\documentclass[11pt]{amsart}
\usepackage{amscd, amssymb}

\setlength{\textwidth}{15.7cm}
\setlength{\textheight}{50pc}
\setlength{\evensidemargin}{.2cm}
\setlength{\oddsidemargin}{.2cm}

\usepackage{amsthm}
\usepackage{mathrsfs}
\usepackage{bbm}
\usepackage{ stmaryrd }

\usepackage{multirow}
\usepackage{makecell}

\usepackage{hyperref}
\hypersetup{
	colorlinks=true,
	linkcolor=blue,
	filecolor=magenta,  
	urlcolor=cyan,
}

\def\Aut{\operatorname{Aut}}
\def\End{\operatorname{End}}

\def\ker{\operatorname{ker}}

\def\dim{\operatorname{dim}}
\def\ad{\operatorname{ad}}

\def\id{\operatorname{id}}
\def\Dist{\operatorname{Dist}}

\def\Ber{\operatorname{Ber}}
\def\ber{\operatorname{ber}}
\def\Ind{\operatorname{Ind}}
\def\Coind{\operatorname{Coind}}
\def\Ad{\operatorname{Ad}}

\def\Hom{\operatorname{Hom}}

\def\C{\mathbb{C}}

\def\N{\mathbb{N}}
\def\Z{\mathbb{Z}}

\def\AA{\mathcal{A}}
\def\TT{\mathcal{T}}

\def\OO{\mathcal{O}}

\def\UU{\mathcal{U}}

\def\FF{\mathcal{F}}
\def\DD{\mathcal{D}}

\def\ZZ{\mathcal{Z}}

\def\a{\mathfrak{a}}
\def\b{\mathfrak{b}}
\def\c{\mathfrak{c}}
\def\m{\mathfrak{m}}
\def\n{\mathfrak{n}}
\def\t{\mathfrak{t}}
\def\p{\mathfrak{p}}
\def\q{\mathfrak{q}}
\def\g{\mathfrak{g}}
\def\h{\mathfrak{h}}
\def\r{\mathfrak{r}}
\def\k{\mathfrak{k}}
\def\l{\mathfrak{l}}
\def\s{\mathfrak{s}}
\def\o{\mathfrak{o}}


\def\d{\partial}

\def\ol{\overline}

\def\tr{\text{tr}}

\def\Spec{\text{Spec}}

\def\sub{\subseteq}

\def\xto{\xrightarrow}

\newtheorem{thm}{Theorem}[section]
\newtheorem{cor}[thm]{Corollary}
\newtheorem{lemma}[thm]{Lemma}
\newtheorem{prop}[thm]{Proposition}
\newtheorem{conj}[thm]{Conjecture}

\theoremstyle{definition}
\newtheorem{definition}[thm]{Definition}
\theoremstyle{remark}
\newtheorem{remark}[thm]{Remark}

\numberwithin{equation}{section}

\usepackage{relsize}
\usepackage{fancyhdr}
\pagestyle{fancyplain}
\usepackage[all,cmtip]{xy}

\providecommand{\keywords}[1]
{
	\small	
	\textbf{\textit{Keywords---}} #1
}

\begin{document}
\title{Ghost distributions on supersymmetric spaces I: Koszul induced superspaces, branching,  and the full ghost centre}
\chead{Ghost distributions on supersymmetric spaces}

\author[Alex Sherman]{Alexander Sherman}

\maketitle
\pagestyle{plain}

\begin{abstract}  Given a Lie superalgebra $\g$, Gorelik defined the anticentre $\AA$ of its enveloping algebra, which consists of certain elements that square to the center.  We seek to generalize and enrich the anticentre to the context of supersymmetric pairs $(\mathfrak{g},\mathfrak{k})$, or more generally supersymmetric spaces $G/K$.  We define certain invariant distributions on $G/K$, which we call ghost distributions, and which in some sense are induced from invariant distributions on $G_0/K_0$.  Ghost distributions, and in particular their Harish-Chandra polynomials, give information about branching from $G$ to a symmetric subgroup $K'$ which is related (and sometimes conjugate) to $K$.  We discuss the case of $G\times G/G$ for an arbitrary quasireductive supergroup $G$, where our results prove the existence of a polynomial which determines projectivity of irreducible $G$-modules.  Finally, a generalization of Gorelik's ghost centre is defined which we call the full ghost centre, $\ZZ_{full}$.  For type I basic Lie superalgebras $\g$ we fully describe $\ZZ_{full}$, and prove that if $\g$ contains an internal grading operator, $\ZZ_{full}$ consists exactly of those elements in $\UU\g$ acting by $\Z$-graded constants on every finite-dimensional irreducible representation.
\end{abstract}

\textbf{\textit{Keywords}}: \keywords{Lie superalgebras, supersymmetric spaces, invariant distributions}

\textbf{\textit{AMS subject classifications}}: \keywords{14M30, 17B10}

\section{Introduction}

Let $\g$ be a Lie superalgebra over an algebraically closed field $k$ of characteristic zero.  In \cite{gorelik2000ghost}, Gorelik defined a certain natural 'twisted' adjoint action of a Lie superalgebra $\g$ on its enveloping algebra $\UU\g$.  The action was originally considered by \cite{arnaudon1997casimir} for $\o\s\p(1|2n)$, where a certain element $T$ in the enveloping algebra was constructed, called Casimir's ghost, which squared to the center.

The action defined by Gorelik in general is remarkable in that the structure of $\UU\g$ becomes that of an induced module, $\Ind_{\g_{\ol{0}}}^{\g}\UU\g_{\ol{0}}$.  Further the invariants of this action, denoted $\AA$ and called the anticentre in \cite{gorelik2000ghost}, are both a module over the center $\ZZ:=\ZZ(\UU\g)$ and multiply into the center, so that $\tilde{\ZZ}:=\ZZ+\AA$ is an algebra which Gorelik called the ghost centre of $\g$.  If $\g$ is quasireductive, i.e. $\g_{\ol{0}}$ is reductive and acts semisimply on $\g$, and $\Lambda^{top}\g_{\ol{1}}$ is a trivial $\g_{\ol{0}}$-module, Gorelik obtained an explicit identification of vector spaces between $\AA$ and the center of $\UU\g_{\ol{0}}$.  Further, for basic classical Lie superalgebras, Gorelik gave a complete description of the Harish-Chandra image of $\AA$ and the structure of $\tilde{\ZZ}$.  In particular she showed that $\tilde{\ZZ}$ consists of exactly those elements of $\UU\g$ that act by superconstants on every irreducible representation.  We also note that Gorelik fully computed the $\AA$ for $Q$-type superalgebras in \cite{gorelik2006shapovalov}.

\subsection{Generalization to supersymmetric spaces} We reformulate Gorelik's results from the geometric perspective, and thereby understand how they may be generalized.  Consider the setting of symmetric supervarieties (or supersymmetric spaces, if one prefers that term) $G/K$ and their corresponding supersymmetric pairs $(\g,\k)$, corresponding to an involution $\theta$.  We have a decomposition $\g=\k\oplus\p$ into the $\pm1$ eigenspaces of $\theta$.  We define a new subalgebra 
\[
\k':=\k_{\ol{0}}\oplus\p_{\ol{1}},
\]
which itself is the fixed points of the involution $\delta\circ\theta$, where $\delta(x)=(-1)^{\ol{x}}x$ is the grading operator on $\g$.  Let $K'$ be the subgroup of $G$ with $K'_0=K_0$ and $\operatorname{Lie}(K')=\k'$.  Then because $\k_{\ol{1}}'\oplus\k_{\ol{1}}=\g_{\ol{1}}$, the action of $K'$ on $G/K$ enjoys many nice properties, in particular many of which are not explicitly enjoyed by the action of $K$ on $G/K$.  The first result indicating this is the following.  Recall that for an affine supervariety $X$ with a closed point $x$ and maximal ideal $\m_x\sub k[X]$ in the space of functions, we may consider the super vector space of distributions
\[
\Dist(X,x)=\{\psi:k[X]\to k:\psi(\m_x^n)=0\text{ for }n\gg0\}.
\]
If we consider the point $eK$ on $G/K$, $K_0'$ fixes it, and thus $K'$ has a natural action on $\Dist(G/K,eK)$.
\begin{thm}\label{dist_iso_intro}
	We have an isomorphism of $K'$-modules
	\[
	\Dist(G/K,eK)\cong\Ind_{\k_{\ol{0}}}^{\k}\Dist(G_0/K_0,eK_0).
	\]
	In particular, if $K'$ is quasireductive and $\Lambda^{top}\p_{\ol{1}}$ is a trivial $K_0$-module, then we have an explicit isomorphism of vector spaces
	\[
	\Dist(G/K,eK)^{K'}\cong \Dist(G_0/K_0,eK_0)^{K_0}.
	\]
\end{thm}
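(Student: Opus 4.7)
The plan is to realize both sides of the first isomorphism as left-multiplication modules via super PBW, construct a natural comparison map, and establish bijectivity by a filtration argument; the ``In particular'' statement then follows by computing $K'$-invariants of the induced module under the quasireductive and trivial-Berezinian hypotheses.

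First I would identify $\Dist(G/K,eK)\cong\UU\g\otimes_{\UU\k}k=\UU\g/\UU\g\cdot\k$ as $\UU\g$-modules via the standard identification of point distributions on a homogeneous superspace with the left regular module, so that the $\k'$-action is simply the restriction of the $\UU\g$-action to $\k'\subset\g$.  Analogously $\Dist(G_0/K_0,eK_0)\cong\Sym(\p_{\ol{0}})$ by classical PBW.  The natural comparison map
\[
\Phi:\UU\k\otimes_{\UU\k_{\ol{0}}}\Sym(\p_{\ol{0}})\longrightarrow\UU\g/\UU\g\cdot\k,\qquad u\otimes v\mapsto uv,
\]
is defined by left multiplication in $\UU\g$ through the inclusions $\UU\k,\UU\g_{\ol{0}}\hookrightarrow\UU\g$; it is $\UU\k$-linear by construction, and its $\k'$-equivariance should follow from the supersymmetric-pair relations $[\p,\p]\subset\k$ and $[\k,\p]\subset\p$ together with the Koszul-induction formalism set up earlier in the paper (which equips the source with its $\k'$-structure).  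To show $\Phi$ is bijective, I would apply super PBW in two orderings: $\UU\g\cong\UU\k\otimes\Sym(\p)$ yields $\Dist(G/K,eK)\cong\Sym(\p_{\ol{0}})\otimes\Lambda(\p_{\ol{1}})$, while PBW on $\k=\k_{\ol{0}}\oplus\k_{\ol{1}}$ yields $\UU\k\otimes_{\UU\k_{\ol{0}}}\Sym(\p_{\ol{0}})\cong\Lambda(\k_{\ol{1}})\otimes\Sym(\p_{\ol{0}})$.  Filtering both sides by total odd-generator degree and passing to the associated graded, $\Phi$ becomes the canonical Koszul identification and is a bijection; the technical work lies in tracking the parity signs and bracket corrections in the PBW re-expansion, which is precisely what the Koszul-induced-superspace machinery of the paper is designed to package.

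For the ``In particular'' part, assume $K'$ is quasireductive and $\Lambda^{top}\p_{\ol{1}}$ is a trivial $K_0$-module.  Writing the right-hand side of the main isomorphism as $\Lambda(\p_{\ol{1}})\otimes\Sym(\p_{\ol{0}})$ after the Koszul identification, I take $K'$-invariants: quasireductivity makes $(-)^{K_0}$ an exact functor, and a filtration of $\Lambda(\p_{\ol{1}})$ by ``co-wedge degree'' shows that the leading term of the $\p_{\ol{1}}$-action is wedging, so $\p_{\ol{1}}$-invariants concentrate in the top exterior power.  The trivial-Berezinian assumption furnishes a $K_0$-invariant generator $\omega\in\Lambda^{top}\p_{\ol{1}}$, and the map $v\mapsto\omega\otimes v$ delivers the canonical isomorphism $\Sym(\p_{\ol{0}})^{K_0}\xrightarrow{\sim}\Dist(G/K,eK)^{K'}$.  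The main obstacle I anticipate is carrying out this last step cleanly in the presence of bracket corrections---this is the relative-setting analogue of Gorelik's computation of the anticentre $\AA\cong\ZZ(\UU\g_{\ol{0}})$, and the argument should parallel hers.
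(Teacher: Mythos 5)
Your proposal tracks a typo in the theorem statement, and this creates a genuine problem in the argument as written. The intended statement (cf.\ \cref{symm_space_dist}) is
\[
\Dist(G/K,eK)\cong\Ind_{\k_{\ol{0}}}^{\k'}\Dist(G_0/K_0,eK_0),
\]
with induction to $\k'=\k_{\ol{0}}\oplus\p_{\ol{1}}$, not to $\k$; this is forced by the claim that the isomorphism is one of $K'$-modules and by \cref{dist_iso}, which requires the complementary condition $\h_{\ol{1}}\oplus\k_{\ol{1}}=\g_{\ol{1}}$ satisfied by $\h=\k'$ but not by $\h=\k$. You took $\Ind_{\k_{\ol{0}}}^{\k}$ at face value, and the resulting comparison map $\Phi:\UU\k\otimes_{\UU\k_{\ol{0}}}\Sym(\p_{\ol{0}})\to\UU\g/\UU\g\k$ cannot be bijective: already $\Phi(u\otimes 1)=u\equiv 0 \pmod{\UU\g\k}$ for every $u\in\k_{\ol{1}}$, so $\Phi$ has a large kernel. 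Your own associated-graded comparison flags this: you identify the source with $\Lambda(\k_{\ol{1}})\otimes\Sym(\p_{\ol{0}})$ and the target with $\Lambda(\p_{\ol{1}})\otimes\Sym(\p_{\ol{0}})$, and these have different dimensions unless $\dim\k_{\ol{1}}=\dim\p_{\ol{1}}$, which fails for many supersymmetric pairs (e.g.\ $\g\l(m|n)$ with $\k=\g\l(a|c)\oplus\g\l(b|d)$ and generic $a,b,c,d$). You should have caught this sanity failure and inferred that $\k'$ is meant.

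Once $\k$ is replaced by $\k'$ throughout, your first-part strategy is sound and lands on essentially the same object the paper produces in \cref{dist_iso}: the map $u\otimes v\mapsto uv$ on $\UU\k'\otimes_{\UU\k_{\ol{0}}}(\UU\g_{\ol{0}}/\UU\g_{\ol{0}}\k_{\ol{0}})\to\UU\g/\UU\g\k$, shown to be an isomorphism by passing to the PBW-associated graded where $\Lambda(\k'_{\ol{1}})=\Lambda(\p_{\ol{1}})$ matches the odd part of $\p$. The paper reaches this differently: it first develops Koszul's induced supervariety $(G_0/K_0)^{\k'}$ with its coinduced function algebra and proves $\Dist((X_0)^{\h},x)\cong\Ind_{\h_{\ol{0}}}^{\h}\Dist(X_0,x)$ in general (\cref{dist_induce}), then shows the canonical map $(G_0/K_0)^{\k'}\to G/K$ is a local isomorphism near $eK_0$ (\cref{local_iso}, \cref{lemma_for_iso}), and deduces the distribution isomorphism from that. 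Your route is more direct but less geometric; the paper's route is what it reuses later for functions and for the differential-operator picture, so it is doing genuinely more work than this one theorem requires.

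For the ``In particular'' part, your filtration argument only shows that any $\p_{\ol{1}}$-invariant must sit in the top exterior degree of the associated graded; it does not produce the invariants, which you acknowledge as the remaining obstacle. The paper closes this by proving the $\Ind$--$\Coind$ isomorphism $\Ind_{\g_{\ol{0}}}^{\g}V_0\cong\Coind_{\g_{\ol{0}}}^{\g}(\Ber(\g_{\ol{1}})\otimes V_0)$ geometrically (\cref{ind-coind_iso}), and then a one-line Frobenius reciprocity computation yields $\Dist(G/K,eK)^{K'}\cong\Dist(G_0/K_0,eK_0)^{K_0}$; the explicit inverse is $z\mapsto v_{\k'}\cdot z$ (\cref{invariants_ind_coind_iso}), where $v_{\k'}\in\UU\k'$ projects to the line $\Lambda^{\mathrm{top}}\p_{\ol{1}}$. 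Your map $v\mapsto\omega\otimes v$ is the right idea, but establishing that $\omega\otimes v$ plus suitable lower-order corrections is genuinely $\k'$-invariant is exactly the content of the $\Ind$--$\Coind$ isomorphism, which you would need to prove rather than assume.
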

For the symmetric supervariety $G\times G/G\cong G$, we have identifications $G'\cong G$ and $\Dist(G,eG)\cong\UU\g$, and the action we obtain in this case of $G'$ on $\Dist(G,eG)$ is exactly Gorelik's twisted adjoint action, thus reproducing her results from this context.  

The proof of Theorem \ref{dist_iso_intro} uses a construction due to Koszul (\cite{koszul1982graded}) which allows one to take the variety $G_0/K_0$, which has an action of $K_0'=K_0$, and induce it to a $K'$-supervariety denoted $(G_0/K_0)^{(\k')}$.  The latter supervariety has the special property that the vector fields $\k_{\ol{1}}'=\p_{\ol{1}}$ are everywhere non-vanishing, and its algebra of functions and spaces of distributions, respectively, are (co)induced from $G_0/K_0$.  Then by a general result, $G/K$ and $(G_0/K_0)^{(\k')}$ are locally isomorphic as $K'$-supervarieties, implying an isomorphism of their spaces of distributions as in Theorem \ref{dist_iso_intro}.

\subsection{The Harish-Chandra morphism} Now consider a symmetric supervariety $G/K$ where we assume $G$ is quasireductive, $\Lambda^{top}\p_{\ol{1}}$ is the trivial $K_0$-module, and we have an Iwasawa decomposition $\g=\k\oplus\a\oplus\n$ where $\a\sub\p$ is a Cartan subspace (see Definition \ref{defn_cartan_subspace}).  We write 
\[
\AA_{G/K}:=\Dist(G/K,eK)^{K'},
\]
for the $K'$-invariant distributions, and call elements of $\AA_{G/K}$ ghost distributions on $G/K$.  One important problem is to compute the image of $\AA_{G/K}$ under the Harish-Chandra homomorphism 
\[
HC:\Dist(G/K,eK)\to \mathfrak{A},
\]
where $\mathfrak{A}$ is the polynomial superalgebra on $\a$.  Then we have will that
\[
HC(\AA_{G/K})\sub S(\a_{\ol{0}})\Lambda^{top}\a_{\ol{1}}.
\]
Thus given $\gamma\in\AA_{G/K}$, we may write $HC(\gamma)=p_{\gamma}\xi$, $p_{\gamma}\in S(\a_{\ol{0}})$ and $\xi\in\Lambda^{top}\a_{\ol{1}}$ is a fixed basis.  If $B$ is a Borel subgroup of $G$ such that $\a\oplus\n\sub\operatorname{Lie}(B)$, then an irreducible $B$-submodule $L_{\lambda}\sub\C[G/K]$ will be of weight $\lambda\in\a_{\ol{0}}^*$, and $\a_{\ol{1}}$ will act on $L_{\lambda}$ as an odd abelian Lie superalgebra. Then $HC(\gamma):L_{\lambda}\to k$ will define a functional.

\begin{thm}\label{intro_branching_thm}  Let $L_{\lambda}\sub\C[G/K]$ be an irreducible $B$-submodule; the following are equivalent:
	\begin{enumerate}
		\item  the $K'$-module generated by $L_{\lambda}$ contains a copy of $I_{K'}(k)$;
		\item  there exists $\gamma\in\AA_{G/K}$ such that the map $HC(\gamma):L_{\lambda}\to k$ is nonzero;
		\item  $L_{\lambda}$ is projective over $\a_{\ol{1}}$ and $p_{\gamma}(\lambda)$ for some $\gamma\in\AA_{G/K}$.
	\end{enumerate}
\end{thm}
Here $I_{K'}(k)$ denote the injective indecomposable $K'$-module with socle $k$.  From this we obtain, as one corollary:
\begin{cor}\label{intro_cor}
	Suppose that the equivalent conditions of Theorem \ref{intro_branching_thm} hold,and suppose further that the $G$-module generated by $f_{\lambda}$, $L$, is irreducible.	Then $I_{G}(L)$ is a submodule of $k[G/K']$.
\end{cor}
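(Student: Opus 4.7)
The plan is to upgrade the $K'$-invariant functional implicit in \cref{intro_branching_thm} into a $G$-equivariant embedding of the full injective hull $I_G(L)$ into $k[G/K']$. In addition to $L$ being irreducible, I take as given the existence of some $\gamma \in \AA_{G/K}$ with $HC(\gamma)(\lambda) \neq 0$, which by \cref{intro_branching_thm} is equivalent to an inclusion $I_{K'}(k) \hookrightarrow \langle f_\lambda \rangle_{K'} \subseteq L$; without this side of the biconditional there is no reason even for $L$ to sit inside $k[G/K']$.

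First I would use $\gamma$ to produce a $G$-equivariant map $\Phi : L \to k[G/K']$ defined by $\Phi(\ell)(g) = \gamma(g^{-1}\cdot \ell)$, where $G$ acts on $k[G/K]$ by left translation. The $K'$-invariance of $\gamma$ forces $\Phi(\ell)$ to be right-$K'$-invariant (so it does lie in $k[G/K']$), $G$-equivariance is routine, and $\Phi(f_\lambda)(e) = \gamma(f_\lambda) \neq 0$ shows $\Phi$ is nonzero. Irreducibility of $L$ then promotes this to an embedding $L \hookrightarrow k[G/K']$.

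The work is in extending to $I_G(L)$. I would apply Frobenius reciprocity for algebraic induction,
\[
\Hom_G\bigl(I_G(L),\, k[G/K']\bigr) \;\cong\; \Hom_{K'}\bigl(I_G(L)|_{K'},\, k\bigr),
\]
and exhibit a nonzero element on the right. Because $I_{K'}(k) \hookrightarrow L|_{K'} \hookrightarrow I_G(L)|_{K'}$ and $I_{K'}(k)$ is $K'$-injective, this inclusion splits to a $K'$-equivariant projection $\pi : I_G(L) \to I_{K'}(k)$ that is the identity on $I_{K'}(k)$. The standing hypothesis that $\Lambda^{top}\p_{\ol{1}}$ is a trivial $K_0$-module trivializes the Nakayama twist for $\k' = \k_{\ol{0}} \oplus \p_{\ol{1}}$, so $I_{K'}(k)$ is simultaneously a projective cover of $k$ and admits a surjection $\epsilon : I_{K'}(k) \onto k$. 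The composite $\epsilon \circ \pi$ is nonzero on $I_{K'}(k) \subseteq L$, hence on $L$. The corresponding $G$-homomorphism $I_G(L) \to k[G/K']$ is then automatically an embedding, because $I_G(L)$ has simple socle $L$ and the map is nonzero on $L$.

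The main obstacle is precisely this third step: manufacturing a $K'$-invariant functional on the whole injective hull $I_G(L)$ rather than merely on its socle. The key input is that the hypothesis on $\Lambda^{top}\p_{\ol{1}}$ already required for \cref{dist_iso_intro} gives $I_{K'}(k)$ balanced socle and head (both equal to $k$), so it can serve as a bridge from $I_G(L)$ down to $k$ via the splitting supplied by \cref{intro_branching_thm}.
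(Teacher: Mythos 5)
Your proof is correct and follows essentially the same route as the paper's proof of \cref{injective_summand_prop}: split the copy of $I_{K'}(k)$ off $I_G(L)|_{K'}$ using $K'$-injectivity, compose with the projection onto the head of $I_{K'}(k)$ (which the paper writes as $\Ber(\k')$, trivial under the standing hypothesis on $\Lambda^{top}\p_{\ol{1}}$), and apply Frobenius reciprocity, with injectivity following because the map is nonzero on the simple socle $L$. The only difference is cosmetic — your preliminary step constructing $\Phi:L\to k[G/K']$ via matrix coefficients of $\gamma$ is never used afterward and could be omitted.
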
 
Thus obtaining $HC(\AA_{G/K})$ is of interest, and this will be taken up in more detail for the case of basic classical Lie superalgebras in a subsequent article. However even for such Lie superalgebras the answer is not known in general (outside the case $G\times G/G$, originally done by Gorelik).  

\subsection{The case of $G\times G/G$} For $G\times G/G$, we have the following nice application of Corollary \ref{intro_cor}. 
\begin{thm}\label{intro_proj_criteria_poly}
	Let $G$ be a quasireductive supergroup with Cartan subalgebra $\h\sub\g$, and choose a Borel subgroup $B$  with Lie superalgebra $\b$ containing $\h$.  Then there exists a polynomial $p_{G,B}\in S(\h)$ of degree less than or equal to $\dim\b_{\ol{1}}$, such that for a $\b$-dominant weight $\lambda$, 
	\[
	p_{G,B}(\lambda)\neq0\ \text{ if and only if }\ L_{B}(\lambda)\text{ is projective},
	\]
where $L_{B}(\lambda)$ is the irreducible $G$-module of $B$-highest weight $\lambda$.  
\end{thm}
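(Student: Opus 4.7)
The plan is to apply Corollary \ref{intro_cor} to the symmetric supervariety $G \times G / G_{\mathrm{diag}}$, which is canonically $G$ under the left-right $G \times G$-action. In this case $\k = \g_{\mathrm{diag}}$, $\p = \{(x,-x) : x \in \g\}$, and the associated subgroup $K' \subset G \times G$ has Lie superalgebra $\{(x,x) : x \in \g_{\ol{0}}\} \oplus \{(y,-y) : y \in \g_{\ol{1}}\}$, which is parity-twist-isomorphic to $G$. By Theorem \ref{dist_iso_intro} we get $\AA_{G \times G / G_{\mathrm{diag}}} \cong \ZZ(\UU\g_{\ol{0}})$. I would take $T \in \AA_{G \times G / G_{\mathrm{diag}}}$ to be the ghost distribution corresponding to $1 \in \ZZ(\UU\g_{\ol{0}})$ under this isomorphism, and define $p_{G,B} := HC(T) \in S(\h)$ via the generalized (non-Cartan-even) Harish-Chandra morphism from the appendix.

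To obtain the degree bound I would compute the Harish-Chandra image explicitly: $HC(T)$ should be, up to sign and $\rho$-shifts, a product of one linear form for each weight of $\b_{\ol{1}}$ on $\h$ --- one factor for each positive odd root, together with contributions from a basis of $\h_{\ol{1}}$ --- giving $\deg p_{G,B} \le |\Delta_{\ol{1}}^+| + \dim\h_{\ol{1}} = \dim\b_{\ol{1}}$. In the Cartan-even basic classical case this should reduce to Gorelik's formula for Casimir's ghost.

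For the ``if'' direction, suppose $p_{G,B}(\lambda) \ne 0$ with $\lambda$ $\b$-dominant. I would take $f_\lambda \in k[G] \cong k[G \times G / G_{\mathrm{diag}}]$ to be a $(B \times B^{\mathrm{op}})$-highest weight matrix coefficient of $L_B(\lambda)$; its $(G \times G)$-orbit generates the irreducible $L_B(\lambda)^* \boxtimes L_B(\lambda)$. Corollary \ref{intro_cor} then yields an embedding $I_{G \times G}(L_B(\lambda)^* \boxtimes L_B(\lambda)) \hookrightarrow k[(G \times G)/K']$. Using a Peter-Weyl-type decomposition of the right side (which is $k[G]$ with a $\delta$-twisted $G$-bimodule action) and comparing $\lambda$-isotypes, this embedding forces $I_G(L_B(\lambda)) = L_B(\lambda)$, so $L_B(\lambda)$ is injective and hence, by Frobenius-ness of finite-dimensional modules over quasireductive $G$, projective.

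The main obstacle will be the ``only if'' direction. There I would argue that $T$ acts on $L_B(\lambda)$ by a scalar (up to parity shift) equal to $p_{G,B}(\lambda+\rho)$, and that when $L_B(\lambda)$ is projective this scalar must be non-zero. The key point is that the specific anticentre element built from $1 \in \ZZ(\UU\g_{\ol{0}})$ via Theorem \ref{dist_iso_intro} arises from an averaging over $\p_{\ol{1}}$ whose non-degeneracy on a simple $L$ is controlled precisely by the projective dimension of $L$. Establishing this uniformly for all quasireductive $G$ --- rather than only the Cartan-even basic classical case where it reduces to Gorelik's strong-typicality criterion --- is the delicate point, and will require combining the Koszul-induced description of $\Dist(G,e)$ from Theorem \ref{dist_iso_intro} with the generalized Harish-Chandra morphism from the appendix.
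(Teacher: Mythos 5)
Your overall strategy matches the paper's: interpret $\UU\g \cong \Dist(G\times G/G, eG)$ under Gorelik's twisted adjoint action, take $T = \operatorname{ev}_{eG}\,v_{\g'}$ to be the minimal ghost distribution, set $p_{G,B} := HC(T)$, and use \cref{intro_cor} together with the decomposition of $k[G]$ as a $G\times G$-module to relate nonvanishing of $p_{G,B}(\lambda)$ to projectivity of $L_B(\lambda)$. Your ``if'' direction is essentially the paper's \cref{projective_if_boxtimes_has_trivial_injective} / \cref{proj_lemma_appendix} (though ``$I_G(L_B(\lambda)) = L_B(\lambda)$'' should be ``$L_B(\lambda)$ is its own block summand of $k[G]$,'' which then gives projectivity), and your identification of $\AA_{G\times G/G}$ with $\ZZ(\UU\g_{\ol{0}})$ is exactly what \cref{symm_space_dist} provides.

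The genuine gap is the ``only if'' direction, which you explicitly flag as the delicate point but do not resolve. Saying that $T$ acts on $L_B(\lambda)$ by a scalar whose nonvanishing ``is controlled precisely by the projective dimension of $L$'' is the statement to be proven, not a proof. The missing ingredient is the \emph{trace argument}: the paper shows (\cref{tr_nonzero}, \cref{appendix_tr_nonzero}) that $L^*\boxtimes L$ admits a unique $G'$-invariant, namely $\tr_L$, and $\tr_L(eG) = \dim L \neq 0$. When $L = L_B(\lambda)$ is projective, $L^*\boxtimes L$ is projective over $G\times G$ and hence over $G'$ (as $G$ is quasireductive), so its $G'$-invariant necessarily generates a copy of $I_{G'}(k)$. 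Because that invariant is nonvanishing at $eG$, the hypotheses of \cref{non_vanishing_invt_prop} (resp.\ \cref{appendix_p_prop}) are satisfied, and $p_{G,B}(\lambda)\neq 0$ follows. Without this argument the ``only if'' direction is unproved. Separately, your degree-bound reasoning (``a product of linear forms, one for each weight of $\b_{\ol{1}}$'') is a heuristic and not how the paper proves it; the paper uses a filtration argument (\cref{degree_bound}, \cref{degree_bound_appendix}) plus, in the non-Cartan-even case, a determinant polynomial $b_H$ of degree $\dim\h_{\ol{1}}$ detecting projectivity of $L_\lambda$ over the Clifford algebra $\UU\h$, so that $p_{G,B} = p_1 b_H$ with $\deg p_1 \leq |\Delta_{\ol{1}}^+|$. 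Your proposal notices the $\dim\h_{\ol{1}}$ contribution but does not explain where the $b_H$ factor comes from.
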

In particular the above result implies that if one simple $G$-module is projective then this is a generic property of simple $G$-modules.  However it is possible that $p_{G,B}$ is the zero polynomial, so that no simple $G$-modules are projective. This was already know for many Lie superalgebras by direct study, and our theory gives a general explanation for this phenomenon.

For a different application of Theorem \ref{intro_proj_criteria_poly}, we can prove the following general sufficient criteria for having projective irreducible modules.  We say a quasireductive supergroup is Cartan-even if a maximal torus of $G_0$ is self-centralizing.

\begin{thm}
	Let $G$ be a Cartan-even quasireductive supergroup with a chosen Cartan subalgebra $\h$ such that the following conditions hold on its Lie superalgebra:
	\begin{enumerate}
		\item for a root $\alpha\in\h^*$, $\dim[\g_{\alpha},\g_{-\alpha}]=1$; and
		\item for a root $\alpha\in\h^*$, the pairing
		\[
		[-,-]:\g_{\alpha}\otimes\g_{-\alpha}\to [\g_{\alpha},\g_{-\alpha}]
		\]
		is nondegenerate.
	\end{enumerate}
	Choose an arbitrary Borel subgroup $B$ whose Lie superalgebra contains $\h$, and let $\alpha_1,\dots,\alpha_n$ denote the odd positive roots with $r_i=\dim\g_{\alpha_i}$.  Write $h_{\alpha_i}$ for a nonzero element of $[\g_{\alpha_i},\g_{-\alpha_i}]$.  Then we have (up to a scalar)
	\[
	p_{G,B}=h_{\alpha_1}^{r_1}\cdots h_{\alpha_n}^{r_n}+l.o.t.
	\]
	In particular $p_{G,B}\neq0$, so $G$ admits irreducible projective modules.
\end{thm}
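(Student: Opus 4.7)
The plan is to use that $p_{G,B}$ coincides, up to a nonzero scalar, with the Harish-Chandra image of Gorelik's lowest-degree ghost element $T_{\g}\in\AA\subset\UU\g$; this follows from Theorem~\ref{intro_proj_criteria_poly} combined with the proposition above characterizing projectivity via $T_\g$. The problem thus reduces to identifying the top-degree component of $HC(T_\g)$.

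Specialize Theorem~\ref{dist_iso_intro} to $G\times G/G$: here $K'=G$, the ghost distributions $\AA_{G}$ form Gorelik's anticentre, and one obtains an isomorphism of vector spaces $\AA_{G}\cong Z(\UU\g_{\ol 0})$ under which $T_\g$ corresponds, up to scalar, to $1$. Equivalently, writing $\UU\g\cong\UU\g_{\ol 0}\otimes\Lambda^{\bullet}\g_{\ol 1}$ via PBW, the top exterior-degree component of $T_\g$ is a nonzero scalar multiple of $e_1\cdots e_N$ for any basis $e_1,\dots,e_N$ of $\g_{\ol 1}$, while the remaining summands lie in strictly smaller exterior degree. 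Choose this basis adapted to the root decomposition: for each odd positive root $\alpha_i$, pick a basis $\{e_i^j\}_{j=1}^{r_i}$ of $\g_{\alpha_i}$ and a dual basis $\{f_i^j\}_{j=1}^{r_i}$ of $\g_{-\alpha_i}$ with $[e_i^j,f_i^k]=\delta_{jk}h_{\alpha_i}$, which is possible by hypotheses (1) and (2). When $i\neq l$, the bracket $[e_i^j,f_l^k]$ lies in a nonzero weight space of $\g_{\ol 0}$ and so contributes nothing to Harish-Chandra.

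To compute $HC(T_\g)$, order the top PBW component of $T_\g$ so that all positive-root odd vectors precede all negative-root ones, and commute the negatives past the positives. The only contributions surviving in $S(\h)$ with the top possible Cartan degree $\sum_i r_i$ arise from pairing each $e_i^j$ with its matching $f_i^j$: each pair produces the factor $h_{\alpha_i}$, so the total gives a nonzero scalar multiple of $\prod_i h_{\alpha_i}^{r_i}$. Every other pairing either leaves residual nilpotent factors killed by $HC$, or invokes a cross-root bracket that strictly lowers the Cartan degree; the sub-top exterior summands of $T_\g$ similarly contribute only lower-degree polynomials in $\h$. The main obstacle is the combinatorial bookkeeping---verifying that, summed over all orderings produced by the commutations, the diagonal pairing retains a genuinely nonzero scalar coefficient; this should follow from the odd PBW-filtration together with the weight grading, since cross-root contractions are combinatorially isolated from the diagonal one. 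Once settled, hypothesis (2) ensures each $h_{\alpha_i}\neq 0$, so $p_{G,B}\neq 0$ and Theorem~\ref{intro_proj_criteria_poly} yields the claimed irreducible projective modules.
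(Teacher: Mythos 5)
Your overall strategy is sound and matches the paper's: identify $p_{G,B}$ with $HC(T_{\g})$ where $T_{\g}=\ad'(v_{\g})(1)$, isolate the leading term via the filtration, and normal-order. Your reduction to a single monomial is a genuine streamlining: since $\operatorname{gr}T_{\g}=2^{N}e_{1}\cdots e_{N}$ in $S(\g)$, one has $T_{\g}=2^{N}u_{0}\cdots u_{m}v_{0}\cdots v_{m}+T''$ with $T''\in F^{\dim\g_{\ol{1}}-2}$, and \cref{degree_bound} kills the contribution of $T''$ to the top degree. The paper instead expands $\ad'(u_{0}\cdots u_{m}v_{m}\cdots v_{0})(1)$ as a sum over subsets $I$ and treats each term; your approach avoids that combinatorial spread.

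The gap is exactly where you flag it, and it is not dismissible. You reduce everything to the claim that $HC(u_{0}\cdots u_{m}v_{0}\cdots v_{m})=h_{\beta_{0}}\cdots h_{\beta_{m}}+\text{l.o.t.}$, and then say this "should follow" from PBW and weight considerations. That verification is the bulk of the paper's proof, and it is a genuine induction, not bookkeeping that can be waved at: one must move $v_{0}$ leftward, observe that the term with $v_{0}$ in front lies in $\n^{-}\UU\g$ (so is killed by $HC$), that the diagonal bracket $[u_{0},v_{0}]=h_{\beta_{0}}$ contributes $h_{\beta_{0}}\cdot HC(u_{1}\cdots u_{m}v_{1}\cdots v_{m})+\text{l.o.t.}$, and — the nontrivial part — that every off-diagonal bracket $[u_{j},v_{0}]$ with $j\neq 0$ is an even root vector in $\n^{\pm}$ which, after further commutations, drops the monomial into $F^{2m}$, hence contributes $HC$-degree $\leq m$. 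Without establishing that descent in the $F$-filtration (the paper does it by replacing a $u_{l}$ with $[u_{l},[u_{j},v_{0}]]$, decreasing the odd length by $2$), the top coefficient could a priori receive competing contributions; your remark about "cross-root contractions being combinatorially isolated" is the right intuition but needs the inductive argument on $F^{\bullet}$ to become a proof. Incidentally, once you have reduced to the single monomial, the worry you raise about cancellations "summed over all orderings" largely disappears — the diagonal contraction is a single chain of commutators with coefficient a product of Kronecker deltas — so the remaining work is really the degree-drop argument, not sign cancellation.
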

Note that the above conditions hold in particular for a Cartan-even quadratic quasireductive supergroup, i.e. a Cartan-even quasireductive supergroup whose Lie superalgebra admits a nondegenerate, invariant, and even supersymmetric form.  See \cite{benayadi2000quadratic} for a classification of quadratic quasireductive Lie superalgebras.

\subsection{The operator $T_{\g}$} Recall that $\AA$ denotes the ghost center of $\UU\g$.  Suppose that $\Lambda^{top}\g_{\ol{1}}$ is the trivial $\g_{\ol{0}}$-module.  Then $\AA$ contains an element of least degree, which we write as $T_{\g}$.  This operator can test whether a given semisimple $G$-module is projective in the following sense:

\begin{prop}
	Let $L$ be a simple $G$-module.  Then $T_{\g}$ acts by $0$ on $L$ if and only if $L$ is not projective.  If $L$ is projective, then $T_{\g}$ acts by one of the following automorphisms:
	\begin{itemize}
		\item if $T_{\g}$ is even, then up to scalar it acts by $\delta_{L}$, the parity operator on $L$;
		\item if $T_{\g}$ is odd, then it acts by $\delta_{L}\circ\sigma_{L}$, where $\sigma_L:L\to L$ is a $G$-equivariant odd automorphism of $L$.
	\end{itemize}
Further $T_{\g}^2$ is an even, central element of $\UU\g$ which exactly annihilates all non-projective simple $G$-modules.
\end{prop}
Note that this operator was used to understand the representation theory of $\o\s\p(1|2n)$ in \cite{arnaudon1997casimir} and \cite{musson1997center}.  A conceptual description of $T_{\g}$ was given in \cite{duflo2007symmetric}, where they give a universal formula for $T_{\g}$ in terms of the Jacobian of a certain exponential map.

We conjecture a stronger property of $T_{\g}$; first of all, let us remove the condition that ${\bigwedge}^{top}\g_{\ol{1}}$ is a trivial $\g_{\ol{0}}$-module.  Then $\UU\g$ admits a least degree element $T_{\g}$ under the twisted adjoint action is an eigenvector of weight the same as that of ${\bigwedge}^{top}\g_{\ol{1}}$.  Even though this operator $T_{\g}$ is not $\g$-invariant, however it still always takes $\g$-submodules to $\g$-submodules.  

\begin{conj}\label{conj on T}
	Let $L$ be a simple $\g$-module.  Then $T_{\g}$ acts on $P(L)$ by taking the top of $P(L)$ to its socle; in particular it annihilates the radical of $P(L)$.
\end{conj}
If Conjecture \ref{conj on T} holds, it would in particular tell us how $T_{\g}$ acts on the \textit{entire} category $\operatorname{Rep}G$.  We note that the this conjecture is straightfoward to show when $G_0$ is a central torus, and this case is proven in \cite{GorSerSher2022}.

\subsection{The full ghost centre $\ZZ_{full}$}  We may generalize Gorelik's results in a different way as follows to produce an interesting subalgebra of $\UU\g$ which contains Gorelik's ghost centre. Let $\Aut(\g,\g_{\ol{0}})$ be those automorphisms of $\g$ that fix $\g_{\ol{0}}$ pointwise.  For $\phi\in\Aut(\g,\g_{\ol{0}})$, define the $\phi$-twisted adjoint action $\ad_{\phi}$ of $\g$ on $\UU\g$ by
\[
\ad_{\phi}(u)(v)=uv-(-1)^{\ol{u}\ol{v}}v\phi(u).
\]
When $\phi=\delta$, we obtain the twisted adjoint action studied by Gorelik.  Then using Theorem \ref{dist_iso_intro} or a similar method to that used in \cite{gorelik2000ghost}, we can prove that:
\begin{thm}\label{intro_thm_induced_module_twisted_adjoint}
	If $\phi(x)=x$ implies that $x\in\g_{\ol{0}}$, then $\UU\g\cong\Ind_{\g_{\ol{0}}}^{\g}\UU\g_{\ol{0}}$ under the $\phi$-twisted adjoint action.  
\end{thm}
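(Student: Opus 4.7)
The plan is to realize the $\phi$-twisted adjoint action on $\UU\g$ as the natural $K'$-action on distributions of an appropriate symmetric supervariety, so that \cref{dist_iso_intro} applies directly.

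Specifically, I would consider the symmetric pair $(\g\times\g,\tilde\k_\phi)$, where $\tilde\k_\phi$ is the fixed subalgebra of the involution
\[
\theta_\phi(a,b) = (\delta\phi^{-1}(b),\,\delta\phi(a)).
\]
Since $\delta\phi\in\Aut(\g,\g_{\ol{0}})$ is a genuine Lie superalgebra automorphism (being the composition of two such), one checks that $\theta_\phi$ is an involution with fixed subalgebra $\tilde\k_\phi = \{(x,\delta\phi(x)) : x\in\g\}$, canonically isomorphic to $\g$ via the first projection. A direct bracket computation, using that $\phi$ fixes $\g_{\ol{0}}$ and is a Lie superalgebra automorphism, shows that the associated subalgebra
\[
\tilde\k'_\phi = \tilde\k_{\phi,\ol{0}}\oplus\tilde\p_{\phi,\ol{1}} = \{(x,x) : x\in\g_{\ol{0}}\}\oplus\{(y,\phi(y)) : y\in\g_{\ol{1}}\}
\]
is likewise isomorphic to $\g$ via the first projection. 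Upon identifying $(G\times G)/\tilde K_\phi$ with $G$ via the $\delta\phi$-twisted parametrization of cosets, a direct tangent computation shows that the natural action of $\tilde K'_\phi$ on $\Dist(G,e)\cong\UU\g$ becomes, under the identification $\tilde\k'_\phi\cong\g$, precisely the $\phi$-twisted adjoint action $\ad_\phi$.

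With this setup in place, \cref{dist_iso_intro} applied to the symmetric supervariety $(G\times G)/\tilde K_\phi$ yields
\[
\UU\g \;\cong\; \Ind_{\tilde\k_{\phi,\ol{0}}}^{\tilde\k_\phi}\Dist\bigl((G\times G)_0/\tilde K_{\phi,0},\,e\bigr) \;\cong\; \Ind_{\g_{\ol{0}}}^{\g}\UU\g_{\ol{0}}
\]
as $\tilde\k'_\phi$-modules, where the second isomorphism uses that $\tilde K_{\phi,0}$ is the ordinary even diagonal $G_0^{\Delta}\subset G_0\times G_0$, so that $(G\times G)_0/\tilde K_{\phi,0}\cong G_0$ and its distributions at the identity are $\UU\g_{\ol{0}}$. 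Transporting the module structures through $\tilde\k_\phi\cong\g$ and $\tilde\k'_\phi\cong\g$ then gives the claimed isomorphism under $\ad_\phi$.

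The hypothesis $\phi(x)=x\Rightarrow x\in\g_{\ol{0}}$ is the transversality condition needed for the argument to go through: it is equivalent to the map $\g_{\ol{1}}\to\g_{\ol{1}}$, $y\mapsto y-\phi(y)$, being a bijection, or equivalently to $\tilde\k_{\phi,\ol{1}}$ being transverse to the ordinary diagonal $\g_{\ol{1}}^{\Delta}$ inside $\g_{\ol{1}}\times\g_{\ol{1}}$. This is precisely the input under which the Koszul construction underlying \cref{dist_iso_intro} recovers the expected induced-module structure in this case; without it, the identification $(G\times G)/\tilde K_\phi\cong G$ breaks down and the conclusion fails, as is already visible in the case $\phi=\id$ (where $\ad_\phi$ is the ordinary adjoint action, whose invariants are the large center $\ZZ(\UU\g)$, incompatible with an induced-module structure). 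The main obstacle in carrying out the proof will be the careful bookkeeping of super-signs to verify that the $\tilde K'_\phi$-action transports to $\ad_\phi$ exactly (and not to some other differently signed twist), and that the induced-module structure on the right-hand side intertwines with left multiplication on $\Ind_{\g_{\ol{0}}}^{\g}\UU\g_{\ol{0}}$ under the chosen identifications.
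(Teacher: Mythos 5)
Your proposal has a genuine gap. The claim that ``a direct tangent computation shows that the natural action of $\tilde K'_\phi$ on $\Dist(G,e)\cong\UU\g$ becomes \ldots{} precisely the $\phi$-twisted adjoint action'' is not correct; the computation actually yields the $\delta$-twisted action $\ad_\delta$, and $\phi$ drops out entirely. Concretely, under the standard identification $\UU\g\cong\UU(\g\times\g)/\UU(\g\times\g)\tilde\k_\phi$, $u\mapsto u\otimes1$, an element $(a,\phi(a))\in\tilde\k'_\phi$ acts by
\[
(a\otimes1+1\otimes\phi(a))(u\otimes1)=au\otimes1+(-1)^{\ol{a}\,\ol{u}}\,u\otimes\phi(a),
\]
and since $(\delta(a),\phi(a))=(\delta(a),\delta\phi(\delta(a)))\in\tilde\k_\phi$ one has $1\otimes\phi(a)\equiv-\,\delta(a)\otimes1$ modulo $\UU(\g\times\g)\tilde\k_\phi$, whence
\[
(a,\phi(a))\cdot u\;\equiv\;au-(-1)^{\ol{a}\,\ol{u}}\,u\,\delta(a)\;=\;\ad_\delta(a)(u).
\]
This is not a sign issue fixable by a cleverer parametrization: the automorphism $\id\times(\delta\phi)^{-1}$ of $G\times G$ carries $(\tilde\k_\phi,\tilde\k'_\phi)$ to $(\g^\Delta,\g')$, so your whole setup is isomorphic to the untwisted one, and more generally any involution of $\g\times\g$ with $\tilde\k_{\ol0}=\g_{\ol0}^\Delta$ produces $\ad_\delta$ from its associated $K'$. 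A further warning sign is that \cref{dist_iso_intro} has no transversality hypothesis at all beyond being a supersymmetric pair (since $\k'_{\ol1}\oplus\k_{\ol1}=\g_{\ol1}$ holds automatically), so your argument as written would prove the theorem for arbitrary $\phi$, including $\phi=\id$, where the conclusion is plainly false because $\AA_{\id}=\ZZ$ is nothing like the coinvariants of an induced module.

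The paper's actual argument (the proof of \cref{dist_ind_twisted_action}) does not attempt to realize $\g_\phi$ as the $K'$ of a supersymmetric pair. It keeps the standard pair $(G\times G,\,G^\Delta)$ with the standard identification $\Dist(G\times G/G^\Delta,eG)\cong\UU\g$ of \cref{Ug_dist}---under which the action of $\g_\phi=\{(u,\phi(u)):u\in\g\}$ is genuinely $\ad_\phi$---and then invokes the more flexible \cref{dist_iso}, which only requires a subgroup $H\sub G\times G$ with $H_0\sub K_0$ and $\h_{\ol1}\oplus\k_{\ol1}=(\g\times\g)_{\ol1}$; there is no requirement that $H$ be a $K'$. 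Taking $H=G_\phi$ and $K=G^\Delta$, the transversality $\g_{\phi,\ol1}\oplus\g^\Delta_{\ol1}=(\g\times\g)_{\ol1}$ is exactly the hypothesis that $\phi$ has no nonzero odd fixed vectors, and the claim follows. So the fix is to drop the twisted symmetric pair entirely and apply \cref{dist_iso} on the diagonal pair with $H=G_\phi$.
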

Write $\AA_{\phi}\sub\UU\g$ for the $\ad_{\phi}$-invariant elements in $\UU\g$, for any $\phi\in\Aut(\g,\g_{\ol{0}})$.  Then $\AA_{\id}=\ZZ$, and $\AA_{\delta}=\AA$.  Further, for $\phi,\psi\in\Aut(\g,\g_{\ol{0}})$ multiplication induces a morphism
\[
\AA_{\phi}\otimes\AA_{\psi}\to\AA_{\psi\phi}.
\]
Therefore if we set
\[
\ZZ_{full}:=\sum\limits_{\phi\in\Aut(\g,\g_{\ol{0}})}\AA_{\phi},
\]
we obtain a subalgebra of $\UU\g$, which also contains $\tilde{\ZZ}$.  For $\g$ one of the type I basic classical Lie superalgebras $\g\l(m|n)$, $\s\l(m|n)$, $\p\s\l(n|n)$ with $n>2$, or $\o\s\p(2|2n)$) we have an explicit description of this algebra.  Here, $\Aut(\g,\g_{\ol{0}})\cong k^\times$, so we write $\phi_c\in\Aut(\g,\g_{\ol{0}})$ for the automorphism corresponding to $c\in k^\times$ and $\AA_{c}$ for the invariants of the $\phi_c$-twisted adjoint action.  Then $\phi_c$ satisfies the conditions of Theorem \ref{intro_thm_induced_module_twisted_adjoint} exactly if $c\neq1$.  
\begin{thm}
	Let $N=\dim\g_{\ol{1}}/2$.  Then $HC(\AA_{c})=HC(\AA_{-1})$ for all $c\neq 1$, and 
	\[
	\ZZ_{full}=\bigoplus\limits_{\zeta^N=1}\AA_{\zeta}.
	\]
	Further, for $\g\l(m|n)$, $\s\l(m|n)$ with $m\neq n$, and $\o\s\p(2|2n)$, $\ZZ_{full}$ consists of exactly the set of elements in $\UU\g$ which act by $\Z$-graded constants on all finite-dimensional irreducible representations of $\g$.
\end{thm}
We exclude $\s\l(n|n)$ and $\p\s\l(n|n)$ in the last statement due to the lack of an internal grading operator.  Further, $\p\s\l(2|2)$ is fully excluded because in this case $\Aut(\g,\g_{\ol{0}})=SL_2$ (see section 5.5 of \cite{musson2012lie}).  For this reason it would be interesting to compute the full ghost center for this superalgebra.

\subsection{Future work} This article is the first of two on ghost distributions and their applications.  In the subsequent article we study in more detail two questions of interest: (1) when is it possible to produce an algebra using ghost distributions on a general supersymmetric space?  In particular can we form an algebra of differential operators, and if not can we at least produce an algebra of polynomials using the Harish-Chandra homomorphism?  (2) Computing $HC(\AA_{G/K})$ as much as possible in the case when $\g$ is an almost simple basic classical Lie superalgebra and the involution under consideration preserves the form on $\g$.  In this case we give some general properties of $HC(\AA_{G/K})$, and seek to compute it for all rank one supersymmetric pairs.

\subsection{Outline of paper}
In section 2 we introduce the basic algebraic supergeometry we need, in particular the algebra of differential operators and the space of distributions at a given point. Section 3 recalls basic facts about algebraic supergroups and their actions, and gives a description of invariant differential operators on homogeneous spaces.  In section 4, we explain the main technical construction, the induced superspace as defined by Koszul.  Section 5 applies the ideas of section 4 to homogeneous superspaces, and deduces what we need to generalize the results of \cite{gorelik2000ghost}.  Section 6 studies the case of $G/G_0$, looking in particular at a certain invariant distribution, $v_{\g}$, which will play an important role in the theory of ghost distributions.  Section 7 looks at applications to a symmetric supervariety $G/K$, and section 8 studies more closely the case when an Iwasawa decomposition is present, giving the definition of the Harish-Chandra map and its interpretations.  In section 9 we take a special look at the case of $G\times G/G$, where the theory of ghost distributions is most developed, and prove Theorem \ref{intro_proj_criteria_poly}.  Finally in section 10 we define and study the full ghost centre $\ZZ_{full}$, and look especially at the cases of type I algebras.   

\subsection{Acknowledgments}  The author would like to thank Alexander Alldridge for many enlightening discussions about this project.  Thanks to Vera Serganova for her patience, tremendous support and guidance throughout my PhD, during which much of this work was done.  Thank you to Maria Gorelik for patiently explaining many aspects of her relevant papers to me.   Finally thank you to Siddartha Sahi, Johannes Flake, and Inna Entova-Aizenbud for helpful comments and suggestions.  This research was partially supported by ISF grant 711/18 and NSF-BSF grant 2019694.

\section{Preliminaries from algebraic supergeometry}

\subsection{Linear super algebra notation} We work throughout over an algebraically closed field $k$ of characteristic zero.  For a super vector space $V$ we write $V=V_{\ol{0}}\oplus V_{\ol{1}}$ for its parity decomposition.  Even though we precisely consider $V_{\ol{0}}$ and $V_{\ol{1}}$ to be even vector spaces, we will occasionally (and abusively) view them as super vector spaces where $V_{\ol{0}}$ is purely even and $V_{\ol{1}}$ is purely odd. We write $\dim V=\dim V_{\ol{0}}+\dim V_{\ol{1}}$ for the dimension of the underlying vector space of $V$, and the superdimension of $V$ is given by $\operatorname{sdim}V=\dim V_{\ol{0}}-\dim V_{\ol{1}}$.

\subsection{Algebraic supergeometry notation} We will use the symbols $X,Y,\dots$ for supervarieties with even subschemes $X_0,Y_0,\dots.$ We will be considering supervarieties in the sense of \cite{sherman2019sphericalsupervar}, however all spaces of interest will be smooth and affine.  A smooth affine supervariety is always given by ${\bigwedge}^\bullet E$, the exterior algebra of a vector bundle $E$ on a smooth affine variety $X_0$ (see \cite{voronov1990elements}).  Thus one will lose almost nothing if one simply works with supervarieties of this form in this article.  Note that affine supervarieties and morphisms between them are entirely determined by their spaces of global functions and maps between them, just as with affine varieties.  See \cite{carmeli2011mathematical} for more on the basics of algebraic supergeometry.

If $X$ is a supervariety, there is a canonical closed embedding $i_X:X_0\to X$, and this is a homeomorphism of underlying topological spaces.  The closed points of $X$ are the $k$-points, which we write as $X(k)$, and they are canonically identified with the closed points of $X_0$ via $i_X$.  If $x$ is a closed point of $X$ and $\FF$ is a sheaf on $X$, we write $\FF_x$ for the stalk of $\FF$ at $x$.  Then $\OO_{X,x}$ is a local superalgebra, and we write $\m_x$ for its corresponding maximal ideal and $_0\m_x$ for the maximal ideal in $\OO_{X_0,x}$.  For affine supervarieties we will also write, by abuse of notation, $\m_x$ for the maximal ideal of $k[X]$ corresponding to $x$, and similarly $_0\m_x$ for the maximal ideal of $k[X_0]$ corresponding to $x$.

\subsection{Differential operators and distributions}
\begin{definition}
	For a supervariety $X$, let $\DD_X$ denote the sheaf of filtered algebras which is the subsheaf of $\mathcal{E}nd(\OO_X)$ defined inductively as follows.  We set $\DD_X^{n}=0$ for $n<0$, and for $n\geq 0$ and an open subset $U$ of $X$, set
	\[
	\Gamma(U,\DD_X^n):=\{D\in\End(\OO_U)):[D,f]\in\Gamma(U,\DD_X^{n-1})\text{ for all }f\in\OO_X(U)\}
	\]
We call $\DD_X$ the sheaf of differential operators on $X$, and refer to its sections as differential operators.  If $\FF$ is a sheaf on $X$, we say that it is a left, resp. right $\DD_X$-module if it is exactly that.
\end{definition}

The study of differential operators and the modules over them was initiated by Penkov in \cite{penkov1983d}.

\begin{definition}\label{dist_def}
	 If $x$ is a closed point of $X$, define the super vector space of distributions at $x$ to be all (not necessarily even) linear maps $\psi:\OO_{X,x}\to k$ such that for some $n\in\N$ we have $\psi(\m_x^n)=0$.  We denote this super vector space by $\Dist(X,x)$.  Define $\Dist^n(X,x)\sub\Dist(X,x)$ to be those distributions vanishing on $\m_x^{n+1}$ so that $\Dist(X,x)$ obtains a filtration.  Note that $\Dist^0(X,x)$ is one-dimensional and consists of the distinguished even distribution given by evaluation at $x$, which we denote by $\operatorname{ev}_x$. 
	
       We may give $\Dist(X,x)$ the structure of a right $\DD_X$-module as follows.  We view $\Dist(X,x)$ as a sheaf on $X$ supported on $x$.  Given a differential operator $D$ defined in a neighborhood of $x$, and a distribution $\psi$, define
	\[
	(\psi D)(f):=\psi(Df)
	\]
	This action respects the filtration on $\Dist(X,x)$, so it becomes a filtered right $\DD_X$-module.
\end{definition}

The following lemma is proved in the same way as in the classical setting, so we omit the proof.
\begin{lemma}Let $X$ be a supervariety with a closed point $x$.  \begin{enumerate}	
		\item Given a map of supervarieties $\phi:X\to Y$, we have a natural map of filtered super vector spaces $d\phi_x:\Dist(X,x)\to\Dist(Y,\phi(x))$. 
		\item The chain rule holds: if $\phi:X\to Y$ and $\psi:Y\to Z$, then $d(\psi\circ\phi)=d\psi\circ d\phi$.
		\item If $X$ is affine, then the natural pairing $\Dist(X,x)\otimes k[X]\to k$ has the property that if $\psi(f)=0$ for all $f\in k[X]$, then $\psi=0$.
		\item There is a natural restriction morphism $\operatorname{res}_x:\Gamma(U,\DD_X)\to\Dist(X,x)$ for any open subscheme $U$ containing $x$, given by $\operatorname{res}_x(D)(f)=D(f)(x)$.  This is a morphism of filtered right $\DD_X$-modules, where $\DD_X$ acts on itself by right multiplication.
	\end{enumerate}
\end{lemma}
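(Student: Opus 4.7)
The plan is to treat each of the four items as a routine unpacking of definitions that follows the classical template, with the only extra bookkeeping being to track the super signs and to verify that all constructions preserve the filtrations.  Throughout, the key geometric input is that a morphism of supervarieties $\phi:X\to Y$ induces, for each closed point $x\in X$, a local homomorphism of local superalgebras $\phi^{\#}_{x}:\OO_{Y,\phi(x)}\to\OO_{X,x}$, and in particular sends $\m_{\phi(x)}$ into $\m_{x}$, hence $\m_{\phi(x)}^{n}$ into $\m_{x}^{n}$ for every $n$.

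For parts (1) and (2) I would then simply define $d\phi_{x}(\psi):=\psi\circ\phi^{\#}_{x}$.  The stalk map $\phi^{\#}_{x}$ sends $\m_{\phi(x)}^{n}$ into $\m_{x}^{n}$, so $d\phi_{x}$ preserves the order-filtration, giving (1).  The chain rule in (2) is then forced by the functoriality $(\psi\circ\phi)^{\#}_{x}=\phi^{\#}_{x}\circ\psi^{\#}_{\phi(x)}$, so nothing further needs to be checked beyond this bare formal manipulation.  For (3), the point is that for every $N$ the composition $k[X]\to\OO_{X,x}\to\OO_{X,x}/\m_{x}^{N+1}$ is surjective: given $a/s\in\OO_{X,x}$ with $s\notin\m_{x}$, write $s=s(x)(1-h)$ with $h\in\m_{x}$, whence $s^{-1}\equiv s(x)^{-1}(1+h+\cdots+h^{N})\pmod{\m_{x}^{N+1}}$ by the usual geometric series trick, so $a/s$ already has a representative in $k[X]$ modulo $\m_{x}^{N+1}$.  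Any $\psi\in\Dist(X,x)$ factors through $\OO_{X,x}/\m_{x}^{N+1}$ for some $N$, so vanishing on $k[X]$ forces $\psi=0$.

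For part (4) I would first verify by induction on $n$ that if $D\in\DD_{X}^{n}$ then $D(\m_{x}^{n+1})\subseteq\m_{x}$; for $n=0$ this is immediate because $\DD_{X}^{0}=\OO_{X}$, and for $n\ge 1$ one writes $D(f_{1}\cdots f_{n+1})=[D,f_{1}](f_{2}\cdots f_{n+1})\pm f_{1}\,D(f_{2}\cdots f_{n+1})$ and invokes the inductive hypothesis on the order-$(n-1)$ operator $[D,f_{1}]$.  Evaluating at $x$ then gives $\operatorname{res}_{x}(D)\in\Dist^{n}(X,x)$, so $\operatorname{res}_{x}$ respects the filtrations.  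The right $\DD_{X}$-module compatibility reduces to associativity of composition of endomorphisms, namely
\[
(\operatorname{res}_{x}(D)\cdot E)(f)=\operatorname{res}_{x}(D)(E(f))=D(E(f))(x)=(DE)(f)(x)=\operatorname{res}_{x}(DE)(f),
\]
with no sign adjustment needed since the actions are on the right.  The only mild obstacle I anticipate is the inductive commutator argument in part (4), where one must insert the correct Koszul sign when moving $f_{1}$ past $D$; once this is done carefully, all four items are direct.
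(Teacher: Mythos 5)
Your proof is correct and follows exactly the standard classical argument that the paper declines to spell out; in particular, defining $d\phi_x(\psi)=\psi\circ\phi^{\#}_x$, using surjectivity of $k[X]\to\OO_{X,x}/\m_x^{N+1}$ via the geometric-series trick, and the commutator induction showing $D(\m_x^{n+1})\subseteq\m_x$ for $D\in\DD_X^n$ are precisely the expected ingredients. The Koszul sign in $D(f_1g)=[D,f_1](g)+(-1)^{\ol{D}\,\ol{f_1}}f_1D(g)$ is irrelevant to the membership statement, so nothing further needs checking.
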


%

\begin{remark}
	We have the following identifications:
	\[
	\Dist^n(X,x)=(\OO_{X,x}/\m_{x}^{n+1})^*, \ \ \Dist(X,x)=\lim\limits_{\rightarrow}(\OO_{X,x}/\m_{x}^{n+1})^*.
	\]
	Furthermore when $X$ is affine, we have:
	\[
	\Dist^n(X,x)=( k[X]/\m_x^{n+1})^*, \ \ \Dist(X,x)=\lim\limits_{\rightarrow}( k[X]/\m_x^{n+1})^*.
	\]
	In general we have an isomorphism of $\DD_{X}$-modules
	\[
	\Dist(X,x)=\Gamma_{\m_x}(\OO_{X,x})^*
	\]
	and for $X$ affine an isomorphism of $\Gamma(X,\DD_X)$-modules
	\[
	\Dist(X,x)=\Gamma_{\m_x} k[X]^*.
	\]
	We recall the definition of the functor $\Gamma_{\m_x}$:
	\[
	\Gamma_{\m_x}M=\{m\in M:\m_x^nm=0\text{ for }n\gg0\}.
	\]
\end{remark}

\begin{definition}
	Define the sheaf $\TT_X$ of vector fields on $X$ by setting $\Gamma(\Spec A,\TT_X)=\operatorname{Der}(A)$ for an affine open subscheme $\Spec A$ of $X$.  In this way $\TT_X$ becomes a subsheaf of $\DD_X^1$, and a sheaf of Lie superalgebras under supercommutator.
\end{definition}

\begin{definition}
	For a supervariety $X$ and a closed point $x$ of $X$, we define the tangent space of $X$ at $x$ to be the super vector space $T_xX:=(\m_x/\m_x^2)^*$.  In this way $T_xX$ is exactly the subspace of $\Dist^1(X,x)$ given by functionals $\psi:\OO_{X,x}/\m_x^2\to k$ such that $\psi(1)=0$.  
\end{definition}

	Observe the restriction morphism $\operatorname{res}_x:\DD_{X,x}\to\Dist(X,x)$ restricts to a morphism $\TT_{X,x}\to T_xX$.  
	
	\begin{definition}
		A supervariety $X$ is smooth if for all $x\in X(k)$ the morphism $\operatorname{res}_x:\TT_{X,x}\to T_xX$ is surjective.
	\end{definition}
	See the appendix of \cite{sherman2019sphericalsupervar} for more equivalent conditions of smoothness.

	We have the following standard theorem for $\DD_X$-modules which we will use later on.  The proof is almost verbatim from the classical case, so we omit the proof.

\begin{prop}\label{coherent_D_module_free}
	Suppose that $X$ is a smooth supervariety.  Then if a left (or right) $\DD_X$-module $\FF$ is coherent over $\OO_X$, then it is locally free over $\OO_X$.
\end{prop}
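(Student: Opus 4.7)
The plan is to prove that a $\DD_X$-coherent $\OO_X$-module is locally free by constructing, near any closed point, a local flat frame using the $\DD_X$-module structure, and then descending freeness from the formal completion back to the stalk. I will treat the left module case; the right case is analogous (one may also pass through the $\OO_X$-linear equivalence with the Berezinian shift).

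First I would localize near a closed point $x\in X(k)$. Smoothness (as recalled from the appendix of \cite{sherman2019sphericalsupervar}) lets me pick a system of local coordinates: even $x_1,\ldots,x_m$ and odd $\xi_1,\ldots,\xi_n$ in $\m_x$ whose differentials form a homogeneous basis of $T_x^*X$, together with homogeneous dual commuting vector fields $\partial_{x_1},\ldots,\partial_{x_m},\partial_{\xi_1},\ldots,\partial_{\xi_n}$ that trivialize $\TT_X$ in a neighborhood. Using $\OO_X$-coherence and the super Nakayama lemma, I lift a homogeneous basis of the fiber $\FF_x/\m_x\FF_x$ to homogeneous $e_1,\ldots,e_r\in\FF_x$ that generate $\FF_x$ and, after shrinking, generate $\FF$ locally.

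Next I would use the flat connection provided by the $\DD_X$-action to produce a flat frame over $\widehat{\OO}_{X,x}$. Since $\FF$ is a $\DD_X$-module, the derivations $\partial_{x_i},\partial_{\xi_j}$ supercommute in their action on $\FF$; in characteristic zero I can therefore formally exponentiate to obtain unique $\tilde e_j\in\widehat{\FF}_x$ characterized by $\partial_{x_i}\tilde e_j=0$, $\partial_{\xi_k}\tilde e_j=0$ for all $i,k$, and $\tilde e_j\equiv e_j\pmod{\widehat{\m}_x}$. Concretely, $\tilde e_j$ is the image of $e_j$ under the formal super-exponential $\exp\bigl(-\sum x_i\partial_{x_i}-\sum\xi_k\partial_{\xi_k}\bigr)$ — the odd part of the exponential is a finite sum since $\xi_k^2=0$, and the even part converges $\m_x$-adically. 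Flatness of the $\tilde e_j$ and the super-Leibniz rule then show that a relation $\sum a_j\tilde e_j=0$ with $a_j\in\widehat{\OO}_{X,x}$ forces every iterated partial derivative of each $a_j$ at $x$ to vanish, so $a_j=0$. Hence $\widehat{\FF}_x\cong\widehat{\OO}_{X,x}^{\,r}$ as super modules.

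Finally, faithfully flat descent along $\OO_{X,x}\to\widehat{\OO}_{X,x}$ transfers freeness back to $\FF_x$, and coherence spreads local freeness of rank $r$ to a neighborhood of $x$. The main obstacle, and the only point where the super setting demands care, is tracking signs in the super-Leibniz rule when verifying flatness of the $\tilde e_j$ and when running the independence argument on homogeneous components separately. Because characteristic is zero and the odd directions are nilpotent, no genuinely new analytic or algebraic input is required beyond the classical argument, which matches the author's remark that the proof is almost verbatim from the classical case.
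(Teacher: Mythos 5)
Your overall strategy — pick local coordinates from smoothness, lift a fiber basis, manufacture a formal flat frame from the $\DD_X$-module structure, deduce freeness over $\widehat\OO_{X,x}$, and then descend along the faithfully flat map $\OO_{X,x}\to\widehat\OO_{X,x}$ and spread by coherence — is the standard classical argument, which is exactly what the paper has in mind when it omits the proof. The final linear-independence step is also correctly set up: once the $\tilde e_j$ are flat, applying any iterated $\nabla_{\partial}$ to a relation $\sum a_j\tilde e_j=0$ kills the $\tilde e_j$ by Leibniz and forces all $\m_x$-adic Taylor coefficients of $a_j$ to vanish.

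However, the explicit formula you give for the flat frame is wrong. The operator $E=\sum x_i\partial_{x_i}+\sum\xi_k\partial_{\xi_k}$ is the Euler vector field; it acts on a monomial of total degree $d$ by multiplication by $d$, so $\exp(-E)$ rescales each graded piece by $e^{-d}$ rather than projecting onto constants or producing parallel sections. Already in the classical rank-one case on $\mathbb{A}^1$ one checks that $\exp(-x\partial_x)e$ does not satisfy $\partial_x\tilde e=0$; the true flat section is a path-ordered exponential of the connection matrix, not $\exp$ of the Euler field. (Also the justification "$\xi_k^2=0$" is beside the point; what makes the odd directions harmless is $\partial_{\xi_k}^2=0$ and the nilpotence of $\m_x$ in the odd variables.) The fix is routine: build $\tilde e_j\in\widehat\FF_x$ order by order in the $\m_x$-adic filtration, solving $\nabla_{\partial}\tilde e_j^{(n+1)}\equiv 0\pmod{\widehat\m_x^n\widehat\FF_x}$ with $\tilde e_j^{(n+1)}\equiv\tilde e_j^{(n)}$; the compatibility at each step is exactly the vanishing of curvature, i.e.\ the supercommutativity of the $\nabla_{\partial_{x_i}},\nabla_{\partial_{\xi_k}}$. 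Alternatively, you can bypass formal completion entirely: lift a homogeneous basis of $\FF(x)$ to $e_1,\dots,e_r\in\FF_x$, suppose $\sum a_ie_i=0$ with minimal $\m_x$-adic order $m$ among the $a_i$, apply a coordinate vector field to lower the order, and descend by induction to a contradiction at $m=0$; then coherence spreads freeness to a neighborhood. Either way the result stands; only the closed-form exponential needs to be replaced.
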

%
%
%

\begin{lemma}\label{dist_symm_lemma}
	Let $X$ be a supervariety and $x\in X(k)$ a closed point at which $X$ is smooth.  Suppose $V$ is a subspace of vector fields defined in a neighborhood $U$ of $x$, such that the restriction map $V\to T_xX$ is an isomorphism.  If $v_1,\dots,v_n,w_1,\dots,w_m$, where $\ol{v_i}=\ol{0}$ and $\ol{w_i}=\ol{1}$, is a homogeneous basis of $V$, then the restriction of the set of all monomials in $v_1,\dots,v_n,w_1,\dots,w_m$, in any order, is a spanning set of $\Dist(X,x)$
\end{lemma}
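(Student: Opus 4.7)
The plan is to induct on the filtration degree of $\Dist(X,x)$, using an identification of the associated graded with a super-symmetric algebra on $T_xX$. Let $M_k$ denote the linear span in $\Dist(X,x)$ of $\operatorname{res}_x$ applied to all monomials in $v_1,\dots,v_n$ of total degree at most $k$, taken in any order. I aim to show $\Dist^k(X,x) \subseteq M_k$ for all $k \geq 0$. The base case $k=0$ is immediate since $\operatorname{res}_x(1)=\operatorname{ev}_x$ spans the one-dimensional $\Dist^0(X,x)$.

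For the inductive step, I would use the identification
\[
\Dist^k(X,x)/\Dist^{k-1}(X,x) \cong (\m_x^k/\m_x^{k+1})^* \cong S^k(T_xX),
\]
where $S^k$ denotes the super-symmetric power. The second isomorphism uses that at a smooth point the completed local ring is a formal power series algebra in even variables tensored with an exterior algebra in odd variables, so $\m_x^k/\m_x^{k+1}$ is the super-symmetric $k$-th power of $\m_x/\m_x^2 = T_x^*X$. Under this identification, the principal symbol of $\operatorname{res}_x(v_{i_1}\cdots v_{i_k})$ is the super-symmetric product $v_{i_1}(x)\cdots v_{i_k}(x) \in S^k(T_xX)$, independent of the chosen ordering up to the usual signs, since commutators of vector fields are themselves first-order operators and therefore drop one degree in the filtration upon composition. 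As $\{v_1(x),\dots,v_n(x)\}$ is a basis of $T_xX$ by hypothesis, the symbols of the degree-$k$ monomials span $S^k(T_xX)$. Hence for any $\psi \in \Dist^k(X,x)$ I can choose a linear combination $\eta$ of such monomials so that $\psi - \operatorname{res}_x(\eta) \in \Dist^{k-1}(X,x) \subseteq M_{k-1} \subseteq M_k$, and the induction closes.

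The main obstacle is justifying the local normal form for smooth supervarieties that underlies $\m_x^k/\m_x^{k+1} \cong S^k(T_x^*X)$; once this structural fact is invoked (it is standard, cf.\ the appendix of \cite{sherman2019sphericalsupervar}), the remainder is a routine PBW-style symbol computation. One subtle point worth flagging: commutators $[v_i,v_j]$ need not lie in $V$, but this is irrelevant to the induction, because any such commutator is a vector field defined in a neighborhood of $x$ and hence $\operatorname{res}_x[v_i,v_j] \in \Dist^1(X,x)$, which is absorbed into $M_{k-1}$ when comparing the two orderings of a monomial at filtration level $k$.
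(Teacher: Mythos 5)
Your proof is correct but takes a genuinely different route from the paper's. The paper argues directly via a dual pairing: it picks local functions $f_1,\dots,f_n$ whose classes are dual to $v_1,\dots,v_n$ in $\m_x/\m_x^2$, writes $\OO_{X,x}/\m_x^\ell$ as polynomials in the $f_i$ of bounded degree, and then exhibits the explicit pairing $\operatorname{res}_x(v_1^{r_1}\cdots v_n^{r_n})(f_1^{s_1}\cdots f_n^{s_n})=\pm s_1!\cdots s_n!\delta_{r_1s_1}\cdots\delta_{r_ns_n}$, from which spanning is immediate (and in fact only the ordered monomials are needed). You instead pass to the associated graded, identify $\Dist^k/\Dist^{k-1}\cong S^k(T_xX)$ via the local normal form at a smooth point, observe that the symbol of a degree-$k$ monomial in the $v_i$ is the supersymmetric product of the $v_i(x)$ (independent of ordering since commutators drop filtration degree), and close the induction. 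Both arguments rest on the same structural fact about the completed local ring at a smooth point; the paper's version is more concrete and self-contained (it produces an explicit dual basis), while yours is a cleaner PBW-style symbol argument that makes the ordering-independence transparent and generalizes more readily. Your flagging of the point about commutators $[v_i,v_j]$ not lying in $V$ is apt and correctly dispatched; that issue simply does not arise in the paper's version because it restricts attention to ordered monomials from the start.
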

\begin{proof}
	Choose homogeneous functions $f_1,\dots,f_n,\alpha_1,\dots,\alpha_m$, where $\ol{f_i}=\ol{0}$ and $\ol{\alpha_i}=\ol{1}$, defined in a neighborhood of $x$ which vanish at $x$ such that they project to a basis of $\m_x/\m_x^2$.  Then by smoothness $\OO_{X,x}/\m_x^\ell$ is isomorphic to the set of superpolynomials in $f_1,\dots,f_n,\alpha_1,\dots,\alpha_m$ of degree less than or equal to $\ell$.  By applying a linear automorphism, we may assume that $\operatorname{res}_x(v_i)(f_j)=\delta_{ij}$ and $\operatorname{res}(w_i)(\alpha_j)=\delta_{ij}$.  Then we see that
	\[
	\operatorname{res}_x(v_1^{r_1}\cdots v_n^{r_n}w_1^{\epsilon_1}\cdots w_{m}^{\epsilon_m})(f_1^{s_1}\cdots f_n^{s_n}\alpha_1^{\delta_1}\cdots\alpha_{m}^{\delta_m})=\pm s_1!\cdots s_n!\delta_{r_1s_1}\cdots\delta_{r_ns_n}\delta_{\epsilon_1\delta_1}\cdots\delta_{\epsilon_m\delta_m}
	\]
	where $\epsilon_i,\delta_i\in\{0,1\}$, and the sign is determined by the number of parity changes which occurs in the computation.  From this the statement follows.
\end{proof}

\begin{definition}
	Suppose $\FF$ is a quasi-coherent sheaf on $X$.  For a closed point $x$ of $X$, define distributions on $\FF$ at $x$ to be all linear maps $\psi:\FF_{x}\to k$ such that for some $n\in\N$, $\psi(\m_x^n\FF_x)=0$.  We write this as $\Dist(\FF,x)$.  
	
	
\end{definition}

\begin{remark}  Notice that if $\FF$ has a connection, hence an action by vector fields, then $\Dist(\FF,x)$ naturally admits a right action by vector fields, given by
	\[
	(\psi v)(s)=\psi(vs).
	\]
   If the connection is flat and $X$ is smooth, then this action extends in the natural way to a right action by all differential operators.  
	
	As before, we have an identification for each $x\in X(k)$:
	\[
	\Dist(\FF,x)\cong\Gamma_{\m_x}(\FF_x)^*
	\]
	and this identification respects the right action by vector fields when $\FF$ admits a connection.
\end{remark}

\section{Differential Operators on a $G$-variety}

\subsection{Preliminaries on algebraic supergroups}  Recall that an algebraic supergroup is a group object in the category of algebraic supervarieties.  Morphisms of supergroups are those which respect the multiplication morphisms.  We only consider affine algebraic supergroups, or equivalently those algebraic supergroups that are linear, i.e.\ have a faithful finite-dimensional representation.  The letters $G,H,\dots$ will denote an affine algebraic supergroup.  To avoid cumbersome language, we will not write the adjectives affine and algebraic when referring to affine algebraic supergroups, and instead simply call them supergroups.  Similarly, we use the term subgroup instead of subsupergroup.  We refer again to \cite{carmeli2011mathematical} for basics on algebraic supergroups.

A supergroup has a Lie superalgebra which we will denote with the letters $\g,\h,\dots$.  The Lie superalgebra may be defined as the super vector space of left-invariant (or right-invariant) vector fields on $G$.  As in the classical case, the Lie superalgebra is canonically identified with the tangent space of $G$ at the identity, $T_eG$.  Morphisms of algebraic supergroups induce morphisms of the corresponding Lie superalgebras.

If $G$ is a supergroup, then $G_0$ is an algebraic group, and the morphism $i_{G}:G_0\to G$ is a morphisms of supergroups.  Further, $i_G$ induces an isomorphism of Lie algebras $\operatorname{Lie}(G_0)\cong \operatorname{Lie}(G)_{\ol{0}}$.

\subsection{Representations of supergroups} In this article we will be using some basic facts about the representation theory of quasireductive supergroups.  We refer to \cite{serganova2011quasireductive} for further background.

Recall that for an affine algebraic supergroup $G$, $k[G]$ has the natural structure of a supercommutative Hopf superalgebra.  We define a left $G$-module to be a left $k[G]$-comodule, and a right $G$-module to be a right $k[G]$-comodule.  It will be necessary for us to consider both left and right $G$-modules due to the fact that distributions form a right module over the algebra of differential operators, as we have seen.  The category of left $G$-modules is equivalent to the category of right $G$-modules because $k[G]$ has a Hopf structure.  We will sometimes call a left or right $G$-module simply a representation of $G$, or even a $G$-module, without specifying whether it has a left or right action.  In this case either the type of action is apparent or is of no importance.

A left (resp. right) $G$-module induces in a natural way a left (resp. right) representation of the Lie superalgebra.  Recall that the category of representations of $G$ is equivalent to the category of $(G_0,\g=\operatorname{Lie}(G))$-modules such that the action of $G_0$ and $\g_{\ol{0}}\sub\g$ are compatible (see \cite{carmeli2011mathematical}).

Given a representation $V$ of a Lie supergroup $G$  such that $\dim V_{\ol{1}}=n$, we write $\Ber(V)$ for the Berezinian of $V$, which is the one-dimensional $G$-module with the same parity as $n$, where the action by $G$ is given by the Berezinian morphism $G\to GL(V)\xto{\Ber}\mathbb{G}_m$ (see chapter 3 of \cite{manin2013gauge}).  If $V$ is purely even (resp. purely odd) then $\Ber(V)$ coincides with the top exterior power (resp. top symmetric power) of $V$.  We write $\ber_V:G\to \mathbb{G}_m$ for the character of $G$ determined by $\Ber(V)$, and by abuse of notation we also write $\ber_V$ for the character of $\g$ that this determines.  Then we have $\ber_V=\det_{V_{\ol{0}}}\cdot\det_{V_{\ol{1}}}^{-1}$ as a character of $G_0$, and $\ber_{V}=\tr_{V_{\ol{0}}}-\tr_{V_{\ol{1}}}$ as a character of $\g_{\ol{0}}$.

If $\chi:G\to\mathbb{G}_m$ is a character of $G$, and $V$ is a representation of $G$, we will write $V^{\chi}$ for the subspace of $V$ where $G$ acts by $\chi$.  If $\chi$ is the trivial character, we just write $V^G:=V^{\chi}$.

If $V$ is a $G$-module and $W\sub V$ is a subspace, the $G$-module generated by $W$, which we write as $\langle G\cdot W\rangle$, is given by $\UU\g\cdot\langle G_0\cdot W\rangle$. That is, we first take the $G_0$-module generated by $W$, and then take the $\UU\g$-module which that generates. 

Finally, if $V$ is a $G$-module then we will write (when they exist) $I_G(V)$, resp. $P_G(V)$ (or $I(V)$, resp. $P(V)$ when the context is clear) for the injective hull, resp projective cover of $V$.

\subsection{Actions of supergroups} If $X$ is a supervariety, $G$ a supergroup, and $G$ acts on (the left on) $X$, then we call $X$ a $G$-supervariety.  We will usually reserve the letter $a_X=a$ for the action morphism, i.e.\ $a:G\times X\to X$.   In this case we will consider $k[X]$ as a right $G$-module via translation.  Explicitly, $g\in G_0(k)$ acts by pullback $L_g^*$ along the left translation morphism $L_g:X\to X$.  The Lie superalgebra acts, for $u\in T_eG$, by
\[
u\mapsto (u\otimes 1)\circ a^*.
\]
This induces an map of superalgebras $\UU\g\to \Gamma(X,\DD_X)^{op}$, and in this way $\Dist(X,x)$ becomes a left $\UU\g$-module for any $x\in X(k)$.  In general, we will say $\g$ acts on a supervariety $X$ if it admits a homomorphism of algebras $\UU\g\to\Gamma(X,\DD_X)^{op}$ such that $\g$ maps into $\Gamma(X,\TT_X)$.
\begin{remark}\label{Dist_G_module}
	Suppose that $G$ acts on $X$, and $x\in X(k)$ is a closed point which is fixed by $G_0$.  Then $G_0$ acts on $\Dist(X,x)$.  However $\g$ also acts on $\Dist(X,x)$, and in this way $\Dist(X,x)$ obtains the structure of a $G$-module.  Notice that this will happen even if $x$ is not stabilized by all of $G$. 
\end{remark}

\subsection{Differential operators on a $G$-supervariety}    Let $X$ be an affine $G$-supervariety, with action morphism $a:G\times X\to X$, and consider $D_{X}=\Gamma(X,\DD_{X})$.  
\begin{lemma}
	For a differential operator $D\in D_{X}$, the following are equivalent:
	\begin{enumerate}
		\item The map $D: k[X]\to k[X]$ is $G$-equivariant;
		\item We have $a^*\circ D=\id\otimes D\circ a^*$;
	\end{enumerate}
and in the case when $G$ is connected, we have the third equivalent condition:
\begin{itemize}
	\item[(3)] For all $u\in\g$, $[u,D]=0$.
\end{itemize}
	In this case we say that $D$ is $G$-invariant.
\end{lemma}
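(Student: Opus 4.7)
The plan is to close the triangle $(1)\Leftrightarrow(2)\Rightarrow(3)\Rightarrow(2)$, where only the last arrow requires any real work and the connectedness of $G$.

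First I would dispatch $(1)\Leftrightarrow(2)$ by unwinding the conventions. By definition, the right $G$-module structure on $k[X]$ is encoded by the coaction $a^{*}\colon k[X]\to k[G]\otimes k[X]$, and $D$ being $G$-equivariant means precisely that it commutes with this coaction in the sense $a^{*}\circ D=(\id\otimes D)\circ a^{*}$. So these two conditions are literally the same statement.

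Next, for $(2)\Rightarrow(3)$, I would apply a distribution at the identity. The element $u\in\g=T_{e}G$ sits inside $\Dist^{1}(G,e)$, and by the setup in the excerpt its action on $k[X]$ is given by $u\cdot f=(u\otimes\id)(a^{*}f)$. Applying $u\otimes\id$ to both sides of (2) and tracking the Koszul sign (because $(\id\otimes D)$ on $g\otimes h$ produces $(-1)^{|D||g|}g\otimes Dh$, and $u(g)$ vanishes unless $|g|=|u|$) yields $u\cdot Df=(-1)^{|D||u|}D(u\cdot f)$, which is exactly the vanishing of the super-commutator $[u,D]$.

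The main step is $(3)\Rightarrow(2)$ under the assumption that $G$ is connected. Here I would use the fact that $k[X]$ is a locally finite $G$-module, so every $f\in k[X]$ lies in some finite-dimensional $G$-submodule $V$. Because $[u,D]=0$ for all $u\in\g$, the subspace $D(V)$ is $\g$-stable, hence $W:=V+D(V)$ is a finite-dimensional $\g$-stable subspace of $k[X]$. Invoking the equivalence (recalled at the start of Section 3) between $G$-modules and compatible $(G_{0},\g)$-modules, and the fact that for connected $G_{0}$ in characteristic zero a rational $\g_{\ol{0}}$-stable subspace is automatically $G_{0}$-stable, $W$ is actually a $G$-submodule. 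Then $D\colon V\to W$ is a $\g$-equivariant map of finite-dimensional $G$-modules; by the same equivalence applied to morphisms, it is $G$-equivariant. Since (2) is a condition checkable on each $f\in k[X]$, and every $f$ lies in such a $V$, condition (2) follows on all of $k[X]$.

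The main obstacle is the rigidity statement hidden in the last step, namely that $\g$-equivariance implies $G$-equivariance between finite-dimensional $G$-modules when $G$ is connected. In the classical setting this is the standard integration of Lie algebra actions for algebraic groups in characteristic zero, and in the super setting it is a formal consequence of the $(G_{0},\g)$-module equivalence plus the connectedness of $G_{0}$; this is what forces the hypothesis "$G$ connected" on (3) and is the only place beyond bookkeeping that enters the proof.
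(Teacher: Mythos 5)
Your proposal is correct and takes essentially the same route as the paper: the paper also treats $(1)\Leftrightarrow(2)$ as a direct unwinding of what "comodule homomorphism" means, and simply declares $(1)\Leftrightarrow(3)$ "obvious" for connected $G$; you are supplying the standard details (differentiating the coaction for $(2)\Rightarrow(3)$, and local finiteness together with the $(G_{0},\g)$-module equivalence to integrate $\g$-equivariance back to $G$-equivariance) for what the paper elects to leave implicit.
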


\begin{proof}
	(1)$\iff$(3) is obvious when $G$ is connected.  And (2) says that $D$ is a $ k[G]$-comodule homomorphism, equivalently a $G$-module homomorphism giving (1)$\iff$(2).
\end{proof}
\begin{definition}
	Write $D_X^G$ for the superalgebra of $G$-invariant differential operators on $X$. 
\end{definition}
 
 For the meaning of an open orbit of an algebraic supergroup, see \cite{sherman2019sphericalsupervar}.

\begin{prop} Let $X$ be a $G$-supervariety, and $x$ a point of $X$ with stabilizer subgroup $K\sub G$.  If $G$ has an open orbit at $x$, then the morphism $\operatorname{res}_x:D_{X}\to\Dist(X,x)$ restricts to an injection $D_X^G\to\Dist(X,x)^K$.  
\end{prop}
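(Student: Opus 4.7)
The plan is to prove the two assertions in sequence: first, that $\operatorname{res}_x$ carries $D_X^G$ into $\Dist(X,x)^K$, and second, that the resulting map is injective under the open orbit hypothesis. The containment is essentially formal. For $D\in D_X^G$ and $\gamma\in K_0$, the $G$-equivariance of $D$ together with $\gamma x=x$ yields $(\gamma\cdot\operatorname{res}_x(D))(f)=D(f\cdot \gamma)(x)=D(f)(\gamma x)=D(f)(x)=\operatorname{res}_x(D)(f)$; and for $u\in\k$, since $K\subseteq G$ is the stabilizer of $x$, the image of $\pi(u)$ in $T_xX$ is zero, so $\pi(u)(h)(x)=0$ for any $h\in k[X]$. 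Combined with $[\pi(u),D]=0$ this gives the $\k$-invariance of $\operatorname{res}_x(D)$, and together with the $K_0$-invariance above we obtain $K$-invariance.

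For injectivity, suppose $D\in D_X^G$ satisfies $\operatorname{res}_x(D)=0$, i.e., $D(f)(x)=0$ for all $f\in k[X]$. I would proceed in three steps. The first and most substantive step is to promote this to the statement that \emph{every} distribution at $x$ annihilates $D(f)$. Since $[\pi(u),D]=0$ for $u\in\g$, iterating gives
\[
\pi(u_1)\cdots\pi(u_k)(D(f))(x)=\pm D(\pi(u_1)\cdots\pi(u_k)(f))(x)=0
\]
for all $u_i\in\g$. The open orbit hypothesis is equivalent to surjectivity of $\pi(\g)\to T_xX$, so one may choose a homogeneous subspace $V\subseteq\pi(\g)$ mapping isomorphically to $T_xX$; then by \cref{dist_symm_lemma}, the restrictions $\operatorname{res}_x(v_1\cdots v_k)$ of monomials in a basis of $V$ span $\Dist(X,x)$. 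Hence every distribution at $x$ vanishes on $D(f)$, placing $D(f)\in\bigcap_n\m_x^n$ inside $\OO_{X,x}$; Krull's intersection theorem, applied to the super Noetherian local ring $\OO_{X,x}$, then forces $D(f)=0$ in $\OO_{X,x}$. The second step uses $G$-equivariance to translate: for any $g\in G_0(k)$, the pullback $L_g^*\colon\OO_{X,gx}\to\OO_{X,x}$ is an isomorphism commuting with $D$, so $D(f)=0$ in $\OO_{X,y}$ for every $y\in G_0\cdot x$. The third step concludes: since $G_0\cdot x$ is dense in $X$ and $X$ is reduced, a global function whose germ vanishes at every point of a dense open set must be zero, so $D(f)=0$ in $k[X]$ for every $f$, and hence $D=0$.

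The main obstacle is the first substep, where the single condition $\operatorname{res}_x(D)=0$ must be upgraded to vanishing against the entire space $\Dist(X,x)$. Both hypotheses enter precisely here: $G$-invariance of $D$ lets us pass differential operators $\pi(u_1)\cdots\pi(u_k)$ through $D$ and reduce to evaluation against the known vanishing of $\operatorname{ev}_x\circ D$, while the open orbit condition ensures via \cref{dist_symm_lemma} that such operators, when restricted to $x$, actually span all of $\Dist(X,x)$. The remaining propagation from $x$ to the open orbit, and from the open orbit to $X$, is comparatively routine.
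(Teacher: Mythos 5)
Your proof is correct and takes a genuinely different route from the paper. The paper's proof is a short, slick computation: it invokes a cited result (Prop.\ 3.11 of \cite{sherman2019sphericalsupervar}) that the orbit map $a_x:G\to X$ has injective pullback $a_x^*$ when the orbit is open, and then uses the comodule identity $a^*\circ D=(\id\otimes D)\circ a^*$ together with the factorization $a_x=a\circ(\id_G\times i_x)$ to show $a_x^*(D(f))=(\id\otimes\operatorname{res}_x(D))(a^*(f))=0$, whence $D(f)=0$. Your argument instead unpacks the open-orbit hypothesis at the infinitesimal level: surjectivity of $\g\to T_xX$ plus \cref{dist_symm_lemma} shows that $\operatorname{ev}_x$ together with the $\g$-invariance of $D$ controls the entire jet of $D(f)$ at $x$, Krull's intersection theorem kills the jet, and translation plus density does the rest. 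The paper's route is shorter but depends on the nontrivial external injectivity statement for $a_x^*$; yours is longer but more self-contained and makes explicit where the open-orbit condition is actually used.

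Two small points to tighten. First, the phrase ``$X$ is reduced'' is false for supervarieties (the odd part of $\OO_X$ is nilpotent); what you actually need for the final density step is that $\OO_X$ is locally free over $\OO_{X_0}$ with $X_0$ integral, so that a section vanishing over a dense open of $X_0$ vanishes everywhere. Second, since $X$ is affine and $\OO_X$ is a finite locally free module over the integral ring $k[X_0]$, the ring map $k[X]\to\OO_{X,x}$ is already injective, so once you have $D(f)=0$ in the stalk at $x$ you may conclude $D(f)=0$ in $k[X]$ directly and skip the translation and density steps entirely. Finally, it is worth noting that $G$-invariance of $D$ always implies $\g$-invariance $[\pi(u),D]=0$ (connectedness of $G$ is needed only for the converse), so that step is sound without additional hypotheses.
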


\begin{proof}
	The map $\operatorname{res}_x$ is $K$-equivariant, so we have $\operatorname{res}_x(D_X^G)\sub\Dist(X,x)^K$.  To see that it is injective, let $a_x:G\to X$ be the orbit map at $x$, and $D\in D_X^G$.  Then by $a_x^*:\OO_X\to (a_x)_*\OO_G$ is an injective morphism of sheaves by Prop. 3.11 of \cite{sherman2019sphericalsupervar}.  Therefore, if $\operatorname{res}_x(D)=0$, we have $D(f)(x)=0$ for all $f$ defined in an open neighborhood of $x$.  Equivalently, $a_x^*(D(f))(e)=0$ for all such $f$.  But we have the factorization $a_x=a\circ (\id_G\times i_x)$, so this says that
	\begin{eqnarray*}
	(\id_G\times i_x)^*\circ a^*(D(f))& = &(\id_G\times i_x)^*(\id\otimes D)(a^*(f))\\
	                                  & = &(\id\otimes \operatorname{res}_x(D))(a^*(f))=0.
	\end{eqnarray*}
	This implies $D(f)=0$ for all $f$ defined in an open neighborhood of $X$.  Since by definition the restriction morphism on functions is injective for a supervariety, this implies that $D=0$.
\end{proof}
\begin{prop}\label{diff_ops_G/K}
	In the context of the previous proposition, if $X\cong G/K$ via the orbit map at $x$ then the map $D_X^G\to\Dist(X,x)^K$ is an isomorphism.
\end{prop}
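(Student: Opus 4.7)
The previous proposition gives injectivity of $\operatorname{res}_x : D_X^G \to \Dist(X,x)^K$, so I only need to establish surjectivity. My plan is to construct the inverse explicitly by using the $G$-action to spread a $K$-invariant distribution at $x$ out into a $G$-invariant differential operator on all of $X$.

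Given $\psi \in \Dist(X,x)^K$, I would define a linear operator $D_\psi$ on $k[X]$ via the composition
\[
\tilde D_\psi : k[X] \xrightarrow{a^*} k[G] \otimes k[X] \xrightarrow{\id \otimes \psi} k[G],
\]
where $a : G \times X \to X$ is the action morphism. The hypothesis $X \cong G/K$ through the orbit map $a_x$ gives an identification $a_x^* : k[X] \xrightarrow{\sim} k[G]^K$ (right $K$-invariants), and the core claim is that the image of $\tilde D_\psi$ lies in $k[G]^K$. This is precisely where $K$-invariance of $\psi$ is used: under the right translation action of $K$ on $k[G]$, which corresponds via $a^*$ to the action of $K$ on the $k[X]$ tensor factor, the condition $\psi \circ k^* = \psi$ for all $k \in K$ is exactly what forces the image of $\tilde D_\psi$ to be $K$-fixed. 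Pulling back along $a_x^*$ then produces $D_\psi \in \End(k[X])$.

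To see $D_\psi$ is a differential operator of order at most $n$ when $\psi \in \Dist^n(X,x)^K$, I would induct on $n$. The base case $n=0$ yields a scalar multiple of the identity. For the inductive step, I would compute the commutator $[D_\psi, f]$ for $f \in k[X]$ and check, using the fact that $a^*$ is an algebra homomorphism and the Leibniz property of the comultiplication, that $[D_\psi, f] = D_{\psi'}$ for a distribution $\psi' \in \Dist^{n-1}(X,x)^K$ given by $\psi'(h) = \psi(fh) - f(x)\psi(h)$ (with appropriate super-signs); this drops the order by one and completes the induction.

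$G$-invariance of $D_\psi$ is then a direct consequence of coassociativity of the action morphism, $(\id_G \otimes a^*) \circ a^* = (m_G^* \otimes \id_X) \circ a^*$, applied to $\id \otimes \psi$, which yields the identity $a^* \circ D_\psi = (\id \otimes D_\psi) \circ a^*$ characterizing $G$-invariance. Finally, $\operatorname{res}_x(D_\psi) = \psi$ follows by applying $(\operatorname{ev}_e \otimes \id) \circ a^* = \id_{k[X]}$ on the one hand and tracking it through the construction on the other. The main obstacle I anticipate is the inductive step verifying that $D_\psi$ is a genuine differential operator of the correct filtration degree with all signs in place; the remaining assertions (well-definedness into $k[X]$, $G$-invariance, and $\operatorname{res}_x \circ D_{(-)} = \id$) reduce to standard Hopf-algebraic diagram chasing.
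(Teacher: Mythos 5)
Your construction of $D_\psi=(\id\otimes\psi)\circ a^*$ is exactly the paper's, and you are in fact more thorough than the paper, which records only the coassociativity computation giving $G$-invariance and takes the remaining checks (that the image lands in $k[G]^K\cong k[X]$, that $D_\psi$ is a finite-order differential operator, that $\operatorname{res}_x D_\psi=\psi$) for granted.

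There is, however, a genuine flaw in your inductive step: the commutator $[D_\psi,f]$ is \emph{not} in general of the form $D_{\psi'}$ for any distribution $\psi'\in\Dist^{n-1}(X,x)^K$. Any $D_{\psi'}$ is $G$-invariant by construction, while $[D_\psi,f]$ typically is not once $f$ is non-constant. Concretely, take $X=G=\mathbb{A}^1$, $K=\{e\}$, $x=0$, $\psi$ the second-order derivative at $0$ and $f=t^2$; then $D_\psi=\partial^2$ and $[\partial^2,t^2]=2+4t\partial$, which has a non-constant coefficient and so cannot equal $D_{\psi'}$. The defect in your formula is that it fixes the constant $f(x)$, whereas at an arbitrary point $y\in X(k)$ the relevant constant is $f(y)$, and the $f$ occurring in $D_\psi(fh)(y)=\psi(\tau_g^*(fh))$ (for $g\cdot x=y$) has been translated. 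The correct way to get the order bound is pointwise: a short induction shows that for all $f_0,\dots,f_n\in k[X]$, all $h\in k[X]$, and every $y\in X(k)$,
\[
[\cdots[[D_\psi,f_0],f_1],\dots,f_n](h)(y)\;=\;D_\psi\Bigl(\prod_{i=0}^{n}\bigl(f_i-f_i(y)\bigr)\,h\Bigr)(y),
\]
and the right-hand side vanishes because $\prod_i(f_i-f_i(y))\,h\in\m_y^{n+1}$ while $D_\psi(\m_y^{n+1})(y)=0$: choosing $g\in G(k)$ with $g\cdot x=y$, the translation $\tau_g^*$ carries $\m_y^{n+1}$ into $\m_x^{n+1}$, on which $\psi$ vanishes. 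Hence the $(n+1)$-fold bracket is identically zero and $D_\psi\in\DD_X^n$. With this correction in place the rest of your writeup goes through, and is in fact a more complete argument than the one printed in the paper.
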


\begin{proof}
	It remains to show it is surjective. For this, if $\psi\in\Dist(X,x)^K$, define $D_{\psi}$ by $f\mapsto (\id\otimes \psi)\circ a^*(f)$.  Indeed,
	\begin{eqnarray*}
	a^*\circ D_{\psi}& = &a^*\circ (\id\otimes \psi)\circ a^*\\
	                 & = &(\id\otimes\id\otimes\psi)\circ(a^*\otimes\id)\circ a^*\\
	                 & = &(\id\otimes\id\otimes\psi)\circ(\id\otimes a^*)\circ a^*\\
	                 & = &[\id \otimes((\id\otimes\psi)\circ a^*)]\circ a^*\\
	                 & = &(\id\otimes D_{\psi})\circ a^*.
	\end{eqnarray*}
\end{proof}

\section{Induced supervarieties in the sense of Koszul}

We present a construction that is originally due to Koszul in \cite{koszul1982graded}.  Although it may be defined for supervarieties that are not affine, for simplicity we stick to the affine case since that is all we need. Throughout, we let $H$ be a supergroup and write $\h=\operatorname{Lie}(H)$. 

\subsection{Induced and coinduced modules}

\begin{definition}
	Let $V_0$ be an $\h_{\ol{0}}$-module, and define the $\h$-module $\Ind_{\h_{\ol{0}}}^{\h}V_0$ to be 
	\[
	\Ind_{\h_{\ol{0}}}^{\h}V_0:=\UU\h\otimes_{\UU\h_{\ol{0}}}V_0.
	\]
	The action by $\h$ is left multiplication.  If the action of $\h_{\ol{0}}$ on $V_0$ integrates to an action of $H_0$, then the $\h$ action on $\Ind_{\h_{\ol{0}}}^{\h}V_0$ integrates to an action of $H$, where $H_0$ acts by
	\[
	h\cdot (u\otimes v)=\Ad(h)(u)\otimes h\cdot v.
	\]
	Similarly we define the $\h$-module $\Coind_{\h_{\ol{0}}}^{\h}V_0$ by
	\[
	\Coind_{\h_{\ol{0}}}^{\h}V_0:=\Hom_{\h_{\ol{0}}}(\UU\h,V_0).
	\]
	Here we consider $\UU\h$ as a left $\UU\h_{\ol{0}}$-module.  The action by $\h$ on this module is
	\[
	(u\eta)(v)=(-1)^{\ol{u}(\ol{\eta}+\ol{v})}\eta(vu).
	\]
	Once again, if $V_0$ is actually an $H_0$-module then $\Coind_{\h_{\ol{0}}}^{\h}V_0$ will be an $H$-module, where the action by $H_0$ is given by
	\[
	(h\cdot\eta)(v)=h\cdot\eta(\Ad(h^{-1})(v)).
	\]
\end{definition} 

\begin{remark}\label{coind_alg_remark}
	If $A_0$ is an algebra on which $\h_{\ol{0}}$ acts by derivations, then $\Coind_{\h_{\ol{0}}}^{\h}A_0$ is naturally a superalgebra with multiplication given by
	\[
	(fg)(u)=m_{A_0}\circ(f\otimes g)(\Delta(u)).
	\]
	In this case, the action of $\h$ on $\Coind_{\h_{\ol{0}}}^{\h}A_0$ is by super derivations.  A similar statement holds if $A_0$ is a Hopf algebra which $\h_{\ol{0}}$ acts on by derivations preserving the Hopf algebra structure.
\end{remark}

\begin{lemma}\label{coind_dual_ind_iso}
	For an $\h_{\ol{0}}$-module $V_0$, we have a canonical isomorphism of $\h$-modules
	\[
	(\Coind_{\h_{\ol{0}}}^{\h}V_0)^*\cong\Ind_{\h_{\ol{0}}}^{\h}(V_0)^*.
	\]
	This extends to an isomorphism of $H$-modules when $V_0$ is an $H_0$-module.
\end{lemma}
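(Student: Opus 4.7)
The plan is to construct the isomorphism from an explicit natural pairing. Define
\[
\langle -, - \rangle : \Ind_{\h_{\ol{0}}}^{\h} V_0^* \otimes \Coind_{\h_{\ol{0}}}^{\h} V_0 \to k, \qquad \langle u \otimes \phi, \eta \rangle := (-1)^{\ol{\phi}\,\ol{\eta}}\phi(\eta(S(u))),
\]
where $S$ is the principal antipode of $\UU\h$, i.e.\ the antiautomorphism with $S(x) = -x$ for $x \in \h$. To verify well-definedness on the tensor product over $\UU\h_{\ol{0}}$, I would use that for $x \in \h_{\ol{0}}$ one has $S(ux) = -xS(u)$, whence $\eta(S(ux)) = -x\eta(S(u))$ by the left $\h_{\ol{0}}$-equivariance of $\eta$, and then $\phi(-x\eta(S(u))) = (x\phi)(\eta(S(u)))$ using the standard dual action $(x\phi)(v) = -\phi(xv)$. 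This gives $\langle ux \otimes \phi, \eta\rangle = \langle u \otimes x\phi, \eta\rangle$.

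Next I would show that the associated map $\Phi : \Ind_{\h_{\ol{0}}}^{\h}V_0^* \to (\Coind_{\h_{\ol{0}}}^{\h}V_0)^*$ is a vector space isomorphism using PBW. Choose a homogeneous basis $\xi_1, \ldots, \xi_n$ of $\h_{\ol{1}}$ and let $\{\xi_I\}_{I \subseteq \{1,\ldots,n\}}$ denote the ordered monomials. Then $\UU\h$ is free as both a left and right $\UU\h_{\ol{0}}$-module on $\{\xi_I\}$, yielding vector space identifications $\Ind_{\h_{\ol{0}}}^{\h}V_0^* \cong \Lambda(\h_{\ol{1}}) \otimes V_0^*$ and $\Coind_{\h_{\ol{0}}}^{\h}V_0 \cong \Hom_k(\Lambda(\h_{\ol{1}}), V_0) \cong \Lambda(\h_{\ol{1}})^* \otimes V_0$ (the last using $\dim \h_{\ol{1}} < \infty$). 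Under these identifications the pairing reduces, up to nonzero rescalings from $S$ acting on monomials, to the standard perfect pairing between $\Lambda(\h_{\ol{1}}) \otimes V_0^*$ and $\Lambda(\h_{\ol{1}})^* \otimes V_0$, proving $\Phi$ is a linear isomorphism.

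For $\h$-equivariance I would compute both sides on a generator $u \otimes \phi$ using the formulas $S(yu) = -(-1)^{\ol{y}\,\ol{u}} S(u)y$, the given $\h$-action $(y\eta)(v) = (-1)^{\ol{y}(\ol{\eta} + \ol{v})}\eta(vy)$ on $\Coind$, and the dual left $\h$-action $(yF)(\eta) = -(-1)^{\ol{y}\,\ol{F}}F(y\eta)$ on $(\Coind_{\h_{\ol{0}}}^{\h} V_0)^*$. A direct sign computation then shows $\Phi(yu \otimes \phi) = y\Phi(u \otimes \phi)$.

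Finally, when $V_0$ is an $H_0$-module, the $H_0$-actions on $\Ind$ and $\Coind$ are both built from $\Ad$. Since $\Ad(h)$ is an even automorphism of $\UU\h$ commuting with $S$, and the $H_0$-action on $V_0^*$ is the standard dual action, one checks that $\Phi$ is also $H_0$-equivariant; combined with the $\h$-equivariance this upgrades $\Phi$ to an isomorphism of $H$-modules, since an $H$-module structure is determined by a compatible $(H_0, \h)$-action. The main obstacle I expect is keeping the Koszul signs straight in the $\h$-equivariance step, especially reconciling the sign from $S(yu)$ with the sign in the dual action; the rest is formal manipulation with PBW and Hopf-algebraic identities.
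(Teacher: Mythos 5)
Your overall approach is the same as the paper's: define the explicit map $\Ind \to \Coind^*$ via the antipode and verify it is an isomorphism via PBW, since $\UU\h$ is a finitely generated free $\UU\h_{\ol{0}}$-module. However, the sign in your defining formula is wrong, and it is exactly the kind of error you flagged as the main obstacle.

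The paper's map is $(u\otimes\varphi)(\eta)=(-1)^{\ol{u}(\ol{\varphi}+\ol{\eta})}\varphi(\eta(s(u)))$, whereas you wrote $(-1)^{\ol{\phi}\ol{\eta}}\phi(\eta(S(u)))$. With your sign the map is \emph{not} $\h$-equivariant. Using $S(yu)=-(-1)^{\ol{y}\ol{u}}S(u)y$ and $(y\eta)(v)=(-1)^{\ol{y}(\ol{\eta}+\ol{v})}\eta(vy)$ one finds $\eta(S(yu))=-(-1)^{\ol{y}\ol{\eta}}(y\eta)(S(u))$. Plugging this into your formula for $\Phi(yu\otimes\phi)(\eta)$ and comparing with $(y\Phi(u\otimes\phi))(\eta)=-(-1)^{\ol{y}(\ol{u}+\ol{\phi})}\Phi(u\otimes\phi)(y\eta)$, the two sides differ by $(-1)^{\ol{y}(\ol{u}+\ol{\eta})}$, which is not identically $1$. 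A concrete failure: take $\h=\h_{\ol{1}}$ one-dimensional spanned by $\xi$, and $V_0=k$. Then $\Coind=(\UU\h)^*$ has $\xi\cdot\xi^*=-1^*$ and hence, on the double dual with basis $\mathbf{1},\boldsymbol{\xi}$ dual to $1^*,\xi^*$, one computes $\xi\cdot\mathbf{1}=\boldsymbol{\xi}$. With your sign, $\Phi(1)=\mathbf{1}$ but $\Phi(\xi)=\Phi(\xi)(\xi^*)\boldsymbol{\xi}=-\xi^*(\xi)\boldsymbol{\xi}=-\boldsymbol{\xi}\neq\xi\cdot\Phi(1)$; with the paper's sign $(-1)^{\ol{u}\ol{\eta}}$ one gets $\Phi(\xi)=\boldsymbol{\xi}$ and equivariance holds. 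So the "direct sign computation" you asserted does not in fact close, and you should replace your sign with $(-1)^{\ol{u}(\ol{\phi}+\ol{\eta})}$. The PBW nondegeneracy argument and the $H_0$-compatibility step are unaffected by this correction.
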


\begin{proof}
	We always have a canonical map of $\h$-modules (or $H$-modules when applicable)
	\[
	\Ind_{\h_{\ol{0}}}^{\h}(V_0)^*\to (\Coind_{\h_{\ol{0}}}^{\h}V_0)^*
	\]
	given by
	\[
	(u\otimes \varphi)(\eta)=(-1)^{\ol{u}(\ol{\varphi}+\ol{\eta})}\varphi(\eta(s(u))),
	\]
	where $s:\UU\h\to\UU\h$ is the antipode.  Here it is an isomorphism by the PBW theorem, since $\UU\h$ is a finitely generated free $\UU\h_{\ol{0}}$-module.
	
\end{proof}

\subsection{Koszul's induced superspace}
\begin{definition}
For an affine variety $X_0$ with an action of $\h_{\ol{0}}$, define $X_0^{(\h)}$ to be the affine variety with coordinate ring 
\[
k[X_0^{(\h)}]:=\Hom_{\h_{\overline{0}}}(\UU\h, k[X_0])=\Coind_{\h_{0}}^{\h}k[X_0].
\]
By Remark \ref{coind_alg_remark}, $X_0^{(\h)}$ is an affine supervariety with an action by $\h$.  If the action of $\h_{\ol{0}}$ on $X_0$ comes from an action of $H_0$ on $X_0$, then $X_0^{(\h)}$ will be an $H$-supervariety.  We have that $(X_0^{(\h)})_0=X_0$, and the natural projection $ k[X_0^{(\h)}]\to k[X_0]$ is given by $\eta\mapsto\eta(1)$.
\end{definition}

\begin{remark}
	The construction of the induced superspace given above can be done for any supervariety $X_0$ with an $\h_{\ol{0}}$-action as follows.  We define $X_0^{(\h)}$ to have underlying space $X_0$ and sheaf of functions given by, for an open subset $U_0\sub X_0$,
	\[
	\Gamma(U_0,\OO_{X_0^{(\h)}})=\Coind_{\h_{0}}^{\h}k[U_0]=k[(U_0)^{(\h)}].
	\]
	One can check this construction respects localization and thus gives a well-defined supervariety.
\end{remark}

\begin{remark}
	If $X_0$ is smooth, then $X_0^{(\h)}$ is also smooth.  In any case, under the action of $\h$ on $X_0^{(\h)}$, the elements of $\h_{\ol{1}}$ define everywhere non-vanishing vector fields on $X_0^{(\h)}$ so that $\h_{\ol{1}}\to (T_xX_0^{(\h)})_{\ol{1}}$ is an isomorphism for all $x\in X_0(k)$.
\end{remark}
\begin{remark}
	In \cite{koszul1982graded}, it was shown that $G=(G_0)^{\g}$.  Thus we have an explicit description of the algebra of functions on $k[G]$ given by
\[	
k[G]=\Hom_{\g_{\ol{0}}}(\UU\g,k[G_0]).
\]
From this one can also determine the Hopf algebra structure on $k[G]$ from the Hopf algebra structures on $\UU\g$ and $k[G_0]$.
\end{remark}

\begin{prop}\label{univ prop induced space} Let $X_0$ be an affine variety with an action of $\h_{\ol{0}}$.  Then $X_0^{(\h)}$ has the following universal property: given an affine supervariety $Y$ with an action of $\h$, and an $\h_{\ol{0}}$-equivariant map $\ol{\phi}:X_0\to Y_0$, there exists a unique $\h$-equivariant map $\phi:X_0^{(\h)}\to Y$ such that $\phi_0=\ol{\phi}$ and the following diagram commutes:
	\[
	\xymatrix{X_0^{(\h)} \ar[r]^{\phi} & Y\\
		X_0\ar[u]^{i_{X_0^{(\h)}}}\ar[r]^{\ol{\phi}} & Y_0\ar[u]_{i_{Y}}.}
	\]
	If the actions of $\h_{\ol{0}}$ and $\h$ integrate to actions of $H_0$ and $H$, respectively, then $\phi$ is a morphism of $H$-supervarieties.
\end{prop}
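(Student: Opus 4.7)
The plan is to pass to coordinate rings and exploit the fact that $\Coind_{\h_{\ol{0}}}^{\h}$ is right adjoint to the restriction functor from $\h$-modules to $\h_{\ol{0}}$-modules. First I would observe that $i_{Y_0}^*\colon k[Y]\to k[Y_0]$ is an $\h_{\ol{0}}$-equivariant superalgebra homomorphism, since $i_{Y_0}$ is $H_0$-equivariant and $\h_{\ol{0}}=\operatorname{Lie}(H_0)$ acts on $k[Y]$ through its embedding into $\h$. Composing with $\bar\phi^*$ then yields an $\h_{\ol{0}}$-equivariant superalgebra homomorphism $\psi:=\bar\phi^*\circ i_{Y_0}^*\colon k[Y]\to k[X_0]$.

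Next I would invoke the adjunction: for any $\h$-module $M$ and $\h_{\ol{0}}$-module $N$, composition with the evaluation $\mathrm{ev}_1\colon \Coind_{\h_{\ol{0}}}^{\h}N\to N$, $\eta\mapsto\eta(1)$, gives a natural bijection
\[
\Hom_{\h}\bigl(M,\Coind_{\h_{\ol{0}}}^{\h}N\bigr)\;\xrightarrow{\;\sim\;}\;\Hom_{\h_{\ol{0}}}(M,N),
\]
with inverse $\psi\mapsto\hat\psi$ defined by $\hat\psi(m)(u)=(-1)^{\ol{u}\,\ol{m}}\psi(u\cdot m)$. When $M$ and $N$ are superalgebras on which the relevant Lie superalgebras act by derivations, this bijection restricts to one between equivariant superalgebra homomorphisms: the multiplication on $\Coind_{\h_{\ol{0}}}^{\h}N$ from \cref{coind_alg_remark} is the unique one making $\mathrm{ev}_1$ an algebra map, and one verifies using the coproduct identity $u(mn)=\sum\pm(u_{(1)}m)(u_{(2)}n)$ that $\hat\psi$ preserves multiplication. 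Applied to $M=k[Y]$ and $N=k[X_0]$, this yields a unique $\h$-equivariant superalgebra homomorphism
\[
\phi^*\colon k[Y]\longrightarrow \Coind_{\h_{\ol{0}}}^{\h}k[X_0]=k[(X_0)^{\h}]
\]
with $\mathrm{ev}_1\circ\phi^*=\psi$, i.e.\ $i_{X_0}^*\circ\phi^*=\bar\phi^*\circ i_{Y_0}^*$; this is precisely the commutativity of the diagram (forcing $\phi_0=\bar\phi$). Uniqueness of $\phi$ is immediate, since any $\h$-equivariant $\phi$ extending $\bar\phi$ has the same image $\psi$ under the adjunction bijection.

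For the integrated statement, if the $\h_{\ol{0}}$-action comes from an $H_0$-action on $X_0$ and $\bar\phi$ is $H_0$-equivariant, then $\psi$ is $H_0$-equivariant. The $H_0$-structure placed on $\Coind_{\h_{\ol{0}}}^{\h}k[X_0]$ in the definition of $(X_0)^{\h}$ is the unique one compatible with the $\h$-action and making $\mathrm{ev}_1$ $H_0$-equivariant, so $\phi^*$ is automatically $H_0$-equivariant; combined with $\h$-equivariance this promotes $\phi$ to a morphism of $H$-supervarieties via the equivalence between $H$-modules and compatible $(H_0,\h)$-modules recalled earlier. The principal technical obstacle is the verification that $\hat\psi$ is a superalgebra homomorphism: this is a careful sign-tracking computation using the coproduct on $\UU\h$, the multiplication formula on $\Coind_{\h_{\ol{0}}}^{\h}k[X_0]$, and the fact that $\h$ acts on $k[Y]$ by super-derivations. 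Every other step is a formal consequence of the adjunction.
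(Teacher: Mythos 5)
Your proof is correct and follows essentially the same route as the paper's: both pass to coordinate rings, use that $i_{Y_0}^*\circ\bar\phi^*$ lands in $\Hom_{\h_{\ol{0}}}(k[Y],k[X_0])$, force the formula $\phi^*(f)(u)=(-1)^{\ol{u}\ol{f}}\bar\phi^*(i_{Y_0}^*(uf))$ from $\h$-equivariance, and defer the verification that this gives an algebra map. You phrase the argument via the $(\operatorname{Res},\Coind)$ adjunction rather than deriving the formula by hand, but the underlying computation and the division into a forced-uniqueness step plus a routine multiplicativity check are identical to the paper's.
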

\begin{proof}
	Since everything is affine, we may work instead with the algebras of functions.  Then the commutativity of the square says that for such a map $\phi$, we must have that for $f\in k[Y]$,
	\[
	\phi^{*}(f)(1)=(\ol{\phi})^{*}\circ(i_{Y_0})^{*}(f)
	\]
	The property of begin an $\h$-equivariant map then forces, for $u\in\UU\h$,
	\[
	\phi^{*}(f)(u)=(-1)^{\ol{u}\ol{f}}(u\phi^{*}(f))(1)=(-1)^{\ol{u}\ol{f}}\phi^{*}(uf)(1)=(-1)^{\ol{u}\ol{f}}(\ol{\phi})^{*}((i_{Y_0})^{*}(uf))
	\]
	so the definition of $\phi^{*}$ is forced on us.  One can check that the above definition defines an algebra homomorphism, and so we are done.
\end{proof}

For the following, observe that if $X_0$ is an $H_0$-variety and $x$ is a closed point of $X_0$ which is fixed by $H_0$, then $H_0$ preserves $_0\m_x$ and thus acts on $\Dist(X_0,x)$.  
\begin{prop}\label{dist_induce}
	Let $X_0$ be affine $H_0$-variety, and $x\in X_0(k)$ a closed point which is fixed by $H_0$.  Then we have an isomorphism of $H$-modules 
	\[
	\Dist(X_0^{(\h)},x)\cong\Ind_{\h_{\ol{0}}}^{\h}\Dist(X_0,x).
	\]
	(See Remark \ref{Dist_G_module} for the $H$-module structure on $\Dist(X_0^{(\h)},x)$.)
\end{prop}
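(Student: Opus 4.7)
The plan is to construct an explicit $H$-equivariant map $\Phi: \Ind_{\h_{\ol{0}}}^{\h} \Dist(X_0, x) \to \Dist((X_0)^{\h}, x)$ and verify it is an isomorphism, modeled on the formula appearing in \cref{coind_dual_ind_iso}. For $u \in \UU\h$, $\psi \in \Dist(X_0, x)$, and $\eta \in k[(X_0)^{\h}] = \Hom_{\h_{\ol{0}}}(\UU\h, k[X_0])$, set
$$\Phi(u \otimes \psi)(\eta) := (-1)^{\bar u(\bar\psi + \bar\eta)} \psi(\eta(s(u))),$$
where $s$ is the antipode of $\UU\h$. Well-definedness over $\UU\h_{\ol{0}}$ follows from the left $\UU\h_{\ol{0}}$-equivariance of $\eta$ together with the compatibility of the $\UU\h_{\ol{0}}$-actions on $\Dist(X_0, x)$ and $k[X_0]$. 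To see that $\Phi(u \otimes \psi)$ actually lies in $\Dist((X_0)^{\h}, x)$, if $\psi$ annihilates $_0\m_x^{N+1}$ and $u$ has PBW degree at most $d$, then a Leibniz-type expansion of $\eta(s(u))$ using the coalgebra structure of $\UU\h$ together with the multiplication formula in \cref{coind_alg_remark} shows $\eta(s(u)) \in {_0\m_x}^{N+1}$ whenever $\eta \in \m_x^{N+d+1}$, so $\Phi(u \otimes \psi)$ vanishes on $\m_x^{N+d+1}$.

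Next I would verify equivariance. The $\h$-equivariance (with $\h$ acting by left multiplication on $\Ind_{\h_{\ol{0}}}^{\h} \Dist(X_0, x)$ and via the action of \cref{Dist_G_module} on $\Dist((X_0)^{\h}, x)$) is a direct computation using the anti-cohomomorphism property of $s$. Similarly, $H_0$-equivariance follows from the fact that $\Ad$ on $\UU\h$ commutes with $s$ and matches the explicit $H_0$-action on $\Coind_{\h_{\ol{0}}}^{\h} k[X_0]$ given in the definition of $\Coind$. The compatibility of these two actions promotes $\Phi$ to a morphism of $H$-modules.

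The hard part is showing $\Phi$ is bijective. The approach is to fix an ordered homogeneous basis of $\h_{\ol{1}}$ and apply the symmetrization PBW isomorphism $\UU\h \cong \Lambda\h_{\ol{1}} \otimes \UU\h_{\ol{0}}$ of right $\UU\h_{\ol{0}}$-modules, which identifies the left-hand side with $\Lambda\h_{\ol{1}} \otimes \Dist(X_0, x)$. Dually, an $\h_{\ol{0}}$-equivariant map $\UU\h \to k[X_0]/{_0\m_x}^{n+1}$ is determined by its values on a basis of $\Lambda\h_{\ol{1}}$, giving $\Hom_{\h_{\ol{0}}}(\UU\h, k[X_0]/{_0\m_x}^{n+1}) \cong (\Lambda\h_{\ol{1}})^* \otimes k[X_0]/{_0\m_x}^{n+1}$, and, after passing to duals and the direct limit in $n$, a natural identification with $\Lambda\h_{\ol{1}} \otimes \Dist(X_0, x)$. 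The subtle step—and the principal obstacle—is verifying that this direct limit really agrees with $\Dist((X_0)^{\h}, x) = \dirlim(k[(X_0)^{\h}]/\m_x^{n+1})^*$, i.e., that the filtrations by $\{\Hom_{\h_{\ol{0}}}(\UU\h, {_0\m_x}^{n+1})\}$ and by $\{\m_x^{n+1}\}$ on $k[(X_0)^{\h}]$ are cofinal. This cofinality follows from Leibniz-type bounds in both directions, combined with the fact that the odd part of $\m_x$ is nilpotent of bounded degree (at most $\dim\h_{\ol{1}}+1$). Once cofinality is established, tracking $\Phi$ through the above identifications shows it agrees, up to sign, with the identity on each $\Lambda\h_{\ol{1}}$-component, hence is a bijection.
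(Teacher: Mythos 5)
Your argument is essentially the paper's proof made explicit. The paper establishes the isomorphism via the chain
\[
\Dist((X_0)^{\h},x)\cong\Gamma_{\m_x}\bigl(\Coind_{\h_{\ol{0}}}^{\h}k[X_0]\bigr)^*\cong\Gamma_{\m_x}\Ind_{\h_{\ol{0}}}^{\h}\bigl(k[X_0]^*\bigr)\cong\Ind_{\h_{\ol{0}}}^{\h}\Gamma_{_0\m_x}\bigl(k[X_0]^*\bigr),
\]
using \cref{coind_dual_ind_iso} for the middle step, and isolates precisely one nontrivial point: moving $\Gamma$ past $\Ind$, i.e.\ the cofinality of the filtrations $\{\m_x^n\}$ and $\{\Hom_{\h_{\ol{0}}}(\UU\h,{_0\m_x}^n)\}$. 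Your explicit formula $\Phi(u\otimes\psi)(\eta)=(-1)^{\ol{u}(\ol\psi+\ol\eta)}\psi(\eta(s(u)))$ is exactly the isomorphism of \cref{coind_dual_ind_iso} unwound, and your bijectivity argument reduces to the same cofinality claim, so you are rediscovering the paper's proof by direct computation rather than by composing natural isomorphisms. Two small remarks worth internalizing: (i) the direction $\Hom_{\h_{\ol{0}}}(\UU\h,{_0\m_x}^{d})\subset\m_x^{d}$ is immediate, since $\Hom_{\h_{\ol{0}}}(\UU\h,{_0\m_x}^{d})={_0\m_x}^{d}\cdot k[(X_0)^{\h}]$ and $k[X_0]$ embeds as a subalgebra via constants, so you need no nilpotence estimate there; and (ii) the nontrivial direction follows cleanly from the observation that in $\Delta^{n-1}(\xi_I)$ at most $\dim\h_{\ol1}$ tensor factors are nonscalar while the rest evaluate any $\eta_j\in\m_x$ in ${_0\m_x}$, together with the fact that $\h_{\ol{0}}$ preserves ${_0\m_x}^d$ because $H_0$ fixes $x$.
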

\begin{proof}
	We have the following string of isomorphisms of $H$-modules:
	\begin{eqnarray*}
		\Dist(X_0^{(\h)},x)& \cong &\Gamma_{\m_x}( k[X_0^{(\h)}])^*\\
		& = &\Gamma_{\m_x}(\Coind_{\h_{\ol{0}}}^{\h} k[X_0])^*\\
		& \cong &\Gamma_{\m_x}\Ind_{\h_{\ol{0}}}^{\h}( k[X_0])^*\\
		& = &\Ind_{\h_{\ol{0}}}^{\h}\Gamma_{_0\m_x}( k[X_0])^*\\
		& = &\Ind_{\h_{\ol{0}}}^{\h}\Dist(X_0,x)
	\end{eqnarray*}
	The non-trivial step here is that we can move $\Gamma$ past $\Ind$.  To see this, first observe that
	\[
	\m_x=\{\eta\in\Hom_{\h_{\ol{0}}}(\UU\h, k[X_0]):\eta(1)\in {_0\m_x}\}.
	\]
	It follows that for any $d\in\N$, for $n\gg d$, if $\eta\in\m_x^n$, then $\eta(u)\in$ $_0\m_x^d$ for all $u\in\UU\h$, because $\h_{\ol{0}}$ preserves the ideal $_0\m_x$.  Thus we have a natural inclusion
	\[
	\Ind_{\h_{\ol{0}}}^{\h}\Gamma_{_0\m_x}( k[X_0])^*\sub \Gamma_{\m_x}\Ind_{\h_{\ol{0}}}^{\h}( k[X_0])^*.
	\]
	To show it is surjective, recall that we have an identification $k[X_0^{(\h)}]\cong S\h_{\ol{1}}^*\otimes k[X_0]$ obtained by choosing a basis $u_1,\dots,u_n$ of $\h_{\ol{1}}$ and taking monomials $u_I=u_{i_1}\cdots u_{i_k}$, where $I=\{i_1<\dots<i_k\}$.  Let $\xi_{J,f}\in S\h_{\ol{1}}^*\otimes k[X_0]$ such that $\xi_{J,f}(u_I)=f$ for some function $f\in k[X_0]$ which we will choose later.  Then the monomials $s(u_I)$, where $s$ is the antipode, also projects to a basis of $S\h_{\ol{1}}$ in the associated graded.  Thus an arbitrary element in $\Ind_{\h_{\ol{0}}}^{\h}( k[X_0])^*$ may be presented as
	\[
	\phi=\sum\limits_I s(u_I)\otimes \varphi_I.
	\]
	Now suppose that $\phi(\m_x^N)=0$; let $f\in _0\m_x^N$; then $\xi_{J,f}\in\m_x^r$ for some $r>N$, so that $\phi(\xi_J)=0$.  On the other hand
	\[
	\phi(\xi_J)=(\sum\limits_I s(u_I)\otimes \varphi_I)(f\xi_J)=\pm \varphi_J(f)=0
	\]
	Since $f$ and $J$ were arbitrary, it follows that $\varphi_I( _0\m_x^r)=0$ for all $I$, as desired.
\end{proof}

\subsection{Induced vector bundles} Let us extend the above framework to the context of vector bundles.  Note that this subsection can largely be skipped, as it is only used to prove the $\Ind-\Coind$ isomorphism in Section \ref{subsec_ind_coind}, which is already known via other methods.  See section 4.6 of \cite{sherman2020sphericalthesis} for the definition of $\h$-equivariant vector bundle.

Again assume that $X_0$ is affine, and suppose that $F_0$ is an $\h_{\ol{0}}$-equivariant vector bundle over $X_0$.  Then $\h_{\ol{0}}$ acts on $\Gamma(X_0,F_0)$ and satisfies, for $f_0\in k[X_0]$, $s_0\in\Gamma(X_0,F_0)$, and $u_0\in\h_{\ol{0}}$,
\[
u_0\cdot(f_0s_0)=u_0(f_0)s_0+f_0u_0(s_0),
\]
where the action of $u_0$ on $f_0$ is coming from the action on $k[X_0]$.  If $F_0$ is actually an $H_0$-equivariant vector bundle, then for $h\in H_0$ we have
\[
h\cdot (f_0s_0)=L_h^*(f_0)h\cdot(s_0).
\] 
Then we define an $\h$-equivariant vector bundle $(F_0)^{(\h)}$ over $X_0^{(\h)}$ to have the space of sections given by
\[
\Gamma(X_0^{(\h)},(F_0)^{(\h)}):=\Coind_{\h_{\ol{0}}}^{\h}\Gamma(X_0,F_0)=\Hom_{\h_{\ol{0}}}(\UU\h,\Gamma(X_0,F_0)).
\]	
Then this admits a natural $\h$-action by virtue of being a coinduced module, and if $F_0$ is $H_0$-equivariant, $(F_0)^{(\h)}$ will be $H$-equivariant.  The $k[X_0^{(\h)}]$-module structure is defined by
\[
(\varphi s)(u)=a_{0,F_0}^*\circ (\varphi\otimes s)(\Delta(u))
\]
where $a_{0,F_0}^*:k[X_0]\otimes\Gamma(X_0,F_0)\to\Gamma(X_0,F_0)$ is the action map.  We check that this makes sense: for $v_0\in\h_{\ol{0}}$, writing $\Delta(u)=\sum u_i\otimes u^i$,
\begin{eqnarray*}
	(\varphi s)(v_0u)& = &a_0\circ (\varphi\otimes s)((v_0\otimes1+1\otimes v_0)\Delta(u))\\
	& = &(-1)^{\ol{s}\ol{u_i}}(\varphi(v_0u_i)s(u^i)+\varphi(u_i)s(v_0u^i))\\
	& = &(-1)^{\ol{s}\ol{u_i}}((v_0\varphi)(u_i)s(u^i)+\varphi(u_i)(v_0s)(u^i))\\
	& = &v_0((\varphi s)(u)).
\end{eqnarray*}
It's also straightforward to show that for $v\in\h$ we have
\[
v(\varphi s)=v(\varphi)s+(-1)^{\ol{v}\ol{\varphi}}\varphi v(s),
\]
and (when applicable) for $h\in H_0$ we have
\[
h\cdot (\varphi s)=L_h^*(\varphi)h\cdot s.
\]
This construction is local in the sense that we have, for an open subvariety $U_0\sub X_0$,
\[
\Gamma(U_0,(F_0)^{(\h)})=\Coind_{\h_{\ol{0}}}^{(\h)}\Gamma(U_0,F_0)=\Hom_{\h_{\ol{0}}}(\UU\h,\Gamma(U_0,F_0)).
\]
From this it is not hard to show that $(F_0)^{(\h)}$ is an even rank vector bundle on $X_0^{(\h)}$ of the same rank as $F_0$. 

\begin{prop}\label{bundles_on_induced_space}  There is a natural, rank preserving bijection between $\h$-equivariant vector bundles on $X_0^{(\h)}$ and $\h_{\ol{0}}$-equivariant vector bundles on $X_0$, given explicitly as follows:
	\begin{itemize}
		\item given an $\h_{\ol{0}}$-equivariant vector bundle $F_0$ on $X_0$, we may produce the $\h$-equivariant vector bundle $(F_0)^{(\h)}$ on $X$; and
		\item given an $\h$-equivariant vector bundle $F$ on $X_0^{(\h)}$ we may take the $\h_{\ol{0}}$-equivariant vector bundle on $X_0$ gotten by pulling back along embedding $i:X_0\hookrightarrow X_0^{(\h)}$.
	\end{itemize}
This restricts to a bijection of $H$-equivariant vector bundles on $X_0^{(\h)}$ and $H_0$-equivariant vector bundles on $X_0$, when applicable.
\end{prop}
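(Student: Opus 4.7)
The plan is to verify that the two assignments $F_0 \mapsto (F_0)^{\h}$ and $F \mapsto i^*F$ are mutually inverse on objects, which would also give functoriality for free.

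First I would handle the easier round trip $F_0 \mapsto (F_0)^{\h} \mapsto i^*(F_0)^{\h}$. Pulling back the $k[(X_0)^{\h}]$-module $\Coind_{\h_{\ol{0}}}^{\h}\Gamma(X_0, F_0)$ along the closed embedding $i$ amounts to quotienting by the ideal $\m \subset k[(X_0)^{\h}]$ of functionals $\eta$ with $\eta(1) = 0$. The natural evaluation $\eta \mapsto \eta(1)$ descends to an $\h_{\ol{0}}$-equivariant map from this quotient to $\Gamma(X_0, F_0)$, and using the PBW decomposition $\UU\h \cong \UU\h_{\ol{0}} \otimes \Lambda^{\bullet}\h_{\ol{1}}$ as a left $\UU\h_{\ol{0}}$-module together with the coproduct formula defining the $k[(X_0)^{\h}]$-action, I would show this is an isomorphism of $\h_{\ol{0}}$-equivariant modules.

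For the other composition, given an $\h$-equivariant bundle $F$ on $(X_0)^{\h}$ with $F_0 := i^*F$, define
\[
\Phi : \Gamma((X_0)^{\h}, F) \longrightarrow \Coind_{\h_{\ol{0}}}^{\h}\Gamma(X_0, F_0), \qquad \Phi(s)(u) := i^*(u \cdot s),
\]
where $u \cdot s$ denotes the $\h$-action on sections. Routine verifications (using the coproduct identity in \cref{coind_alg_remark}) show that $\Phi(s)$ is $\h_{\ol{0}}$-equivariant, that $\Phi$ is $k[(X_0)^{\h}]$-linear, and that $\Phi$ intertwines the $\h$-actions on both sides.

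The main obstacle is proving $\Phi$ is a bijection, and my plan is to reduce to a local free situation. I would pick an affine open $U_0 \subset X_0$ over which $F_0$ is free of rank $r$ with basis $e_1, \dots, e_r$. Since $\h_{\ol{1}}$ acts on $F|_{(U_0)^{\h}}$ by everywhere non-vanishing odd vector fields and $F$ has even total rank $(r|0)$, the images of the $e_i$ under monomials in a basis of $\h_{\ol{1}}$ generate $F|_{(U_0)^{\h}}$ freely over $\OO_{(U_0)^{\h}}$ — and the evenness of rank is essential here, since it forbids these expansions from producing independent odd fiber directions at any point, so Nakayama at closed points of $X_0$ delivers freeness. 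On this trivialization $\Phi$ becomes the tautological identification $\OO_{(U_0)^{\h}}^{\oplus r} = \Coind_{\h_{\ol{0}}}^{\h}(\OO_{U_0}^{\oplus r})$, hence bijective. Since $F$ and $(F_0)^{\h}$ are sheaves on the same underlying space $X_0$, these local isomorphisms glue to a global one, and $H_0$-equivariance is preserved throughout when applicable.
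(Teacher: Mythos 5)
Your overall plan and the explicit formulas coincide with the paper's: both directions use evaluation at $1 \in \UU\h$ for $i^*(F_0)^{\h} \to F_0$ and $s \mapsto (u \mapsto i^*(us))$ for $F \to (i^*F)^{\h}$, and the first round trip is handled essentially the same way (the paper just notes it is a surjection of bundles of equal rank; your PBW/coproduct computation gives the same conclusion).

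The gap is in your bijectivity argument for $\Phi$. The claim that the images of lifts of $e_1,\dots,e_r$ under all $\h_{\ol{1}}$-monomials ``generate $F|_{(U_0)^{\h}}$ freely over $\OO_{(U_0)^{\h}}$'' cannot be right as written: that set has $r\cdot 2^{\dim\h_{\ol{1}}}$ elements, while $F|_{(U_0)^{\h}}$ is locally free of rank $r$ over $\OO_{(U_0)^{\h}}$, so no such set is an $\OO_{(U_0)^{\h}}$-basis unless $\h_{\ol{1}} = 0$. If you intended ``free over $\OO_{U_0}$'' the count is consistent, but that statement still rests on the fiber-level surjectivity you wave at with Nakayama without establishing, and you would still need to match the $\OO_{(U_0)^{\h}}$-module structures before calling the resulting comparison ``tautological.'' Also, the role of the even-rank hypothesis is not what you describe: it is not about ``forbidding odd fiber directions in the expansion.'' The point is simply that if $F$ had nonzero odd rank, then $i^*F$ would have nonzero odd part and so would fail to be a vector bundle on the ordinary variety $X_0$, leaving nothing to invert. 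The paper's route is shorter and avoids the trivialization detour entirely: since $\Phi$ is an $\OO_{(X_0)^{\h}}$-linear map between locally free sheaves of the same rank, it suffices to check it is an isomorphism on fibers at each closed point $x \in X_0$; there, using the first-direction isomorphism to identify the fiber of $(F_0)^{\h}$ at $x$ with the fiber of $F_0$ at $x$, and noting $\Phi(s)(1) = i^*s$, the fiber map is visibly the identity. Replacing your local-freeness paragraph with this fiber check would close the gap.
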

\begin{proof}
	If we start with an $\h_{\ol{0}}$-equivariant vector bundle $F_0$, then we have a natural surjection 
	\[
	\Gamma(X_0,i^*(F_0)^{(\h)})\to\Gamma(X_0,F_0)
	\]
	given by evaluating an element of $\Hom_{\h_{\ol{0}}}(\UU\h,\Gamma(X_0,F_0))$ at $1\in\UU\h$. It is an $\h_{\ol{0}}$-equivariant map, and since the vector bundles have the same rank, it must be an isomorphism.
	
	In the other direction, suppose we have an $\h$-equivariant bundle $F$.  Then we have a natural map of $\h$-equivariant vector bundles $F\to(i^*F)^\h$, given on sections
	\[
	\Gamma(X,F)\to\Hom_{\h_{\ol{0}}}(\UU\h,\Gamma(X,i^*F))
	\] 
	by
	\[
	s\mapsto (u\mapsto i^*(us))	
	\]
	To see this is an isomorphism, it suffices to look at the map on fibers, which are isomorphisms.
\end{proof}

\begin{prop}\label{dist_bundle_induced}
	Under the same hypothesis of Proposition \ref{dist_induce}, let $F_0$ be an $H_0$-equivariant vector bundle on $X_0$.  Then we have a natural isomorphism of $H$-modules
	\[
	\Dist((F_0)^{(\h)},x)\cong\Ind_{\h_{\ol{0}}}^{(\h)}\Dist(F_0,x)
	\]
\end{prop}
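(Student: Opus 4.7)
The plan is to mimic the proof of \cref{dist_induce} essentially verbatim, replacing the structure sheaf by the sheaf of sections of our vector bundle. Set $M_0 := \Gamma(X_0, F_0)$, viewed both as an $\h_{\ol{0}}$-module and as a $k[X_0]$-module, and $M := \Gamma((X_0)^{\h}, (F_0)^{\h}) = \Coind_{\h_{\ol{0}}}^{\h} M_0$, with its $k[(X_0)^{\h}]$-module structure given by the coproduct formula from the subsection on induced vector bundles. First I would assemble the chain
\[
\Dist((F_0)^{\h}, x) \cong \Gamma_{\m_x}(M)^* \cong \Gamma_{\m_x}(\Coind_{\h_{\ol{0}}}^{\h} M_0)^* \cong \bigl(\Coind_{\h_{\ol{0}}}^{\h}\Gamma_{_0\m_x} M_0\bigr)^* \cong \Ind_{\h_{\ol{0}}}^{\h} \Dist(F_0, x).
\]
The first isomorphism is the general identification $\Dist(\FF, x) \cong \Gamma_{\m_x}(\FF_x)^*$, applied in the affine setting with the stalk replaced by global sections; the second is the definition of $(F_0)^{\h}$; the fourth is \cref{coind_dual_ind_iso} applied to the $\h_{\ol{0}}$-module $\Gamma_{_0\m_x} M_0$, whose dual is $\Dist(F_0, x)$ by the analogous identification on $X_0$. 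Everything in sight is $H$-equivariant, so the composite is an isomorphism of $H$-modules.

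The substantive step is the middle identification $\Gamma_{\m_x}(\Coind_{\h_{\ol{0}}}^{\h} M_0) = \Coind_{\h_{\ol{0}}}^{\h}\Gamma_{_0\m_x} M_0$. I would argue exactly as in \cref{dist_induce}: the maximal ideal $\m_x$ of $k[(X_0)^{\h}] = \Coind_{\h_{\ol{0}}}^{\h} k[X_0]$ consists of those $\eta$ with $\eta(1) \in {_0\m_x}$, and because $x$ is $H_0$-fixed, both $_0\m_x \subseteq k[X_0]$ and $_0\m_x \cdot M_0 \subseteq M_0$ are $\h_{\ol{0}}$-stable. Using the coproduct formula for the $k[(X_0)^{\h}]$-action on $\Coind_{\h_{\ol{0}}}^{\h} M_0$, a straightforward induction shows that if $s \in M$ and $\eta_1, \dots, \eta_n \in \m_x$, then for any fixed $u \in \UU\h$ and any $d \in \N$, $(\eta_1 \cdots \eta_n \cdot s)(u) \in {_0\m_x^d} \cdot M_0$ once $n$ is sufficiently large (depending on $d$ and on the PBW degree of $u$). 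Thus an element $s \in M$ is annihilated by a power of $\m_x$ exactly when each $s(u) \in M_0$ is annihilated by a power of $_0\m_x$, giving the claimed identification.

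The main obstacle is nothing beyond the bookkeeping for this last commutation, but since the argument is word-for-word parallel to the one already given in \cref{dist_induce}, no genuinely new idea is required; the Leibniz identities driving the coproduct computation apply to the submodule $_0\m_x \cdot M_0 \subseteq M_0$ just as they apply to the ideal $_0\m_x \subseteq k[X_0]$, which is the only feature of the function case that was actually used.
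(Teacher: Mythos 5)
Your plan to mimic the proof of \cref{dist_induce} essentially verbatim with $M_0=\Gamma(X_0,F_0)$ in place of $k[X_0]$ is exactly the paper's approach --- the paper's own proof is literally the one-liner ``extends almost verbatim from \cref{dist_induce}'' --- and the displayed estimate $(\eta_1\cdots\eta_n\cdot s)(u)\in{_0\m_x^d}\cdot M_0$ for $n\gg0$ is correct and is precisely the key technical ingredient. However, the chain of isomorphisms as written has a parenthesization problem that makes the ``substantive step'' vacuous. Your middle term $\bigl(\Coind_{\h_{\ol{0}}}^{\h}\Gamma_{_0\m_x}M_0\bigr)^*$ applies $\Gamma_{_0\m_x}$ directly to $M_0$; but $M_0$ is a projective $k[X_0]$-module with no ${_0\m_x}$-torsion whenever $X_0$ has positive dimension, so $\Gamma_{_0\m_x}M_0=0$, and likewise $\Gamma_{\m_x}(\Coind_{\h_{\ol{0}}}^{\h}M_0)=0$ since $\Coind_{\h_{\ol{0}}}^{\h}M_0$ is a projective $k[(X_0)^\h]$-module. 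The stated identity $\Gamma_{\m_x}(\Coind M_0)=\Coind\Gamma_{_0\m_x}M_0$ is therefore just $0=0$, and $(\Gamma_{_0\m_x}M_0)^*$ is not $\Dist(F_0,x)$; the correct identification is $\Gamma_{_0\m_x}(M_0^*)=\Dist(F_0,x)$.

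The fix is to parse the (admittedly ambiguous) notation $\Gamma_{\m_x}(M)^*$ as $\Gamma_{\m_x}(M^*)$, the $\m_x$-torsion submodule of the full linear dual. One then dualizes first, converts $\Coind$ to $\Ind$ inside the $\Gamma$, and commutes $\Gamma$ past $\Ind$ rather than past $\Coind$:
\[
\Dist((F_0)^\h,x)\cong\Gamma_{\m_x}\bigl((\Coind_{\h_{\ol{0}}}^\h M_0)^*\bigr)\cong\Gamma_{\m_x}\bigl(\Ind_{\h_{\ol{0}}}^\h(M_0^*)\bigr)\cong\Ind_{\h_{\ol{0}}}^\h\Gamma_{_0\m_x}(M_0^*)=\Ind_{\h_{\ol{0}}}^\h\Dist(F_0,x),
\]
and this is the chain in \cref{dist_induce}. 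Your lemma is exactly what justifies the third isomorphism: an element $u\otimes\psi$ with $\psi({_0\m_x^d}M_0)=0$ pairs with $\eta_1\cdots\eta_n s$ via $\pm\psi\bigl((\eta_1\cdots\eta_n s)(S(u))\bigr)$, which vanishes for $n\gg0$ by your estimate, giving $\Ind\Gamma_{_0\m_x}(M_0^*)\subseteq\Gamma_{\m_x}(\Ind M_0^*)$; the converse is handled as in \cref{dist_induce}. So no new idea is needed --- only the duals need to be placed correctly.
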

\begin{proof}
	The proof extends almost verbatim from Proposition \ref{dist_induce}.
\end{proof}

\section{Induced spaces and homogeneous spaces}

\subsection{A local isomorphism for certain homogeneous supervarieties} Given a supergroup $G$ and a closed algebraic subgroup $K$, we may form the homogeneous supervariety $G/K$.  For technical aspects of such spaces see \cite{masuoka2018geometric} and \cite{masuoka2010quotient}.  In particular $G/K$ is always smooth and $(G/K)_0=G_0/K_0$, so that $G/K$ is affine if and only if $G_0/K_0$ is.  

Let $X=G/K$ be a homogeneous affine supervariety, and suppose that there exists a subgroup $H$ of $G$ such that $\h_{\ol{1}}\oplus\k_{\ol{1}}=\g_{\ol{1}}$.  Then consider the $H$-supervariety $(G_0/K_0)^{(\h)}$.  By its universal property (Proposition \ref{univ prop induced space}) it admits a canonical $H$-equivariant morphism to $G/K$.

\begin{prop}\label{local_iso}
	The canonical $H$-equivariant map $\phi:(G_0/K_0)^{(\h)}\to G/K$ induces an isomorphism of supervarieties in a Zariski open neighborhood of $eK_0$.  In particular, the map on functions
	\[
	\phi^{*}: k[G/K]\to k[(G_0/K_0)^{(\h)}]=\Hom_{\h_{\ol{0}}}(\UU\h, k[G_0/K_0])
	\]
	is an injective $H$-module homomorphism
\end{prop}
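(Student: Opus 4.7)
The plan is to verify that $d\phi_{eK_0}$ is an isomorphism of super tangent spaces and then upgrade this to a Zariski-local isomorphism using smoothness of both sides, with the injectivity statement following because the source is integral.

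First I would identify the relevant tangent spaces. Both $G/K$ and $(G_0/K_0)^{\h}$ are smooth supervarieties: the former by general results on homogeneous superspaces, the latter because it is Koszul-induced from a smooth variety and admits $\h_{\ol{1}}$ as everywhere non-vanishing odd vector fields (remark preceding \cref{bundles_on_induced_space}). The underlying even map $\phi_0$ is the identity on $G_0/K_0$ by the universal property used to define $\phi$, so on even tangent vectors $d\phi_{eK_0}$ is the identity on $\g_{\ol{0}}/\k_{\ol{0}}$. The odd tangent space of $(G_0/K_0)^{\h}$ at $eK_0$ is canonically $\h_{\ol{1}}$, while the odd tangent space of $G/K$ at $eK$ is $\g_{\ol{1}}/\k_{\ol{1}}$. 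Because $\phi$ is $H$-equivariant and sends $eK_0$ to $eK$, the map on odd tangent vectors is the composition of the inclusion $\h_{\ol{1}}\hookrightarrow\g_{\ol{1}}$ with the projection $\g_{\ol{1}}\twoheadrightarrow\g_{\ol{1}}/\k_{\ol{1}}$. The hypothesis $\h_{\ol{1}}\oplus\k_{\ol{1}}=\g_{\ol{1}}$ makes this a linear isomorphism.

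Next I would invoke the inverse function theorem for smooth supervarieties to conclude $\phi$ is a local isomorphism in a Zariski open neighborhood of $eK_0$. Since any smooth affine supervariety is (locally) of the form $\Lambda^{\bullet}E$ for a vector bundle $E$ on its even part, and since $\phi_0=\id$, the morphism $\phi$ corresponds (after reduction modulo higher symmetric terms) to a map of vector bundles on $G_0/K_0$ between two bundles of the same rank; by our computation this map is a fiberwise isomorphism at $eK_0$, and by semicontinuity of the rank of a vector bundle morphism it is an isomorphism on a Zariski open neighborhood of $eK_0$. This upgrades the tangent-level isomorphism to a Zariski-local isomorphism of the full supervarieties.

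For the final injectivity of $\phi^*\colon k[G/K]\to k[(G_0/K_0)^{\h}]$, let $U\subseteq G/K$ be a Zariski open neighborhood of $eK$ on which $\phi$ restricts to an isomorphism. If $f\in k[G/K]$ satisfies $\phi^*(f)=0$, then $f|_U=0$. Since $G/K$ is an integral supervariety (its underlying even variety $G_0/K_0$ is irreducible, and $k[G/K]$ is a Koszul induction of a domain), restriction to any nonempty open subvariety is injective on global functions, hence $f=0$. The map is $H$-equivariant by construction.

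The main obstacle I anticipate is the passage from ``tangent-level isomorphism at one point'' to ``Zariski-local isomorphism.'' In the super setting this requires slightly more care than the classical inverse function theorem because one must handle the exterior-algebra structure on $\OO_X$ simultaneously with the smoothness hypothesis; however, since $\phi_0$ is literally the identity, everything reduces to the semicontinuity argument for a morphism of vector bundles on $G_0/K_0$, which is straightforward.
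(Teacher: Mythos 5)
Your proposal is correct and takes essentially the same approach as the paper: compute the differential at $eK_0$ via the action of $\h$ as vector fields (using $\phi_0=\id$ on the even part, and the hypothesis $\h_{\ol{1}}\oplus\k_{\ol{1}}=\g_{\ol{1}}$ on the odd part), then upgrade to a Zariski-local isomorphism by passing to the associated-graded vector bundle morphism and using semicontinuity of its rank. The paper packages the upgrade step as a standalone lemma (a morphism of smooth affine supervarieties with $f_0$ an isomorphism and $df_x$ an isomorphism for all closed points is an isomorphism, applied after first shrinking to the open set where $\h_{\ol{1}}$ spans the odd tangent space), whereas you fold the semicontinuity into the argument directly; the content is the same.
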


First we need a lemma.

\begin{lemma}\label{lemma_for_iso}
	Suppose that $f:X\to Y$ is a morphism of smooth affine supervarieties such that $f_0:X_0\to Y_0$ is an isomorphism and $df_x$ is an isomorphism for all closed points $x\in X(k)$.  Then $f$ is an isomorphism. 
\end{lemma}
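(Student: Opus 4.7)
The plan is to reduce to showing that the induced map $f^{*}:k[Y]\to k[X]$ is an isomorphism of superalgebras, and to do this by passing to the associated graded for the filtration by powers of the ideal generated by the odd elements.

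First, since $f_0$ is an isomorphism, I would identify $X_0$ and $Y_0$ via $f_0$ and regard $f$ as a morphism of affine supervarieties sitting above the identity on $X_0=Y_0$. Let $J_X\sub k[X]$ and $J_Y\sub k[Y]$ denote the kernels of the canonical surjections $k[X]\onto k[X_0]$ and $k[Y]\onto k[Y_0]$; these are the ideals generated by the odd elements. Since $f^*$ induces $f_0^*$ on the quotients, $f^*(J_Y)\sub J_X$, so $f^*$ respects the $J$-adic filtrations. Because $X$ and $Y$ are smooth, each is locally isomorphic to an exterior algebra $\Lambda^\bullet E$ of a vector bundle on $X_0$ (\cite{voronov1990elements}), so the two filtrations terminate: $J_X^{N_X+1}=0$ with $N_X=\dim X_{\ol{1}}$, and similarly for $Y$.

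Next I would check that the induced map $\ol{f^{*}}:J_Y/J_Y^2\to J_X/J_X^2$ of $k[X_0]$-modules is an isomorphism. The key point is that for a closed point $x\in X_0(k)$, the fiber of $J_X/J_X^2$ at $x$ is canonically dual to the odd part $(T_xX)_{\ol{1}}$ of the tangent space, and similarly for $Y$. The hypothesis that $df_x$ is an isomorphism together with $f_0$ being an isomorphism forces the odd component of $df_x$ to be an isomorphism, which dualizes exactly to the statement that $\ol{f^{*}}$ is an isomorphism on the fiber at $x$. Since $J_X/J_X^2$ and $J_Y/J_Y^2$ are locally free $k[X_0]$-modules of finite rank (by smoothness), a map between them that is an isomorphism on every closed fiber is an isomorphism globally by Nakayama's lemma; in particular $\dim X_{\ol{1}}=\dim Y_{\ol{1}}$.

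Finally, taking the associated graded with respect to the $J$-adic filtrations, the map $\operatorname{gr}(f^{*}):\operatorname{gr}_{J}k[Y]\to\operatorname{gr}_{J}k[X]$ identifies (Zariski-locally on $X_0$) with $\Lambda^\bullet(\ol{f^{*}})$ between the two exterior algebras, so it is an isomorphism on every graded piece. Since both filtrations are finite, a standard induction on filtration degree then forces $f^{*}$ itself to be an isomorphism, which gives the lemma. The main obstacle is the middle step: making precise the canonical identification between the fiber of $J_X/J_X^2$ at a closed point and the dual of $(T_xX)_{\ol{1}}$, and checking carefully that the differential-iso hypothesis dualizes to exactly the fiberwise statement needed; once this is in hand, the first and last steps are formal.
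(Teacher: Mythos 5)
Your proposal is correct and follows essentially the same route as the paper: both pass to the associated graded of $k[X]$ and $k[Y]$ with respect to the filtration by the ideal of odd elements, use the tangent-space hypothesis at each closed point to see that the induced map in filtration degree one ($J_Y/J_Y^2 \to J_X/J_X^2$, equivalently on the bundles $E_{Y_0}\to E_{X_0}$) is an isomorphism, and then conclude that $\operatorname{gr}(f^*)$ and hence $f^*$ is an isomorphism. The only cosmetic difference is that the paper trivializes the bundles on a cover and writes down an explicit basis $\xi_1,\dots,\xi_n$, whereas you keep the bundle language and invoke Nakayama's lemma for the fiberwise-to-global step; the substance is identical.
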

\begin{proof}
Because $X$ and $Y$ are smooth and affine, we may present them as exterior algebras of vector bundles $E_{X_0}$, $E_{Y_0}$ on $X_0$, $Y_0$.  Working locally on an open cover, we may assume these vector bundles are trivial.  

Let $\xi_1,\dots,\xi_n\in \Gamma(Y_0,E_{Y_0})\sub k[Y]_{\ol{1}}$ be a $k[Y_0]$-basis for $\Gamma(Y_0,E_{Y_0})$ so these elements project to a basis of $(T_yY)_{\ol{1}}$ for each $y\in Y(k)$.  Then $f^*(\xi_1),\dots,f^*(\xi_n)\in k[X]_{\ol{1}}$ must project to a basis of $T_xX$ for all $x\in X(k)$.  It follows that $f^*(\xi_1),\dots,f^*(\xi_n)$ project to a $k[X_0]$-basis of $\Gamma(X_0,E_{X_0})$ in the associated graded of $k[X]$.  Hence the associated graded morphism of $f^*$ is an isomorphism, which implies that $f^*$ is an isomorphism, and we are done.
\end{proof}

\begin{proof}[Proof of Proposition \ref{local_iso}]
	Each space admits an action by $\h$ as vector fields acting on functions, and since $\phi^{*}$ is a $\h$-homomorphism we have 
	\[
	\phi^{*}(uf)=u\phi^{*}(f)
	\]
	for $u\in\h,f\in k[G/K]$.  Hence for any closed point $x$ of $G/K$ the following diagram is commutative:
	\[
	\xymatrix{\h\ar[d]\ar[dr]&\\
		T_x(G_0/K_0)^{(\h)} \ar[r]^{df_x} &T_x(G/K) }
	\]
	In particular, wherever $\h_{\ol{1}}$ spans the odd part of the tangent space of $G/K$ at $x$, $df_x$ will be an isomorphism of vector spaces when restricted to the odd part.  However, we see that $f_0^{*}$ is the identity map by construction, hence we get a commutative diagram:
	\[
	\xymatrix{(G_0/K_0)^{(\h)} \ar[r]^f & G/K\\
		G_0/K_0\ar[u]\ar[r]^{f_0=\id} & G_0/K_0.\ar[u]}
	\]
	If we restrict to the open affine neighborhood of $eK_0$ upon which $\h_{\ol{1}}$ spans the odd tangent space of $G/K$, $f$ will be an isomorphism by Lemma \ref{lemma_for_iso}.
\end{proof}

\begin{cor}\label{dist_iso}
Maintain the assumptions of Proposition \ref{local_iso}. and suppose further that $H_0\sub K_0$.  Then we have a canonical isomorphism of $H$-modules
	\[
	\Dist(G/K,eK)\cong\Ind_{\h_{\ol{0}}}^{\h}\Dist(G_0/K_0,eK_0).
	\]
	Or in terms of enveloping algebras,
	\[
	\UU\g/(\UU\g\k)\cong\Ind_{\h_{\ol{0}}}^{\h}\UU\g_{\ol{0}}/(\UU\g_{\ol{0}}\k_{\ol{0}}).
	\]
\end{cor}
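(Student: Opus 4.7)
The plan is to chain together Propositions 4.8 and 4.7. First, note that the hypothesis $H_0 \subseteq K_0$ guarantees that both $eK_0 \in G_0/K_0$ and $eK \in G/K$ are fixed by $H_0$, so by \cref{Dist_G_module} both $\Dist(G_0/K_0, eK_0)$ and $\Dist(G/K, eK)$ naturally carry $H$-module structures (and likewise for the induced superspace $(G_0/K_0)^{\h}$, whose underlying even variety is $G_0/K_0$ with the same base point).

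The first step is to invoke \cref{local_iso}: the canonical $H$-equivariant map $\phi\colon(G_0/K_0)^{\h}\to G/K$ is an isomorphism on a Zariski open neighborhood of $eK_0$. Since $\Dist(-, x)$ depends only on the completed local ring at $x$ (equivalently, only on any open neighborhood of $x$), the induced map
\[
d\phi_{eK_0}\colon \Dist\bigl((G_0/K_0)^{\h},eK_0\bigr)\;\longrightarrow\;\Dist(G/K,eK)
\]
is an isomorphism of filtered super vector spaces. Because $\phi$ is $H$-equivariant, the chain rule together with the definition of the $H$-action on distributions via $\UU\h\to \Gamma(-,\DD)^{op}$ (plus $H_0$-translation on the fiber, which is trivial here as $H_0$ fixes the base points) makes $d\phi_{eK_0}$ an isomorphism of $H$-modules.

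Next, I apply \cref{dist_induce} with $X_0=G_0/K_0$ and $x=eK_0$, which gives
\[
\Dist\bigl((G_0/K_0)^{\h},eK_0\bigr)\;\cong\;\Ind_{\h_{\ol{0}}}^{\h}\Dist(G_0/K_0,eK_0)
\]
as $H$-modules. Composing with $d\phi_{eK_0}$ yields the first claimed isomorphism.

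For the enveloping algebra reformulation, I use the standard identifications of distributions on a homogeneous space at the base point: $\Dist(G/K,eK)\cong \UU\g/(\UU\g\cdot\k)$ as filtered $\g$-modules (with $\g$ acting by left-invariant vector fields), and similarly $\Dist(G_0/K_0,eK_0)\cong \UU\g_{\ol{0}}/(\UU\g_{\ol{0}}\cdot\k_{\ol{0}})$. These identifications are immediate from \cref{diff_ops_G/K} combined with $\operatorname{res}_{eK}\colon \UU\g\to\Dist(G/K,eK)$ factoring through $\UU\g/(\UU\g\cdot\k)$ and being surjective by \cref{dist_symm_lemma} (using a complementary subspace to $\k$ in $\g$). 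Substituting these identifications into the first isomorphism and noting that $\Ind_{\h_{\ol{0}}}^{\h}(-)=\UU\h\otimes_{\UU\h_{\ol{0}}}(-)$ yields the second statement. The only subtle point, and the one I expect to require the most care, is verifying that the isomorphism $d\phi_{eK_0}$ really does intertwine the two $H$-actions—specifically that the action of $\h_{\ol{1}}$ on $\Dist((G_0/K_0)^{\h},eK_0)$ (coming from odd vector fields on the Koszul induced space) is transported to the action of $\h_{\ol{1}}$ on $\Dist(G/K,eK)$ (coming from the $G$-action on $G/K$); this follows directly from the $\h$-equivariance of $\phi^*$ spelled out in the proof of \cref{local_iso}.
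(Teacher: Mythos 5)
Your proof is correct and follows essentially the same approach as the paper: the paper's proof is a one-liner citing \cref{dist_induce} for the first statement (with the local isomorphism of \cref{local_iso} implicitly carried along via the hypothesis "maintain the assumptions"), and you have simply expanded the chain $\Dist(G/K,eK) \xleftarrow{\sim} \Dist((G_0/K_0)^{\h},eK_0) \cong \Ind_{\h_{\ol{0}}}^{\h}\Dist(G_0/K_0,eK_0)$ with the appropriate equivariance checks. Your extra care about $d\phi_{eK_0}$ intertwining the $H$-actions, and your use of \cref{dist_symm_lemma} to justify surjectivity of $\operatorname{res}_{eK}$ in the enveloping-algebra reformulation, are both legitimate details that the paper leaves implicit.
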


\begin{proof}
	The first statement follows from Proposition \ref{dist_induce}.  The second statement follows from the usual identification of $\UU\g$-modules
	\[
	\Dist(G/K,eK)\cong \UU\g/(\UU\g\k).
	\]
	which comes from the composition
	\[
	\UU\g\to\Gamma(G/K,\DD_{G/K})\xto{\operatorname{res}_{eK}}\Dist(G/K,eK).
	\]
\end{proof}

\subsection{Supersymmetric spaces}  From now on, we assume that $G$ is a connected supergroup, i.e. an affine algebraic supergroup such that $G_0$ is connected.  Let $\theta$ be an involution of $G$, and let $K$ be a closed subgroup of $G$ such that $(G^\theta)^0\sub K\sub G^\theta$.  In particular $K$ need not be connected.  From this we may consider the homogeneous supervariety $G/K$, and we call $G$-supervarieties of this form symmetric supervarieties (or supersymmetric spaces).

On the level of Lie superalgebras, $\theta$ induces an involution on $\g$, which by abuse of notation we again write as $\theta$, giving rise to the decomposition $\g=\k\oplus\p$, where $\k=\operatorname{Lie}K$ is the fixed subspace of $\theta$ and $\p$ the $(-1)$-eigenspace of $\theta$. 
\begin{definition}
	Define $\k':=\k_{\ol{0}}\oplus\p_{\ol{1}}$, which is the fixed points of the involution $\delta\circ\theta$, where $\delta(x)=(-1)^{\ol{x}}x$ is the grading operator on $\g$.  Let $K'$ denote the closed algebraic subgroup of $G$ such that $K'_0=K_0$ and $\operatorname{Lie}K'=\k'$.  
\end{definition} 
Notice that $\delta\circ\theta$ is an involution on $G$ such that $(G^{\delta\circ\theta})^{0}\sub K'\sub G^{\delta\circ\theta}$, hence $G/K'$ is another symmetric supervariety.

\begin{prop}\label{symm_space_local_iso}
	We have a $K'$-equivariant morphism
	\[
 (G_0/K_0)^{(\k')}\to G/K
	\]
	which is an isomorphism in a neighborhood of $eK_0$.  In particular, the pullback morphism of functions 
	\[
	 k[G/K]\to k[(G_0/K_0)^{(\k')}]=\Coind_{\k_{\ol{0}}}^{\k'} k[G_0/K_0]
	\]
	is injective.  
\end{prop}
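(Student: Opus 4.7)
The plan is to observe that this is essentially a direct application of \cref{local_iso} with $H = K'$, so the proof will be short and the only work is verifying hypotheses and interpreting the conclusion.

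First I would check the hypothesis of \cref{local_iso}. We need a subgroup $H \subseteq G$ with $\h_{\ol{1}} \oplus \k_{\ol{1}} = \g_{\ol{1}}$. Taking $H = K'$, we have $\k'_{\ol{1}} = \p_{\ol{1}}$ by definition, and the $\theta$-eigenspace decomposition $\g = \k \oplus \p$ restricts to the odd parts to give $\g_{\ol{1}} = \k_{\ol{1}} \oplus \p_{\ol{1}} = \k_{\ol{1}} \oplus \k'_{\ol{1}}$, as required. Note also $\k'_{\ol{0}} = \k_{\ol{0}}$, which is what allows the identification $\Coind_{\k_{\ol{0}}}^{\k'} = \Coind_{\k'_{\ol{0}}}^{\k'}$ in the statement.

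Next, I would invoke \cref{local_iso} directly: by the universal property of $(G_0/K_0)^{\k'}$ applied to the canonical inclusion $G_0/K_0 \hookrightarrow G/K$ (which is $K_0 = K'_0$-equivariant), we obtain a $K'$-equivariant morphism $\phi: (G_0/K_0)^{\k'} \to G/K$, and \cref{local_iso} tells us $\phi$ is an isomorphism on some Zariski open neighborhood $U$ of $eK_0$.

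Finally, I would deduce injectivity of the pullback $\phi^*: k[G/K] \to k[(G_0/K_0)^{\k'}]$. If $f \in k[G/K]$ satisfies $\phi^*(f) = 0$, then $f|_U = 0$ via the local isomorphism. Since $G_0$ is connected, $G_0/K_0$ is irreducible, so the underlying topological space of $G/K$ is irreducible; since $G/K$ is smooth and affine, its structure sheaf is locally the exterior algebra of a trivial vector bundle over an irreducible base, and a global section vanishing on a nonempty open subset must vanish identically. Hence $f = 0$. There is no real obstacle here; the content of the proposition is entirely supplied by \cref{local_iso}, and the new input is just the observation that $K'$ is the unique subgroup of $G$ with $K'_0 = K_0$ whose Lie superalgebra complements $\k_{\ol{1}}$ in $\g_{\ol{1}}$ while remaining a subalgebra, making it the natural choice for the role of $H$ in the symmetric space setting.
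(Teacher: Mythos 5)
Your proposal is correct and takes essentially the same approach as the paper: the paper's proof is simply ``This follows immediately from \cref{local_iso},'' which, after verifying $\k'_{\ol{1}}\oplus\k_{\ol{1}}=\p_{\ol{1}}\oplus\k_{\ol{1}}=\g_{\ol{1}}$, gives both the local isomorphism and the injectivity of $\phi^*$ (the latter is already part of the ``in particular'' clause of \cref{local_iso}, so your separate irreducibility argument for it is a harmless redundancy).
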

\begin{proof}
	This follows immediately from Proposition \ref{local_iso}.
\end{proof}
\begin{cor}\label{K'_invts}
	We have a natural injective morphism of algebras
	\[
	 k[K'\backslash G/K]= k[G/K]^{K'}\hookrightarrow  k[K_0\backslash G_0/K_0]= k[G_0/K_0]^{K_0}.
	\]
	In particular, $ k[K'\backslash G/K]$ is an integral domain.
\end{cor}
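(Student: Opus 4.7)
The plan is to deduce the statement from \cref{symm_space_local_iso}. That proposition produces an injective $K'$-equivariant algebra homomorphism
\[
\phi^{*}\colon k[G/K] \hookrightarrow \Coind_{\k_{\ol{0}}}^{\k'} k[G_0/K_0],
\]
so applying $(-)^{K'}$ preserves injectivity, and it remains to identify $(\Coind_{\k_{\ol{0}}}^{\k'} k[G_0/K_0])^{K'}$ with $k[G_0/K_0]^{K_0}$ as algebras.

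The main computation I would carry out is to show that evaluation at $1 \in \UU\k'$ furnishes that identification. Given $\eta \in \Hom_{\k_{\ol{0}}}(\UU\k', k[G_0/K_0])$, the $\k'$-action formula $(v\eta)(u) = (-1)^{\ol{v}(\ol{\eta}+\ol{u})}\eta(uv)$ shows that $\k'$-invariance is equivalent to $\eta$ vanishing on $\UU\k'\cdot\k'$, which by PBW is exactly the augmentation ideal of $\UU\k'$; hence $\eta$ is determined by $\eta(1)$. The coinduced-algebra product $(\eta\xi)(u) = m\circ(\eta\otimes\xi)(\Delta u)$ from \cref{coind_alg_remark}, together with $\Delta(1) = 1\otimes 1$, makes $\eta \mapsto \eta(1)$ an algebra map. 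The $\k_{\ol{0}}$-equivariance of $\eta$ combined with $\k'$-invariance forces $v_0\cdot\eta(1) = \eta(v_0) = 0$ for $v_0 \in \k_{\ol{0}}$, and the $K'_0 = K_0$-action formula $(h\eta)(1) = h\cdot\eta(1)$ places $\eta(1)$ in $k[G_0/K_0]^{K_0}$. Conversely, $f \mapsto \varepsilon(\cdot)f$ (where $\varepsilon\colon\UU\k'\to k$ is the augmentation) provides a $K'$-invariant inverse. Composing with $\phi^{*}$ then gives the desired injective algebra homomorphism.

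For the integral domain assertion, $G_0/K_0$ is connected because $G_0$ is, and it is smooth as a homogeneous space, hence irreducible; so $k[G_0/K_0]$ and therefore its subring $k[G_0/K_0]^{K_0}$ are integral domains, and $k[G/K]^{K'}$ embeds into the latter.

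The step demanding the most care is the identification of $K'$-invariants, since it requires juggling the sign conventions in the $\k'$-action, the $K_0$-action formula, the Hom-condition, and the coinduced-algebra product simultaneously. I do not expect any substantive obstacle beyond this bookkeeping, as everything beyond \cref{symm_space_local_iso} is formal manipulation of the coinduction construction from Section 4.
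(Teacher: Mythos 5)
Your proof is correct and follows essentially the same route as the paper: apply $(-)^{K'}$ to the injection from \cref{symm_space_local_iso} and then identify $\bigl(\Coind_{\k_{\ol{0}}}^{\k'}k[G_0/K_0]\bigr)^{K'}$ with $k[G_0/K_0]^{K_0}$ — the paper simply cites Frobenius reciprocity for this last identification, whereas you explicitly unpack it via the evaluation-at-$1$ map, and the bookkeeping you carry out (augmentation ideal, Hom-condition, $K_0$-action) is exactly what makes that black box work.
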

\begin{proof}
	Taking $K'$ invariants of the pullback morphism we obtain an injection
	\[
	 k[G/K]^{K'}\to \left(\Coind_{\k_{\ol{0}}}^{\k'} k[G_0/K_0]\right)^{K'}.
	\]
	Now one may use Frobenius reciprocity to identify $\left(\Coind_{\k_{\ol{0}}}^{\k'} k[G_0/K_0]\right)^{K'}$ with $ k[G_0/K_0]^{K_0}$ as an algebra.
\end{proof}

The following result, which now is proven easily from Corollary \ref{dist_iso}, is a generalization of the fundamental observation made in \cite{gorelik2000ghost}.
\begin{prop}\label{symm_space_dist}
We have a canonical isomorphism of $K'$-modules
\[
\Dist(G/K,eK)\cong\Ind_{\k_{\ol{0}}}^{\k'}\Dist(G_0/K_0,eK_0).
\]
\end{prop}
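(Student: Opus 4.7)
The plan is to recognize \cref{symm_space_dist} as an immediate application of \cref{dist_iso} to the subgroup $H = K'$ of $G$. Thus the work amounts to checking that the hypotheses of \cref{local_iso} together with the extra assumption $H_0 \subseteq K_0$ are satisfied in this setting.

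First I would verify the splitting hypothesis $\k'_{\ol{1}} \oplus \k_{\ol{1}} = \g_{\ol{1}}$. From the definition $\k' = \k_{\ol{0}} \oplus \p_{\ol{1}}$ we read off $\k'_{\ol{1}} = \p_{\ol{1}}$, and then the $\theta$-eigenspace decomposition $\g = \k \oplus \p$ gives on the odd part $\g_{\ol{1}} = \k_{\ol{1}} \oplus \p_{\ol{1}} = \k_{\ol{1}} \oplus \k'_{\ol{1}}$, as required. Next, $K'_0 = K_0$ by the very definition of $K'$, so the condition $H_0 \subseteq K_0$ in \cref{dist_iso} holds trivially (in fact with equality). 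Finally, since $\k'_{\ol{0}} = \k_{\ol{0}}$, the induction appearing in the conclusion of \cref{dist_iso} becomes $\Ind_{\k'_{\ol{0}}}^{\k'} = \Ind_{\k_{\ol{0}}}^{\k'}$, matching the statement to be proved.

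With all hypotheses verified, \cref{dist_iso} directly yields the canonical isomorphism of $K'$-modules
\[
\Dist(G/K,eK) \cong \Ind_{\k_{\ol{0}}}^{\k'}\Dist(G_0/K_0,eK_0).
\]
There is no real obstacle here: the substantive work has already been done in Section 4 via Koszul's induced superspace construction and the local isomorphism of \cref{symm_space_local_iso}. The only conceptual point worth noting is that the $K'$-module structure on the left-hand side is the one described in \cref{Dist_G_module}, coming from the fact that the base point $eK$ is fixed by $K'_0 = K_0$ together with the action of $\k'$ by vector fields, and this is precisely the structure produced by \cref{dist_iso} when specialized to $H = K'$.
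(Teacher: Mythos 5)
Your proposal is correct and follows exactly the paper's route: the paper states that \cref{symm_space_dist} ``is proven easily from \cref{dist_iso}'', and your argument spells out precisely how, by taking $H = K'$ and checking the required hypotheses $\k'_{\ol{1}} \oplus \k_{\ol{1}} = \g_{\ol{1}}$, $K'_0 = K_0$, and $\k'_{\ol{0}} = \k_{\ol{0}}$. There is nothing to add.
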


\section{The symmetric space $G/G_0$}

Consider the involution $\theta=\delta$, the canonical grading operator on $\g$ which is defined by $\delta(x)=-x$.  In this case $G^{\delta}=G_0$, and the local isomorphism of Proposition \ref{symm_space_local_iso} becomes a global isomorphism of $G$-supervarieties (both consisting of just one point):
\[
G/G_0\cong (G_0/G_0)^{\g}.
\]
\subsection{$\Ind$-$\Coind$ isomorphism}\label{subsec_ind_coind} The homogeneous space $G/G_0$ has one closed point, which we will call $e$.  Let $V$ be a $G$-equivariant vector bundle on $G/G_0$, and write $V$ again for its space of sections.  Since $\m_e$ is nilpotent, we have the identification $\Dist(V,e)=V^*$. 

A $G_0$-equivariant vector bundle on $G_0/G_0$ is the same data as a finite-dimensional $G_0$-representation $V_0$, and its sections are again $V_0$.  Then Proposition \ref{dist_bundle_induced} tells us that
\[
\Dist(V_0^{(\g)},e)=(\Coind_{\g_{\ol{0}}}^{\g}V_0)^*\cong\Ind_{\g_{\ol{0}}}^{\g}V_0^*,	
\]
which we already showed in Lemma \ref{coind_dual_ind_iso}.  

We will write $V_0^{(\g)}$ once again for the sections of $V_0^{(\g)}$ on $G/G_0$ since this space has only one point.  Since $D_{G/G_0}$ is generated by $\UU\g$ and $ k[G/G_0]$ as an algebra, $V_0^{(\g)}$ will be a $D_{G/G_0}$-module on $G/G_0$.  

Thus $\Dist(V_0^{(\g)},e)$ is a $D_{G/G_0}$-module, and, being finite-dimensional, must also be coherent.  By Proposition \ref{coherent_D_module_free}, $\Dist(V_0^{(\g)},e)$ must be a vector bundle on $G/G_0$, and is $G$-equivariant via its $\DD_{G/G_0}$-module structure.  Therefore by Proposition \ref{bundles_on_induced_space} there exists a $G_0$-equivariant vector bundle on $G_0/G_0$, that is a $G_0$-representation $W_0$, such that
\[
\Dist(V_0^{(\g)},e)\cong W_0^{(\g)}.
\]
By Proposition \ref{bundles_on_induced_space}, $W_0$ is obtained by taking the sections of the restriction of $V_0^{(\g)}$ to $G_0/G_0$, i.e.
\[
W_0\cong \Dist(V_0^{(\g)},e)/\m_e\Dist(V_0^{(\g)},e).
\]
Now let $n=\dim\g_{\ol{1}}$ so that $\m_e^{n+1}=0$ but $\m_e^n\neq 0$.  Then $\Dist(V,e)\cong(V_0^{(\g)}/\m_e^{n+1}V_0^{(\g)})^*$, and therefore
\[
(V_0^{(\g)}/\m_e^{n+1}V_0^{(\g)})^*/\m_e(V_0^{(\g)}/\m_e^{n+1}V_0^{(\g)})^*\cong (\m_e^nV_0^{(\g)})^*.
\]  
Now as a $G_0$-module,
\[
V_0^{(\g)}=\Hom_{\g_{\ol{0}}}(\UU\g,V_0)\cong \Hom(\Lambda\g_{\ol{1}},V_0)\cong \Lambda\g_{\ol{1}}^*\otimes V_0
\]
and $\m_e^{n}V$ sits inside as the $\g_{\ol{0}}$-submodule $\Pi^n\Lambda^n\g_{\ol{1}}^*\otimes V_0$.  Therefore, $W_0\cong\Pi^n\Lambda^n\g_{\ol{1}}\otimes V_0^*=\Ber(\g_{\ol{1}})\otimes V_0^*$.

Putting everything together and replacing $V_0$ by $V_0^*$, we have reproduced the following well-known result (see for instance section 9 of \cite{serganova2011quasireductive} for an algebraic proof):
\begin{prop}\label{ind-coind_iso}
	For a $G_0$-module $V_0$, we have a canonical isomorphism of $G$-modules 
	\[
	\Ind_{\g_{\ol{0}}}^{\g}V_0\cong\Coind_{\g_{\ol{0}}}^{\g}(\Ber(\g_{\ol{1}})\otimes V_0).
	\]
\end{prop}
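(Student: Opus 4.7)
The plan is to realize both sides as structures attached to the Koszul induced supervariety $G/G_0 \cong (G_0/G_0)^{\g}$, which has the unique closed point $e$, and then to exploit the uniqueness statement in \cref{bundles_on_induced_space} to identify a hidden $G_0$-module.

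First I would interpret $\Coind_{\g_{\ol{0}}}^{\g}V_0$ as the space of global sections of the $G$-equivariant vector bundle $(V_0)^{\g}$ on $G/G_0$. Since $\m_e$ is nilpotent on $G/G_0$, distributions at $e$ coincide with the full linear dual, so \cref{dist_bundle_induced} combined with \cref{coind_dual_ind_iso} yields
\[
\Dist((V_0)^{\g}, e) \cong \Ind_{\g_{\ol{0}}}^{\g} V_0^{*}
\]
as $G$-modules. This is the $\Ind$-side of the sought isomorphism.

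Next, because $D_{G/G_0}$ is generated by $\UU\g$ and $k[G/G_0]$, the space $\Dist((V_0)^{\g}, e)$ inherits a $D_{G/G_0}$-module structure, and being finite-dimensional it is coherent over $\OO_{G/G_0}$. \cref{coherent_D_module_free} then forces it to be locally free, hence a $G$-equivariant vector bundle of even rank. Applying \cref{bundles_on_induced_space} in reverse produces a unique $G_0$-representation $W_0$ with $\Dist((V_0)^{\g}, e) \cong (W_0)^{\g}$, recovered concretely as $W_0 \cong \Dist((V_0)^{\g}, e)/\m_e\Dist((V_0)^{\g}, e)$. This presents the same module as a $\Coind$, reducing the problem to identifying $W_0$.

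The final step is the explicit identification of $W_0$. Writing $n = \dim\g_{\ol{1}}$ and using the $G_0$-module decomposition $(V_0)^{\g} \cong \Lambda^{\bullet}\g_{\ol{1}}^{*} \otimes V_0$, the subspace $\m_e^{n}(V_0)^{\g}$ is precisely the top exterior piece $\Pi^n\Lambda^n\g_{\ol{1}}^{*} \otimes V_0$. Dualizing then gives $W_0 \cong \Ber(\g_{\ol{1}}) \otimes V_0^{*}$. Substituting yields $\Ind_{\g_{\ol{0}}}^{\g} V_0^{*} \cong \Coind_{\g_{\ol{0}}}^{\g}(\Ber(\g_{\ol{1}}) \otimes V_0^{*})$, and exchanging $V_0 \leftrightarrow V_0^{*}$, first for finite-dimensional $V_0$ and then by passing to direct limits in general, produces the stated isomorphism. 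The main obstacle I expect is the bookkeeping of the parity shift $\Pi^n$ and the $G_0$-character on $\Lambda^n\g_{\ol{1}}^{*}$ against the definition of $\Ber(\g_{\ol{1}})$ given in the Representations subsection, so that all sign conventions and the contribution of $\det_{\g_{\ol{1}}}^{-1}$ line up correctly.
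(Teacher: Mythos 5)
Your proposal reproduces the paper's argument essentially verbatim: the same chain $\Dist((V_0)^{\g},e)\cong\Ind_{\g_{\ol{0}}}^{\g}V_0^*$ via \cref{dist_bundle_induced} and \cref{coind_dual_ind_iso}, the same appeal to \cref{coherent_D_module_free} and \cref{bundles_on_induced_space} to realize the distribution module as $(W_0)^{\g}$, and the same identification $W_0\cong\Ber(\g_{\ol{1}})\otimes V_0^*$ via the top piece $\m_e^n(V_0)^{\g}\cong\Pi^n\Lambda^n\g_{\ol{1}}^*\otimes V_0$. The only addition is your remark about passing to direct limits for infinite-dimensional $V_0$, which is a sensible but minor technical note the paper leaves implicit.
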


\subsection{The invariant differential ghost operator} Now we study differential operators on $G/G_0$. By Proposition \ref{diff_ops_G/K}, the invariant differential operators are given by $G_0$-invariant distributions, i.e. $(\UU\g/(\UU\g)\g_{\ol{0}})^{G_0}$ which can be identified, via symmetrization, with $S(\g_{\ol{1}})^{G_0}$.  However, we also have $\Dist(G/G_0,e)\cong\Ind_{\g_{\ol{0}}}^{\g} k$ as a $\g$-module.  Therefore by Proposition \ref{ind-coind_iso} we have
\[
\Dist(G/G_0,e)\cong\Coind_{\g_{\ol{0}}}^\g(\Ber(\g_{\ol{1}}))
\]
Recall that $\ber_{\g}$ is the character of $G$ determined determined by $\Ber(\g)$.  If $V$ is a $G$-module, we write $V^{\ber_{\g}}$ for the subspace of eigenvectors for $G$ of weight $\ber_{\g}$. 

\begin{cor}\label{one_dim'l_semi_invts}
	Suppose that $\Lambda^{top}\g_{\ol{0}}$ is a trivial $G_0$-module.  Then the subspace $\Dist(G/G_0,e)^{\ber_{\g}}$ is one-dimensional.
\end{cor}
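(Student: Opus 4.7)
The plan is to combine the just-established isomorphism $\Dist(G/G_0,e)\cong\Coind_{\g_{\ol{0}}}^{\g}(\Ber(\g_{\ol{1}}))$ of $G$-modules with Frobenius reciprocity for coinduction. In the super setting this reciprocity reads, for a character $\chi\colon G\to\mathbb{G}_m$ and any $G_0$-module $M$,
\[
\Hom_G\bigl(k_\chi,\Coind_{\g_{\ol{0}}}^{\g}M\bigr)\cong\Hom_{G_0}\bigl(k_{\chi|_{G_0}},M\bigr),
\]
the map being composition with the evaluation-at-$1\in\UU\g$ morphism $\Coind_{\g_{\ol{0}}}^{\g}M\to M$. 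Applied with $\chi=\ber_{\g}$ and $M=\Ber(\g_{\ol{1}})$, this reduces the corollary to the assertion that $\Hom_{G_0}(k_{\ber_{\g}|_{G_0}},\Ber(\g_{\ol{1}}))$ is one-dimensional.

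From there I would simply match $G_0$-characters. Using the formula $\ber_V|_{G_0}=\det_{V_{\ol{0}}}\cdot\det_{V_{\ol{1}}}^{-1}$ recalled in Section 2, I compute $\ber_{\g}|_{G_0}=\det_{\g_{\ol{0}}}\cdot\det_{\g_{\ol{1}}}^{-1}$, and the hypothesis that $\Lambda^{top}\g_{\ol{0}}$ is a trivial $G_0$-module kills the first factor, leaving $\ber_{\g}|_{G_0}=\det_{\g_{\ol{1}}}^{-1}$. The same formula applied to the purely odd $G_0$-module $\g_{\ol{1}}$ yields $\ber_{\g_{\ol{1}}}|_{G_0}=\det_{\g_{\ol{1}}}^{-1}$, which is precisely the character by which $G_0$ acts on the one-dimensional line $\Ber(\g_{\ol{1}})$. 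The two characters agree, so the Hom space is exactly $k$.

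The only real subtlety is verifying the super version of Frobenius reciprocity above, as the target $M$ is only a $G_0$-module while the left-hand Hom requires $G$-equivariance of the full coinduced module. One must check both that evaluation-at-$1$ is a bijection on these Hom spaces and that the extension of a $G_0$-equivariant map $k_{\chi|_{G_0}}\to M$ integrates to a genuine $G$-action (not merely a $\g$-action) on the coinduced module. Both checks are formal but should be written out once; after that the character computation, and hence the corollary, is immediate.
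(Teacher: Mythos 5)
Your proof is correct and is essentially the paper's own argument written out in more detail: the paper likewise uses the identification $\Dist(G/G_0,e)\cong\Coind_{\g_{\ol{0}}}^{\g}\Ber(\g_{\ol{1}})$ together with Frobenius reciprocity, noting that under the hypothesis $\Ber(\g)=\Ber(\g_{\ol{1}})$ as $G_0$-characters, exactly as your $\det_{\g_{\ol{0}}}$ cancellation shows.
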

\begin{proof}
	Under this assumption, $\Ber(\g)=\Ber(\g_{\ol{1}})$, and so the result follows by Frobenius reciprocity.
\end{proof}
Observe that the conditions of Corollary \ref{one_dim'l_semi_invts} hold if $G_0$ is reductive.  When it exists, we write $v_{\g}\in\Dist(G/G_0,e)^{\ber_{\g}}$ for a chosen non-zero element.  If $\ber_{\g}=0$, i.e. $\Ber(\g_{\ol{1}})$ is a trivial $G_0$-module, then $v_{\g}\in\Dist(G/G_0,e)^{G}\sub\Dist(G/G_0,e)^{G_{0}}\cong D_{G/G_0}^G$. Therefore in this case $v_{\g}$ corresponds to a $G$-invariant differential operator on $G/G_0$, which we write as $D_{\g}$. 

\begin{definition}
	We say a supergroup $G$ is quasireductive if $G_0$ is reductive.  We say a Lie superalgebra $\g$ is quasireductive if it is the Lie superalgebra of a quasireductive supergroup.
\end{definition}

\textbf{Assumption}:  We assume for the rest of the section that $G$ is quasireductive and $\Ber(\g)=\Ber(\g_{\ol{1}})$ is the trivial $G_0$-module.  

Under this assumption, $v_{\g}$ corresponds, as we said, to a certain $G$-invariant differential operator $D_{\g}$.  We now determine what it is.

\begin{lemma}
	$D_{G/G_0}=\End_{ k}( k[G/G_0])$. 
\end{lemma}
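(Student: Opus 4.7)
The inclusion $D_{G/G_0}\subseteq \End_{k}(k[G/G_0])$ is tautological, so the content is the reverse. The key observation is that $G/G_0$ is a single (super)point: by the identification $G/G_0\cong (G_0/G_0)^{\g}$, the algebra of functions is the finite-dimensional local superalgebra
\[
k[G/G_0]\cong \Coind_{\g_{\ol 0}}^{\g}k\cong \Lambda\g_{\ol 1}^{*},
\]
with unique maximal ideal $\m_e$ satisfying $\m_e^{n+1}=0$ for $n=\dim \g_{\ol 1}$.

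Given this, the plan is to show that an arbitrary $D\in\End_{k}(k[G/G_0])$ automatically has finite differential order, in fact of order at most $2n$. Recall $D$ has order $\leq N$ iff every iterated super-commutator $[[\ldots[D,f_1],f_2],\ldots,f_{N+1}]=0$ for all homogeneous $f_i\in k[G/G_0]$. The standard expansion (by induction on $N$, using $[D,f]=D\circ L_f-(-1)^{\ol D\ol f}L_f\circ D$) writes this iterated commutator as a signed sum
\[
\sum_{S\subseteq\{1,\ldots,N+1\}}\pm\, L_{f_S}\circ D\circ L_{f_{S^c}},
\]
where $L_{f_I}$ denotes left multiplication by an ordered product of $\{f_i:i\in I\}$. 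Taking $N+1=2n+1$ and assuming all $f_i\in\m_e$, for every subset $S$ either $|S|\geq n+1$ or $|S^{c}|\geq n+1$; in either case the corresponding multiplication operator vanishes by $\m_e^{n+1}=0$, so the whole iterated commutator is zero. A general $f_i\in k[G/G_0]$ decomposes as a scalar plus an element of $\m_e$, and scalars super-commute with $D$, so the general case reduces to the case $f_i\in\m_e$. Hence $D\in D^{\leq 2n}_{G/G_0}$.

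The only real bookkeeping is getting the super-signs correct in the inductive commutator expansion, but since $\m_e$ is nilpotent of exponent $n+1$, the signs are ultimately irrelevant: any product of $\geq n+1$ elements of $\m_e$ in any order is zero. This is what I expect to be the most tedious part, but not a genuine obstacle. Combined with the trivial inclusion, this yields the equality $D_{G/G_0}=\End_{k}(k[G/G_0])$.
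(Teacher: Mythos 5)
Your proposal is correct. The paper's own ``proof'' is just the one-line assertion that the lemma ``easily follows from the definition of differential operators''; the argument you give---expanding the $(N{+}1)$-fold iterated supercommutator as a signed sum of terms $L_{f_S}\circ D\circ L_{f_{S^c}}$ and using that $\m_e^{n+1}=0$ forces one of the two multiplication factors to vanish once $N\geq 2n$---is exactly the standard way to make that assertion precise, and the reduction to $f_i\in\m_e$ via multilinearity and $[D,c]=0$ for scalars $c$ is sound.
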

\begin{proof}
	This easily follows from the definition of differential operators.
\end{proof}	

Recall that since $G_0$ is reductive, $ k[G/G_0]=\Coind_{\g_{\ol{0}}}^{\g} k$ is projective, and hence is a sum of injective indecomposable modules $I(L)$ for $L$ a simple $G$-module in the socle of $\Coind_{\g_{\ol{0}}}^{\g} k$.  We have that the trivial module, $ k$, shows up exactly once in $\Coind_{\g_{\ol{0}}}^{\g} k$, hence we can write:
\[
k[G/G_0]=\Coind_{\g_{\ol{0}}}^{\g} k=I(k)\oplus V.
\]
Since $\Ber(\g_{\ol{1}})$ is trivial, we have that $I(k)=P(k)$ (see \cite{serganova2011quasireductive}), and thus the head and tail of $I(k)$ are both the trivial module.   It follows that $\End(I(k))$ contains a unique up to scalar endomorphism $\phi$ taking the head to the tail, which is nilpotent exactly if $G$ does not have semisimple representations (i.e. $ k$ is not projective).

\begin{prop}
Up to a non-zero scalar, $D_{\g}$ is the endomorphism of $ k[G/G_0]=I(k)\oplus V$ given by $\phi\oplus 0_V$.
\end{prop}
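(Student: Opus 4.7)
The plan is to identify the distribution $v_\g$ abstractly as a $G$-module map $k[G/G_0]\to k$, and then use the inverse isomorphism in \cref{diff_ops_G/K} to read off $D_\g$ from it. Since $\ber_\g$ is trivial, $v_\g\in\Dist(G/G_0,e)^G$, which under the perfect pairing with $k[G/G_0]$ identifies with a nonzero element of $\Hom_G(k[G/G_0],k)$. For any simple $L\neq k$ the head of $I(L)$ is $L$, so $\Hom_G(I(L),k)=0$ and hence $v_\g$ vanishes on $V$. On the other hand $\Hom_G(I(k),k)$ is one-dimensional, spanned by the projection to the head of $I(k)$, which is $k$. Thus, up to scalar, $v_\g$ is the composition $k[G/G_0]\twoheadrightarrow I(k)\twoheadrightarrow\operatorname{head}(I(k))=k$.

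Next I would compute $D_\g$ using the formula $D_\psi(f)(g\cdot eG_0)=\psi(L_g^*f)$ extracted from the proof of \cref{diff_ops_G/K}, where $L_g$ denotes left multiplication by $g$. The decomposition $k[G/G_0]=I(k)\oplus V$ is into $G$-submodules, so $L_g^*$ preserves each summand. For $f\in V$, $L_g^*f\in V$ and hence $v_\g(L_g^*f)=0$, giving $D_\g|_V=0$. For $f\in I(k)$, $L_g^*f\in I(k)$, and its image in $\operatorname{head}(I(k))$ agrees with that of $f$ since the head is a trivial $G$-module. Consequently $v_\g(L_g^*f)=v_\g(f)$ is a scalar independent of $g$, so $D_\g(f)=v_\g(f)\cdot 1$ is a constant function.

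Since $\Ber(\g_{\ol{1}})$ is trivial, the constants $k\cdot 1\subseteq k[G/G_0]$ form the unique trivial submodule in the socle, which lies inside $\operatorname{socle}(I(k))$. Thus $D_\g|_{I(k)}$ is the composition $I(k)\twoheadrightarrow k\hookrightarrow I(k)$ from the head to the socle, which equals $\phi$ up to a nonzero scalar. Combined with $D_\g|_V=0$, this yields $D_\g=c(\phi\oplus 0_V)$ for some $c\in k^\times$. The argument is essentially a repackaging of facts already established; the only substantive inputs are that $v_\g$ must factor through the head (by $\Hom_G$-considerations) and that the head of $I(k)$ is trivial, which together force $D_\g(f)$ to depend only on the head component of $f$ and to land in the socle.
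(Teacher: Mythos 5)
Your proof is correct, but it takes a noticeably more explicit route than the paper's. The paper argues by uniqueness: it uses that $\operatorname{res}_e\colon D_{G/G_0}\to\Dist(G/G_0,e)$ is a right $D_{G/G_0}$-module map which is an isomorphism on $G_0$-invariants, observes that $\phi\oplus 0_V$ kills $\g$ from both sides (it factors through the head, on which $\g$ acts trivially, and lands in the socle, on which $\g$ also acts trivially), so that $\operatorname{res}_e(\phi\oplus 0_V)$ is $\g$-invariant, and then invokes one-dimensionality of the $\g$-invariant space to conclude $\operatorname{res}_e(\phi\oplus 0_V)$ is proportional to $v_\g$. You instead identify $v_\g$ concretely as (up to scalar) the composite $k[G/G_0]\twoheadrightarrow I(k)\twoheadrightarrow\operatorname{head}(I(k))=k$, and then compute $D_{\g}$ outright from the formula $D_{\psi}(f)=(\id\otimes\psi)\circ a^*(f)$, obtaining $D_{\g}(f)=v_{\g}(f)\cdot 1$. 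Both proofs hinge on exactly the same structural facts (the head and socle of $I(k)$ are trivial, and the trivial-isotypic part of $\Dist(G/G_0,e)^{G_0}$ is one-dimensional), so the mathematical content is the same; your version trades the abstract uniqueness step for an explicit evaluation of the operator, which arguably makes it more transparent what $D_\g$ does. One stylistic caveat: the evaluation ``$D_{\psi}(f)(g\cdot eG_0)=\psi(L_g^*f)$'' is phrased in terms of group elements, which is problematic for $G/G_0$ (which has only one $k$-point). It is cleaner to argue directly at the level of the coaction: $v_{\g}$ is $G$-equivariant onto the trivial module, so $(\id\otimes v_{\g})\circ a^*=a_k^*\circ v_{\g}$, and since the coaction on the trivial module is $c\mapsto c\cdot 1$, one gets $D_{\g}(f)=v_{\g}(f)\cdot 1$ immediately. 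You should also note explicitly that $D_{\g}\neq 0$ (which follows from $v_{\g}\neq 0$ and the injectivity of $\operatorname{res}_e$ on $G_0$-invariant operators) to justify ``$c\in k^\times$''.
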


\begin{proof}
	Since $D_{\g}$ is $G$-invariant by construction, it suffices to show that $\operatorname{res}_e(D_{\g})$ is $\g$-invariant.  However, we observe that $uD_{\g}=D_{\g}u=0$ for all $u\in\g$, so since $D_{G/G_0}\to\Dist(G/G_0,e)$ is right $D_{G/G_0}$-equivariant, we are done.
\end{proof}

The following is now easy to show:
\begin{cor}
For a $G_0$-module $V_0$, $D_{\g}$ acts on $\Coind_{\g_{\ol{0}}}^{\g} V_0$ by $\phi$ on each summand isomorphic to $I(k)$, and zero otherwise.
\end{cor}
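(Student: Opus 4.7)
The plan is to adapt the proof of the preceding Proposition, whose key ingredients were the $G$-invariance of $D_\g$ together with the identities $u D_\g = D_\g u = 0$ for all $u \in \g$ inside $D_{G/G_0}$, the latter following from $v_\g$ sitting in top filtration degree in $\Dist(G/G_0,e)$.  Since these are operator-theoretic identities, they persist when $D_\g$ acts on any $D_{G/G_0}$-module, in particular on $M := \Coind_{\g_{\ol{0}}}^{\g} V_0$, and they immediately force the action of $D_\g$ on $M$ to factor as
\[
M \twoheadrightarrow M/\g M \xrightarrow{\ \bar{D}_\g\ } M^\g \hookrightarrow M.
\]

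Next, under our standing hypotheses ($G_0$ reductive and $\Ber(\g_{\ol{1}})$ trivial), $\Coind V_0$ is projective-injective, so it admits a decomposition $M = \bigoplus_L I(L)^{\oplus m_L}$ into indecomposable injectives; since $I(L) = P(L)$ in this setting, each such $I(L)$ has head and socle both isomorphic to $L$.  For $L \neq k$ the simple module $L$ is non-trivial as a $\g$-module, so every trivial quotient and every trivial submodule of $I(L)$ vanishes; hence $I(L)/\g I(L) = 0 = I(L)^\g$.  For $L = k$ both spaces are one-dimensional.  Consequently $\bar{D}_\g$ is concentrated on the $I(k)$-isotypic block, and $D_\g$ kills every $I(L)^{\oplus m_L}$ with $L \neq k$, giving the ``zero otherwise'' half of the statement.

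The remaining task is to show that on the $I(k)^{\oplus m_k}$ block one can choose the decomposition so that $D_\g$ acts as $\phi$ on each summand, and this is where the main work will lie.  My approach would be to invoke naturality of the assignment $V_0 \mapsto D_\g|_{\Coind V_0}$ and reduce, via a Yoneda-type argument, to the base case $V_0 = k$ already handled by the preceding Proposition: Frobenius reciprocity identifies the $I(k)$-multiplicity space with $\Hom_{G_0}(I(k)|_{G_0}, V_0)$, and the endomorphism that $D_\g$ induces on this multiplicity space is natural in $V_0$, hence given by post-composition with a fixed $G_0$-endomorphism of $I(k)|_{G_0}$ pinned down by the case $V_0 = k$.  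The main obstacle is verifying this naturality step cleanly, which boils down to identifying the $G_0$-equivariant map $\Coind V_0 \to V_0$ associated to $D_\g$ via Frobenius reciprocity with the ``top form'' evaluation $s \mapsto s(\xi_1 \cdots \xi_n)$ coming from the explicit representative $v_\g = [\xi_1 \cdots \xi_n]$ in $\UU\g/\UU\g\g_{\ol{0}}$.
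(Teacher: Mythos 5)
The skeleton of your argument---factor $D_\g$ through $M/\g M \to M^\g \hookrightarrow M$ using the identities $uD_\g = D_\g u = 0$, observe that for $L\neq k$ both $I(L)/\g I(L)$ and $I(L)^\g$ vanish, and then pin down the action on the $I(k)$-block---is the right one, and since the paper leaves this corollary unproved ("is now easy to show") you are reconstructing real content. Two points need attention, though. First, the justification you offer for $uD_\g = D_\g u = 0$, namely that $v_\g$ sits in top filtration degree, is not on its own conclusive: since $\m_e^{n+1}=0$ for $n=\dim\g_{\ol 1}$ we already have $\Dist(G/G_0,e)=\Dist^n(G/G_0,e)$, so a right action by $u\in\g$ cannot ``escape'' the top filtered piece, and moreover the needed identity lives in $D_{G/G_0}$ rather than in $\Dist(G/G_0,e)$. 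It is cleaner simply to quote the identity as it is asserted in the proof of the preceding Proposition.

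Second, and more seriously, the identification of the $I(k)$-multiplicity space with $\Hom_{G_0}(I(k)|_{G_0},V_0)=\Hom_G(I(k),\Coind_{\g_{\ol 0}}^\g V_0)$ is wrong. Since $I(k)=P(k)$ is projective, $\dim\Hom_G(I(k),I(L))=[I(L):k]$, which is typically nonzero even for $L\neq k$ (the trivial module can occur in interior layers of $I(L)$), so this space strictly overcounts the $I(k)$-summands. The correct multiplicity space is $\Hom_G(k,\Coind_{\g_{\ol 0}}^\g V_0)=\Hom_{G_0}(k,V_0)=V_0^{G_0}$. With this fix your Yoneda step trivializes---a natural endomorphism of $V_0\mapsto V_0^{G_0}$ is precomposition with an element of $\End_{G_0}(k)=k$, hence a scalar---but at that point you no longer need Yoneda at all. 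For any $G_0$-map $f:V_0\to W_0$ the morphism $\Coind f$ is visibly a $k[G/G_0]$-module map and a $\g$-module map, hence a $D_{G/G_0}$-module map, so the assignment $V_0\mapsto(D_\g\text{ acting on }\Coind_{\g_{\ol 0}}^\g V_0)$ is an additive functor. Split $V_0=V_0^{G_0}\oplus V_0'$ using reductivity of $G_0$: the preceding Proposition handles each of the $\dim V_0^{G_0}$ copies of $\Coind_{\g_{\ol 0}}^\g k=I(k)\oplus V$, while your head/socle argument shows $D_\g=0$ on $\Coind_{\g_{\ol 0}}^\g V_0'$, which contains no $I(k)$ summand. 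This is almost certainly the argument the paper had in mind.
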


We have the following characterization of $v_{\g}$:
\begin{cor}
	Let $\g$ be a Lie superalgebra such that $\Lambda^{top}\g_{\ol{0}}$ is the trivial $G_0$-module, and write $v_{\g}$ for a non-zero element of $(\UU\g/\UU\g\g_{\ol{0}})^{\ber_{\g}}$.  Then $v_{\g}$ is the unique non-zero element up to scalar with the property that 
	\[
	uv_{\g}-\ber_{\g}(u)v_{\g}\in\UU\g\g_{\ol{0}}
	\]
	for all $u\in\g$.  
\end{cor}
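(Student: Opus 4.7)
The key observation is that the stated condition $uv_{\g}\in \UU\g\g_{\ol{0}}$ for all $u\in\g$ is precisely the statement that the class of $v_{\g}$ in $\UU\g/\UU\g\g_{\ol{0}}$ is killed by $\g$ under the left multiplication action. Since the earlier identification $\Dist(G/G_0,e)\cong \UU\g/\UU\g\g_{\ol{0}}$ turns the left $\UU\g$-action on distributions into left multiplication on the quotient, the condition is exactly $\g$-invariance of $v_{\g}$ as a distribution. So the plan is to show that $\g$-invariance in this module forces full $G$-invariance, and then invoke the one-dimensionality result already proved.

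First, I would note that $v_{\g}$ itself clearly satisfies the condition: by construction $v_{\g}\in\Dist(G/G_0,e)^{\ber_{\g}}$, and under the standing assumption $\Ber(\g)=\Ber(\g_{\ol{1}})$ is trivial, the character $\ber_{\g}$ is trivial, so $v_{\g}$ is $G$-invariant and in particular $\g$-invariant. Hence $u v_{\g}\in\UU\g\g_{\ol{0}}$ for every $u\in\g$.

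For uniqueness, suppose $w\in\UU\g$ has the property that $uw\in\UU\g\g_{\ol{0}}$ for all $u\in\g$. Then the class $\bar w\in\UU\g/\UU\g\g_{\ol{0}}\cong\Dist(G/G_0,e)$ is annihilated by $\g$ under the left action. Restricting to the even part, $\bar w$ is $\g_{\ol{0}}$-invariant, and since the paper's standing assumption is that $G_0$ is connected, $\g_{\ol{0}}$-invariance of $\bar w$ is equivalent to $G_0$-invariance. Combined with the $\g$-invariance we already have, $\bar w$ is $G$-invariant in the sense of $(G_0,\g)$-modules. Thus $\bar w\in\Dist(G/G_0,e)^G$, which by \cref{one_dim'l_semi_invts} (applied to the trivial character) is a one-dimensional space spanned by $v_{\g}$. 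Hence $\bar w$ is a scalar multiple of $v_{\g}$, proving uniqueness.

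The main (and essentially only) subtlety is the bookkeeping of sides: verifying that the left $\UU\g$-module structure on $\Dist(G/G_0,e)$ coming from the map $\UU\g\to\Gamma(G/G_0,\DD_{G/G_0})^{op}$ really does correspond to left multiplication on $\UU\g/\UU\g\g_{\ol{0}}$, so that ``$\g$ kills $v_{\g}$'' translates into the stated inclusion $\g v_{\g}\subseteq \UU\g\g_{\ol{0}}$. This follows from unwinding the identification in \cref{dist_iso} and is not a serious obstacle; everything else is a direct appeal to \cref{one_dim'l_semi_invts} and to the connectedness of $G_0$.
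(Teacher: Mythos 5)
Your proof is correct and follows what the paper evidently intends (the corollary is stated without proof, as an immediate consequence of the preceding material). The identification of the condition $uv_{\g}\in\UU\g\g_{\ol{0}}$ with $\g$-invariance under the left-multiplication action, the passage from $\g_{\ol{0}}$-invariance to $G_0$-invariance via connectedness of $G_0$, and the appeal to \cref{one_dim'l_semi_invts} for one-dimensionality of $\Dist(G/G_0,e)^G$ (using that $\ber_\g$ is trivial under the stated hypotheses) are all exactly the right ingredients, and your side-convention check on the $\UU\g/\UU\g\g_{\ol{0}}\cong\Dist(G/G_0,e)$ identification is the one genuine point to verify.
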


\subsection{Relation to Gorelik's element $v_{\emptyset}$}
\begin{prop}\label{invariants_ind_coind_iso}
	Let $\g$ be a Lie superalgebra such that $\Lambda^{top}\g_{\ol{0}}$ is the trivial $G_0$-module, so that $v_{\g}$ exists.  Then for a $G_0$-module $V_0$ we have a natural isomorphism
	\[
	(V_0)^{G_0}\to (\Ind_{\g_{\ol{0}}}^{\g}V_0)^{\ber_{\g}}
	\]
	given by
	\[
	z\mapsto v_{\g}z.
	\]
\end{prop}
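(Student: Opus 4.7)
The plan is to verify that the assignment $\Phi:z\mapsto v_{\g}\otimes z\in\Ind_{\g_{\ol{0}}}^{\g}V_0=\UU\g\otimes_{\UU\g_{\ol{0}}}V_0$ is well-defined on $V_0^{G_0}$ (I read the statement's $(V_0)^G$ as $V_0^{G_0}=V_0^{\g_{\ol{0}}}$, invoking connectedness of $G_0$), has image inside the $\ber_{\g}$-eigenspace, is injective, and is surjective. Well-definedness is immediate: any two lifts of $v_{\g}\in\UU\g/\UU\g\g_{\ol{0}}$ to $\UU\g$ differ by an element of $\UU\g\g_{\ol{0}}$, whose image in the induced module vanishes when applied to a $\g_{\ol{0}}$-invariant $z$. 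For the eigenspace condition, using that $v_{\g}$ is a $\ber_{\g}$-eigenvector of $\Ad(G_0)$ and that $g\cdot z=z$, one directly computes, for $g\in G_0$,
\[
g\cdot(v_{\g}\otimes z)=\Ad(g)(v_{\g})\otimes g\cdot z=\ber_{\g}(g)\,(v_{\g}\otimes z).
\]

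For injectivity I would use the PBW isomorphism $\UU\g\cong\Lambda\g_{\ol{1}}\otimes\UU\g_{\ol{0}}$ of right $\UU\g_{\ol{0}}$-modules, identifying $\Ind_{\g_{\ol{0}}}^{\g}V_0\cong\Lambda\g_{\ol{1}}\otimes V_0$ as a $k$-space; under this $\Phi(z)=\bar v_{\g}\otimes z$, where $\bar v_{\g}\in\Lambda\g_{\ol{1}}\cong\Dist(G/G_0,e)$ is the image of $v_{\g}$ under the natural projection. Nonvanishing of $v_{\g}$ forces $\bar v_{\g}\neq 0$ in $\Lambda\g_{\ol{1}}$, so $\Phi$ is injective. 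For surjectivity I would invoke \cref{ind-coind_iso} to rewrite
\[
\Ind_{\g_{\ol{0}}}^{\g}V_0\cong\Coind_{\g_{\ol{0}}}^{\g}(\Ber(\g_{\ol{1}})\otimes V_0)\cong\Coind_{\g_{\ol{0}}}^{\g}V_0,
\]
the last step using the hypothesis $\Ber(\g_{\ol{1}})\cong k$ as $G_0$-module. Frobenius reciprocity for coinduction then identifies $(\Coind_{\g_{\ol{0}}}^{\g}V_0)^{\ber_{\g}}$ with $V_0^{\g_{\ol{0}},\,\ber_{\g}}$, which in the ambient setting (quasireductive $G_0$, whence $\det_{\g_{\ol{0}}}$ is trivial and $\ber_{\g}$ restricts to the trivial $G_0$-character) coincides with $V_0^{G_0}$.

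The subtle step will be confirming that $\Phi$ actually realizes this abstract Frobenius isomorphism rather than a proper subinclusion. For finite-dimensional $V_0$ the dimensions of source and target agree, so the injective $\Phi$ is forced to be an isomorphism; for general $V_0$ one can reduce to the finite-dimensional case via naturality (both $V_0\mapsto V_0^{G_0}$ and $V_0\mapsto(\Ind_{\g_{\ol{0}}}^{\g}V_0)^{\ber_{\g}}$ are left exact in $V_0$, and $\Phi$ is a natural transformation between them), or more explicitly construct the inverse by reading off the coefficient of $\bar v_{\g}$ in the PBW expansion of an arbitrary eigenvector.
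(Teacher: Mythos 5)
Your plan (well-definedness, eigenspace membership, PBW injectivity, surjectivity via the $\Ind$--$\Coind$ isomorphism plus a dimension count) is the same circle of ideas the paper means by ``the work already done in this section,'' and your injectivity and dimension-count steps are sound. However, the verification that $\Phi(z)$ lies in $(\Ind_{\g_{\ol{0}}}^{\g}V_0)^{\ber_{\g}}$ is incomplete: you only check the $G_0$-part, $g\cdot(v_{\g}\otimes z)=\ber_{\g}(g)(v_{\g}\otimes z)$ for $g\in G_0$. To lie in the $\ber_{\g}$-eigenspace for the full supergroup $G$ you must also verify the odd part, namely $u\cdot\Phi(z)=\ber_{\g}(u)\Phi(z)=0$ for all $u\in\g_{\ol{1}}$ (the right side vanishing because $\ber_{\g}$ is an even character). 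Unwound in the induced module this says $u\tilde v_{\g}\otimes z=0$ for a lift $\tilde v_{\g}\in\UU\g$ of $v_{\g}$, and that is exactly where the nontrivial structural input enters: since $\ber_{\g}$ is trivial under the standing assumption, $v_{\g}$ spans $(\UU\g/\UU\g\g_{\ol{0}})^{G}$, i.e.\ $u\tilde v_{\g}\in\UU\g\g_{\ol{0}}$ for all $u\in\g$ --- this is precisely the characterization of $v_{\g}$ given in the corollary just before the proposition. Combined with the $\g_{\ol{0}}$-invariance of $z$, it kills $u\tilde v_{\g}\otimes z$. Without this step the target of $\Phi$ is not established and the dimension-count closing has nothing to compare; once you supply it, your argument is complete and matches the paper's intended route.
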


\begin{proof}
	This easily follows from the work in already done in this section.
\end{proof}
In \cite{gorelik2000ghost}, it was proven that if $\Ber(\g_{\ol{1}})$ is trivial then there exists an element $v_{\emptyset}\in\UU\g$ with the property that for a $\g_{\ol{0}}$-module $V$, the map
\[
z\mapsto v_{\emptyset}z
\]
defines an isomorphism 
\[
 V^{\g_{\ol{0}}}\to (\Ind_{\g_{\ol{0}}}^{\g}V)^{\g}.
\]
\begin{cor}
The element $v_{\g}$ agrees with the construction of such an element given by Gorelik in \cite{gorelik2000ghost}.	
\end{cor}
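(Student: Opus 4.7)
The plan is to deduce the agreement of $v_{\g}$ and Gorelik's $v_{\emptyset}$ from the one-dimensionality of the invariant space already established in \cref{one_dim'l_semi_invts}, since both elements are characterized by the same universal property modulo $\UU\g\g_{\ol{0}}$.

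First, I would apply Gorelik's defining property of $v_{\emptyset}$ to the trivial $\g_{\ol{0}}$-module $V = k$. This says precisely that $v_{\emptyset} \otimes 1$ is a non-zero $\g$-invariant in $\Ind_{\g_{\ol{0}}}^{\g} k \cong \UU\g/\UU\g\g_{\ol{0}}$, equivalently that the class $[v_{\emptyset}]$ lies in $(\UU\g/\UU\g\g_{\ol{0}})^{\g}$. Under the standing assumption of this section that $G$ is quasireductive and $\Ber(\g_{\ol{1}})$ is trivial as a $G_0$-module, the character $\ber_{\g}$ vanishes (the even part contributes trivially because the adjoint trace on reductive $\g_{\ol{0}}$ is zero, and the odd part contributes trivially by hypothesis), so that the $\ber_{\g}$-eigenspace in $\Dist(G/G_0, e)$ coincides with the subspace of $\g$-invariants. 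Thus both $[v_{\emptyset}]$ and $v_{\g}$ are non-zero elements of the same one-dimensional space and differ by a non-zero scalar.

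Finally, I would observe that the multiplication map $z \mapsto v \cdot z$ from $V^{\g_{\ol{0}}}$ into $\UU\g \otimes_{\UU\g_{\ol{0}}} V$ depends on $v \in \UU\g$ only through its class modulo $\UU\g\g_{\ol{0}}$: any term of the form $w x$ with $x \in \g_{\ol{0}}$ annihilates a $\g_{\ol{0}}$-invariant $z$ inside the tensor product. Consequently $v_{\g}$ implements, up to a non-zero scalar, the same isomorphism $V^{\g_{\ol{0}}} \xrightarrow{\sim} (\Ind_{\g_{\ol{0}}}^{\g} V)^{\g}$ as $v_{\emptyset}$, finishing the proof. The only step that requires genuine care---though not really an obstacle---is matching the character-eigenspace formulation of \cref{invariants_ind_coind_iso} with Gorelik's strict $\g$-invariance formulation; this match is elementary, but it relies essentially on the reductivity of $G_0$ to conclude that $\ber_{\g}$ vanishes.
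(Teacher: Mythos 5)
Your proof is correct and takes essentially the same route as the paper, which simply notes that Gorelik's element defines a nonzero class in $(\UU\g/\UU\g\g_{\ol{0}})^{\g}$ and appeals to the one-dimensionality of that space (itself a consequence of \cref{one_dim'l_semi_invts} together with the standing assumption that $\Ber(\g_{\ol{1}})=\Ber(\g)$ is trivial). You have merely spelled out the details the paper leaves implicit, including the observation that triviality of $\ber_{\g}$ turns the $\ber_{\g}$-eigenspace into the genuine $\g$-invariants, and that the map $z\mapsto vz$ depends only on the class of $v$ modulo $\UU\g\g_{\ol{0}}$.
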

\begin{proof}
	Gorelik's element has the property that it defined a nonzero element of $(\UU\g/\UU\g\g_{\ol{0}})^{\g}$, so we are done.
\end{proof}

\subsection{Computations of $v_{\g}$ for some Lie superalgebras}

We compute explicitly the element $v_{\g}\in\left(\UU\g/\UU\g\g_{\ol{0}}\right)^{\ber_{\g}}$ for certain quasireductive Lie superalgebras.  First, an easy example:
\begin{lemma}\label{v_for_almost_odd_abelian}
	If $[\g_{\ol{1}},\g_{\ol{1}}]$ is central in $\g$, then $v_{\g}=v_1\cdots v_n$ for any choice of basis $v_1,\dots,v_n$ of $\g_{\ol{1}}$.
\end{lemma}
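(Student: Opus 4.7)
The plan is to identify the image of $v_1\cdots v_n$ in $\UU\g/\UU\g\g_{\ol{0}}\cong \Dist(G/G_0,e)$ as a nonzero $\g$-semi-invariant of weight $\ber_{\g}$; since under the running assumptions of Section~6 such semi-invariants form a $1$-dimensional subspace, this will force $v_1\cdots v_n$ to equal $v_{\g}$ up to scalar. The key leverage is the hypothesis that $[\g_{\ol{1}},\g_{\ol{1}}]$ is central in $\g$: this makes all the commutator ``error terms'' appearing in any reshuffling of a product of elements of $\g_{\ol{1}}$ land in $\UU\g\cdot\g_{\ol{0}}$ once they are pushed to the right.

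First I would verify non-vanishing via PBW: under the $\g_{\ol{0}}$-module isomorphism $\UU\g/\UU\g\g_{\ol{0}}\cong \Lambda\g_{\ol{1}}$ coming from symmetrization, the class of $v_1\cdots v_n$ maps to a generator of $\Lambda^n\g_{\ol{1}}=\Ber(\g_{\ol{1}})$. Hence $v_1\cdots v_n\not\equiv 0\pmod{\UU\g\g_{\ol{0}}}$, and it is a weight vector for $\g_{\ol{0}}$ under the adjoint action, transforming by the character $-\ber_{\g_{\ol{1}}}$. Under the standing assumption that $\Ber(\g)$ and $\Lambda^{top}\g_{\ol{0}}$ are trivial $G_0$-modules, $\ber_{\g}$ and $-\ber_{\g_{\ol{1}}}$ agree (both are zero), so this is the correct weight.

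The main computation is to check that $u\cdot(v_1\cdots v_n)\equiv 0\pmod{\UU\g\g_{\ol{0}}}$ for every $u\in\g_{\ol{1}}$. By linearity it suffices to take $u=v_k$. Repeatedly applying $v_k v_i=-v_iv_k+[v_k,v_i]$, and using that each $[v_k,v_i]\in[\g_{\ol{1}},\g_{\ol{1}}]$ is central, the commutator terms take the form $v_1\cdots\hat{v_i}\cdots v_n\cdot[v_k,v_i]\in\UU\g\cdot\g_{\ol{0}}$. Therefore
\[
v_k v_1\cdots v_n\;\equiv\;(-1)^{k-1}\,v_1\cdots v_{k-1}v_k^{2}v_{k+1}\cdots v_n\pmod{\UU\g\g_{\ol{0}}},
\]
and $v_k^2=\tfrac12[v_k,v_k]\in\g_{\ol{0}}$ is central, so it can also be pushed to the right, landing the whole expression in $\UU\g\g_{\ol{0}}$. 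An essentially identical bookkeeping handles the adjoint action of $u\in\g_{\ol{0}}$: writing $[u,v_i]=\sum_j a_{ij}v_j$, the off-diagonal ($j\neq i$) contributions duplicate some $v_j$ and collapse into $\UU\g\g_{\ol{0}}$ by the same argument, while the diagonal sum produces $\tr_{\g_{\ol{1}}}(\ad u)\cdot v_1\cdots v_n$, which vanishes because $\ber_{\g_{\ol{1}}}=0$.

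The only real obstacle is careful bookkeeping of signs in the repeated transpositions and making sure the central correction terms really do land in the right ideal $\UU\g\g_{\ol{0}}$ rather than merely in the two-sided ideal generated by $\g_{\ol{0}}$. This is where centrality of $[\g_{\ol{1}},\g_{\ol{1}}]$ in all of $\g$ (not just in $\g_{\ol{0}}$) is crucial: without it, the commutators $[v_k,v_i]$ could not be moved past all the remaining $v_j$'s to the right. Once this is done, uniqueness of the $\ber_{\g}$-semi-invariant (\cref{one_dim'l_semi_invts}) closes the argument.
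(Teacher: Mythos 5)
Your proof is correct and takes essentially the same route as the paper: identify the class of $v_1\cdots v_n$ in $\UU\g/\UU\g\g_{\ol{0}}$, verify that $\g_{\ol{1}}$ annihilates it modulo $\UU\g\g_{\ol{0}}$ by pushing $v_k$ rightward and using centrality of $v_k^2$ and the supercommutators $[v_k,v_i]$ to dump the error terms into the right ideal, then invoke the one-dimensionality from \cref{one_dim'l_semi_invts}. The only difference is that the paper dispatches the $\g_{\ol{0}}$-equivariance and non-vanishing in one line by appeal to the $\Ber(\g)$-weight, while you spell out the PBW non-vanishing and the diagonal/off-diagonal bookkeeping for the $\ad(\g_{\ol{0}})$-action; this extra detail is harmless and correct.
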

\begin{proof}
	This element is acted on by $\g_{\ol{0}}$ according to its action on $\Ber(\g)$.  Observe that
	\[
	v_iv_{\g}=(-1)^{i-1}v_1\cdots v_i^2\cdots v_n+\sum\limits_{1\leq j<i}(-1)^{j-1}v_1\dots v_{j-1}[v_i,v_j]v_{j+1}\cdots v_n.
	\]
	Since both $[v_j,v_i]$ and $v_i^2$ are central, we may rewrite this sum as 
	\[
	(-1)^{i-1}v_1\cdots\widehat{v_i}\cdots v_nv_i^2+(-1)^{j-1}v_1\dots v_{j-1}\widehat{v_j}v_{j+1}\cdots v_n[v_j,v_i]\in\UU\g\g_{\ol{0}}.
	\]
\end{proof}

For type I algebras, we have the following:
\begin{prop}\label{v_g_for_type_I}
	Suppose that $\g$ is a quasireductive Lie superalgebra with a $\Z$-grading $\g=\g_{-1}\oplus\g_0\oplus\g_{1}$ such that $[\g_i,\g_j]\sub\g_{i+j}$, $\g_{\ol{0}}=\g_0$ and $\g_{\ol{1}}=\g_{-1}\oplus\g_{1}$.  Suppose further that for an odd weight $\alpha$ with respect to a Cartan subalgebra of $\g_{\ol{0}}$, $[\g_{\alpha},\g_{-\alpha}]$ acts trivially on $\Lambda^{top}\g_1$.
Let $v_1,\dots v_n$ be any basis of $\g_{-1}$ and $w_1,\dots,w_m$ any basis of $\g_1$.  Then
	\[
	v_\g=(v_1\cdots v_n)\cdot(w_1\cdots w_m).
	\]
\end{prop}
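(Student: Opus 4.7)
The plan is to show that $\xi := (v_1 \cdots v_n)(w_1 \cdots w_m)$ is a nonzero element of the one-dimensional $\ber_\g$-weight subspace of $\UU\g/\UU\g\g_{\ol{0}} \cong \Dist(G/G_0, e)$, whence $\xi = v_\g$ up to a nonzero scalar. This identification is automatically basis-independent, since any change of basis of $\g_{-1}$ or $\g_1$ alters $\xi$ only by a Berezinian factor.

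Non-vanishing of $\xi$ is immediate from PBW, since ordered products of distinct elements of $\g_{\ol{1}}$ descend to a basis of the quotient and $\xi$ is the top such product. For the weight property, I will compute $[h, \xi]$ for $h \in \g_{\ol{0}}$. The $\Z$-grading together with $\g_{\pm 2} = 0$ makes $\g_{-1}$ and $\g_1$ each abelian, so $v_i^2 = w_j^2 = 0$ in $\UU\g$, and the $v_i$'s (respectively the $w_j$'s) anticommute. Expanding $[h, v_1 \cdots v_n] = \sum_i v_1 \cdots [h, v_i] \cdots v_n$ with $[h, v_i] = \sum_l A_{li}\,v_l$, every term with $l \neq i$ duplicates an existing $v_l$ in the surrounding product and vanishes by $v_l^2 = 0$, leaving the diagonal sum $\tr_{\g_{-1}}(h)\,v_1 \cdots v_n$. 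The analogous identity for the $w$-product yields $[h, \xi] = \tr_{\g_{\ol{1}}}(h)\,\xi$, so $h\xi \equiv \tr_{\g_{\ol{1}}}(h)\,\xi \pmod{\UU\g\g_{\ol{0}}}$ after absorbing $\xi h \in \UU\g\g_{\ol{0}}$.

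The main obstacle is matching this $\g_{\ol{0}}$-character with $\ber_\g$. Under the section's standing assumption that $\Ber(\g) = \Ber(\g_{\ol{1}})$ is trivial as $G_0$-module, both $\tr_{\g_{\ol{1}}}$ and $\ber_\g = \tr_{\g_{\ol{0}}} - \tr_{\g_{\ol{1}}}$ vanish on $\g_{\ol{0}}$, so $\xi$ is $\g_{\ol{0}}$-invariant modulo $\UU\g\g_{\ol{0}}$ and lies in the one-dimensional $\ber_\g$-weight subspace of $\UU\g/\UU\g\g_{\ol{0}}$ (cf.\ \cref{one_dim'l_semi_invts}), which is spanned by $v_\g$. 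Combined with $\xi \neq 0$, this yields $\xi = v_\g$ up to a nonzero scalar. The proposition's hypothesis that $[\g_\alpha, \g_{-\alpha}]$ acts trivially on $\Lambda^{top}\g_1$ — automatically satisfied for type I basic Lie superalgebras via the isotropy of odd roots — enters to delineate the class of algebras for which this factorized formula is the natural one, and also underpins a direct verification of $\g_1$-annihilation (showing $w\xi \in \UU\g\g_{\ol{0}}$ for $w \in \g_\beta$ by commuting $w$ past the $v_i$'s and using $\tr_{\g_1}([w, v_{i_0}]) = 0$ for $v_{i_0} \in \g_{-\beta}$) in the style of \cref{v_for_almost_odd_abelian}, should one wish to establish full $\g$-invariance explicitly rather than via the weight-space argument above.
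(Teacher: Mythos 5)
Your weight-space argument has a fundamental gap: you conclude that $\xi$ lies in the one-dimensional $\ber_\g$-weight subspace $\Dist(G/G_0,e)^{\ber_\g}$ from the fact that $\g_{\ol{0}}$ acts on $\xi$ by $\ber_\g|_{\g_{\ol{0}}}$ modulo $\UU\g\g_{\ol{0}}$. But $\Dist(G/G_0,e)^{\ber_\g}$ is by definition the subspace where the full $G$ (equivalently the full $\g$, including $\g_{\ol{1}}$) acts by the character $\ber_\g$; since $\ber_\g$ vanishes on odd elements, this means $\g_{\ol{1}}$ must annihilate $\xi$ modulo $\UU\g\g_{\ol{0}}$. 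Checking only $\g_{\ol{0}}$-semi-invariance is strictly weaker and does not pin $\xi$ down. Already for $\g\l(1|1)$: in $\Lambda\g_{\ol{1}}=\Lambda\langle e,f\rangle$ both $1$ and $ef$ have $\g_{\ol{0}}$-weight $0=\ber_\g$, but only $ef$ is annihilated by $e$ and $f$, so only $ef$ lies in the genuine $\ber_\g$-weight space. Your argument as stated would equally well "prove" that $\operatorname{ev}_e=1$ equals $v_\g$ up to scalar, which is false.

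You do acknowledge the $\g_1$-annihilation check at the end, but you present it as an optional alternative to the weight-space argument, when in fact it is the essential content of the proof. Indeed, the paper's proof of this proposition consists of the easy observations that $\g_{\ol{0}}$ acts by the right character and that $\g_{-1}$ annihilates (by abelianness of $\g_{-1}$, a step you do not address at all), plus the substantive verification that $\g_1$ annihilates $\xi$ modulo $\UU\g\g_{\ol{0}}$. That verification is more delicate than your parenthetical sketch suggests: after commuting a weight vector $w\in\g_\alpha$ past the $v_i$'s, one gets terms involving $[w,v_j]\in\g_{\ol{0}}$, and one must separately handle the cases where $[w,v_j]\in[\g_\alpha,\g_{-\alpha}]$ (where the hypothesis on $\Lambda^{\mathrm{top}}\g_1$ and the Jacobi identity give $\alpha([\g_\alpha,\g_{-\alpha}])=0$), where $[[w,v_j],v_k]$ lands back in $\g_{-1}$ (a duplicate-factor argument), and where $[w,v_j]$ is nilpotent (so acts trivially on $\Lambda^{\mathrm{top}}\g_1$). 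The hypothesis about $[\g_\alpha,\g_{-\alpha}]$ acting trivially on $\Lambda^{\mathrm{top}}\g_1$ is not decorative — it is used in the crucial step — but it does not render the $\g_1$-check avoidable.
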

\begin{proof}
	Clearly $\g_{\ol{0}}$ acts on it as the top exterior power of $\g_{\ol{1}}$, and $\g_{-1}$ annihilates it.  Therefore it remains to show that $\g_1$ also annihilates it. Let $w\in\g_{1}$ be a weight vector, of weight $\alpha$ (note that $\alpha$ could be 0).  The above expression is seen to be independent of the choice of bases up to a nonzero scalar, so let us assume that $v_1,\dots,v_n$ are weight vectors and that $v_1,\dots,v_i$ are a basis of $\g_{-\alpha}$ (and if $\g_{-\alpha}=0$ then this condition is vacuous).   We see that (working up to $\UU\g\g_{\ol{0}}$)
	\begin{eqnarray*}
		w(v_1\cdots v_n)\cdot(w_1\cdots w_m)& = &\sum\limits_{j}(-1)^{j-1}v_1\cdots v_{j-1}[w,v_j]v_{j+1}\cdots v_nw_1\cdots w_m\\
		& = &\sum\limits_{j\leq i}(-1)^{j-1}v_1\cdots v_{j-1}[[w,v_j],v_{j+1}\cdots v_nw_1\cdots w_m]\\
		& + &\sum\limits_{j>i,k>j}(-1)^{j-1}v_1\cdots v_{j-1}v_{j+1}\cdots v_{k-1}[[w,v_j],v_k]v_{k+1}\cdots v_nw_1\cdots w_m\\
		& + &\sum\limits_{j>i}(-1)^{j-1}v_1\cdots v_{j-1}v_{j+1}\cdots v_n[[w,v_j],w_1\cdots w_m].
	\end{eqnarray*} 
In the first sum, we have that action of $[w,v_j]\in[\g_{\alpha},\g_{-\alpha}]$ on the vector $v_{j+1}\cdots v_nw_1\cdots w_m$ which has weight 
\[
j\alpha+\sum\limits_{\beta\in\Delta}\beta.
\]
By assumption, $\sum\limits_{\beta\in\Delta}\beta([\g_{\alpha},\g_{-\alpha}])=0$, and by the Jacobi identity we have $\alpha([\g_{\alpha},\g_{-\alpha}])=0$ as well.  Therefore this first term is zero.  

For the second sum, the weight of $[[w,v_j],v_k]$ is $\alpha+\alpha_j+\alpha_k$, where $\alpha_j$, resp. $\alpha_k$ is the weight of $v_j$, resp. $v_k$.  Since $\alpha\neq -\alpha_j,-\alpha_k$, we have that $\alpha+\alpha_j+\alpha_k\neq \alpha_j,\alpha_k$, and therefore $[[w,v_j],v_k]$ is either zero or a root vector in $\g_{-1}$ which we may assume already appears in the product $v_1\cdots v_{j-1}v_{j+1}\cdots v_{k-1}v_{k+1}\cdots v_n$, giving zero.

For the final sum, we know that $[w,v_j]$ is a nilpotent element of $\g_{\ol{0}}$ and thus acts trivially on $\Lambda^{top}\g_{1}$, so we once again get zero.
\end{proof}

\begin{cor}
	For $\g\l(m|n)$, $(\p)\s\l(m|n)$, $\o\s\p(2|2n)$, and $(\s)\p(n)$ the formula for $v_{\g}$ is given by Proposition \ref{v_g_for_type_I}.
\end{cor}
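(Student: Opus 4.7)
The plan is to verify the hypotheses of \cref{v_g_for_type_I} for each algebra in the list.  Each of $\g\l(m|n)$, $\s\l(m|n)$, $\p\s\l(n|n)$, $\o\s\p(2|2n)$, $\p(n)$, and $\s\p(n)$ is quasireductive and carries a standard type I $\Z$-grading $\g=\g_{-1}\oplus\g_0\oplus\g_1$: for the general and special linear cases $\g_0$ is the block-diagonal subalgebra and $\g_{\pm 1}$ are the off-diagonal blocks; for $\o\s\p(2|2n)$ the grading comes from the central $\g\l(1)\subset\o(2)$ summand of $\g_0=\g\l(1)\oplus\s\p(2n)$, with $\g_{\pm 1}\cong k^{2n}$ as $\s\p(2n)$-modules; and for the periplectic algebras $\g_0\cong\g\l(n)$ (resp.\ $\s\l(n)$) with $\g_1=S^2V$ and $\g_{-1}=\Lambda^2V^*$, where $V$ is the standard $\g_0$-module.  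This handles hypothesis~(1) of the proposition.

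For hypothesis~(2) I would check, case by case, that for every odd root $\alpha$ of the even Cartan $\h\subset\g_{\ol{0}}$ the one-dimensional subspace $[\g_\alpha,\g_{-\alpha}]\subset\h$ annihilates $\Lambda^{top}\g_1$.  All odd roots of the listed algebras are isotropic, so Jacobi already gives $\alpha([\g_\alpha,\g_{-\alpha}])=0$; it remains to verify that the character $\mu_1$ of $\Lambda^{top}\g_1$ also pairs trivially with each coroot $h_\alpha\in[\g_\alpha,\g_{-\alpha}]$.  For instance, in $\p(n)$ and $\s\p(n)$ one computes directly that for $\alpha=\epsilon_i+\epsilon_j$ with $i<j$, $h_\alpha=\pm(E_{jj}-E_{ii})$, which is paired to zero by $\mu_1=(n+1)\sum_k\epsilon_k$; in $\p\s\l(n|n)$ the coroots $E_{ii}+E_{n+j,n+j}$ lie in the kernel of $\mu_1=n(\sum_i\epsilon_i-\sum_j\delta_j)$ modulo the center $kI$; the remaining general linear and $\o\s\p(2|2n)$ cases follow by similar bookkeeping once one works in the appropriate Cartan after quotienting by any central directions.

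Once both hypotheses are verified, \cref{v_g_for_type_I} applies directly in each case, producing the claimed formula $v_\g=(v_1\cdots v_n)(w_1\cdots w_m)$.  The main obstacle is the uniform bookkeeping of coroots $h_\alpha$ and the weight $\mu_1$ across the different families; care is required when passing between $\g\l$, $\s\l$, and $\p\s\l$ (where one must work on the correct central quotient) and between $\p(n)$ and $\s\p(n)$, and in setting up the root data for $\o\s\p(2|2n)$ where the Cartan splits as a sum of the central $\g\l(1)$ and the $\s\p(2n)$-Cartan.
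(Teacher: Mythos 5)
Your strategy --- checking the hypotheses of \cref{v_g_for_type_I} case by case --- is the intended one, and the computations you carry out for $\p(n)$, $\s\p(n)$ and $\p\s\l(n|n)$ are correct. The problem is that the cases you dismiss with ``follow by similar bookkeeping'' are exactly the ones where the literal hypothesis~(2) of \cref{v_g_for_type_I} \emph{fails}. Take $\g\l(m|n)$ with $\g_1$ the upper off-diagonal block: the weight of $\Lambda^{top}\g_1$ is $n\sum_i\epsilon_i - m\sum_j\delta_j$, the coroot of the odd root $\alpha=\epsilon_a-\delta_b$ is $h_\alpha=E_{aa}+E_{m+b,m+b}$, and so the weight of $\Lambda^{top}\g_1$ evaluates on $h_\alpha$ to $n-m$, which is nonzero as soon as $m\neq n$. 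The same failure occurs for $\s\l(m|n)$ with $m\neq n$, and for $\o\s\p(2|2n)$, where the weight of $\Lambda^{top}\g_1$ is $2n$ times the character of the central $\g\l(1)\subset\g_0$ and pairs nontrivially with every isotropic coroot. A verification built on hypothesis~(2) as literally stated would therefore give a false negative on most of the list.

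What does hold uniformly --- and what the proof of \cref{v_g_for_type_I} actually uses --- is that the weight of $\Lambda^{top}\g_{\ol{1}}$ (rather than of $\Lambda^{top}\g_1$) vanishes on $[\g_\alpha,\g_{-\alpha}]$: the monomial $v_{j+1}\cdots v_n w_1\cdots w_m$ appearing in the first sum of that proof has weight $j\alpha$ plus the weight of $\Lambda^{top}\g_{\ol{1}}$, and the $j\alpha$ part already vanishes on the coroot by Jacobi. This corrected condition is easy to check across the whole list: for $\g\l(m|n)$, $(\p)\s\l(m|n)$ and $\o\s\p(2|2n)$ it holds outright because $\g_{-1}\cong\g_1^*$ as $\g_0$-modules, so $\Ber(\g_{\ol{1}})$ is trivial; for $\p(n)$ and $\s\p(n)$ the weight of $\Lambda^{top}\g_{\ol{1}}$ is $2\sum_k\epsilon_k$, which kills the traceless coroots $E_{jj}-E_{ii}$. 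So the corollary is correct, but you need to identify and verify the condition the proof actually relies on rather than the one stated, and the hand-waved cases are precisely where this distinction matters.
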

\begin{proof}
	It is straightforward to check the conditions of Proposition \ref{v_g_for_type_I} for these superalgebras.
\end{proof}

\begin{prop}\label{v_for_osp(1|2n)}
Let $\pm \delta_1,\dots,\pm\delta_n$ denote the odd roots of $\g=\o\s\p(1|2n)$ for the standard presentation of $\o\s\p(1|2n)$ as given for example in \cite{musson2012lie}.  Choose elements $u_1,\dots,u_n$ of weight $\delta_1,\dots,\delta_n$ and $v_1,\dots,v_n$ of weight $-\delta_1,\dots,-\delta_n$ such that if $h_i=[u_i,v_i]$, then $\delta_i(h_i)=1$.  Write $t_i=u_iv_i$.  Then we have
\[
v_{\g}=(1+t_1)(3+t_2)\cdots((2n-1)+t_n).
\]
\end{prop}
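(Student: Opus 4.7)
The plan is to use the uniqueness of $v_{\g}$ modulo scalars. For $\g = \o\s\p(1|2n)$ the odd weights $\pm\delta_1,\dots,\pm\delta_n$ sum to zero, so $\Ber(\g_{\ol{1}})$ is the trivial $G_0$-module, and by \cref{one_dim'l_semi_invts}, $(\UU\g/\UU\g\g_{\ol{0}})^G$ is one-dimensional. Since $\g_{\ol{0}} = [\g_{\ol{1}},\g_{\ol{1}}]$ for $\o\s\p(1|2n)$, this reduces the verification of the claim to showing that the proposed element $w_n := \prod_{i=1}^n ((2i-1) + t_i)$ is both nonzero in $\UU\g/\UU\g\g_{\ol{0}}$ and annihilated by $\g_{\ol{1}}$ modulo $\UU\g\g_{\ol{0}}$, i.e.\ $u_j w_n, v_j w_n \in \UU\g\g_{\ol{0}}$ for every $j$.

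Nonvanishing is immediate: the top term of $w_n$ in the PBW filtration is the ordered product $u_1 v_1 u_2 v_2 \cdots u_n v_n$, a PBW basis element of the quotient $\UU\g/\UU\g\g_{\ol{0}}$.

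The heart of the proof is the annihilation check, which I would attack by induction on $n$. The base case $n = 1$ is a short direct computation: using $[u_1^2, v_1] = [u_1, [u_1, v_1]] = [u_1, h_1] = -u_1$, one finds $u_1(1 + t_1) = u_1 + u_1^2 v_1 = v_1 u_1^2 \in \UU\g\g_{\ol{0}}$, with an analogous identity for $v_1$. For the inductive step I would write $w_n = w_{n-1}\bigl((2n-1) + t_n\bigr)$ and reduce $u_j w_n$ (and $v_j w_n$) to PBW form modulo $\UU\g\g_{\ol{0}}$ using the defining super-commutation relations: all brackets $[u_a, u_b]$, $[u_a, v_b]$, $[v_a, v_b]$ lie in $\mathfrak{sp}(2n) \subset \g_{\ol{0}}$, while brackets of even root vectors with $u_i, v_i$ return odd root vectors via the standard $\mathfrak{sp}(2n)$-module structure on $\g_{\ol{1}}$. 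After expansion and reordering, every term containing a trailing element of $\g_{\ol{0}}$ can be discarded, leaving only odd monomials whose coefficients must cancel. The specific coefficients $1, 3, 5, \dots, 2n-1$ are exactly those which make these residual odd monomials cancel telescopically, because each $2i-1$ is forced by the accumulated shift coming from $[u_i^2, v_i] = -u_i$ at the $i$-th stage. Once this is verified for $u_n, v_n$ and then for $u_j, v_j$ with $j < n$ (via the inductive hypothesis applied to $w_{n-1}$ combined with the super-commutators $[u_j, t_n] = [u_j, u_n]v_n - u_n [u_j, v_n]$), the induction closes.

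The main obstacle I anticipate is the combinatorial bookkeeping in the inductive step: the expansion of $u_j w_n$ produces many terms involving $\mathfrak{sp}(2n)$-brackets between distinct indices, and one must track carefully that all non-cancelling odd-monomial contributions do vanish modulo $\UU\g\g_{\ol{0}}$. An alternative route that sidesteps some of this bookkeeping is to use the $\g_{\ol{0}}$-module isomorphism $\UU\g/\UU\g\g_{\ol{0}} \cong \Lambda(\g_{\ol{1}})$: under this, the $\mathfrak{sp}(2n)$-invariants are spanned by powers of the symplectic form $\omega = \sum_i u_i \wedge v_i$, and one may instead compute the image of $w_n$ as a specific polynomial in $\omega$ and then pin down the unique $\g$-invariant combination. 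Either way, the remaining difficulty is organizational rather than conceptual.
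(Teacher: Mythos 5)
Your overall strategy --- reduce to uniqueness of $v_\g$ up to scalar, verify the element is nonzero via its PBW leading term, then check that $u_j$ and $v_j$ kill it mod $\UU\g\g_{\ol{0}}$ --- matches the paper's, and your base-case identity $u_1(1+t_1)=v_1u_1^2$ (from $[u_1^2,v_1]=[u_1,h_1]=-u_1$) is exactly the computation the paper performs. The divergence, and the gap, is in how you handle $u_j w_n$ for general $j$.

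You correctly anticipate the obstacle: expanding $u_j w_n$ produces cross-terms $[u_j,u_m]$, $[u_j,v_m]$ for $m\neq j$, and tracking their cancellation is a genuine mess. What you have written is a plan, not a proof --- the claim that ``the coefficients $1,3,\dots,2n-1$ are exactly those which make the residual odd monomials cancel telescopically'' is asserted, not demonstrated, and your acknowledgment that this is ``the main obstacle'' is an admission that the argument is incomplete at precisely the hard step. The paper avoids the entire issue by invoking a result of Djokovic--Hochschild (\cite{djokovic1976semisimplicity}, \S 4): the element $(1+t_{\sigma(1)})(3+t_{\sigma(2)})\cdots(2n-1+t_{\sigma(n)})$ is independent of the permutation $\sigma$ modulo $\UU\g\g_{\ol{0}}$. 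With that permutation-invariance in hand, one may always slide the factor $(1+t_j)$ into the leftmost slot before applying $u_j$ or $v_j$; then $u_j(1+t_j)=v_ju_j^2$, and $u_j^2$, $v_j^2$, $h_j$ commute with every $t_m$ ($m\neq j$) because $2\delta_j\pm\delta_m$ is never an odd root, so they slide to the right into $\UU\g\g_{\ol{0}}$. The cross-terms you were worried about never appear. Your proposal is missing this permutation-symmetry input, and without it or an equivalent trick the induction does not close. Your alternative route via $\Lambda(\g_{\ol{1}})$ and powers of the symplectic form $\omega$ is plausible in spirit but is likewise only a sketch --- identifying the unique $\g$-invariant combination of the $\omega^k$ is essentially the same computation in disguise.
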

\begin{proof}
	This is in fact proven in section 4 of \cite{djokovic1976semisimplicity}.  There they prove that the above element is equal, mod $\UU\g\g_{\ol{0}}$, to
	\[
(1+t_{\sigma(1)})(3+t_{\sigma(2)})\cdots(2n-1+t_{\sigma(n)}).
	\]
	for any permutation $\sigma$. Now 
	\[
	u_i(1+t_i)=-v_iu_i^2, \ \ \ \ \ v_i(1+t_i)=-u_iv_i^2+v_ih_i.
	\]
	Since $u_i^2,v_i^2,$ and $h_i$ commute with $t_j$ for $j\neq i$, we may move them all the way to the right and obtain elements of $\UU\g\g_{\ol{0}}$.  
\end{proof}
It would be interesting to obtain formulas for $v_{\g}$ when $\g=\o\s\p(m|2n)$ with $m>2$, $\g=\q(n)$, or when $\g$ is exceptional simple.  Explicit formulas are important in computing the image of ghost distributions under the Harish-Chandra homomorphism, which we discuss later.  

\subsection{Semisimplicity criteria}

We give a brief application of the ideas above.

\begin{thm}\label{semisimplicity_criteria}
	Let $G$ be a quasireductive supergroup.  Then the following are equivalent:
	\begin{enumerate}
		\item The category $\operatorname{Rep}(G)$ of representations of $G$ is semisimple;
		\item $\Ber(\g_{\ol{1}})$ is trivial and $D_{\g}$ is not nilpotent.
		\item $\Ber(\g_{\ol{1}})$ is trivial and $\varepsilon(v_{\g})\neq0$, where $\varepsilon$ is the counit on $\UU\g$ (and is well-defined on $\UU\g/\g_{\ol{0}}\UU\g$).
	\end{enumerate}
\end{thm}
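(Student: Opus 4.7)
The plan is to reduce all three conditions to the single criterion that $\Ber(\g_{\ol{1}})$ is trivial and $I(k) = k$, from which the equivalences follow straightforwardly.

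First I would prove (1) is equivalent to this criterion. For quasireductive $G$, semisimplicity of $\operatorname{Rep}(G)$ is equivalent to the trivial module $k$ being projective, since the natural isomorphism $\Ext^1_G(M,N) \cong H^1(G, M^* \otimes N)$ reduces vanishing of all $\Ext^1$ to vanishing of $\Ext^1_G(k,-) = H^1(G,-)$. Using the standard identification $P(L) = I(L \otimes \Ber(\g_{\ol{1}}))$ from \cite{serganova2011quasireductive}, the condition $P(k) = k$ forces $I(k) = k$ and, by comparing socles, $\Ber(\g_{\ol{1}})$ trivial.

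Next I would prove (2) is equivalent to the same criterion. Given $\Ber(\g_{\ol{1}})$ trivial, the earlier discussion shows $D_{\g}$ acts as $\phi \oplus 0_V$ on $k[G/G_0] = I(k) \oplus V$, so $D_{\g}$ is nilpotent if and only if $\phi$ is. Since $I(k)$ is finite-dimensional and indecomposable, $\End_G(I(k))$ is local, so $\phi$ is either nilpotent or an automorphism. The description of $\phi$ as the composition $I(k) \twoheadrightarrow \operatorname{head} = k \hookrightarrow \operatorname{socle} \hookrightarrow I(k)$ makes clear that $\phi$ is an automorphism exactly when $I(k) = k$.

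Finally, to prove (2) $\Leftrightarrow$ (3), I would use the identity $\varepsilon(v_{\g}) = D_{\g}(1)(e)$, which follows from $v_{\g} = \operatorname{res}_e(D_{\g})$ together with the observation that $\UU\g$ acts on the constant function $1 \in k[G/G_0]$ through the counit: every $u \in \g$ annihilates constants as a derivation, and an easy induction then gives $u \cdot 1 = \varepsilon(u) \cdot 1$ for all $u \in \UU\g$. The constant function is $G$-invariant, hence lies in the trivial socle of $k[G/G_0]$, which sits inside the $I(k)$ summand by uniqueness of the trivial simple in the socle. If $I(k) = k$ then $\phi$ is a nonzero scalar and $D_{\g}(1) = \phi \cdot 1 \neq 0$; if $I(k) \neq k$ then the socle is contained in the kernel of the projection to the head, so $\phi(1) = 0$ and $\varepsilon(v_{\g}) = 0$. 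The main step requiring care is the head/tail bookkeeping for $I(k)$ and the standard reduction of semisimplicity to projectivity of $k$, both of which are essentially pinned down by the preceding discussion.
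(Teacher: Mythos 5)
Your proof is correct and follows essentially the same route as the paper: both reduce the three conditions to the single criterion that $I(k)=k$ (equivalently $k$ splits off $k[G/G_0]$) together with triviality of $\Ber(\g_{\ol{1}})$, using the decomposition $k[G/G_0]=I(k)\oplus V$ and the description of $D_{\g}$ as $\phi\oplus 0_V$. The only difference is granularity — the paper compresses $(2)\iff(3)$ to ``clear'' and cites semisimplicity $\iff$ projectivity of $k$ without justification, whereas you spell out the head/socle bookkeeping in $I(k)$ and the reduction via $\Ext^1_G(M,N)\cong H^1(G,M^*\otimes N)$.
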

Note that the condition that $G$ be quasireductive is necessary in order for $\operatorname{Rep}(G)$ to be semisimple.
\begin{proof}
The equivalence $(3)\iff(2)$ is clear, so we show $(2)\iff(1)$.  Since $G$ is quasireductive, $ k[G/G_0]=\Coind_{\g_{\ol{0}}}^{\g} k$ is projective.  Since $\operatorname{Rep}(G)$ is semisimple if and only if the trivial module $ k$ is projective, it is equivalent to show that $ k$ splits off of $ k[G/G_0]$.   For this it is equivalent that $D_{\g}(1)\neq0$, which gives $(2)\iff(1)$.  
\end{proof}

It is well-known, going back to a result of Djokovic and Hochschild in \cite{djokovic1976semisimplicity}, that if $G$ is a connected algebraic supergroup such that $\operatorname{Rep}(G)$ is semisimple, then $G\cong K\times SOSp(1|2n_1)\times\dots\times SOSp(1|2n_k)$, where $K$ is a reductive Lie group.  Using Theorem \ref{semisimplicity_criteria} and Proposition \ref{v_for_osp(1|2n)} one can obtain a simple proof of this statement, and this has been carried out in \cite{sherman2020two}.

\section{General symmetric space $G/K$}


We come back to the general case of symmetric supervarieties $G/K$.  For the rest of the article we assume that $G$ is quasireductive, so that the connected component of the identity of $K$ is quasireductive, and in particular $K_0$ has semisimple representation theory. 

\subsection{Ghost distributions $\AA_{G/K}$}
We know by Proposition \ref{symm_space_dist} that
\[
\Dist(G/K,eK)\cong \Ind_{\k_0}^{\k'}\Dist(G_0/K_0,eK_0)
\]
as $K'$-modules.  Since $K_0$ has semisimple representation theory, $\Dist(G_0/K_0,eK_0)$ is a sum of finite-dimensional $K_0$-modules.  Hence by Proposition \ref{ind-coind_iso} we have that
\[
\Ind_{\k_0}^{\k'}\Dist(G_0/K_0,eK_0)\cong\Coind_{\k_0}^{\k'}(\Dist(G_0/K_0,eK_0)\otimes \Ber(\k'))
\]
where we have used that $\UU\k'$ is a finite rank, free left $\UU\k_0$-module.  Now we have
\begin{eqnarray*}
\Dist(G/K,ek)^{\ber_{\k'}}& \cong &(\Coind_{\k_0}^{\k'}(\Dist(G_0/K_0,eK_0)\otimes \Ber(\k'))^{\ber_{\k'}}\\
                      & \cong &(\Dist(G_0/K_0,eK_0)\otimes\Ber(\k'))^{\ber_{\k'}}\\
                      & \cong &\Dist(G_0/K_0,eK_0)^{K_0}
\end{eqnarray*}

Write $v_{\k'}$ for an element of $\UU\k'$ which projects to a nonzero element of $(\UU\k'/\UU\k'\k_{\ol{0}})^{\ber_{\k'}}$. Then by Proposition \ref{invariants_ind_coind_iso} we have that
\begin{prop}
	The isomorphism
	\[
	\eta:\Dist(G_0/K_0,eK_0)^{K_0}\to \Dist(G/K,eK)^{\ber_{\k'}}
	\]
	is given by
	\[
	z\mapsto v_{\k'}\cdot z.
	\]	
\end{prop}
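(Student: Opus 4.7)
The plan is to deduce this proposition by combining the two results already established: the isomorphism of Proposition \ref{symm_space_dist},
\[
\Dist(G/K,eK) \cong \Ind_{\k_{\ol{0}}}^{\k'}\Dist(G_0/K_0,eK_0),
\]
as $K'$-modules, together with Proposition \ref{invariants_ind_coind_iso} applied to the Lie superalgebra $\k'$ in place of $\g$. This substitution is legitimate because $(\k')_{\ol{0}} = \k_{\ol{0}}$, and $v_{\k'}$ plays exactly the role for $\k'$ that $v_{\g}$ played for $\g$ in the earlier proposition.

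First I would take $V_0 := \Dist(G_0/K_0,eK_0)$, viewed as a $K_0$-module (it is a sum of finite-dimensional $K_0$-modules by semisimplicity of $\mathrm{Rep}(K_0)$, so Proposition \ref{invariants_ind_coind_iso} applies). That proposition, applied to $\k'$, produces an isomorphism
\[
\Dist(G_0/K_0,eK_0)^{K_0} \xrightarrow{\ \sim\ } \bigl(\Ind_{\k_{\ol{0}}}^{\k'}\Dist(G_0/K_0,eK_0)\bigr)^{\ber_{\k'}},\qquad z \mapsto v_{\k'}\cdot z,
\]
where $v_{\k'}\cdot z$ means left multiplication in the induced module.

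Then I would compose this with the $K'$-module isomorphism of Proposition \ref{symm_space_dist}. Since the latter is an isomorphism of $K'$-modules, it is automatically $\UU\k'$-equivariant, and hence restricts to an isomorphism of $\ber_{\k'}$-eigenspaces that intertwines left multiplication by $v_{\k'}$ on both sides. Under the standard identification $z \leftrightarrow 1\otimes z$ between $\Dist(G_0/K_0,eK_0)$ and the $\UU\k_{\ol{0}}$-generating subspace of the induced module, the formula $z\mapsto v_{\k'}\cdot z$ transfers verbatim to $\Dist(G/K,eK)$, yielding the claimed description of $\eta$.

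The only point requiring any care is making sure the action of $v_{\k'}\in\UU\k'$ on $\Dist(G/K,eK)$ (coming from $\k'$ acting as vector fields via the $K'$-action on $G/K$) matches the left multiplication action on the induced side; but this is built into the statement of Proposition \ref{symm_space_dist} and so is not really an obstacle. Thus the proposition reduces to a bookkeeping composition of two already-proven isomorphisms.
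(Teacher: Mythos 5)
Your proposal is correct and is essentially the paper's own argument: the paper likewise combines \cref{symm_space_dist} with \cref{invariants_ind_coind_iso} (applied to $\k'$), merely inserting an explicit Frobenius-reciprocity step to first identify the $\ber_{\k'}$-eigenspace of the induced module with $\Dist(G_0/K_0,eK_0)^{K_0}$ before invoking the multiplication-by-$v_{\k'}$ description. Your streamlined version, applying \cref{invariants_ind_coind_iso} directly (noting $(\k')_{\ol{0}}=\k_{\ol{0}}$ and that $\Dist(G_0/K_0,eK_0)$ is a sum of finite-dimensional $K_0$-modules so the proposition applies), reaches the same map by the same mechanism.
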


\begin{remark}
	Recall we have an identification $\Dist(G_0/K_0,eK_0)^{K_0}\cong D^{G_0}(G_0/K_0)$, and this algebra is identified with $S(\a)^{W_{\a}}$, where $\a\sub\p_{\ol{0}}$ is a Cartan subspace and $W_{\a}$ is the little Weyl group associated to the symmetric space.
\end{remark}

Stated in terms of enveloping algebras, we have shown:

\begin{cor}
	We have an isomorphism
	\[
	(\UU\g_{\ol{0}}/\UU\g_{\ol{0}}\k_{\ol{0}})^{K_0}\to(\UU\g/\UU\g\k)^{\ber_{\k'}}
	\]
	given by $\phi(z)=v_{\k'}z$.
\end{cor}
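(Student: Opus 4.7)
The plan is to deduce the corollary directly from the preceding proposition by transporting everything through the canonical identifications between distributions at a basepoint and appropriate quotients of enveloping algebras, which are already in use throughout the paper.

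First I would invoke the standard isomorphism of left $\UU\g$-modules
\[
\UU\g/\UU\g\k \xrightarrow{\sim} \Dist(G/K, eK),
\]
arising by composing the natural map $\UU\g \to \Gamma(G/K, \DD_{G/K})$ with $\operatorname{res}_{eK}$; this identification was exactly the one invoked at the end of the proof of \cref{dist_iso}. The same construction applied to the underlying even variety gives $\UU\g_{\ol{0}}/\UU\g_{\ol{0}}\k_{\ol{0}} \cong \Dist(G_0/K_0, eK_0)$.

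Next I would check that these isomorphisms are equivariant for the relevant actions. On the distribution side, $K'_0 = K_0$ fixes $eK$ and acts on $\Dist(G/K, eK)$ by translation, while $\k'$ acts via the vector fields coming from the $K'$-action on $G/K$; on the enveloping algebra side, $K_0$ acts by the adjoint action (which preserves $\k$ since $K_0 \subseteq K$, and so descends to the quotient), and $\k' \subseteq \g$ acts by left multiplication. Matching the $K_0$-translation action on distributions with the $K_0$-adjoint action on $\UU\g/\UU\g\k$ is the one routine but slightly fiddly compatibility—this is the main potential obstacle—and it rests on the fact that the basepoint $eK$ is fixed by $K_0$, so that translation and conjugation differ only by a quantity which kills the defining ideal of $eK$, hence both actions agree modulo $\UU\g\k$.

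With equivariance established, taking $K_0$-invariants on the even side and $\ber_{\k'}$-eigenspaces on the super side transports the isomorphism $\eta$ of the preceding proposition into the claimed isomorphism
\[
(\UU\g_{\ol{0}}/\UU\g_{\ol{0}}\k_{\ol{0}})^{K_0} \xrightarrow{\sim} (\UU\g/\UU\g\k)^{\ber_{\k'}}.
\]
The explicit formula $z \mapsto v_{\k'} z$ persists because the identification $\UU\g/\UU\g\k \cong \Dist(G/K, eK)$ is a map of left $\UU\g$-modules, so left multiplication by $v_{\k'}$ in $\UU\g/\UU\g\k$ corresponds precisely to left multiplication by $v_{\k'}$ on distributions, which is the definition of $\eta$.
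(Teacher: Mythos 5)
Your proposal is correct and matches what the paper does: the paper treats the corollary as an immediate translation of the preceding proposition (``Stated in terms of enveloping algebras, we have shown:'') using the identification $\UU\g/\UU\g\k \cong \Dist(G/K,eK)$, which is exactly the route you spell out. Your filling-in of the equivariance check (that the $K_0$-adjoint action on $\UU\g/\UU\g\k$ matches the $K_0$-translation action on $\Dist(G/K,eK)$, because $eK$ is fixed) is the correct unstated ingredient, and the observation that left multiplication by $v_{\k'}$ is preserved under the $\UU\g$-module isomorphism is the right way to carry the explicit formula across.
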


\begin{definition}
	We define $\AA_{G/K}$ to be $\Dist(G/K,eK)^{\ber_{\k'}}$, and refer to elements of $\AA_{G/K}$ as ghost distributions on $G/K$.  We will often use the letter $\gamma$ to denote such a distribution.
\end{definition}
\begin{remark}[\textbf{Caution}]\label{A_notation_caution}
	In \cite{gorelik2000ghost}, $\AA$ is used to denote the $G'$ invariants in $\Dist(G\times G/G,eG)$ as we will see later on.  However in our notation, $\AA_{G\times G/G}$ denotes the $\ber_{\g'}$ semi-invariants of $G'$ acting on $\Dist(G\times G/G,eG)$.  Thus these will agree only when $\Ber(\g)$ is the trivial module.
	
  In Section \ref{ghost_centre_section}, we will introduce another object, $\AA_{\phi}$, which is a subspace of $\UU\g$ that is invariant under a certain twisted adjoint action depending on an automorphism $\phi$.  For this notation, we have that $\AA=\AA_{\delta}$.
\end{remark}

\subsection{Module structure of $\AA_{G/K}$}
Write $\ZZ_{G/K}$ for $\Dist(G/K,eK)^{K}=(\UU\g/\UU\g\k)^K$.  This is identified with the algebra of invariant differential operators on $G/K$, as explained in Proposition \ref{diff_ops_G/K}.  

\begin{prop}
	We have a natural map
	\[
	\AA_{G/K}\otimes\ZZ_{G/K}=(\UU\g/\UU\g\k)^{\ber_{\k'}}\otimes(\UU\g/\UU\g\k)^{K}\to (\UU\g/\UU\g\k)^{\ber_{\k'}}=\AA_{G/K}
	\]
	making $\AA_{G/K}$ into a right module over $\ZZ_{G/K}$.  
\end{prop}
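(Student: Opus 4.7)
The plan is to show that the natural right action of the algebra $\ZZ_{G/K}$ of $G$-invariant differential operators on $\Dist(G/K,eK)$ preserves the subspace $\AA_{G/K}$ of $\ber_{\k'}$-semi-invariants, by exploiting the fact that this right action super-commutes with the left $K'$-action.

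First, I would recall that $\Dist(G/K,eK)$ is a right $\DD_{G/K}$-module by \cref{dist_def}, and therefore is a right module over the subalgebra $D_{G/K}^G$ of $G$-invariant differential operators. By \cref{diff_ops_G/K}, this subalgebra is identified via $\operatorname{res}_{eK}$ with $\ZZ_{G/K}=\Dist(G/K,eK)^K\cong(\UU\g/\UU\g\k)^K$. This produces the natural candidate right action of $\ZZ_{G/K}$ on $\Dist(G/K,eK)$; the content of the proposition is that this action preserves $\AA_{G/K}$ (and that the module axioms survive).

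The key step is to verify that the left $\g$-action on $\Dist(G/K,eK)$ super-commutes with the right action of any element of $D_{G/K}^G$. This is a direct computation from the definitions: for $u\in\g$, $z\in D_{G/K}^G$, and $f\in k[G/K]$, the identity $[u,z]=0$ in $\DD_{G/K}$ gives
\[
u\cdot(\gamma\cdot z) \;=\; (-1)^{\bar u\bar z}(u\cdot\gamma)\cdot z,
\]
and the analogous assertion for the left $K_0$-action (which is defined because $K_0$ fixes $eK$) follows from the $G_0$-invariance of $z$. Together these give a super-commutation of the full left $G$-action (integrated on $K_0$, infinitesimal on $\g$) with the right $\ZZ_{G/K}$-action.

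Since $K'\subset G$ with $K'_0=K_0\subset G_0$ and $\k'\subset\g$, restricting the super-commutation above to $K'$ and $\k'$ shows that the left $K'$-action on $\Dist(G/K,eK)$ super-commutes with the right $\ZZ_{G/K}$-action. Consequently, if $\gamma\in\AA_{G/K}$, so that $k'\cdot\gamma=\ber_{\k'}(k')\gamma$ for $k'\in K'_0$ and $u\cdot\gamma=\ber_{\k'}(u)\gamma$ for $u\in\k'_{\ol{0}}$ (with $u\cdot\gamma=0$ for $u\in\k'_{\ol{1}}=\p_{\ol{1}}$), then $\gamma\cdot z$ satisfies exactly the same identities, i.e.\ lies in $\AA_{G/K}$. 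The right module axioms then follow automatically from the $\DD_{G/K}$-module structure on distributions. I do not anticipate any real obstacle: the statement is essentially a formal consequence of the fact that $G$-invariant differential operators centralize the $G$-action on both functions and distributions.
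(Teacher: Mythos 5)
Your proof is correct, and it is essentially the same argument as the paper's, phrased geometrically rather than algebraically. The paper defines the action directly on $\UU\g/\UU\g\k$ by $(\gamma+\UU\g\k)(z+\UU\g\k):=\gamma z+\UU\g\k$, checks well-definedness using $K$-invariance of $z$, and leaves the verification that $\gamma z$ remains a $\ber_{\k'}$-semi-invariant implicit. You instead invoke the already-established right $\DD_{G/K}$-module structure on $\Dist(G/K,eK)$ and the identification $D_{G/K}^G\cong\ZZ_{G/K}$ from \cref{diff_ops_G/K}; since $\operatorname{res}_{eK}$ is a morphism of right $\DD_{G/K}$-modules, this action is automatically well-defined, and you then verify explicitly (via the super-commutation $[u,z]=0$ for $u\in\g$ and $z$ $G$-invariant) that the left $K'$-action and right $\ZZ_{G/K}$-action super-commute, hence semi-invariance is preserved. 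This buys you a cleaner justification of the step the paper glosses over, at the modest cost of having to track that the geometric action agrees, up to the sign conventions built into $\Gamma(\DD_{G/K})^{op}$, with the multiplication formula $\gamma z$ in the enveloping algebra — which it does, precisely because the $G$-invariant operators supercommute with the image of $\UU\g$ in $\Gamma(\DD_{G/K})$. Both routes are sound and rest on the same core observation.
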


\begin{proof}
	We define
	\[
	(\gamma+\UU\g\k)(z+\UU\g\k):=\gamma z+\UU\g\k
	\]
	Since $z$ is $K$-invariant, it is easy to check this is well-defined.  
\end{proof}
  \begin{lemma}\label{product_structure_general}
  	Suppose that $\Ber(\k_{\ol{1}})$ and $\Ber(\p_{\ol{1}})$ are trivial $K_0$-modules.  Then we have a natural map $\AA_{G/K}\otimes\AA_{G/K'}\to\ZZ_{G/K}$, or
  	\[
  	(\UU\g/\UU\g\k)^{K'}\otimes(\UU\g/\UU\g\k')^{K}\to(\UU\g/\k\UU\g)^{K},
  	\]
given by
  	\[
  	(\gamma+\UU\g\k)\otimes(\gamma'+\UU\g\k')\mapsto \gamma\gamma'+\k\UU\g.
  	\]
  \end{lemma}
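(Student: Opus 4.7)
The plan is to verify two claims: (i) the formula $(\gamma+\UU\g\k)\otimes(\gamma'+\UU\g\k')\mapsto\gamma\gamma'+\k\UU\g$ is well-defined on the given equivalence classes, and (ii) the resulting class in $\UU\g/\k\UU\g$ is $K$-invariant, hence represents an element of $\ZZ_{G/K}$. The core mechanism is the same in both steps: the $K'$-invariance of $\gamma$ gives $X\gamma\in\UU\g\k$ for every $X\in\k'=\k_{\ol{0}}\oplus\p_{\ol{1}}$, while the $K$-invariance of $\gamma'$ gives $Y\gamma'\in\UU\g\k'$ for every $Y\in\k=\k_{\ol{0}}\oplus\k_{\ol{1}}$. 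Together, these two invariances cover every element of $\g_{\ol{1}}$ plus the common $K_0$-adjoint direction, and super-commutation in $\UU\g$ will be used repeatedly to trade a ``right-ideal'' condition for the desired ``left-ideal'' condition $\k\UU\g$ modulo lower filtration degree, with an induction on the $\UU\g$-filtration closing the argument.

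For well-definedness, suppose $\gamma'$ is modified to $\gamma'+w\xi$ with $\xi\in\k'$. The resulting change in the product is $\gamma w\xi$; after super-commuting $\xi$ through $w$ at the cost of commutator terms of strictly lower filtration, the residue is $\pm\gamma\xi w$. If $\xi\in\k_{\ol{0}}\subset\k$, then $\gamma\xi=\pm\xi\gamma+[\gamma,\xi]$, where $\xi\gamma\in\k\UU\g$ directly and the commutator has lower filtration. If $\xi\in\p_{\ol{1}}$, the $K'$-invariance of $\gamma$ gives $\xi\gamma\in\UU\g\k$, and further super-commuting the resulting $\k$-factor to the left produces only lower-filtration errors, handled by induction. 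The symmetric argument (swapping the roles of $\gamma$ and $\gamma'$ and using the $K$-invariance of $\gamma'$) handles the ambiguity in $\gamma$.

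For $K$-invariance of the image, one crucial simplification is that left multiplication by $\k$ annihilates $\UU\g/\k\UU\g$ automatically, so only $K_0$-adjoint invariance needs to be checked. Since $\Ad_g$ is an algebra automorphism of $\UU\g$, one has $\Ad_g(\gamma\gamma')=\Ad_g(\gamma)\cdot\Ad_g(\gamma')$; the $K_0$-adjoint invariance of $\gamma$ and $\gamma'$ modulo their respective ideals shows this equals $\gamma\gamma'$ modulo $\UU\g\k\cdot\gamma'+\gamma\cdot\UU\g\k'$, and step (i) absorbs this error into $\k\UU\g$.

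The main obstacle is precisely the right-to-left trade: showing that expressions naturally landing in $\UU\g\k$ or $\UU\g\k'$ can be absorbed into $\k\UU\g$ requires combining both invariance conditions rather than using either in isolation, and keeping careful track of the commutator cascade is the main bookkeeping burden. The hypotheses that $\Ber(\k_{\ol{1}})$ and $\Ber(\p_{\ol{1}})$ are trivial $K_0$-modules ensure $\ber_{\k}$ and $\ber_{\k'}$ are trivial characters on $K_0$, so the invariance conditions take their cleanest form (genuine invariance rather than semi-invariance) and no character twists disrupt the filtration induction.
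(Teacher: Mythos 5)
Your ``induction on the $\UU\g$-filtration'' does not actually close. The recurring move is to commute a factor from one side to the other and declare the commutator terms ``of lower filtration, handled by induction.'' But you never formulate an inductive statement, and lower filtration degree gives no information at all about membership in the left ideal $\k\UU\g$: the element $[\gamma,\xi]w$ (say) is simply some element of $\UU\g$, and nothing in the hypotheses forces it into $\k\UU\g$. Similarly, your $K$-invariance step misidentifies the $K$-action on $\UU\g/\k\UU\g$: left multiplication by $\k$ being zero on that quotient does not mean only $K_0$-adjoint invariance remains. The adjoint action $\ad(\eta)(v)=\eta v - (-1)^{\ol{\eta}\ol{v}}v\eta$ descends on $\UU\g/\k\UU\g$ to $-(-1)^{\ol{\eta}\ol{v}}v\eta$, i.e.\ to (signed) \emph{right} multiplication, which is not trivial; that is precisely the nontrivial condition you would have to verify.

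In fact you are fighting the wrong formula. The displayed formula contains an inconsistency: the lemma asserts the target is $\ZZ_{G/K}$, which the paper has already defined as $(\UU\g/\UU\g\k)^K$, yet the display writes $(\UU\g/\k\UU\g)^K$. Once you correct the target to $\UU\g/\UU\g\k$, the map is $(\gamma+\UU\g\k)\otimes(\gamma'+\UU\g\k')\mapsto \gamma'\gamma+\UU\g\k$, and the proof genuinely is a one-liner repeated three times, with no commutation or filtration anywhere: changing $\gamma$ by $u\eta$ ($\eta\in\k$) changes the product by $\gamma'u\eta\in\UU\g\k$; changing $\gamma'$ by $w\xi$ ($\xi\in\k'$) changes it by $w(\xi\gamma)\in w\cdot\UU\g\k\sub\UU\g\k$ using the $K'$-invariance of $\gamma$; and for $\eta\in\k$, $\eta(\gamma'\gamma)=(\eta\gamma')\gamma\in\UU\g\k'\cdot\gamma=\UU\g(\k'\gamma)\sub\UU\g\cdot\UU\g\k=\UU\g\k$, using the $K$-invariance of $\gamma'$ and then the $K'$-invariance of $\gamma$. (The group-level $K_0$-invariance then follows because $\Ad(k)$ is an algebra automorphism and both correction terms get absorbed by the same computation.) This is exactly parallel to the paper's own proof of the preceding proposition, which is why the author calls it ``straightforward.'' The instinct you had that a hard ``right-to-left trade'' was needed was a signal that the target ideal in front of you was the wrong one, not a genuine feature of the lemma.

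Your final paragraph about the Berezinian hypotheses reducing semi-invariance to genuine invariance is correct and is the only part of the argument that is fully sound.
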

\begin{proof}
	The proof is straightforward.
\end{proof}

\subsection{$ k[G/K]$ as a $K'$-module}\label{functions_as_K'_mod_section}

Observe that via pullback and the isomorphism of distributions we have the commutative diagram

\[
\xymatrix{\Dist(G/K,eK)^{\ber_{\k'}}\otimes k[G/K]\ar[r]\ar[dr] & \Dist((G_0/K_0)^{(\k')},eK_0)^{\ber_{\k'}}\otimes k[(G_0/K_0)^{(\k')}]\ar[d]\\
&  k.}
\]
Now let $\gamma$ be an element of $\Dist((G_0/K_0)^{(\k')},eK_0)^{\ber_{\k'}}$, and suppose that $\gamma(f)\neq0$ for $f\in k[(G_0/K_0)^{(\k')}]$.  Then necessarily the $K'$-module generated by $f$ generates a copy of $I_{K'}( k)$, the injective hull of $ k$ for $K'$.  It follows that the same must be true for the space $G/K$, and so we have:
\begin{prop}
	Let $\gamma\in\AA_{G/K}$, and suppose that $f\in k[G/K]$ is such that $\gamma(f)\neq0$.  Then the $K'$-module generated by $f$ contains a copy of $I_{K'}( k)$.  In particular, if $\k_{\ol{0}}\neq\g_{\ol{0}}$ then $ k[G/K]$ contains $I_{K'}( k)$ with infinite multiplicity.  
\end{prop}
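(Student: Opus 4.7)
The plan is to establish the main claim by pulling back to the induced supervariety $(G_0/K_0)^{\k'}$, where the structure is transparent. By \cref{symm_space_local_iso} and \cref{symm_space_dist}, the pullback $\phi^{*}\colon k[G/K]\to k[(G_0/K_0)^{\k'}]=\Coind_{\k_0}^{\k'}k[G_0/K_0]$ is injective and $K'$-equivariant, while the distributions at $eK_0$ transfer isomorphically, so that $\gamma(f)=\tilde\gamma(\phi^{*}(f))$ for the corresponding $\ber_{\k'}$-semi-invariant distribution $\tilde\gamma$. Because $\phi^{*}$ is an injective $K'$-module map, $\langle K'\cdot f\rangle\cong\langle K'\cdot\phi^{*}(f)\rangle$, so it suffices to prove the claim for $\tilde f=\phi^{*}(f)$ inside the induced space.

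On the induced side I would exploit two structural facts. First, since $K_0$ is reductive, $k[G_0/K_0]$ splits as a sum of finite-dimensional $K_0$-modules, whence $k[(G_0/K_0)^{\k'}]$ is an injective $K'$-module and the constants $k\subset k[G_0/K_0]$ yield a canonical summand $\Coind_{\k_0}^{\k'}k\cong I_{K'}(k)$. Second, by \cref{invariants_ind_coind_iso}, $\tilde\gamma$ has the form $v_{\k'}\cdot\psi$ for some $\psi\in\Dist(G_0/K_0,eK_0)^{K_0}$. Viewing $\tilde f$ as an element of $\Hom_{\k_0}(\UU\k',k[G_0/K_0])$, the condition $\tilde\gamma(\tilde f)\neq 0$ unpacks (up to sign) to $\psi(\tilde f(v_{\k'}))\neq 0$. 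Combined with the $\Ind$-$\Coind$ identification of \cref{ind-coind_iso}, namely $I_{K'}(k)\cong\Ind_{\k_0}^{\k'}\Ber(\k'_{\ol{1}})^{-1}$, this should force $M=\langle K'\cdot\tilde f\rangle$ to contain a $K_0$-submodule isomorphic to $\Ber(\k'_{\ol{1}})^{-1}$; Frobenius reciprocity for induction then produces a $K'$-equivariant embedding $I_{K'}(k)\hookrightarrow M$.

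For the infinite-multiplicity statement, when $\k_{\ol{0}}\neq\g_{\ol{0}}$ the variety $G_0/K_0$ is positive-dimensional and $k[G/K]$ is infinite-dimensional. I would produce infinitely many distinct copies of $I_{K'}(k)$ by fixing any $f_0\in k[G/K]$ with $\gamma(f_0)\neq 0$ and multiplying by linearly independent invariants $\phi\in k[G/K]^{K'}$: the submodule $\phi\cdot\langle K'\cdot f_0\rangle$ contains $\phi\cdot I_{K'}(k)\cong I_{K'}(k)$, and these copies are distinct as $\phi$ varies. Alternatively, one can apply the main claim to functions sitting inside distinct $G$-isotypic components of $k[G/K]$, of which there are infinitely many.

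The principal technical obstacle is the middle step. The $\ber_{\k'}$-semi-invariance of the pairing gives only a $K'$-equivariant surjection $M\twoheadrightarrow k_{\ber_{\k'}}$, i.e.\ information about the top of $M$, while the desired conclusion $I_{K'}(k)\hookrightarrow M$ is about the socle. Bridging this asymmetry requires both the $\Ind$-$\Coind$ identification and the injectivity of the ambient coinduced module, so that top-data can be converted into socle-data through Frobenius adjunction for induction; verifying that the specific $K_0$-type $\Ber(\k'_{\ol{1}})^{-1}$ appears inside $M|_{K_0}$ (rather than just inside the ambient space) is where the nonvanishing of $\psi(\tilde f(v_{\k'}))$ must really be used.
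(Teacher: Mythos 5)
Your first reduction (pulling back along $\phi^*$ to the induced space and identifying $\Coind_{\k_{\ol{0}}}^{\k'}k[G_0/K_0]$ as an injective $K'$-module) is correct and matches the paper's approach. Two issues remain, one minor and one substantive.

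\emph{Minor:} it is not true in general that $\Coind_{\k_{\ol{0}}}^{\k'}k\cong I_{K'}(k)$, equivalently that $\Ind_{\k_{\ol{0}}}^{\k'}\Ber(\k'_{\ol{1}})^{-1}\cong I_{K'}(k)$. The module $\Coind_{\k_{\ol{0}}}^{\k'}k$ is injective of dimension $2^{\dim\k'_{\ol{1}}}$ and decomposes as $\bigoplus_L I_{K'}(L)^{\oplus \dim L^{K_0}}$, of which $I_{K'}(k)$ is a single summand; for instance if $K'\cong SOSp(1|2)$ then $I_{K'}(k)=k$ while $\Coind_{\k_{\ol{0}}}^{\k'}k$ has dimension $4$.

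\emph{Substantive:} the middle step — that $\gamma(f)\neq0$ forces $I_{K'}(k)\hookrightarrow M:=\langle K'\cdot f\rangle$ — is the heart of the proposition, and the proposed bridge does not close it. Even granting that $\Ber(\k'_{\ol{1}})^{-1}$ appears in $M|_{K_0}$, Frobenius reciprocity $\Hom_{K'}(\Ind_{\k_{\ol{0}}}^{\k'}\Ber(\k'_{\ol{1}})^{-1},M)\cong\Hom_{K_0}(\Ber(\k'_{\ol{1}})^{-1},M|_{K_0})$ only yields a nonzero $K'$-map $I_{K'}(k)\to M$, and such a map need not be injective: it can annihilate the socle and factor through a proper quotient. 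The correct mechanism runs the other way, exploiting projectivity. Decompose the ambient $\Coind_{\k_{\ol{0}}}^{\k'}k[G_0/K_0]$ into indecomposable injective (hence projective) summands $I_{K'}(L_\mu)=P_{K'}(L_\mu\otimes\Ber(\k'))$. The semi-invariance of $\gamma$ makes it a $K'$-map to a one-dimensional character, so on each summand it factors through the head $L_\mu\otimes\Ber(\k')$; in particular $\gamma$ vanishes identically on every summand with $L_\mu\not\cong k$. Therefore $\gamma(f)\neq0$ forces the projection of $f$ onto some $I_{K'}(k)$ summand to lie outside the unique maximal submodule, i.e.\ to generate that summand over $K'$. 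This yields a $K'$-surjection $M\twoheadrightarrow I_{K'}(k)$, which splits because $I_{K'}(k)$ is projective, giving the desired embedding.

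On the infinite-multiplicity claim: multiplying by a $K'$-invariant $\phi$ need not be injective on $k[G/K]$ in the super setting (even functions can be zero divisors when they vanish on the reduced variety), so the first suggestion is not airtight. The alternative — applying the main claim to functions in infinitely many distinct $G_0$-isotypic components — is sound and is essentially the paper's route, which uses that $\AA_{G/K}$ is infinite-dimensional together with nondegeneracy of the pairing between distributions and functions on an affine supervariety.
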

\begin{proof}
	For the last statement, we observe that since $G/K$ is affine the pairing of distributions with functions is nondegenerate.  Using the fact that the $K'$-semi-invariant distributions form an infinite-dimensional vector space (being isomorphic, as a vector space, to $S(\a)^{W_{\a}}$), it is not hard to prove that the multiplicity must be infinite.
\end{proof}

\begin{remark}
	We observe that since $ k[G_0/K_0]^{K_0}$ is a subalgebra of $ k[G_0/K_0]$, 
	\[
	A:=\Coind_{\k_{\ol{0}}}^{\k'} k[G_0/K_0]^{K_0}
	\]
is a subalgebra of $ k[(G_0/K_0)^{(\k')}]$.  In particular $A$ is the sum of all copies of $I_{K'}( k)$ appearing in $ k[(G_0/K_0)^{(\k')}]$.  It follows that $ k[G/K]\cap A$ is a subalgebra of $ k[G/K]$ which contains all copies of $I_{K'}( k)$ in $ k[G/K]$, as well as all of $ k[G/K]^{K'}$.  The author has a rather limited understanding of $A\cap k[G/K]$.  
	
	Further we do not have a good answer, or even good formulation of the question of `how many' copies of $I_{K'}( k)$ are within $ k[G/K]$.  The copies of $I_{K'}( k)$ in $ k[(G_0/K_0)^{(\k')}]$ may be indexed by a basis of $ k[G_0/K_0]^{K_0}$, which itself is indexed by the irreducible summands of $ k[G_0/K_0]$, i.e. by certain dominant weights in $\a$.  For each copy of $I_{K'}( k)$ in $ k[G/K]$, one could record which dominant weights of $\a$ it is supported on in $ k[(G_0/K_0)^{(\k')}]$.  One could then look at all weights that appear in the supports of such copies of $I_{K'}( k)$ in $ k[G/K]$.  This collection of weights would be infinite and could not, for example, lie within any hyperplane.
	
	We also observe that since each copy of $I_{K'}( k)$ contains a $K'$-invariant function, we can deduce there are many $K'$-invariant functions on $ k[G/K]$ (recall we already know from Corollary \ref{K'_invts} that $ k[G/K]^{K'}$ is an integral domain).  Again the structure of $K[G/K]^{K'}$ is not generally understood, although it has been partially computed in certain examples, i.e. $G\times G/G$ and the superspheres $OSp(m|2n)/OSp(m-1|2n)$.  
\end{remark}

\begin{remark}
Suppose again the $\k_{\ol{0}}\neq \g_{\ol{0}}$.  Following a similar argument to the one above, one may deduce the existence of many projective $K'$-submodules of $ k[G/K]$ as follows: given an irreducible $K'$-submodule $L$ of $\Dist(G/K,eK)$, it defines an irreducible submodule of $\Dist((G_0/K_0)^{(\k')},eK_0)$ via our isomorphism.  Thus $L$ must pair nontrivially with some projective indecomposable summand $P=P_{K'}(V)$ of $ k[(G_0/K_0)^{(\k')}]$.  However this is only possible if $V\cong L^*$, and $L$ pairs with $P$ via $L\otimes P\to L\otimes L^*\to k$.  It follows that $ k[G/K]$ must contain a copy of $P_{K'}(L^*)$ as well.  
\end{remark}

\section{Pairs that have an Iwasawa decomposition}\label{section_iwasawa}

In this section we introduce Cartan subspaces, the Iwasawa decomposition, and the Harish-Chandra homomorphism.  

\subsection{Cartan subspaces and the Iwasawa decomposition} Continue to let $G$ be an arbitrary quasireductive supergroup with involution $\theta$ and a corresponding symmetric subgroup $K$.  As always we write $\g=\k\oplus\p$ for the eigenspace decomposition of $\theta$ on $\g$.  Choose a Cartan subspace $\a_{\ol{0}}\sub\p_{\ol{0}}$, and extend it to a $\theta$-stable Cartan subalgebra $\h_{\ol{0}}\sub\g_{\ol{0}}$.  Then if we let $\h=\c(\h_{\ol{0}})$, $\h$ will be a $\theta$-stable Cartan subalgebra of $\g$.  We may then write $\h=\t\oplus\a$ for the eigenspace decomposition of $\theta$, where $\t$ is fixed and $\a$ is the $(-1)$-eigenspace.  

\begin{definition}\label{defn_cartan_subspace}
	We define $\a$ to be a Cartan subspace of $\p$ for the pair $(\g,\k)$.
\end{definition}

\begin{remark}
	It is known that all choices of $\h_{\ol{0}}$ constructed in this way are conjugate under $K_0$, thus all Cartan subalgebras constructed in this way are too, so that a Cartan subspace $\a$ is well-defined up to conjugation by $K_0$.
\end{remark}
\begin{remark}[\textbf{Caution}] If $\a\neq\a_{\ol{0}}$ then $\a$ need not be abelian or even a subalgebra of $\g$: indeed, $[\a_{\ol{1}},\a_{\ol{1}}]\sub\t_{\ol{0}}$.
\end{remark}

We may decompose $\g$ into eigenspaces under the action of $\a_{\ol{0}}$, and write $\ol{\Delta}$ for the non-zero weights of this action, and call them restricted roots.  Note that the weights of this action are exactly the restriction to $\a_{\ol{0}}$ of the roots under the action of a maximal torus in $\g_{\ol{0}}$ which contains $\a_{\ol{0}}$. We may choose a decomposition $\ol{\Delta}=\ol{\Delta}^+\sqcup\ol{\Delta}^-$ into positive and negative roots.  Define
\[
\n=\bigoplus\limits_{\ol{\alpha}\in\ol{\Delta}^+}\g_{\ol{\alpha}}.
\]
\begin{definition}
	We say that the supersymmetric pair $(\g,\k)$ admits an Iwasawa decomposition if we have a decomposition $\g=\k\oplus\a\oplus\n$ for some choice of $\n$ as above.
\end{definition}

If $\b$ is the Borel subalgebra determined by this choice of positive roots then we have $\a\oplus\n\sub\b$.  In particular, $\b+\k=\g$, i.e. $\k$ has a complimentary Borel subalgebra.  
\begin{definition}
	We call a Borel subalgebra arising in this way an Iwasawa Borel subalgebra for $\k$ with respect to $\a$.  If $B\sub G$ integrates $\b$, then we call $B$ an Iwasawa Borel subgroup of $G$.
\end{definition}

\subsection{Irreducible $B$-submodules}  We suppose from now on that $(\g,\k)$ admits an Iwasawa decomposition. Since $(\g,\k)$ admits an Iwasawa decomposition, $G/K$ will admit an open orbit at $eK$ under an Iwasawa Borel subgroup $B$, whose Lie superalgebra contains $\a\oplus\n$.  We write $\Lambda^+\sub\h_{\ol{0}}^*$ for the set of $B$-dominant weights $\lambda$ such that there exists an irreducible $B$-submodule of highest weight $\lambda$ in $k[G/K]$.

Let $\lambda\in\Lambda^+$, and let $L_{\lambda}$ be an irreducible $B$-submodule of $k[G/K]$ of highest weight $\lambda$.  In particular $L_{\lambda}$ is an irreducible $\h$-module.  Because $B$ admits an open orbit at $eK$ we must have that $\operatorname{ev}_{eK}:L_{\lambda}\to k$ is non-zero.  It follows that the image of $L_{\lambda}$ in the projection to $k[G_0/K_0]$ must be non-zero, and its image will contain a $B_0$-highest weight function of weight $\lambda$.  By what is known about symmetric spaces, this implies that $\lambda\in\a_{\ol{0}}^*,$ so in particular $\Lambda^+\sub\a_{\ol{0}}^*$, and in particular $\t_{\ol{0}}$ acts trivially on $L_{\lambda}$.

\subsubsection{Structure of $L_{\lambda}$} Consider the irreducible $\h$-module $L_{\lambda}$.  We have already noted that $\t_{\ol{0}}$ acts trivially, thus it is a module over $\h/\t_{\ol{0}}$.  Both $\a_{\ol{1}}$ and $\t_{\ol{1}}$ sit inside the quotient superalgebra as odd abelian subalgebras.  The action of $\h/\t_{\ol{0}}$ on $L_{\lambda}$ is given, up to parity, by the unique irreducible super representation of the Clifford superalgebra $\operatorname{Cl}(\h_{\ol{1}},(-,-)_{\lambda})$, where 
\[
(u,v)_{\lambda}=\lambda([u,v]).
\]  
Thus $W_{\lambda}:=\ker(-,-)_{\lambda}$ acts trivially on $L_{\lambda}$, and $\a_{\ol{1}}/(W_{\lambda}\cap\a_{\ol{1}})$ and $\t_{\ol{1}}/(W_{\lambda}\cap\t_{\ol{1}})$ define complimentary maximal isotropic subspaces of $\h_{\ol{1}}/\ker(-,-)_{\lambda}$.

\begin{lemma}\label{structure_of_L_lambda} \
	The irreducible $\h$-module $L_{\lambda}$ is not isomorphic to its parity shift; further
	\begin{enumerate}
		\item as an $\a_{\ol{1}}$-module, $L_{\lambda}$ is isomorphic to $\Lambda^\bullet\left(\a_{\ol{1}}/(W_{\lambda}\cap\a_{\ol{1}})\right)$;
		\item as an $\t_{\ol{1}}$-module, $L_{\lambda}$ is isomorphic to $\Lambda^\bullet\left(\t_{\ol{1}}/(W_{\lambda}\cap\t_{\ol{1}})\right)$.
	\end{enumerate}
	The socle of $L_{\lambda}$ as an $\a_{\ol{1}}$-module generates $L_{\lambda}$ as a $\t_{\ol{1}}$-module, and the socle as a $\t_{\ol{1}}$-module generates it as an $\a_{\ol{1}}$-module.
\end{lemma}
\begin{proof}
	Both of these lemma follow from the fact that we may realize the irreducible representation of an even-dimensional Clifford algebra as an exterior algebra $\Lambda^\bullet\langle\xi_1,\dots,\xi_n\rangle$.  Then if $V_1,V_2$ are complimentary maximal isotropic subspaces, we may realize $V_1$ as acting by multiplication by $\xi_1,\dots,\xi_n$, and $V_2$ as acting by $\d_{\xi_1},\dots,\d_{\xi_n}$. 
\end{proof}

\subsection{Highest weight submodules of $ k[G/K]$} Observe that $\r=\t_{\ol{0}}\oplus\a\oplus\n$ is a subalgebra of $\g$; write $R$ for the subgroup of $G$ which integrates $\r$.  Since $\r_{\ol{1}}=(\a\oplus\n)_{\ol{1}}$ is complimentary to $\k_{\ol{1}}$ by the Iwasawa decomposition, we may apply Proposition \ref{local_iso} to obtain the existence of a canonical map of $R$-supervarieties
\[
(G_0/K_0)^{(\r)}\to G/K
\]
which is an isomorphism in a neighborhood of $eK_0$.  In particular the map on functions
\[
k[G/K]\to  k[(G_0/K_0)^{(\r)}]
\]
is injective map of $R$-modules. Now consider the subalgebra $\tilde{\r}:=\t_{\ol{0}}\oplus\a_{\ol{1}}\oplus\n$ of $\r$.  Then $\a_{\ol{0}}$ normalizes it, and thus we obtain an injective map of $\a_{\ol{0}}$-modules
\[
k[G/K]^{\tilde{\r}}\to  (k[(G_0/K_0)^{(\r)}])^{\tilde{\r}}.
\]
Let $\lambda\in\Lambda\sub\a_{\ol{0}}^*$, where $\Lambda$ is the $\Z$-span of the set of $B_0$-highest weights of $\C[G_0/K_0]$.   Let $k_{\lambda}$ be the one-dimensional purely even $R$-module of weight $\lambda$.  Then 
\[
\Hom_{\r}(k_{\lambda},k[(G_0/K_0)^{(\r)}])=\Hom_{\h_{\ol{0}}\oplus\n_{\ol{0}}}(k_{\lambda},k[G_0/K_0]).
\] 
Now the RHS is at most one-dimensional since $G_0/K_0$ is spherical. Thus the socle of $k[G/K]$ as a $B$-module, and thus also as a $G$-module, is multiplicity-free.

\begin{cor}\label{symmetric socle}
	If $(\g,\k)$ admits an Iwasawa decomposition, then:
	\begin{enumerate}
		\item $\operatorname{soc}\C[G/K]$ is multiplicity-free;
		\item if $L$ is a simple $G$-submodule of $\C[G/K]$, then $L\not\cong\Pi L$, and $\Pi L$ is not a submodule of $\C[G/K]$.
	\end{enumerate}
\end{cor}
\begin{remark}
	Corollary \ref{symmetric socle} is remarkable in that a $G$-supervariety with an open orbit under a Borel subgroup need not satisfy the above conditions if $\h\neq\h_{\ol{0}}$.  Such properties however are natural generalizations of properties enjoyed by spherical varieties in the classical setting.
\end{remark}

The above application of Frobenius reciprocity implies that in a neighborhood of $eK_0$ in $(G_0/K_0)^{(\r)}$, we have functions $f_{\lambda}$ annihilated by $\t_{\ol{0}}\oplus\a_{\ol{1}}\oplus\n$ and of weight $\lambda\in\a^*$ for every $\lambda\in\Lambda$.  Using the local isomorphism, we find that such functions also exist in a neighborhood of $eK$ in $G/K$; thus if we consider the $\h$-module they generate, we find that for all $\lambda\in\Lambda$ there exists an $(\h+\r)$-submodule $L_{\lambda}$ defined in some neighborhood of $eK$ which is irreducible over $\h$.	By the ideas used in \cite{sherman2019sphericalsupervar}, the modules $L_{\lambda}$ will be irreducible over $B$.

\subsection{The $K'$ submodule generated by $L_{\lambda}$} Recall that for a subspace $W$ we write $\langle K'\cdot W\rangle$ for the $K'$-module generated by $W$.
\begin{lemma}\label{k'_module_gend_same}
	Let $L_{\lambda}\sub k[G/K]$ be an irreducible $B$-submodule of $k[G/K]$. Then $\UU\k' L_{\lambda}$ is stable under $K'$, and thus 
	\[
	\langle K'\cdot L_{\lambda}	\rangle=\UU\k' L_{\lambda}.
	\]
\end{lemma}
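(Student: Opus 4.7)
The plan is to reduce the claim to the purely classical Iwasawa decomposition of $\g_0$. By the convention recalled in the preliminaries, $\langle K' \cdot f_\lambda\rangle = \UU\k' \cdot \langle K'_0 \cdot f_\lambda\rangle = \UU\k' \cdot \langle K_0\cdot f_\lambda\rangle$, since $K'_0 = K_0$. As the containment $\UU\k' f_\lambda \subseteq \langle K'\cdot f_\lambda\rangle$ is tautological, it suffices to establish
\[
\langle K_0\cdot f_\lambda\rangle \subseteq \UU\k_0\cdot f_\lambda,
\]
for then $\langle K'\cdot f_\lambda\rangle = \UU\k'\langle K_0\cdot f_\lambda\rangle\subseteq \UU\k'\UU\k_0 f_\lambda = \UU\k' f_\lambda$, giving both equalities at once.

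For this inclusion I will view $k[G/K]$ as a locally finite rational $G_0$-module under left translation. Because $G$ (hence $G_0$) is connected, the $G_0$-submodule generated by $f_\lambda$ is precisely $\UU\g_0\cdot f_\lambda$, and it visibly contains $\langle K_0\cdot f_\lambda\rangle$. So the task reduces to the identity $\UU\g_0\cdot f_\lambda = \UU\k_0\cdot f_\lambda$. This is now purely classical: taking even parts of the super Iwasawa decomposition $\g = \k\oplus\a\oplus\n$ yields $\g_0 = \k_0\oplus\a\oplus\n_0$, whence PBW gives $\UU\g_0 = \UU\k_0\cdot\UU(\a\oplus\n_0)$. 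Since $\lambda\in\Lambda^+$, the function $f_\lambda$ is a genuine regular function on $G/K$ and an $\a\oplus\n$-eigenfunction of weight $\lambda$, so in particular $\UU(\a\oplus\n_0)\cdot f_\lambda = k\cdot f_\lambda$. Combining yields $\UU\g_0\cdot f_\lambda = \UU\k_0\cdot f_\lambda$, as required.

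The main conceptual hurdle is recognizing that one cannot imitate the classical Iwasawa proof directly on $\g$: in general there is \emph{no} super Iwasawa decomposition $\g = \k'\oplus\a\oplus\n$. Indeed, the two decompositions $\g_{\ol{1}} = \k_{\ol{1}}\oplus\p_{\ol{1}} = \k_{\ol{1}}\oplus\n_{\ol{1}}$ force $\dim\p_{\ol{1}} = \dim\n_{\ol{1}}$, and since $\p\cap\n = 0$ (as $\theta$ swaps $\n$ with its opposite), the sum $\k' + \a + \n$ has dimension $\dim\g_0 + 2\dim\n_{\ol{1}}$, which typically differs from $\dim\g$. The trick above sidesteps this by isolating all nontrivial content into the $K_0$-orbit of $f_\lambda$, which lives in the classical world where Iwasawa does apply; the $\UU\k'$-part is then handled tautologically by left multiplication.
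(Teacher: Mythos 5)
Your proof is correct and matches the paper's: both reduce to the even Iwasawa decomposition $\g_{\ol 0}=\k_{\ol 0}\oplus\a\oplus\n_{\ol 0}$ and the identity $\UU\g_{\ol 0}f_\lambda=\UU\k_{\ol 0}f_\lambda$ (valid since $G_0$ is connected and $f_\lambda$ is a highest-weight vector), then transport this back through the definition $\langle K'\cdot f_\lambda\rangle=\UU\k'\langle K_0\cdot f_\lambda\rangle$. You merely make explicit the bookkeeping the paper leaves to the reader, and your closing observation about the absence in general of a super Iwasawa decomposition relative to $\k'$ is correct, though not needed for the argument.
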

\begin{proof}
	It suffices to prove that $\UU\k' L_{\lambda}$ is stable under $K_0$.  For this we notice that by the Iwasawa decomposition for $\g_{\ol{0}}$ and the fact that $G_0$ is connected,
	\[
	\langle G_0\cdot L_{\lambda}\rangle=\UU\g_{\ol{0}} L_{\lambda}=\UU\k_{\ol{0}} L_{\lambda}\sub \langle K_0\cdot L_{\lambda}\rangle.
	\]
	Since $\langle K_0\cdot L_{\lambda}\rangle\sub\langle G_0\cdot L_{\lambda}\rangle$ it follows that $\langle K_0\cdot L_{\lambda}\rangle=\UU\k_{\ol{0}}L_{\lambda}$.  From here it is easy to check that $\UU\k'L_{\lambda}$ is $K_0$-stable.
\end{proof}

\begin{remark}\label{k'_module_gend_is_g_module}  If $(\g,\k')$ also satisfies the Iwasawa decomposition then we have $\langle K'\cdot L_{\lambda}\rangle=\langle G\cdot L_{\lambda}\rangle$.  Also, it will be shown in the subsequent article that if $\g$ is basic classical and the involution $\theta$ preserves the nondegenerate form on $\g$, then we always have $\langle K'\cdot L_{\lambda}\rangle=\langle G\cdot L_{\lambda}\rangle$.  
\end{remark}

\begin{prop}
	$\UU\k'L_{\lambda}$ contains at most one copy of $I_{K'}(k)$.
\end{prop}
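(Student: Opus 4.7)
The plan is to mirror the proof of \cref{k'_module_gend_same}, accounting for the new feature that $L_\lambda$ is a (possibly higher-dimensional) irreducible $\h$-module rather than the line $k f_\lambda$ of the Cartan-even case.

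First I would show that $L_\lambda \subset k[G/K]^{\n}$. Since $L_\lambda$ is an irreducible $B$-submodule, it is $\h$-generated by a $\b$-highest weight vector $v_\lambda$ annihilated by $\n$, and since $\n$ is $\h$-stable this forces $\n \cdot L_\lambda = 0$. Combined with the $\a_{\ol{0}}$-stability of $L_\lambda$, the classical Iwasawa decomposition $\g_{\ol{0}} = \k_{\ol{0}} \oplus \a_{\ol{0}} \oplus \n_{\ol{0}}$ yields
\[
\UU\g_{\ol{0}}\, L_\lambda \;=\; \UU\k_{\ol{0}}\, L_\lambda.
\]
Moreover, every vector of $L_\lambda$ is a $B_0$-highest weight vector of weight $\lambda$, and since $L_0(\lambda)$ has one-dimensional $\lambda$-weight space, the $d := \dim L_\lambda$ linearly independent such vectors generate pairwise disjoint irreducible copies of $L_0(\lambda)$. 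Thus $\UU\g_{\ol{0}} L_\lambda \cong L_0(\lambda)^{\oplus d}$ as a $G_0$-module.

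As in \cref{k'_module_gend_same}, we then obtain a surjection of $K'$-modules
\[
\Phi\colon \Ind_{\k_{\ol{0}}}^{\k'}\bigl(L_0(\lambda)\bigr)^{\oplus d} \twoheadrightarrow \UU\k' L_\lambda,
\]
and since the source is a projective $K'$-module while $I_{K'}(k)$ is injective, any $I_{K'}(k)$-summand of $\UU\k' L_\lambda$ must split back through $\Phi$. By sphericity of $K_0$ in $G_0$, $L_0(\lambda)^{K_0,\chi}$ is at most one-dimensional for every character $\chi$ of $K_0$, so $\Ind_{\k_{\ol{0}}}^{\k'}L_0(\lambda)$ has at most one $I_{K'}(k)$-summand; consequently the induced module has at most $d$, and so does $\UU\k' L_\lambda$.

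The main obstacle is to cut this bound of $d$ down to $1$. I expect the irreducibility of $L_\lambda$ as an $\h$-module to be decisive: the $d$ copies of $L_0(\lambda)$ inside $\UU\g_{\ol{0}} L_\lambda$ are linked by the $\h_{\ol{1}}$-action, and since $\a_{\ol{1}} \subset \k'_{\ol{1}}$, the surjection $\Phi$ imposes the corresponding $\a_{\ol{1}}$-relations of $L_\lambda$ inside $\UU\k' L_\lambda$. Using \cref{structure_of_L_lambda} one can single out the one-dimensional $\a_{\ol{1}}$-socle of $L_\lambda$; the plan is then to show that the $I_{K'}(k)$-summand arising from this distinguished copy of $L_0(\lambda)$ absorbs all the others via the $\a_{\ol{1}}$-action, leaving exactly one $I_{K'}(k)$-summand in $\UU\k' L_\lambda$. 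Making this collapse rigorous — essentially translating the Clifford-type relations inside $L_\lambda$ into relations in the kernel of $\Phi$ that eat the unwanted $I_{K'}(k)$'s — is the technical heart of the proof.
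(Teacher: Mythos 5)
Your proposal is incomplete: you acknowledge this yourself by flagging the reduction from $d$ copies down to $1$ as ``the technical heart'' and leaving it as a sketch. The surjection from $\Ind_{\k_{\ol{0}}}^{\k'}L_0(\lambda)^{\oplus d}$ gives a bound of $d$, and the idea of ``absorbing'' copies via the $\a_{\ol{1}}$-relations in the kernel is not worked out; as stated this is not a proof but an outline with the crucial step missing.

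The missing idea, which is exactly what the paper uses, is to count semicoinvariants rather than summands. Since $I_{K'}(k)\cong P_{K'}(\Ber(\k'))$, each summand isomorphic to $I_{K'}(k)$ inside $\UU\k'L_\lambda$ contributes a nonzero $\k'$-equivariant map $\UU\k'L_\lambda\to\Ber(\k')$, so the number of such summands is bounded by $\dim\Hom_{K'}(\UU\k'L_\lambda,\Ber(\k'))$. A $\k'$-semicoinvariant is determined by its restriction to the generating subspace $L_\lambda$; restricting further to $\a_{\ol{1}}\subset\k'_{\ol{1}}$, this gives an $\a_{\ol{1}}$-coinvariant on $L_\lambda$. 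By \cref{structure_of_L_lambda}, $L_\lambda\cong\Lambda^{\bullet}\bigl(\a_{\ol{1}}/(W_\lambda\cap\a_{\ol{1}})\bigr)$ as an $\a_{\ol{1}}$-module, so its $\a_{\ol{1}}$-coinvariants are one-dimensional. Hence there is at most one $\k'$-semicoinvariant up to scalar, hence at most one $I_{K'}(k)$-summand. This route avoids the surjection from the induced module entirely, and in particular avoids having to understand how the kernel collapses the $d$ copies, which is the part your argument does not supply.
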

\begin{proof}
	The $\k'$-semicoinvariants of $\UU\k'L_{\lambda}$ are determined by their values on $L_{\lambda}$, and give rise to an $\a_{\ol{1}}$-coinvariant on $L_{\lambda}$.  However as an $\a_{\ol{1}}$-module $L_{\lambda}$ admits a unique $\a_{\ol{1}}$-coinvariant by Lemma \ref{structure_of_L_lambda}, so there can be at most one $\k'$-semicoinvariant on $\UU\k'L_{\lambda}$ up to scalar.  Thus at most one copy of $I_{K'}(k)$ can arise. 
\end{proof}

\begin{cor}
	Let $f\in L_{\lambda}^{\a_{\ol{1}}}$ be nonzero.  Then $f(eK)\neq0$.
\end{cor}

\begin{lemma}\label{appendix_ker_lemma}
	If $\UU\k'L_{\lambda}$ contains a copy of $I_{K'}(k)$, then $\ker(-,-)_{\lambda}\cap\a_{\ol{1}}=0$, or equivalently $L_{\lambda}\cong\Lambda^\bullet\a_{\ol{1}}$ as an $\a_{\ol{1}}$-module.
\end{lemma}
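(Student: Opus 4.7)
The plan is to prove the contrapositive: assuming $\ker(-,-)_{\lambda}\cap\a_{\ol{1}}\neq 0$, show that $\UU\k'L_{\lambda}$ contains no copy of $I_{K'}(k)$.  By the preceding characterization of $I_{K'}(k)$-presence (the first lemma of \cref{appendix_ghost_dist_subsec}), it is equivalent to show that $\gamma|_{L_{\lambda}}=0$ for every $\gamma\in\AA_{G/K}$.

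The central step is to establish the bridge identity
\[
\gamma(f)=\operatorname{ev}_{eK}\!\bigl(HC(\gamma)\cdot f\bigr)\qquad\text{for every }f\in L_{\lambda},
\]
in which $HC(\gamma)\in\mathfrak{A}=S(\a_{\ol{0}})\Lambda^{\bullet}\a_{\ol{1}}$ acts on $L_{\lambda}$ by letting $\a_{\ol{0}}$ evaluate at the weight $\lambda$ and $\a_{\ol{1}}$ act through the Clifford action on $L_{\lambda}$.  To justify this, I would lift $\gamma$ to $p+n$ with $p\in\UU(\t_{\ol{0}}\oplus\a)$ and $n\in\n\UU\g$ modulo $\UU\g\k$, use that $\n$ annihilates $L_{\lambda}$ (a standard consequence of Lie's theorem for the irreducible $B$-module $L_{\lambda}$, with $\n$ a nilpotent ideal in $\b$) to kill the $n$-contribution, and use that $\t_{\ol{0}}\subset\k$ gives vector fields vanishing at $eK$ to deduce, together with sphericity (nonvanishing of $\operatorname{ev}_{eK}$ on $L_{\lambda}$), that the extended $\h_{\ol{0}}$-weight of $L_{\lambda}$ vanishes on $\t_{\ol{0}}$---this being exactly what is needed for the action of $p$ on $L_{\lambda}$ to factor through $HC(\gamma)\in\mathfrak{A}$.

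With the identity in hand, the rest is a short Clifford computation.  By \cref{HC_ghost_to_invts}, $HC(\gamma)=p\cdot\xi$ with $p\in S(\a_{\ol{0}})$ and $\xi\in\Lambda^{top}\a_{\ol{1}}$.  Because $[\a_{\ol{1}},\a_{\ol{1}}]\subset\t_{\ol{0}}$ and the weight vanishes on $\t_{\ol{0}}$, the subspace $\a_{\ol{1}}$ is isotropic in $(\h_{\ol{1}},(-,-)_{\lambda})$, so basis elements of $\a_{\ol{1}}$ pairwise anticommute in $Cl(\h_{\ol{1}},(-,-)_{\lambda})$ and the symbol-map image of $\xi$ equals, up to a nonzero scalar, the Clifford product $u_1u_2\cdots u_m$ for any ordered basis $u_1,\dots,u_m$ of $\a_{\ol{1}}$.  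Picking a basis with $u_1\in\ker(-,-)_{\lambda}\cap\a_{\ol{1}}$, such $u_1$ lies in the radical of the Clifford form and therefore acts as zero on any irreducible Clifford module, so the whole product annihilates $L_{\lambda}$; this gives $HC(\gamma)\cdot L_{\lambda}=0$ and $\gamma|_{L_{\lambda}}=0$ as required.  The equivalent reformulation $L_{\lambda}\cong\Lambda^{\bullet}\a_{\ol{1}}$ is immediate from \cref{structure_of_L_lambda} since $W_{\lambda}\cap\a_{\ol{1}}=\ker(-,-)_{\lambda}\cap\a_{\ol{1}}$.

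The hardest part will be the rigorous justification of the bridge identity: one must carefully handle the anti-homomorphism convention $\UU\g\to\Gamma(G/K,\DD_{G/K})^{op}$ so that terms in $\n\UU\g$ have their $\n$-factors applied first to $f\in L_{\lambda}$ and hence yield zero, and verify that changing representatives of $HC(\gamma)$ by elements of $\UU(\t_{\ol{0}}\oplus\a)\t_{\ol{0}}$ does not affect the value---which reduces precisely to the triviality of the $\t_{\ol{0}}$-action on $L_{\lambda}$ established above.
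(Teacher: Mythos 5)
Your proof is correct, but it takes a genuinely different route from the paper's. The paper argues directly: assuming $I_{K'}(k)$ splits off $\UU\k'L_{\lambda}$, restricting $I_{K'}(k)$ to the subalgebra $\t_{\ol{0}}\oplus\a_{\ol{1}}\sub\k'$ must stay injective; the generator $g$ of the head then satisfies $\Lambda^{top}\a_{\ol{1}}g\neq0$, and since any $f\notin\a_{\ol{1}}L_{\lambda}$ has $\UU\k'f$ generating that same copy of $I_{K'}(k)$, one also gets $\Lambda^{top}\a_{\ol{1}}f\neq0$ for such $f$, forcing $\a_{\ol{1}}$ to act freely on $L_{\lambda}$. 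You instead argue the contrapositive via the Harish-Chandra functional and the preceding ``iff'' lemma: if $\ker(-,-)_{\lambda}\cap\a_{\ol{1}}\neq0$, then any $HC(\gamma)=p\xi$ with $\xi\in\Lambda^{top}\a_{\ol{1}}$ has Clifford lift a product of anticommuting $\a_{\ol{1}}$-elements (since $\a_{\ol{1}}$ is isotropic for $(-,-)_{\lambda}$), one factor of which lies in the radical $W_{\lambda}$ and therefore kills $L_{\lambda}$, so $HC(\gamma)|_{L_{\lambda}}=0$ for all $\gamma$. Both are valid; your route makes explicit the role of the Clifford structure and essentially pre-proves the $(1)\Leftrightarrow(3)$ equivalence of \cref{p_gamma_nonzero_condition}, while the paper's is shorter at this point and defers the Clifford computations. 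One small imprecision in yours: $\n$ need not be an \emph{ideal} of $\b$, so the appeal to Lie's/Engel's theorem for $\n$ is not quite right as stated; the correct justification that $\n L_{\lambda}=0$ is that $\n$ is contained in the nilradical $\u$ of $\b$ (because the positive system was chosen to extend the restricted positive system, giving $\a\oplus\n\sub\b$), and $\u$ annihilates any irreducible $B$-module. Also, the bridge identity you prove is precisely what the paper takes for granted when writing ``$HC(\gamma)$ defines a functional on $L_{\lambda}$,'' so spelling it out is a useful addition rather than a detour.
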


\begin{proof}
	Write $\UU\k'L_{\lambda}=I_{K'}(k)\oplus M$ for some complimentary submodule $M$.  Then the restriction of $I_{K'}(k)$ to $\t_{\ol{0}}\oplus\a_{\ol{1}}$ must remain injective.  In particular if $g\in I_{K'}(k)$ generates the head, then we will have $\Lambda^{top}\a_{\ol{1}}g\neq0$.  If we choose $f\notin\a_{\ol{1}}L_{\lambda}$, then $\UU\k'f$ will generate the copy of $I_{K'}(k)$ and thus we must also have $\Lambda^{top}\a_{\ol{1}}f\neq0$.  The statement follows.
\end{proof}

\subsection{Harish-Chandra homomorphism}\label{sec HC} Consider the Lie superalgebra $\t_{\ol{0}}\oplus\a$.  Then $\t_{\ol{0}}$ is a central subalgebra and contains the derived subalgebra, so we may take the quotient by it to obtain the abelian Lie superalgebra $(\t_{\ol{0}}\oplus\a)/\t_{\ol{0}}$; we take pains not to write this as $\a$, since as stated earlier $\a$ itself is not a Lie superalgebra.  We write $\mathfrak{A}:=\UU(\t_{\ol{0}}\oplus\a/\t_{\ol{0}})$ and we view it as the supersymmetric polynomial algebra on the underlying super vector space of $\a$.

If we restrict the natural map $\UU\g\to\UU\g/\UU\g\k$ to $\UU(\t_{\ol{0}}\oplus\a)$, we obtain the projection 
\[
\UU(\t_{\ol{0}}\oplus\a)\to\UU(\t_{\ol{0}}\oplus\a)/(\t_{\ol{0}})\cong\mathfrak{A},
\]
so that $\mathfrak{A}$ is naturally subspace of $\UU\g/\UU\g\k$.  Further, by the PBW theorem we have a decomposition
\[
\UU\g/\UU\g\k=\mathfrak{A}\oplus\n\UU\g/(\n\UU\g\cap\UU\g\k).
\]

\begin{definition}
	We define the Harish-Chandra homomorphism 
	\[
	HC:\Dist(G/K,eK)=\UU\g/\UU\g\k\to\mathfrak{A}
	\]
	to be the projection along $\n\UU\g/(\n\UU\g\cap\UU\g\k)$.  
\end{definition}

Now $\t_{\ol{0}}\oplus\a_{\ol{1}}$ is a subalgebra of $\k'$, and this acts on $\mathfrak{A}$ by left multiplication on the quotient, and $\t_{\ol{0}}$ acts trivially while $\a_{\ol{1}}$ acts freely.  Thus we obtain a free action of the exterior algebra on $\a_{\ol{1}}$ on $\mathfrak{A}$, and therefore the invariants of this action are given by
\[
\mathfrak{A}^{\a_{\ol{1}}}=S(\a_{\ol{0}})\Lambda^{top}\a_{\ol{1}}.
\]
Since the decomposition 
\[
\UU\g/\UU\g\k=\mathfrak{A}\oplus\n\UU\g/(\n\UU\g\cap\UU\g\k)
\]
is $\t_{\ol{0}}\oplus\a_{\ol{1}}$-invariant, we clearly obtain the following lemma.
\begin{lemma}\label{HC_ghost_to_invts}
	The Harish-Chandra morphism restricts to a map
	\[
	HC:\AA_{G/K}\to\mathfrak{A}^{\a_{\ol{1}}}=S(\a_{\ol{0}})\Lambda^{top}\a_{\ol{1}}.
	\]
\end{lemma}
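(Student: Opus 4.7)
The plan is to show that the projection $HC$ is equivariant with respect to the left multiplication action of $\a_{\ol{1}}$, and then use that elements of $\AA_{G/K}$ are annihilated by $\a_{\ol{1}}$ to land in $\mathfrak{A}^{\a_{\ol{1}}}$.

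First I would observe that $\a_{\ol{1}}\sub\p_{\ol{1}}\sub\k'$, so the action of $\a_{\ol{1}}$ on $\Dist(G/K,eK)\cong\UU\g/\UU\g\k$ via its embedding in $\k'$ coincides with left multiplication. For $\gamma\in\AA_{G/K}=\Dist(G/K,eK)^{\ber_{\k'}}$ and $u\in\a_{\ol{1}}$ odd, the character $\ber_{\k'}$ takes values in $\mathbb{G}_m$, which is purely even, hence $\ber_{\k'}(u)=0$. Thus $u\cdot\gamma=0$ for every $u\in\a_{\ol{1}}$.

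Next I would verify that the decomposition
\[
\UU\g/\UU\g\k=\mathfrak{A}\oplus\n\UU\g/(\n\UU\g\cap\UU\g\k)
\]
is stable under left multiplication by $\a_{\ol{1}}$. Stability of $\mathfrak{A}$ is immediate, since $\a_{\ol{1}}\sub\t_{\ol{0}}\oplus\a$ and $\mathfrak{A}$ is the image of $\UU(\t_{\ol{0}}\oplus\a)$. For the complementary summand, the key observation is that $\a_{\ol{1}}\sub\h=\c_{\g}(\h_{\ol{0}})$ commutes with $\a_{\ol{0}}$, hence preserves every $\a_{\ol{0}}$-weight space of $\g$; in particular $[\a_{\ol{1}},\n]\sub\n$. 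So for $u\in\a_{\ol{1}}$, $n\in\n$, $\eta\in\UU\g$, we have $un\eta=nu\eta+[u,n]\eta\in\n\UU\g$, giving stability modulo $\UU\g\k$. Consequently $HC$ is $\a_{\ol{1}}$-equivariant, and applying it to $u\cdot\gamma=0$ yields $u\cdot HC(\gamma)=0$ for every $u\in\a_{\ol{1}}$, i.e.\ $HC(\gamma)\in\mathfrak{A}^{\a_{\ol{1}}}$.

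It remains to identify $\mathfrak{A}^{\a_{\ol{1}}}$. Since $[\a_{\ol{1}},\a_{\ol{1}}]\sub\t_{\ol{0}}$ becomes zero in $\t_{\ol{0}}\oplus\a/\t_{\ol{0}}$, we have $\mathfrak{A}\cong S(\a_{\ol{0}})\otimes\Lambda(\a_{\ol{1}})$ as supercommutative algebras, with $\a_{\ol{1}}$ acting freely by left multiplication on the exterior factor. The invariants of such a free action by an odd abelian algebra are exactly those elements containing the top exterior power, giving $\mathfrak{A}^{\a_{\ol{1}}}=S(\a_{\ol{0}})\Lambda^{top}\a_{\ol{1}}$. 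No step here is a genuine obstacle; the only subtlety, and the one I would highlight, is the verification that $[\a_{\ol{1}},\n]\sub\n$, which relies precisely on $\h$ being chosen as the centralizer of $\h_{\ol{0}}$ rather than $\h_{\ol{0}}$ itself.
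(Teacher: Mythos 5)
Your proposal is correct and follows essentially the same approach the paper takes: the paper's proof is the single sentence preceding the lemma, observing that $\t_{\ol{0}}\oplus\a_{\ol{1}}$ acts on $\mathfrak{A}$ with the stated invariants and that the PBW decomposition $\UU\g/\UU\g\k=\mathfrak{A}\oplus\n\UU\g/(\n\UU\g\cap\UU\g\k)$ is $\t_{\ol{0}}\oplus\a_{\ol{1}}$-stable, whence the lemma is "clear." You have supplied the details the paper omits, notably the verification that $[\a_{\ol{1}},\n]\sub\n$ via $\a_{\ol{1}}\sub\h=\c_{\g}(\h_{\ol{0}})$, and the observation that odd elements of $\k'$ annihilate the $\ber_{\k'}$-semi-invariants because the character is even; both are exactly the points the paper is implicitly relying on.
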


\begin{remark}\label{poly_from_HC}
	By Lemma \ref{HC_ghost_to_invts}, it follows that given a ghost distribution $\gamma\in\AA_{G/K}$, we may obtain a polynomial in $S(\a_{\ol{0}})$ by writing $HC(\gamma)=p_{\gamma}\xi$, where $\xi\in\Lambda^{top}\a_{\ol{1}}$ is some chosen nonzero element.  Thus $p_{\gamma}$ is well-defined up to a scalar, so that its vanishing set is well-defined.    Although we caution that it depends on the choice of positive restricted roots.
\end{remark}

We have a linear map $\a_{\ol{1}}\otimes\a_{\ol{0}}^*\to\t_{\ol{1}}^*$ given by 
\[
(u\otimes \lambda)\mapsto (u,-)_{\lambda}.
\]
Let $U_{reg}\sub\a_{\ol{0}}^*$ denote the locus where this defines an injective morphism $\a_{\ol{1}}\to\t_{\ol{1}}^*$.  Equivalently, $\lambda\in U_{reg}$ if and only if the irreducible $\h$-module of weight $\lambda$ is projective over $\a_{\ol{1}}$.  Clearly $U_{reg}$ is Zariski open, although it need not be nonempty: if $\dim\a_{\ol{1}}>\dim\t_{\ol{1}}$, or if there exists a nonzero element $u\in\a_{\ol{1}}$ such that $[u,\t_{\ol{1}}]=0$, then $U_{reg}=\emptyset$.

\begin{remark}\label{rem dims equal}
	If $\dim\a_{\ol{1}}=\dim\t_{\ol{1}}$, then $U_{reg}$ is the complement of the vanishing set of a homogeneous polynomial of degree $\deg\a_{\ol{1}}$, which is given by the composition of $\a_{\ol{0}}^*\to\Hom(\a_{\ol{1}},\t_{\ol{1}}^*)$ with a determinant map (after choosing coordinates).
\end{remark}

\subsection{Degree bound} Choose a homogeneous basis $p_1,\dots,p_r$ of $\p$.  Recall from Lemma \ref{dist_symm_lemma} that $\Dist(G/K,eK)$ is spanned by monomials
\[
p_1^{k_1}\cdots p_r^{k_r},
\]
where we abusively identify them with their restrictions to $\Dist(G/K,eK)$.  Define a filtration $F^\bullet$ on $\Dist(G/K,eK)$ by setting $F^0=\langle\operatorname{ev}_{eK}\rangle$, and set $F^i$ to be the span of all monomials as above where the degree of $p_i$ is 2 if $p_i$ is even and 1 if $p_i$ is odd.

On the other hand, define a grading of $\mathfrak{A}$ by giving elements of $\a_{\ol{0}}$ degree 1, and elements of $\a_{\ol{1}}$ degree $1/2$. The following is a generalization of 4.2.2 in \cite{gorelik2000ghost}.

\begin{lemma}\label{degree_bound}
	We have
	\[
	HC(F^r)\sub\sum\limits_{s\leq r/2}\mathfrak{A}^{s}.
	\]
\end{lemma}

\begin{proof}
	Since the Harish-Chandra projection is linear, it suffices to prove this on monomials.  We induct on the length of the monomial.  If the monomial is length zero, the result is clear.  Suppose we have a monomial $b_1\cdots b_t\in F^{r}$.  Using the Iwasawa decomposition, we may write $b_1=k+a+n$ where $k\in\k$, $a\in\a$, and $n\in\n$.  Since $nb_2\cdots b_{t}\in\n\UU\g$ it vanishes under the Harish-Chandra projection, so we have
	\begin{eqnarray*}
		HC(b_1\dots b_t)& = &HC(ab_2\cdots b_t)+HC(kb_2\cdots b_t)\\
		& = &HC(ab_2\cdots b_{t})+\sum\limits_{i} HC(b_2\cdots[k,b_i]\cdots b_{t})+HC(b_2\cdots b_tk).
	\end{eqnarray*}
	Since $b_2\cdots b_tk\in\UU\g\k$, the last term vanishes.  If $b_t$ is even then $b_2\cdots b_{t}\in F^{r-2}$, and $HC(ab_2\cdots b_{t})=HC(a)HC(b_2\cdots b_{t})$. Thus by induction,
	\[
	\deg HC(ab_2\cdots b_t)\leq \deg HC(b_2\cdots b_{t})+1\leq \frac{r-2}{2}+1=\frac{r}{2}.
	\]
	On the other hand if $b_t$ is odd then $b_2\cdots b_{t}\in F^{r-1}$ and we have
	\[
	\deg HC(ab_2\cdots b_t)\leq \deg HC(b_2\cdots b_{t})+\frac{1}{2}\leq \frac{r-1}{2}+1=\frac{r}{2}.
	\]
	This deals with the first term.  As for the terms $HC(b_2\cdots[k,b_i]\cdots b_{t})$, if $b_1$ is even then the parity of $[k,b_i]$ is the same as the parity of $b_i$, and thus $b_2\cdots[k,b_i]\cdots b_{t}\in F^{r-2}$.  Therefore
	\[
	\deg HC(b_2\cdots[k,b_i]\cdots b_{t})\leq (r-2)/2\leq r/2.
	\]
	If $b_1$ is odd, then the parity of $[k,b_i]$ is opposite the parity of $b_i$, so $b_2\cdots[k,b_i]\cdots b_{t}\in F^{r}$, and by induction we have
	\[
	HC(b_2\cdots[k,b_i]\cdots b_{t})\leq r/2.
	\]
\end{proof}

\begin{cor}
	Let $z\in\Dist^r(G_0/K_0,eK_0)^{K_0}$ lie in the $r$th part of the standard filtration on $\Dist(G_0/K_0,eK_0)$ defined in  Definition \ref{dist_def}.  Then $v_{\k'}\cdot z\in\AA_{G/K}$ has 
	\[
	HC(v_{\k'}\cdot z)\in \mathfrak{A}^{r+\dim\p_{\ol{1}}/2}.
	\]
	In particular, in the notation of Remark \ref{poly_from_HC}, 
	\[
	\deg p_{v_{\k'}\cdot z}\leq \dim\n_{\ol{1}}/2+r.
	\]
\end{cor}
\begin{proof}
	The first statement follows from Lemma \ref{degree_bound}.  The second statement follows from Remark \ref{poly_from_HC}, where we showed that $HC(v_{\k'}\cdot z)=p_{\gamma}\xi$ for some $p_{\gamma}\in S(\a_{\ol{0}})$ and a non-zero element $\xi\in\Lambda^{top}\a_{\ol{1}}$.  Since $\deg\xi=\dim\a_{\ol{1}}/2$, and $\dim\p_{\ol{1}}=\dim\a_{\ol{1}}+\dim\n_{\ol{1}}$,  the bound follows.  
\end{proof}

\begin{lemma}\label{hc_image_module_structure}
	$HC(\AA_{G/K})$ is naturally a module over $HC(\ZZ_{G/K})$ such that $HC:\AA_{G/K}\to S(\a)$ induces a morphism of $\ZZ_{G/K}$-modules.
\end{lemma}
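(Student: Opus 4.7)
My plan is to reduce the statement to a single multiplicative identity at the level of $S(\a)$, namely
\[
HC(\gamma z) = HC(\gamma) \cdot HC(z) \qquad \text{for all } \gamma \in \AA_{G/K},\ z \in \ZZ_{G/K}.
\]
Once this is in hand, $HC(\AA_{G/K})$ is automatically closed under multiplication by $HC(\ZZ_{G/K})$ inside $S(\a)$, so it inherits an $HC(\ZZ_{G/K})$-module structure, and $HC$ is tautologically a module map with $\ZZ_{G/K}$ acting on the target through its HC image.

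To establish the identity, I would test it at points of the Zariski-dense lattice $\Lambda \subset \a^*$ using the eigenfunction interpretation $HC(\psi)(\lambda) = \psi(f_\lambda)$ recalled earlier in this section. Since $z$ is $K$-invariant, by \cref{diff_ops_G/K} it corresponds to a $G$-invariant differential operator $D_z$ on $G/K$, and hence commutes with the $\g$-action on $k[G/K]$, in particular with $\a$ and $\n$. Applying these commutation relations to $f_\lambda$ shows that $D_z f_\lambda$ remains $\n$-invariant and is an $\a$-eigenfunction of weight $\lambda$ in a neighborhood of $eK$.

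The key geometric input is that the germ space at $eK$ of $\a \oplus \n$-eigenfunctions of weight $\lambda$ is one-dimensional: this follows from the Iwasawa decomposition, since $AN$ has an open orbit through $eK$, so any two such eigenfunction germs have locally constant ratio. Thus $D_z f_\lambda = c_\lambda f_\lambda$ near $eK$, and evaluating at $eK$ combined with the eigenfunction formula pins down $c_\lambda = (D_z f_\lambda)(eK) = z(f_\lambda) = HC(z)(\lambda)$. Using the right $\DD_{G/K}$-module structure on distributions, through which the $\ZZ_{G/K}$-action on $\AA_{G/K}$ was defined, I then obtain
\[
HC(\gamma z)(\lambda) = (\gamma z)(f_\lambda) = \gamma(D_z f_\lambda) = c_\lambda \cdot \gamma(f_\lambda) = HC(\gamma)(\lambda) \cdot HC(z)(\lambda),
\]
and Zariski density of $\Lambda$ promotes this to an equality of polynomials in $S(\a)$.

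The step requiring the most care is the one-dimensionality of the germ space of $\a \oplus \n$-eigenfunctions of a fixed weight at $eK$. I expect this to follow from the classical analogue on $G_0/K_0$ together with the local isomorphism $(G_0/K_0)^{\a\oplus\n} \to G/K$ established earlier in this section, but it is the one spot where the supergeometric setting needs a direct argument rather than a purely formal rearrangement of definitions.
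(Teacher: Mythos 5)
Your proof is correct in outline, but it follows a genuinely different and considerably heavier route than the paper. The paper's proof is a short, purely algebraic computation directly from the Iwasawa-type decomposition
\[
\UU\g/\UU\g\k \;=\; S(\a)\;\oplus\;\n\UU\g/(\n\UU\g\cap\UU\g\k):
\]
one writes $z = n + HC(z)$ and $\gamma = n' + HC(\gamma)$ modulo $\UU\g\k$ with $n,n'$ in the $\n\UU\g$ summand, multiplies out, and notes that $n'n$, $n'HC(z)$, and $HC(\gamma)n$ all remain in $\n\UU\g$ modulo $\UU\g\k$ — the only nontrivial observation being that $S(\a)$ normalizes $\n\UU\g$ because $[\a,\n]\subseteq\n$. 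Hence $HC(\gamma z)=HC(\gamma)HC(z)$ immediately.

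Your argument instead tests the identity on eigenfunctions $f_\lambda$ and appeals to Zariski density of $\Lambda$. This works, but it routes through three additional facts the paper's proof never touches: (i) the implicit compatibility between the algebraic $\ZZ_{G/K}$-action $\gamma\mapsto \gamma z + \UU\g\k$ and the right $\DD_{G/K}$-module action $\psi_\gamma\mapsto\psi_\gamma D_z$ — you present the latter as "the" definition, whereas the paper defines the action by $\UU\g$-multiplication modulo $\UU\g\k$, and the agreement (via $G$-invariance of $D_z$ plus $\operatorname{res}_{eK}$ being a right-module map) is something you'd need to note; (ii) the fact that $D_z f_\lambda$ is again an $\a\oplus\n$-eigenfunction germ of weight $\lambda$; and (iii) one-dimensionality of the germ space of such eigenfunctions, which you correctly flag as the delicate point and which ultimately reduces to sphericity via the local isomorphism with $(G_0/K_0)^{\a\oplus\n}$. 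What your approach buys is a conceptual explanation of the identity — $D_z$ acts by the scalar $HC(z)(\lambda)$ on each $f_\lambda$, so multiplicativity is forced pointwise — at the cost of machinery. The paper's approach is more elementary and self-contained, a few lines of bracket-pushing in $\UU\g$, and is the one that generalizes cleanly to the non–Cartan-even setting treated in the appendix.
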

\begin{proof}
	Let $\gamma\in\AA_{G/K}$ and $z\in\ZZ_{G/K}$.  Write $z=n+HC(z)+\UU\g\k$ and $\gamma=n'+HC(\gamma)+\UU\g\k$.  Then we see that
	\[
	\gamma z=(n'+HC(\gamma)+\UU\g\k)(n+HC(z)+\UU\g\k)=n'(n+HC(z))+HC(\gamma)n+HC(z)HC(\gamma)+\UU\g\k.
	\]
	Clearly $n'(n+HC(z))\in\n\UU\g$.  And since $HC(\gamma)\in S(\a)$, it preserves $\n\UU\g$ under commutator and thus $HC(\gamma)n\in\n\UU\g$ as well.  Hence we find that
	\[
	HC(\gamma z)=HC(\gamma)HC(z),
	\]
	as desired.
\end{proof}

\subsection{Branching}\label{appendix_ghost_dist_subsec} 

Let $\gamma\in\AA_{G/K}$ and $L_{\lambda}$ an irreducible $B$-submodule of $k[G/K]$.  Then $HC(\gamma)$ defines a functional on $L_{\lambda}$.   
\begin{lemma}
	If $HC(\gamma):L_{\lambda}\to k$ is nonzero then $\UU\k'L_{\lambda}$ contains a copy of $I_{K'}(k)$. Further, $\UU\k'L_{\lambda}$ contains a copy of $I_{K'}(k)$ if and only if $HC(\gamma):L_{\lambda}\to k$ is non-zero for some $\gamma\in\AA_{G/K}$. 
\end{lemma}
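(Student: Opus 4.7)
My plan is to adapt the proof of the Cartan-even analogue \cref{k'_mod_gend_by_f_lambda_has_inj}, with two main adjustments required in the non-Cartan-even setting. First, I will establish the bridge identity $\gamma(f) = HC(\gamma)(f)$ for every $f \in L_\lambda$: the $\n\UU\g$-part of $\gamma$ annihilates $L_\lambda$ since $\n\cdot L_\lambda = 0$ by the highest weight property, and the $\UU\g\k$-part annihilates all functions on $G/K$ at $eK$. The ambiguity in lifting $HC(\gamma)\in\mathfrak{A}$ to $\UU(\t_{\ol{0}}\oplus\a)$ along the ideal $(\t_{\ol{0}})$ is killed because $\t_{\ol{0}}\subset\k$. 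With this identity in hand, the forward direction is immediate: given $HC(\gamma)|_{L_\lambda}\neq 0$, I pick $f\in L_\lambda$ with $\gamma(f)\neq 0$ and apply the first proposition of \cref{functions_as_K'_mod_section} to conclude that $\langle K'\cdot f\rangle$ contains $I_{K'}(k)$; this submodule lies in $\UU\k' L_\lambda$ by $K_0$-stability of the latter, which follows from a PBW argument using that $K_0\cdot L_\lambda\subset \UU\k_{\ol{0}}L_\lambda\subset\UU\k' L_\lambda$.

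For the converse, I will use the pullback $\pi\colon k[G/K]\hookrightarrow k[(G_0/K_0)^{\k'}]$ from \cref{symm_space_local_iso} together with the isomorphism $\AA_{G/K}\cong \Dist(G_0/K_0,eK_0)^{K_0}$ given by $z\mapsto v_{\k'}\cdot z$, both of which persist without the Cartan-even assumption since they depend only on the Ind--Coind reformulation in \cref{symm_space_dist}. If $\UU\k' L_\lambda$ contains $I_{K'}(k)$, then so does its image inside $k[(G_0/K_0)^{\k'}]=\Coind_{\k_{\ol{0}}}^{\k'}k[G_0/K_0]$. Frobenius reciprocity identifies this $I_{K'}(k)$-summand with a specific $K_0$-isotypic component of $k[G_0/K_0]$, and any $K_0$-invariant distribution pairing nontrivially with that component yields a $z$ whose ghost distribution $\gamma=v_{\k'}\cdot z$ is nonzero on $\UU\k' L_\lambda$. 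Since $L_\lambda$ generates $\UU\k' L_\lambda$ as a $\UU\k'$-module and $\gamma$ is $K'$-semi-invariant, $\gamma$ must be nonzero on $L_\lambda$ itself, which by the bridge identity is the same as $HC(\gamma)|_{L_\lambda}\neq 0$.

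The main obstacle lies in the converse direction, specifically in tracking the correspondence between the $I_{K'}(k)$-summand and the detecting $K_0$-invariant distribution through the Ind--Coind isomorphism while accounting for the Berezinian twists, which are more delicate than in the Cartan-even case. An alternative approach would be to invoke nondegeneracy of the distribution-function pairing on the affine variety $G/K$ directly and project a detecting distribution onto the $\ber_{\k'}$-isotypic component using $K_0$-semisimplicity, though this would require a separate argument ensuring that the projected functional is still supported at $eK$ to finite order.
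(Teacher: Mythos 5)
Your proposal is correct and is essentially an unpacking of what the paper leaves implicit by citing \cref{functions_as_K'_mod_section}; the two approaches are the same. Your identification of the key point — that the bridge identity $\gamma(f)=HC(\gamma)(f)$ holds for \emph{every} $f\in L_\lambda$, not just a highest weight vector — is exactly right: $L_\lambda$ is an irreducible $B$-module so the full nilradical of $\b$ (and hence $\n$) annihilates it, so $\n\UU\g$ pairs trivially with $L_\lambda$ under the right module structure on $k[G/K]$; and the ambiguity in lifting $HC(\gamma)\in\mathfrak{A}=\UU(\t_{\ol{0}}\oplus\a)/(\t_{\ol{0}})$ is contained in $\UU\g\k$ since $\t_{\ol{0}}$ is central in $\t_{\ol{0}}\oplus\a$ and $\t_{\ol{0}}\sub\k$.

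On the converse direction, which you flag as the main obstacle, you can in fact be more confident: the copies of $I_{K'}(k)$ inside $k[(G_0/K_0)^{\k'}]=\Coind_{\k_{\ol{0}}}^{\k'}k[G_0/K_0]$ correspond precisely to the trivial $K_0$-summands of $k[G_0/K_0]$, i.e.\ to $k[G_0/K_0]^{K_0}$, while $\AA_{G/K}\cong\Dist(G_0/K_0,eK_0)^{K_0}$; the classical pairing between $K_0$-invariant distributions and $K_0$-invariant functions on the affine $G_0$-variety $G_0/K_0$ is nondegenerate (reductivity of $K_0$ plus nondegeneracy of the full distribution-function pairing), so a detecting $z$ always exists. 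That $\gamma$ nonzero on $\UU\k'L_\lambda$ forces $\gamma$ nonzero on $L_\lambda$ follows because a $\ber_{\k'}$-semi-invariant functional on $\UU\k'L_\lambda$ is determined by its restriction to $L_\lambda$ ($\ber_{\k'}$ vanishes on $\k'_{\ol{1}}$), which you correctly observe. The worry you raise about Berezinian twists being more delicate in the non-Cartan-even case is unfounded: \cref{ind-coind_iso} and \cref{invariants_ind_coind_iso} are completely independent of the Cartan-even hypothesis.
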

\begin{proof}
	Follows from the work done in Section \ref{functions_as_K'_mod_section}.
\end{proof}

The following corollary follows from the work done so far in this section.
\begin{cor}\label{p_gamma_nonzero_condition}
	Let $\gamma\in\AA_{G/K}$, $\lambda\in\Lambda$, and write $HC(\gamma)=p_{\gamma}\xi$ as in Remark \ref{poly_from_HC}.  Then the following are equivalent:
	\begin{enumerate}
		\item $HC(\gamma):L_{\lambda}\to k$ is nonzero;
		\item $\UU\k'L_{\lambda}$ contains a copy of $I_{K'}(k)$ and $p_{\gamma}(\lambda)\neq0$;
		\item $L_{\lambda}$ is a projective $\a_{\ol{1}}$-module and $p_{\gamma}(\lambda)\neq0$;
		\item $\lambda\in U_{reg}$ and $p_{\gamma}(\lambda)\neq0$.
	\end{enumerate}
\end{cor}

\subsection{Relationship between $G/K$ and $G/K'$}  We continue to suppose that $(\g,\k)$ satisfies the Iwasawa decomposition.

\begin{prop}\label{injective_summand_prop}
	For $\lambda\in\Lambda^+$, suppose that 
	\begin{enumerate}
		\item $\langle K'\cdot L_{\lambda}\rangle$ contains a copy of $I_{K'}(k)$; and 
		\item $\langle G\cdot L_{\lambda}\rangle\cong L(\lambda)$ is an irreducible $G$-module.
	\end{enumerate} 
	Then $I_G(L(\lambda))$ is a submodule of $k[G]^{-\ber_{\k'}}$.  
\end{prop}
By $k[G]^{-\ber_{\k'}}$  we denote the elements of $k[G]$ which are semi-invariants of weight $-\ber_{\k'}$ under the action of $K'$ on $k[G]$ by right translation.  This space is equal to the sections of the induced bundle $\Ind_{K'}^{G}\Ber(\k')$, and if $\Ber(\k')$ is trivial then $k[G]^{-\ber_{\k'}}=k[G/K']$.
\begin{proof}
	Since $I_{K'}(k)\cong P_{K'}(\Ber(\k'))$, $I_{K'}(k)$ has a morphism $\varphi:I_{K'}(k)\to\Ber(\k')$ determined by the projection onto its head.  Since $I_{K'}(k)$ splits off $L(\lambda)$ as a $K'$-module, it also must split off $I_G(L(\lambda))$ as a $K'$-module.  Therefore we may extend $\varphi$ to a $K'$-equivariant morphism $\phi:I_G(L(\lambda))\to\Ber(\k')$.  By construction $\phi$ is non-zero on $L(\lambda)$, the socle of $I_G(L(\lambda))$. Thus by Frobenius reciprocity, $\phi$ defines an injective morphism of $G$-modules 
	\[
	\Phi:I_G(L(\lambda))\to k[G]^{\ber_{\k'}}.
	\]	
\end{proof}
\begin{remark}
	The above theorem is especially useful when $G/K\cong G/K'$, as it helps to determine the structure of $k[G/K]$ as a $G$-module.  We will see one nice application of this in the next section.
\end{remark}

\subsection{The distribution $\operatorname{ev}_{eK}v_{\k'}$}
Recall that $\operatorname{ev}_{eK}v_{\k'}=v_{\k'}\cdot \operatorname{ev}_{eK}$ is a ghost distribution, and is the one of minimal degree.	  Observe that we have a $K'$-equivariant embedding
\[
i:K'/K_0\to G/K.
\]
This induces a $K'$-equivariant surjective morphism on functions
\[
i^*: k[G/K]\to k[K'/K_0].
\]
Write $\operatorname{ev}_{eK}$ for the evaluation distribution at $eK$ on $G/K$.  Then on $K'$-distributions we obtain a map which sends
\[
v_{\k'}\mapsto v_{\k'}\cdot\operatorname{ev}_{eK}=\operatorname{ev}_{eK}v_{\k'}.
\]
Thus for $\lambda\in\Lambda^+$ we have
\[
(\operatorname{ev}_{eK}v_{\k'}) f_{\lambda}=v_{\k'}i^*(f_\lambda)=\operatorname{ev}_{eK_0}\circ D_{\k'}\circ i^*(f_{\lambda}).
\]
It follows that:
\begin{lemma}
	For $\lambda\in\Lambda^+\cap U_{reg}$, $p_{\operatorname{ev}_{eK}v_{\k'}}(\lambda)\neq0$ if and only if  $\langle K'\cdot i^* f_{\lambda}\rangle$ contains $I_{K'}(k)$.	In particular, if $\langle K'\cdot f_{\lambda}\rangle$ contains a copy of $I_{K'}(k)$, then $p_{\operatorname{ev}_{eK}v_{\k'}}(\lambda)\neq0$ if and only if the $K'$-invariant in the copy of $I_{K'}(k)$ is non-zero at $eK$.  
\end{lemma}

The following property of $v_{\k'}\cdot\operatorname{ev}_{eK}$ will be important in the next section.
\begin{cor}\label{non_vanishing_cor}
	Suppose that whenever a $B$-irreducible submodule $L_{\lambda}\sub k[G/K]$ has $\UU\k'L_{\lambda}$ containing $I_{K'}(k)$, the $K'$-invariant is non-vanishing at $eK$.  Then
	\begin{enumerate}
		\item $\UU\k'L_{\lambda}$ contains $I_{K'}(k)$ if and only if $L_{\lambda}$ is projective over $\a_{\ol{1}}$ and $p_{G/K}(\lambda)\neq0$; and
		\item if $HC(\operatorname{ev}_{eK}v_{\k'}):L_{\lambda}\to k$ is zero, then $HC(\gamma):L_{\lambda}\to k$ is zero for all $\gamma\in\AA_{G/K}$.
	\end{enumerate}
\end{cor}

\section{The Group Case $G\times G/G$}\label{group_case_section}

We now consider the case of the supersymmetric space $G\times G/G$, where $G$ is embedded diagonally.  This space is isomorphic to $G$ with the $G\times G$ action given by left and right translation.  Since $G$ is assumed to be quasireductive, it must be connected in particular, so we can work with the Lie superalgebra without losing much.  The involution we take in this case, $\theta$, is given on the Lie superalgebra $\g\times\g$ by $\theta(x,y)=(y,x)$.   

\begin{lemma}\label{Ug_dist}
	We have a $\g\times\g$-module isomorphism 
	\[
	\UU\g\cong \Dist(G\times G/G,eG)=\UU(\g\times\g)/\UU(\g\times\g)\g
	\]
	given by
	\[
	u\mapsto u\otimes 1.
	\]
\end{lemma}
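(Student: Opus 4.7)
The plan is to decompose $\g\times\g$ as a vector space into two Lie sub-superalgebras, namely $\g_L := \g\times 0$ and the diagonal $\g_d := \{(x,x):x\in\g\}$, and to apply PBW to obtain an explicit factorization of $\UU(\g\times\g)$. Note that $\g_L \cap \g_d = 0$ and $\g_L + \g_d = \g\times\g$ as super vector spaces.

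First, by the PBW theorem applied to an ordered homogeneous basis of $\g\times\g$ obtained by listing a basis of $\g_L$ before a basis of $\g_d$, the multiplication map
\[
\mu:\UU\g_L \otimes \UU\g_d \longrightarrow \UU(\g\times\g)
\]
is an isomorphism of super vector spaces. Since right multiplication by $\g_d$ acts only on the second tensor factor, one has $\UU(\g\times\g)\cdot\g_d = \mu\bigl(\UU\g_L \otimes \g_d\UU\g_d\bigr)$. Quotienting gives
\[
\UU(\g\times\g)/\UU(\g\times\g)\g_d \;\cong\; \UU\g_L \otimes \bigl(\UU\g_d/\g_d\UU\g_d\bigr) \;\cong\; \UU\g_L,
\]
using the augmentation $\UU\g_d/\g_d\UU\g_d \cong k$. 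Identifying $\UU\g \cong \UU\g_L$ via $x \leftrightarrow (x,0)$, the image of $u\in\UU\g$ under the composite map is precisely the class of $u\otimes 1$, which proves the linear isomorphism.

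Finally, to promote this to a $\g\times\g$-module isomorphism, I would transport the left $\g\times\g$-action on $\UU(\g\times\g)/\UU(\g\times\g)\g_d$ (given by left multiplication) over to $\UU\g$ and check it has the correct form. The key observation is that from $0 \equiv (u\otimes 1)\cdot(y,y) = uy\otimes 1 + u\otimes y$ in the quotient, we get the replacement rule $u\otimes y \equiv -uy\otimes 1$, and hence $(1\otimes y)(u\otimes 1) \equiv -(-1)^{|y||u|}uy\otimes 1$. Combining this with the obvious identity $(x\otimes 1)(u\otimes 1) = xu\otimes 1$, the resulting action of $(x,y)\in\g\times\g$ on $\UU\g$ reads $(x,y)\cdot u = xu - (-1)^{|y||u|}uy$, which is manifestly a Lie superalgebra action (restricting to the adjoint action on the diagonal). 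The main — and only — obstacle is the sign bookkeeping in this last step; the structural content is just the PBW factorization.
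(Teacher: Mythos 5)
Your proof is correct and rests on the same two ingredients as the paper's: the PBW theorem (to get the linear isomorphism $\UU(\g\times\g)/\UU(\g\times\g)\g_d\cong\UU\g$ via $u\mapsto u\otimes 1$) and the short sign computation $u\otimes y\equiv -uy\otimes 1$ in the quotient. The paper presents it in the other direction — it writes down the known action $(v_1,v_2)\cdot u = v_1u-(-1)^{\ol{u}\ol{v_2}}uv_2$ on $\UU\g$ and directly verifies the map is equivariant — but this is only a cosmetic rearrangement of the same calculation.
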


\begin{proof}
Let $(v_1,v_2)\in\g\times\g$ and $u\in\UU\g$.  We see that $(v_1,v_2)\cdot u=v_1u-(-1)^{\ol{u}\ol{v_2}}uv_2$, and this will map to (before modding out by $\UU(\g\times\g)\g$)
\begin{eqnarray*}
 v_1u\otimes 1-(-1)^{\ol{u}\ol{v_2}}uv_2\otimes 1& = &v_1u\otimes 1+(-1)^{\ol{v_2}\ol{u}}u\otimes v_2-(-1)^{\ol{v_2}\ol{u}}uv_2\otimes 1-(-1)^{\ol{v_2}\ol{u}}u\otimes v_2\\
                                          & = &(v_1\otimes 1+1\otimes v_2)(u\otimes 1)-(-1)^{\ol{v_2}\ol{u}}(u\otimes 1)(v_2\otimes 1+1\otimes v_2)
\end{eqnarray*}
which shows the $\g\times\g$-equivariance of the map.  It is an isomorphism by the PBW theorem.
\end{proof}

From now on we work with $\UU\g$ as our space of distributions.  Observe that in this case,
\[
\g'=\{(u,\delta(u)):u\in\g\}\cong\g,
\]
and so we can identify $\g'$ with $\g$ as a Lie superalgebra.  With this setup, the action of $\g'$ on $\Dist(G\times G/G,eG)$ is given by, for $u\in\g$ and $v\in\UU\g$,
\[
u\cdot v=uv-(-1)^{\ol{u}\ol{v}+\ol{u}}vu.
\]
This is exactly Gorelik's twisted adjoint action defined in \cite{gorelik2000ghost}.  There, she proved algebraically that $\UU\g$ is an induced module from $\g_{\ol{0}}$ under this action.  However this follows from our geometric perspective via Proposition \ref{symm_space_dist}.  Gorelik defined $\AA\sub\UU\g$, the anticentre of $\UU\g$, to be the $\g'$-invariant distributions on $G$.  However please note Remark \ref{A_notation_caution} regarding notation.

\subsection{Structure of $k[G]$ as a $G\times G$-module}  We now assume that $G$ is quasireductive.  Choose a Cartan subalgebra $\h\sub\g$.   Then $\h\times\h$ is a Cartan subalgebra of $\g\times\g$ such that $\a=\{(h,-h):h\in\h\}$ is a Cartan subspace of $\p$.

If we choose a Borel subalgebra $\b$ of $\g$ containing $\h$, then $\b^-\times\b^+$ becomes an Iwasawa Borel subalgebra of $\g\times\g$.  Again for this choice of Borel subalgebra, both $(\g\times\g,\g)$ and $(\g\times\g,\g')$ admit an Iwasawa decomposition for any such choice of Borel subalgebra as in Section \ref{group_case_section}.  Further, if $L_{\lambda}\sub k[G]$ is an irreducible $B^-\times B^+$ submodule, then 
\[
\UU(\g\times\g)L_{\lambda}=\UU\g L_{\lambda}=\UU\g'L_{\lambda}.
\]

We again have $\Lambda^+=\{(-\lambda,\lambda):\lambda\text{ is a }B\text{-dominant weight}\}$.  By abuse of notation we will also write $\Lambda^+$ for the set of $B$-dominant weights in $\h_{\ol{0}}^*$.

\begin{lemma}\label{two_possibilities}
	Let $L(\lambda):=L_{B}(\lambda)$ be the irreducible $G$-module of highest weight $\lambda\in\Lambda^+$. Then one of the following two must occur:
	\begin{enumerate}
		\item $L(\lambda)^*\boxtimes L(\lambda)$ is an irreducible $G\times G$-module and admits a unique even $G'$-invariant.
		\item $L(\lambda)^*\boxtimes L(\lambda)=L\oplus\Pi L$ is a sum the two irreducible $G\times G$-modules $L$ and $\Pi L$ which are non-isomorphic parity shifts of one another.  In this case, both $L$ and $\Pi L$ admit a unique $G'$-invariant, one even and one odd.
	\end{enumerate}
\end{lemma}
\begin{proof}
	If $L(\lambda)\not\cong\Pi L(\lambda)$ then the first case happens.  Otherwise the second case happens.
\end{proof}
\begin{definition}
	For $\lambda\in\Lambda^+$, define $d(\lambda)\in\{0,1\}$ to be $0$ if $L(\lambda)\not\cong\Pi L(\lambda)$, and $1$ otherwise.  If $d(\lambda)=1$, set $\frac{1}{2}L(\lambda)^*\boxtimes L(\lambda)$ to be the irreducible $G\times G$-submodule of $L(\lambda)^*\boxtimes L(\lambda)$ which contains an even $G'$-invariant.
\end{definition}

\begin{prop}\label{appendix_tr_nonzero}
	We have the following decomposition:
	\[
	\operatorname{soc}k[G]=\bigoplus\limits_{\lambda\in\Lambda^+}\frac{1}{2^{d(\lambda)}}L(\lambda)^*\boxtimes L(\lambda).
	\]
\end{prop}
\begin{proof}
	The irreducible $G\times G$ modules are all of the form $\frac{1}{2^{d(\lambda)d(\mu)}}L(\lambda)\boxtimes L(\mu)$, where $\lambda,\mu\in\Lambda^+$.  By Frobenius reciprocity, this admits an even $G\times G$-equivariant morphism into $k[G]$ if and only if $\lambda=-\mu$ and the unique $G$-invariant of it is even, implying the same of about the unique $G'$-invariant.
\end{proof}
	
\begin{lemma}
	Let $L$ be a simple $G$-module.  Then $\End(L)$, as a $G\times G$-module, admits a unique, up to scalar, even $G'$-invariant given by $\delta_L$, the parity involution.  Consequently, $\tr_L\in\End(L)^*$ defines a nonzero $G'$-invariant function on $G$.
\end{lemma}

\begin{proof}
	Clearly $\delta_L$ is $\g_{\ol{0}}$-invariant.  For $u\in\g_1$, we see that
	\[
	((u,-u)\delta)(v)=u\delta(v)+\delta(uv)=(-1)^{\ol{v}}uv+(-1)^{\ol{u}+\ol{v}}uv=0
	\]
	so $\gamma$ is $\g'$-invariant.  Now as a tensor, $\delta_L=\sum\limits_{i}(-1)^{\ol{e_i}}e_i\otimes\varphi_i$, where $\varphi_i(e_i)=1$, so after applying the braiding to switch the order of tensors, we obtain the trace in $\End(L)^*$.
\end{proof}

\begin{cor}\label{tr_nonzero}
	If $L$ is a simple $G$-module, $\frac{1}{2^{d(\lambda)}}L^*\boxtimes L\sub k[G]$ has a unique up to scalar $G'$-invariant, given by $\tr_L$.  In particular, $\tr_L(eG)=\dim L\neq0$, so that the hypotheses of Corollary \ref{non_vanishing_cor} apply.  
\end{cor}

\begin{remark}\label{appendix_remark_proj}
	We have shown that the irreducible $B^-\times B^+$-submodules of $k[G]$ are given by $\frac{1}{2^{d(\lambda)}}L(\lambda)_{-\lambda}^*\boxtimes L(\lambda)_{\lambda}$ and 
	\[
	\UU\g'\left(\frac{1}{2^{d(\lambda)}}L(\lambda)_{-\lambda}^*\boxtimes L(\lambda)_{\lambda}\right)=\frac{1}{2^{d(\lambda)}}L(\lambda)^*\boxtimes L(\lambda).
	\]
	
	Notice that in this setting $\dim\t_{\ol{1}}=\dim\a_{\ol{1}}$, so by Lemma \ref{appendix_ker_lemma} if $\frac{1}{2^{d(\lambda)}}L(\lambda)^*\boxtimes L(\lambda)$ contains $I_{G'}(k)$ then necessarily $\frac{1}{2^{d(\lambda)}}L(\lambda)_{-\lambda}^*\boxtimes L(\lambda)_{\lambda}$ is a projective $\h\times\h$-module, and it is not difficult to show this is equivalent to $L(\lambda)_{\lambda}$ being a projective $\h$-module.
\end{remark} 

\subsection{Projectivity criteria for irreducible modules} For this section we work with a fixed Borel subalgebra $\b$ of $\g$, which as stated above determines a fixed Iwasawa Borel subalgebra of $\g\times\g$.  

We observe that $\id\times\delta$ defines an automorphism of $G\times G$ which takes $G$ to $G'$ and vice-versa.  In particular it defines an isomorphism 
\[
G\times G/G'\to G\times G/G
\]
which is $\id\times\delta$-equivariant, meaning that the pullback morphism defines a $G\times G$-equivariant isomorphism
\[
k[G]=k[G\times G/G]\to k[G\times G/G']^{\id\times\delta}.
\]
Here we use the notation $V^{\phi}$ for the $\phi$ twist of a $G$-module $V$, where $\phi$ is an automorphism of $G$. Notice that since $\id\times\delta$ is the identity on $G_0\times G_0$, we have that $(L^*\boxtimes L)^{\id\times\delta}\cong L^*\boxtimes L$ for a simple $G$-module $L$, by highest weight theory.

\begin{lemma}\label{proj_lemma_appendix}
	$L(\lambda)$ is a projective $G$-module if and only if $\frac{1}{2^{d(\lambda)}}L(\lambda)^*\boxtimes L(\lambda)$ contains a copy of $I_{G'}(k)$.
\end{lemma}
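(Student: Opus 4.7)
The plan is to run the argument of \cref{projective_if_boxtimes_has_trivial_injective} in the non-Cartan-even setting, substituting the appendix analogues \cref{appendix_tr_nonzero} and \cref{appendix_inj_summand_prop}, and keeping careful track of the parity ambiguity encoded by $d(\lambda)$.

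For the forward direction, suppose $L(\lambda)$ is projective. Then $L(\lambda)^*\boxtimes L(\lambda)$ is projective as a $G\times G$-module, so each of its $G\times G$-direct summands is too; in particular the irreducible summand $\tfrac{1}{2^{d(\lambda)}}L(\lambda)^*\boxtimes L(\lambda)$ provided by \cref{two_possibilities} is projective. Its restriction to $G'$ remains projective since $G_0=G'_0$ is reductive, and by \cref{appendix_tr_nonzero} it contains a nonzero even $G'$-invariant (the trace). Using the identification $I_{G'}(k)\cong P_{G'}(\Ber(\g'_{\ol{1}}))$ from \cite{serganova2011quasireductive}, a $G'$-invariant in a projective $G'$-module must sit in the socle of a $P_{G'}(\Ber(\g'_{\ol{1}}))$-summand, and thus generates a copy of $I_{G'}(k)$.

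For the backward direction, suppose $\tfrac{1}{2^{d(\lambda)}}L(\lambda)^*\boxtimes L(\lambda)$ contains a copy of $I_{G'}(k)$. As an irreducible $G\times G$-submodule of $k[G]=k[G\times G/G]$, \cref{appendix_inj_summand_prop} gives a $G\times G$-equivariant embedding
\[
I_{G\times G}\bigl(\tfrac{1}{2^{d(\lambda)}}L(\lambda)^*\boxtimes L(\lambda)\bigr)\hookrightarrow k[G\times G]^{\ber_{\g'}}.
\]
Since $G\times G$ is quasireductive, this injective module is projective up to a Berezinian twist, so it splits off as a direct summand of its target. The automorphism $\id\times\delta$ of $G\times G$ interchanges $G$ and $G'$ and identifies $k[G\times G]^{\ber_{\g'}}$ with $k[G]$ twisted by a one-dimensional character (essentially $\Ber(\g_{\ol{1}})$, using $\g'_{\ol{1}}\cong\g_{\ol{1}}$ as $G_0$-modules); since the twist is by an invertible line it preserves indecomposable summands and blocks. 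Then by \cref{k[G]_as_GxG_module}, whose proof from \cite{sherman2019sphericalsupervar} does not require Cartan-evenness once one replaces the socle description by \cref{appendix_tr_nonzero}, one has $k[G]=\bigoplus_{\mathcal{B}}M_{\mathcal{B}}$ with each $M_{\mathcal{B}}$ indecomposable. Indecomposability then forces the injective summand (untwisted) to coincide with the whole $M_{\mathcal{B}_\lambda}$ for the block $\mathcal{B}_\lambda$ of $L(\lambda)$, whose socle therefore must be irreducible; this says $\mathcal{B}_\lambda$ contains a unique simple up to parity, which is precisely projectivity of $L(\lambda)$.

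The main obstacle will be the bookkeeping of Berezinian twists in the $\id\times\delta$-transport step, together with checking that \cref{k[G]_as_GxG_module} really extends to the non-Cartan-even case (with \cref{appendix_tr_nonzero} in place of the original socle description). Both are expected to be routine given the appendix machinery already in place, but each should be verified explicitly before the block-decomposition argument can be invoked.
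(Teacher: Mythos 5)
Your proposal takes a genuinely different route from the paper's, and it has a real gap where you acknowledge one.

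Your plan is to rerun the Cartan-even argument of \cref{projective_if_boxtimes_has_trivial_injective}: embed $I_{G\times G}(\frac{1}{2^{d(\lambda)}}L(\lambda)^*\boxtimes L(\lambda))$ into $k[G]$ using $\id\times\delta$, split it off, and then invoke the block decomposition $k[G]=\bigoplus_{\BB}M_{\BB}$ of \cref{k[G]_as_GxG_module} to conclude that $L(\lambda)$ sits in its own block. The problem is that \cref{k[G]_as_GxG_module} is stated and used in the paper only under the standing hypothesis that $G$ is Cartan-even (the whole subsection containing it opens with that assumption), and the appendix never asserts or proves the extension you need. You flag this yourself at the end, calling it ``expected to be routine''; but it is not obviously routine. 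In the non-Cartan-even setting the paper explicitly warns (\cref{appendix_ghost_dist_subsec}) that multiplicity-freeness of irreducible $B$-submodules of $k[G/K]$ fails because the Cartan subalgebra is no longer abelian, and this is precisely the sort of highest-weight input that underlies the structure theory of $k[G]$ in \cite{sherman2019sphericalsupervar}. The fact that the author goes out of their way in the appendix to avoid invoking \cref{k[G]_as_GxG_module} is strong evidence it is not a free generalization.

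The paper's actual proof of \cref{proj_lemma_appendix} is self-contained and takes a different path. After applying \cref{appendix_inj_summand_prop} and the $\id\times\delta$ transport (same first step as yours), it does two things you do not: (i) it invokes \cref{appendix_remark_proj} to deduce that $L(\lambda)_{\lambda}$ is a projective $\h$-module, hence $L(\lambda)$ has no self-extensions; and (ii) it rules out nontrivial extensions $V$ between $L(\lambda)$ and a distinct $L(\mu)$ by considering the matrix-coefficient map $V^*\boxtimes V\to k[G]$, whose image would be indecomposable with socle $\frac{1}{2^{d(\lambda)}}L(\lambda)^*\boxtimes L(\lambda)\oplus\frac{1}{2^{d(\mu)}}L(\mu)^*\boxtimes L(\mu)$ — impossible once $I_{G\times G}(\frac{1}{2^{d(\lambda)}}L(\lambda)^*\boxtimes L(\lambda))$ splits off $k[G]$. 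This uses only \cref{appendix_tr_nonzero} and elementary matrix-coefficient considerations, never the block decomposition. To repair your proof you should either supply the non-Cartan-even version of \cref{k[G]_as_GxG_module} in full, or switch to the paper's argument: obtain the absence of self-extensions from the $\h$-projectivity of $L(\lambda)_\lambda$, and kill cross-extensions via matrix coefficients.
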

\begin{proof}
	The forward direction is clear.  
	
	Conversely, if this module contains $I_{G'}(k)$ then we know by Proposition \ref{injective_summand_prop} that 
	\[
	I_{G\times G}\left(\frac{1}{2^{d(\lambda)}}L(\lambda)^*\boxtimes L(\lambda)\right)
	\]
	is a $G\times G$ submodule of $k[G]$, using our above isomorphism $G\times G/G\cong G\times G/G'$.  Further, $L(\lambda)_{\lambda}$ must be a projective $H$-module by Remark \ref{appendix_remark_proj}, so that $L(\lambda)$ has no self-extensions.  If there was a non-trivial extension $V$ between $L(\lambda)$ and $L(\mu)$, where $\mu\neq\lambda$, then the matrix coefficients morphism would induce a $G\times G$ morphism
	\[
	V^*\boxtimes V\to k[G]
	\]
	such that the image is an indecomposable module with socle 
	\[
	\frac{1}{2^{d(\lambda)}}L(\lambda)^*\boxtimes L(\lambda)\oplus \frac{1}{2^{d(\mu)}}L(\mu)^*\boxtimes L(\mu).
	\]
	However $I_{G\times G}(\frac{1}{2^{d(\lambda)}}L(\lambda)^*\boxtimes L(\lambda))$ splits off $k[G]$, so this cannot happen.  It follows that $L(\lambda)$ cannot have nontrivial extensions with any modules, and is therefore projective.
\end{proof}

Let us look at the Harish-Chandra homomorphism.  Translating to the enveloping algebra, it defines a homomorphism
\[
HC:\UU\g\to\UU\h.
\]
Note that $\h$ is no longer abelian in general.  However this induces a morphism
\[
HC:\AA_{G\times G/G}\to \AA_{H\times H/H}.
\]
Since $H_0$ is a central subgroup, $\Lambda^{top}\h_{\ol{1}}$ is a trivial $H_0$-module, and so $\AA_{H\times H/H}=(\UU\h)^{\h'}$.  Write $T_{\h}:=\ad'(v_{\h})(1)$.  Then since $\h_{\ol{0}}$ is central, we clearly have
\[
\AA_{H\times H/H}=S(\h_{\ol{0}})T_{\h}.
\]
Since $HC(\ad'(v_{\g})(1))\in (\UU\h)^{\h'}$, we may write $HC(\ad'(v_{\g})(1))=p_{1}T_{\h}$.

For each $\lambda\in\h_{\ol{0}}^*$ we have a bilinear form $(-,-)_{\lambda}$ on $\h_{\ol{1}}$, in other words we have a linear map $(-,-):\h_{\ol{0}}^*\to S^2\h_{\ol{1}}^*$.  If we choose a basis for $\h_{\ol{1}}$, then we obtain a map $S^2\h_{\ol{1}}^*\to k$ given by taking the determinant of the corresponding bilinear form.  The composition defines a degree $\dim\h_{\ol{1}}$-degree polynomial $b_H\in S(\h_{\ol{0}})$.  Note that $b_{H}$ is well-defined only up to scalar.  Further, $b_H(\lambda)\neq0$ if and only if the irreducible $H$-module of weight $\lambda$ is projective.  
\begin{definition}
	Define the projectivity polynomial of $G$ with respect to $B$ and $H$ to be $p_{G,B}:=p_1b_H$.
\end{definition}
\begin{remark}
	We note that $p_{1}$ corresponds to the polynomial $p_{v_{\g'}\cdot\operatorname{ev}_{eK}}$ introduced in Section \ref{sec HC}, while the existence of $b_H$ follows from Remark \ref{rem dims equal}.
\end{remark}

Notice that if $G=H$ we have $p_{G,B}=b_H$.  The following theorem justifies the name.

\begin{thm}\label{appendix_proj_poly_main_thm}
	Let $G$ be a quasireductive supergroup with the Cartan subgroup $H$ and Borel subgroup $B$ containing $H$. Then for a $B$-dominant weight $\lambda$, $p_{G,B}(\lambda)\neq0$ if and only if $L_{B}(\lambda)$ is projective.  Further $p_{G,B}$ is a polynomial of degree at most $\dim\b_{\ol{1}}$.
\end{thm}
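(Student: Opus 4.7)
The plan is to combine Lemma \ref{proj_lemma_appendix}, Proposition \ref{appendix_tr_nonzero} and Proposition \ref{appendix_p_prop} with the factorisation $p_{G,B}=p_1 b_H$ built into the definition.

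First I would verify that the hypothesis of Proposition \ref{appendix_p_prop} holds for the supersymmetric space $G\times G/G$. By Proposition \ref{appendix_tr_nonzero} the unique $G'$-invariant of each summand $\frac{1}{2^{d(\lambda)}}L(\lambda)^{*}\boxtimes L(\lambda)\subset\operatorname{soc}k[G]$ evaluates at $eG$ to a nonzero multiple of $\tr L(\lambda)$, so whenever such a submodule of $k[G]$ contains a copy of $I_{G'}(k)$ its $G'$-invariant does not vanish at $eG$. The irreducible $B^{-}\times B^{+}$-submodules of $k[G]$ are exactly the highest-weight pieces $L_\lambda:=\frac{1}{2^{d(\lambda)}}L(\lambda)^{*}_{-\lambda}\boxtimes L(\lambda)_\lambda$, and by Remark \ref{appendix_remark_proj} they satisfy $\UU\g'L_\lambda=\frac{1}{2^{d(\lambda)}}L(\lambda)^{*}\boxtimes L(\lambda)$. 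Combining Lemma \ref{proj_lemma_appendix} with Proposition \ref{appendix_p_prop}(1) will then give the core equivalence: $L_{B}(\lambda)$ is projective if and only if $L_\lambda$ is projective over $\a_{\ol{1}}$ \emph{and} $p_{G/K}(\lambda)\neq 0$.

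Next I would match these two conjunctive conditions with the two factors of $p_{G,B}$. The second part of Remark \ref{appendix_remark_proj} tells us that $L_\lambda$ is $\a_{\ol{1}}$-projective if and only if $L(\lambda)_\lambda$ is a projective $\h$-module, and by the Clifford description of $L(\lambda)_\lambda$ given in Lemma \ref{structure_of_L_lambda} this is in turn equivalent to the bilinear form $(-,-)_\lambda$ on $\h_{\ol{1}}$ being non-degenerate, i.e.\ to $b_H(\lambda)\neq 0$. For the other factor, I would show that $p_{G/K}=c\cdot p_1$ for some nonzero scalar $c$ as follows: under the identification of Lemma \ref{Ug_dist}, the ghost distribution $\operatorname{ev}_{eG}v_{\g'}$ corresponds to $\operatorname{ad}'(v_\g)(1)\in \UU\g$; and because $[\h_{\ol{1}},\h_{\ol{1}}]\subset\h_{\ol{0}}$ is central in $\h$, Lemma \ref{v_for_almost_odd_abelian} applied inside $\UU\h$ gives $T_\h=\operatorname{ad}'(v_\h)(1)=c\,\xi$ for some nonzero $c$, where $\xi\in\Lambda^{top}\h_{\ol{1}}$ is an ordered basis product. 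Comparing the defining identities $HC(\operatorname{ev}_{eG}v_{\g'})=p_{G/K}\,\xi$ and $HC(\operatorname{ad}'(v_\g)(1))=p_1 T_\h$ then yields $p_{G/K}=c\cdot p_1$, so that $p_1(\lambda)\neq 0\iff p_{G/K}(\lambda)\neq 0$ and the equivalence assembles into $p_{G,B}(\lambda)\neq 0\iff L_B(\lambda)$ projective.

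The degree bound follows from the corollary after Lemma \ref{degree_bound_appendix} applied with $r=0$ to $\operatorname{ev}_{eG}v_{\g'}$: in the group case $\n=\u^{-}\oplus\u^{+}$, so $\dim\n_{\ol{1}}/2=\dim\u^{+}_{\ol{1}}$ and $\deg p_1\leq \dim\u^{+}_{\ol{1}}$. Together with $\deg b_H=\dim\h_{\ol{1}}$ this gives $\deg p_{G,B}\leq\dim\u^{+}_{\ol{1}}+\dim\h_{\ol{1}}=\dim\b_{\ol{1}}$. The step I expect to require the most care is the scalar-matching argument of the second paragraph: one has to trace carefully through the isomorphism $\Dist(G\times G/G,eG)\cong\UU\g$ and verify that the appendix's Harish-Chandra map on $\UU(\g\times\g)/\UU(\g\times\g)\g$ really does send $\operatorname{ad}'(v_\g)(1)$ to $p_1 T_\h$ with the same $p_1$ that arises from the intrinsic $\Lambda^{top}\a_{\ol{1}}$-component description of $p_{G/K}$, so that the two vanishing loci genuinely coincide.
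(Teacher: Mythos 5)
Your proof is correct and follows essentially the same route as the paper's, combining Lemma \ref{proj_lemma_appendix}, Proposition \ref{appendix_tr_nonzero}, and Proposition \ref{appendix_p_prop} with the factorisation $p_{G,B}=p_1 b_H$. The one place worth flagging is the scalar-matching step: your invocation of Lemma \ref{v_for_almost_odd_abelian} only identifies $v_\h$ as an ordered product of basis elements of $\h_{\ol{1}}$; it does not by itself give $T_\h=\ad'(v_\h)(1)=c\xi$, since inside $\UU\h$ that element has lower-order corrections in $\h_{\ol{0}}$. What one actually needs is that the PBW isomorphism $\UU\h\cong\mathfrak{A}$ (coming from $\UU\g\cong\UU(\g\times\g)/\UU(\g\times\g)\g$ restricted to $\UU\h$) carries the chosen element $\xi\in\Lambda^{top}\a_{\ol{1}}\subset\mathfrak{A}$ precisely to $T_\h$ — which follows because $\a_{\ol{1}}=\h'_{\ol{1}}$, so $\xi$ is the ordered product of a basis of $\h'_{\ol{1}}$ and hence $v_{\h'}$, and under the isomorphism of Lemma \ref{Ug_dist} the left-multiplication of $v_{\h'}$ on $1$ becomes $\ad'(v_\h)(1)=T_\h$. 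You correctly identify this as the delicate step, and the paper leaves both this identification and the degree bound implicit; your write-up fills in both.
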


\begin{proof}
	If $p_{G,B}(\lambda)\neq0$, then $b_{H}(\lambda)\neq0$ which implies that $\frac{1}{2^{d(\lambda)}}L(\lambda)^*_{-\lambda}\boxtimes L(\lambda)_{\lambda}$ is projective.  Since $p_{1}(\lambda)\neq0$, by Corollary \ref{p_gamma_nonzero_condition} $\frac{1}{2^{d(\lambda)}}L(\lambda)^*\boxtimes L(\lambda)$ contains $I_{G'}(k)$ and so is projective by Lemma \ref{proj_lemma_appendix}.
	
	Conversely if $L(\lambda)$ is projective then $L(\lambda)_{\lambda}$ is projective over $H$ so that $b_H(\lambda)\neq0$.  Further, $\frac{1}{2^{d(\lambda)}}L(\lambda)^*_{-\lambda}\boxtimes L(\lambda)_{\lambda}$ contains $I_{G'}(k)$, and so by Corollary \ref{p_gamma_nonzero_condition} and Proposition \ref{appendix_tr_nonzero} $p_{1}(\lambda)\neq0$.  It follows that $p_{G,B}(\lambda)\neq0$.
\end{proof}

Define $T_{\g}:=\ad'(v_{\g})(1)\in\UU\g$.  Then if $L$ is an irreducible $G$-module, $T_{\g}$ acts on $L$ by a twisted-invariant operator, and thus is either equal, up to a scalar, to $\delta_{L}$ if $T_{\g}$ is even, or $\delta_L\circ\sigma$ if $T_{\g}$ is odd, where $\sigma$ is an odd $G$-equivariant automorphism of $L$.  In particular it either acts by a linear automorphism or by 0.  By Theorem \ref{appendix_proj_poly_main_thm}, we have:
\begin{cor}
	If $L$ is an irreducible $G$-module, then $T_{\g}$ acts by an automorphism on $L$ if and only if $L$ is projective, and otherwise $T_{\g}$ acts by $0$.
\end{cor}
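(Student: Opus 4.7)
The plan is to reduce the corollary to Theorem \ref{appendix_proj_poly_main_thm} by analyzing how $T_{\g}$ acts on the $B$-highest weight space of $L(\lambda) = L_{B}(\lambda)$. The dichotomy ``automorphism or zero'' is already provided by the proposition recalled just before the statement: since $T_{\g}$ is an invariant for the twisted adjoint action, its action on any simple $G$-module $L$ is forced to be either a nonzero scalar multiple of $\delta_{L}$ (in the even case) or of $\delta_{L}\circ\sigma_{L}$ (in the odd case), or else the zero map. Thus the only remaining content is to show that $T_{\g}$ acts nontrivially on $L(\lambda)$ if and only if $L(\lambda)$ is projective.

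First I would observe that $T_{\g}$ is of $\h_{\ol{0}}$-weight zero (the existence of $v_{\g}$ forces $\operatorname{Ber}(\g_{\ol{1}})$ to be trivial, i.e.\ $2\rho_{\ol{1}}=0$, so $v_{\g}$ and hence $T_{\g}=\operatorname{ad}'(v_{\g})(1)$ have weight zero), so $T_{\g}$ preserves the highest weight space $L(\lambda)_{\lambda}$. Since $L(\lambda)$ is generated by any nonzero vector in $L(\lambda)_{\lambda}$ and $T_{\g}$ is either zero or an automorphism, $T_{\g}$ vanishes on $L(\lambda)$ iff it vanishes on $L(\lambda)_{\lambda}$. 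Next, using the Iwasawa-type decomposition for $(\g\times\g,\g)$ and the identification $\UU(\g\times\g)/\UU(\g\times\g)\g\cong\UU\g$ of \cref{Ug_dist}, write $T_{\g}=HC(T_{\g})+R$ with $R$ in the positive-nilpotent part; such $R$ sends $L(\lambda)_{\lambda}$ into strictly higher weight spaces, which are zero. Hence $T_{\g}$ acts on $L(\lambda)_{\lambda}$ purely through $HC(T_{\g})=p_{1}\,T_{\h}\in\UU\h$, as defined in the discussion leading to \cref{appendix_proj_poly_main_thm}.

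It then remains to understand how $T_{\h}$ acts on the Clifford module $L(\lambda)_{\lambda}$ attached to the form $(-,-)_{\lambda}$ on $\h_{\ol{1}}$. Here $\h_{\ol{0}}$ acts by the character $\lambda$, so $p_{1}$ acts as the scalar $p_{1}(\lambda)$, while $T_{\h}=\operatorname{ad}'(v_{\h})(1)$ is a canonical element of the Clifford algebra of $(\h_{\ol{1}},(-,-)_{\lambda})$ whose action on the unique irreducible Clifford module is, up to a nonzero scalar, multiplication by the Pfaffian (equivalently the square root of the discriminant) of the form. In particular $T_{\h}$ acts by an invertible operator on $L(\lambda)_{\lambda}$ iff $(-,-)_{\lambda}$ is nondegenerate, iff $b_{H}(\lambda)\neq0$. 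Combining, $T_{\g}$ is nonzero on $L(\lambda)_{\lambda}$ precisely when $p_{1}(\lambda)\,b_{H}(\lambda)=p_{G,B}(\lambda)\neq0$, which by \cref{appendix_proj_poly_main_thm} is equivalent to $L(\lambda)$ being projective.

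The main obstacle I expect is the Clifford-algebra computation in the third paragraph: identifying the action of $T_{\h}=\operatorname{ad}'(v_{\h})(1)$ on $L(\lambda)_{\lambda}$ as the discriminant of $(-,-)_{\lambda}$ up to a nonzero scalar. This is essentially a finite-dimensional calculation that should reduce to the standard fact that, inside the Clifford algebra of a nondegenerate bilinear form, the product of a basis of any maximal isotropic subspace acts invertibly on the spinor representation iff the form is nondegenerate; but it must be done carefully enough to trace the identification $HC(T_{\g})=p_{1}T_{\h}$ through the Iwasawa decomposition so that the resulting polynomial really is $b_{H}(\lambda)$ rather than some twist of it.
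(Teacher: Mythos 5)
Your proposal is correct and ends at the same theorem the paper cites, but the route to the key equivalence is genuinely a bit different, and it is worth saying how.

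The paper's own argument is terse: it records the dichotomy (twisted-invariance forces $T_{\g}$ to act by a scalar multiple of $\delta_{L}$, or of $\delta_{L}\circ\sigma$, hence by $0$ or by an automorphism), and then appeals to \cref{appendix_proj_poly_main_thm}. What is left implicit is the bridge between "$T_{\g}$ acts nontrivially on $L(\lambda)$'' and "$p_{G,B}(\lambda)\neq0$.'' The paper supplies that bridge via the distribution/matrix-coefficient duality together with the injective-hull machinery of \cref{p_gamma_nonzero_condition}, \cref{appendix_p_prop} and \cref{proj_lemma_appendix}: $T_{\g}$ as the distribution $\operatorname{ev}_{eG}v_{\g'}$ vanishes on $L^{*}\boxtimes L\subset k[G]$ exactly when it acts by $0$ on $L$, and whether it vanishes on the highest-weight $B^-\times B^+$-submodule $L_{\lambda}$ is governed by $p_{G,B}$ and the $\a_{\ol{1}}$-projectivity of $L_{\lambda}$. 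Your proof replaces this by a direct reduction to the highest weight space: weight-zero considerations show $T_{\g}$ preserves $L(\lambda)_{\lambda}$, the HC decomposition shows the action factors through $HC(T_{\g})=p_{1}T_{\h}$, and then a Clifford-module computation shows $T_{\h}$ acts invertibly on the spinor module $L(\lambda)_{\lambda}$ precisely when $(-,-)_{\lambda}$ is nondegenerate, i.e.\ $b_{H}(\lambda)\neq0$. This is a perfectly good alternative: it is more computational and self-contained (it does not route through $k[G]$), at the cost of one extra lemma about $T_{\h}$ that the paper never states explicitly. That lemma is true — e.g.\ in the rank-one case one computes $T_{\h}=4\xi_{1}\xi_{2}-2h$, which acts on the two-dimensional Clifford module by $\mp2\lambda(h)$, vanishing exactly when the form degenerates — but the word "Pfaffian'' is not quite right (the form is symmetric; the relevant quantity is a square root of the discriminant), and the product of a basis of a maximal isotropic subspace acts nilpotently, not invertibly, on a spinor module; it is the twisted-adjoint image $\ad'(v_{\h})(1)$, which contains lower-order central terms, that is invertible. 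Two smaller corrections to your write-up: the remainder $R$ from the Iwasawa decomposition lands in $\u^{-}\UU\g+\UU\g\,\u^{+}$, so $Rv_{\lambda}$ lies in strictly \emph{lower} weight spaces (it is forced to be $0$ because $T_{\g}$ has weight zero, not because higher weight spaces vanish); and you should justify that the appendix's HC map for $G\times G/G$ coincides with the usual projection $\UU\g\to\UU\h$ along $\n^{-}\UU\g+\UU\g\,\n^{+}$, which the paper asserts in the Cartan-even case and extends in the appendix.
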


The following conjecture has been verified for the case of $G=H$, i.e. $G_0$ is a central torus in $G$.
\begin{conj}
	Let $M$ be an indecomposable $G$-module.  If $M$ is not projective, then $T_{\g}\cdot M=0$.  If $M$ is projective, then $T_{\g}$ annihilates the radical of $M$ and takes its head to its tail.
\end{conj}

\begin{remark}
	A universal formula for $T_{\g}$ has been given in \cite{duflo2007symmetric}, where they relate this operator to the Jacobian of the exponential map between $\g/\g_{\ol{0}}$ and $G/G_0$.  Their work also extends to arbitrary supersymmetric spaces.
\end{remark}

In the following corollary we write $\ZZ$ for the center of $\UU\g$.
\begin{cor}
	If $HC(\ZZ)=k$ (e.g.\ if $\ZZ=k$) and $\g\neq0$, then no finite-dimensional irreducible $G$-modules are projective.
\end{cor}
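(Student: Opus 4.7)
Plan. I would argue by contradiction, assuming some finite-dimensional irreducible $L(\lambda)$ is projective. By \cref{projectivity_poly_criteria} this forces $p_{G,B}(\lambda)\neq 0$, hence $p_{G,B}\in S(\h)$ is a non-zero polynomial. The last clause of that theorem also lets me assume, without loss of generality, that $\dim\g_{\ol 1}$ is even and $\Ber(\g_{\ol 1})$ is trivial as a $G_0$-module, since otherwise no irreducibles are projective and there is nothing to contradict. Under these assumptions $v_{\g'}$ is even and of $\h$-weight zero (because $\ber_{\g'}|_{\h}=0$).

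The heart of the proof is to establish $v_{\g'}^{2}\in\ZZ$ together with $HC(v_{\g'}^{2})=p_{G,B}^{2}$. For centrality, the $\ad_{\delta}$-invariance of $v_{\g'}$ combined with $v_{\g'}$ being even gives $uv_{\g'}=v_{\g'}\delta(u)$ for every $u\in\g$; applying the same identity with $\delta(u)$ in place of $u$ to the second factor collapses $uv_{\g'}^{2}$ to $v_{\g'}^{2}u$, so $[u,v_{\g'}^{2}]=0$. To pin down the Harish-Chandra image I would act on the highest-weight vector $v_{\lambda}$ of the Verma module $M(\lambda)$: because $v_{\g'}$ has $\h$-weight zero, $v_{\g'}\cdot v_{\lambda}$ lies in the weight-$\lambda$ line $k v_{\lambda}$; using the decomposition $\UU\g=S(\h)\oplus(\n_{-}\UU\g+\UU\g\n_{+})$ (whose first summand acts by its value at $\lambda$ and whose second annihilates the weight-$\lambda$ line of $M(\lambda)$), I obtain $v_{\g'}\cdot v_{\lambda}=p_{G,B}(\lambda)v_{\lambda}$. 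Squaring yields $v_{\g'}^{2}\cdot v_{\lambda}=p_{G,B}(\lambda)^{2}v_{\lambda}$, which matches $HC(v_{\g'}^{2})(\lambda)v_{\lambda}$ since $v_{\g'}^{2}$ is central. Zariski density of the dominant cone in $\h^{*}$ then upgrades this pointwise equality to the polynomial identity $HC(v_{\g'}^{2})=p_{G,B}^{2}$.

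Combining with the hypothesis $HC(\ZZ)=k$ forces $p_{G,B}^{2}\in k$, and as $S(\h)$ is an integral domain $p_{G,B}$ must itself be a non-zero constant. But then \cref{projectivity_poly_criteria} says every $L(\lambda)$ with $\lambda$ dominant is projective, i.e.\ $\operatorname{Rep}(G)$ is semisimple. The Djokovic--Hochschild classification (recalled after \cref{semisimplicity_criteria}) yields $G\cong K\times SOSp(1|2n_{1})\times\cdots\times SOSp(1|2n_{k})$ with $K$ a connected reductive algebraic group. Since $\g\neq 0$, at least one factor is non-trivial; Chevalley's theorem for $K$ and Arnaudon's computation of $\ZZ(\UU\o\s\p(1|2n))$ show each such factor contributes elements of positive degree to $HC(\ZZ)$, contradicting $HC(\ZZ)=k$.

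The step I expect to be most delicate is $HC(v_{\g'}^{2})=p_{G,B}^{2}$: the Verma-module reduction is elementary but hinges on verifying both that $v_{\g'}$ really is of $\h$-weight zero and that the triangular decomposition $\UU\g=S(\h)\oplus(\n_{-}\UU\g+\UU\g\n_{+})$ behaves in the super setting as in the classical case. If the calculation becomes awkward, the fallback is to cite Gorelik's explicit Harish-Chandra description of $\AA$ in \cite{gorelik2000ghost}, which makes multiplicativity on central products transparent.
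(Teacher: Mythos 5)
Your proof reproduces the paper's argument essentially step for step; the only substantive difference is in how you establish $p_{G,B}^2\in HC(\ZZ)$. Where the paper simply cites Gorelik's result from \cite{gorelik2000ghost} that $\AA\cdot\AA\subset\ZZ$ with a multiplicative Harish-Chandra image (after first reducing, as you do, to the case $\Ber(\g_{\ol{1}})$ trivial so that $\AA_{G\times G/G}=\AA$), you give a self-contained argument: $\ad_{\delta}$-invariance of the even element $T_{\g}=\operatorname{ev}_{eG}v_{\g'}$ forces $T_{\g}^2\in\ZZ$, and a Verma-module eigenvalue computation together with Zariski density yields $HC(T_{\g}^2)=p_{G,B}^2$. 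This substitution is correct and correctly analyzed — your reduction to the one-dimensional weight-$\lambda$ line of $M(\lambda)$ is exactly the standard justification of the Harish-Chandra projection for weight-zero elements — but, as you anticipate, it replaces a one-line citation with a short calculation. The final contradiction via the Djokovic--Hochschild classification of supergroups with semisimple representation theory corresponds to the paper's appeal to \cite{sherman2020two}.
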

\begin{proof}
	If there exists a finite-dimensional irreducible projective $G$-module, then $\Ber(\g)$ must be trivial and thus $\AA_{G\times G/G}=\AA$.  Thus $T_{\g}^2$ will thus be a central element which acts nontrivially on a simple finite-dimensional module, so that $HC(T_{\g}^2)$ is a nonzero scalar in $k$. It follows that $T_{\g}$ acts on every simple finite-dimensional module by an automorphism, meaning that $\operatorname{Rep}(G)$ is semisimple.  However by the classification of algebraic supergroups with semisimple representation theory, $HC(\ZZ)$ must be larger than constants (see \cite{sherman2020two}).
\end{proof}

\subsection{Cartan-even supergroups}
\begin{definition}
	We say that a quasireductive supergroup $G$ is Cartan-even if $\h=\h_{\ol{0}}$, i.e. a maximal torus in $G_0$ remains self-centralizing in $G$.
\end{definition}

We now can prove a nice sufficient criteria for when $G$ admits projective irreducible modules.
\begin{thm}\label{sufficient_proj_criteria}
	Suppose that $G$ is a Cartan-even quasireductive supergroup such that the following conditions on its Lie superalgebra, $\g$, hold: 
	\begin{enumerate}
		\item $\dim[\g_{\alpha},\g_{-\alpha}]=1$ for all $\alpha\in\Delta$; and
		\item for all $\alpha\in\Delta$, the pairing 
		\[
		[-,-]:\g_{\alpha}\otimes\g_{-\alpha}\to[\g_{\alpha},\g_{-\alpha}]
		\]
		is nondegenerate.
	\end{enumerate}
	Make a choice of positive roots and thus a Borel subalgebra $\b$ containing $\h$, and let $\Delta_{\ol{1}}^+=\{\alpha_1,\dots,\alpha_n\}$.  Let $h_{\alpha_i}\in[\g_{\alpha_i},\g_{-\alpha_i}]$ be a chosen nonzero element, and write $r_i=\dim\g_{\alpha_i}$. Then up to a scalar we have:
	\[
	p_{G,B}=h_{\alpha_1}^{r_1}\cdots h_{\alpha_n}^{r_n}+l.o.t.
	\]
	In particular $p_{G,B}\neq0$, so that $G$ admits projective irreducible modules.
\end{thm}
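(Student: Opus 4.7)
The plan is to compute $p_{G,B}=HC(\operatorname{ev}_{eG}v_{\g'})$ by working in the filtration $F^{\bullet}$ on $\Dist(G\times G/G,eG)$ from \cref{degree_bound} and extracting the top-degree piece in $S(\h)$. First, I verify $v_{\g'}$ is nonzero: by hypothesis (2) the bracket gives a $G_0$-equivariant isomorphism $\g_{-\alpha}\cong\g_{\alpha}^{*}$ for each odd root, hence $\g_{\ol{1}}\cong\g_{\ol{1}}^{*}$ as $G_0$-modules, so $\det\g_{\ol{1}}$ is a character of order at most two and thus trivial on the connected reductive group $G_0$. Therefore $\Ber(\g_{\ol{1}}')$ is trivial and \cref{invariants_ind_coind_iso} guarantees that $\AA_{G\times G/G}$ is nonzero.

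Pick bases $e_{\alpha_i}^{(1)},\dots,e_{\alpha_i}^{(r_i)}$ of $\g_{\alpha_i}$ and $f_{\alpha_i}^{(1)},\dots,f_{\alpha_i}^{(r_i)}$ of $\g_{-\alpha_i}$ with structure constants $[e_{\alpha_i}^{(j)},f_{\alpha_i}^{(l)}]=c_{jl}^{(i)}h_{\alpha_i}$, where each matrix $c^{(i)}$ is invertible by (2). Set $E_{\alpha_i}^{(j)}:=(e_{\alpha_i}^{(j)},-e_{\alpha_i}^{(j)})$ and $F_{\alpha_i}^{(j)}:=(f_{\alpha_i}^{(j)},-f_{\alpha_i}^{(j)})$ in $\p_{\ol{1}}\subset\g\times\g$. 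The element $v_{\g'}$ has $F^{\bullet}$-degree $\dim\g_{\ol{1}}=2\sum r_i$, and by \cref{dist_symm_lemma} its top symbol lies in the one-dimensional $G_0$-semi-invariant space $\Lambda^{\top}\p_{\ol{1}}$, so up to scalar equals $\prod_{i}\prod_{j}E_{\alpha_i}^{(j)}F_{\alpha_i}^{(j)}$.

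By \cref{degree_bound}, $HC$ carries $F^{2r}$ into $S^{\leq r}\h$, so the leading term of $p_{G,B}$ is the image of this symbol under an induced map $\Lambda^{\top}\p_{\ol{1}}\to S^{\sum r_i}\h$. To evaluate it, decompose each $E_{\alpha_i}^{(j)}$ and $F_{\alpha_i}^{(j)}$ in the Iwasawa PBW $\g\times\g=\n\oplus\a\oplus\k$ (both have zero $\a$-component), expand the product, and reorder via $XY=-YX+[X,Y]_{super}$. Terms ending in a $\k$-factor vanish modulo $\UU(\g\times\g)\g$ and terms beginning with an $\n$-factor are killed by the projection to $S(\a)$. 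The direct super bracket $[E,F]_{super}$ of two anti-diagonal elements is diagonal and lies in $\k$, contributing nothing to $\a$; the $\a$-contributions instead come from the mixed commutators $[E_{\k},F_{\n}]_{super}$, which a direct calculation shows equals $(2c_{jl}^{(i)}h_{\alpha_i}\delta_{ik},0)$ with $\a$-component (under $(h,-h)\mapsto h$) a nonzero multiple of $c_{jl}^{(i)}h_{\alpha_i}\delta_{ik}$. Pairings of two $E$'s or two $F$'s produce brackets in $\g_{\pm(\alpha_i+\alpha_k)}$ with no $\h$-content, and mixed pairings with $i\neq k$ land in $\g_{\alpha_i-\alpha_k}$, again with no $\h$-content.

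Summing over perfect matchings pairing each $E_{\alpha_i}^{(j)}$ with some $F_{\alpha_i}^{(l)}$ (the only ones contributing in top degree), the sum factors over roots as $\prod_i\bigl(\sum_{\sigma\in S_{r_i}}\sgn(\sigma)\prod_{j}c_{j\sigma(j)}^{(i)}\bigr)h_{\alpha_i}^{r_i}=\prod_i\det(c^{(i)})h_{\alpha_i}^{r_i}$, which is nonzero by (2). The remaining "l.o.t." terms in the theorem statement arise from iterated commutators of three or more factors in the Iwasawa reordering, which by degree considerations contribute only to strictly lower-degree parts of $HC(v_{\g'})\in S(\h)$. The main obstacle is making this Wick-type contraction rule rigorous; it reduces to careful tracking of signs arising from reordering in $\Lambda^{\top}\p_{\ol{1}}$ and from the anti-diagonal parameterization of $\p$, together with the explicit verification (already carried out in the single-pair case above) that the $\a$-content of a product of $\p_{\ol{1}}$-elements can only come from pairwise $[E_{\k},F_{\n}]_{super}$ contractions within matched root spaces.
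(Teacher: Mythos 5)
Your proposal follows the same overall strategy as the paper: both rest on the filtration $F^{\bullet}$ and \cref{degree_bound}, both identify the top filtered piece of the minimal ghost element with the one-dimensional semi-invariant $\Lambda^{\mathrm{top}}\p_{\ol{1}}$, and both reduce the problem to extracting the leading $S(\a)$-term of a single explicit monomial after PBW reordering with respect to the Iwasawa decomposition. Two genuine additions you make are worth noting: the verification that hypothesis (2) forces $\g_{\ol{1}}\cong\g_{\ol{1}}^{*}$ and hence $\Ber(\g_{\ol{1}})$ trivial (a sanity check the paper does not state explicitly), and the willingness to work with an arbitrary basis of each $\g_{\pm\alpha_i}$ so that the answer emerges as $\prod_i\det(c^{(i)})h_{\alpha_i}^{r_i}$ rather than being normalized away.

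The gap is exactly where you say it is, and it is not a formality. The ``Wick-type contraction rule'' --- that after full PBW reordering of $\prod_{i,j}E^{(j)}_{\alpha_i}F^{(j)}_{\alpha_i}$ the degree-$\sum r_i$ part of the $S(\a)$-component arises from perfect matchings within fixed root spaces, with reordering signs assembling into a product of determinants and no cancellation between matchings --- is the entire content of the theorem, and you have asserted rather than established it. The obstruction is real: reordering is iterative, a bracket such as $[E_{\k},F_{\n}]$ produces an element with both $\a$- and $\k$-components which must then itself be commuted past the remaining factors, and cross-root pairings (e.g.\ $E_{\alpha_i}$ against $F_{\alpha_k}$ for $i\neq k$) have to be killed not by weight alone but in concert with the degree bookkeeping from \cref{degree_bound}. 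The paper supplies precisely the missing step: it passes to a dual basis normalized so that $[u_i,v_j]=\delta_{ij}h_{\beta_i}$ (collapsing the would-be determinant to $1$), writes $v_{\g}=u_0\cdots u_m v_m\cdots v_0+v'$ with $v'\in F^{\dim\g_{\ol{1}}-2}$, and then runs an explicit induction peeling off one $v_m$ at a time, at each stage sorting the new summands of the Iwasawa reordering into those that drop filtration degree by $\geq 2$ (hence are absorbed into ``lower order'') and the single surviving term carrying $h_{\beta_m}$. Your sketch is morally equivalent, but without that induction (or some other control on the cascading commutators and on the sign of the matching sum) one cannot rule out cancellation, so as written the proof is incomplete.
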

\begin{proof}
	We may choose a weight basis of $\g_{\ol{1}}$ as follows.  Let $u_0,\dots,u_m$ be a root vector basis of $\n_{\ol{1}}$, where $u_i$ has weight $\beta_i$, and the $\beta_i$'s need not be distinct. Then there exists a root vector basis $v_0,\dots,v_m$ of $\n^-_{\ol{1}}$ such that $v_i$ has weight $-\beta_i$ and $[u_i,v_j]=\delta_{ij}h_{\beta_i}$ whenever $\beta_i=\beta_j$.  Write $u_I$ and $v_I$ for the corresponding ordered products when $I\sub\{0,\dots,m\}$, and set $u_{\emptyset}=v_{\emptyset}=1$.   For a subset $J\sub\{0,\dots,m\}$ with $J=\{j_1<\dots<j_l\}$ we set
	\[
	\widetilde{v_J}=v_{j_l}\cdots v_{j_1}.
	\]
	Then we may take as a basis for $\UU\g/\UU\g\g_{\ol{0}}$ the elements $u_I\widetilde{v_J}$ for $I,J\sub\{0,\dots,m\}$.   Therefore we may write
	\[
	v_{\g}=u_0\cdots u_mv_m\cdots v_0+v',
	\]
	where $v'\in\FF^{\dim\g_{\ol{1}}-2}$.  Thus 
	\[
	\ad'(v_{\g})(1)=\ad'(u_0\cdots u_mv_m\cdots v_0)(1)+\ad'(v')(1).
	\]
	By Lemma \ref{degree_bound} $HC(\ad'(v')(1))\leq m$, so it suffices to show that (up to a scalar)
	\[
	\ad'(u_0\cdots u_mv_m\cdots v_0)(1)=h_{\beta_0}\cdots h_{\beta_m}+l.o.t.
	\]
	Observe that
	\[
	\ad'(v_n\cdots v_0)(1)=\sum\limits_{I\sub\{1,\dots,m\}}(-1)^{i_1+\dots+i_j}\widetilde{v_{I^c}}v_I,
	\]
	(note the $\{1,\dots,m\}$ in the sum is not a typo) where $I=\{i_1,\dots,i_j\}$ and $I^c$ denotes the complement of $I$.
	Now if we apply $\ad'(u_0\cdots u_m)$ to $\ad'(v_m\cdots v_0)(1)$, the only term that does not vanish under the Harish-Chandra morphism is
	\[
	\sum\limits_{I\sub\{1,\dots,m\}}(-1)^{i_1+\dots+i_j}u_0\cdots u_m\widetilde{v_{I^c}}v_I.
	\]
	Now it suffices to show that 
	\[
	HC((-1)^{i_1+\dots+i_j}u_0\cdots u_m\widetilde{v_{I^c}}v_I)=h_{\beta_0}\cdots h_{\beta_m}+l.o.t.
	\]
	The proof of this is inductive.  If $m\notin I$, then we may write
	\[
	u_0\cdots u_m\widetilde{v_{I^c}}v_I=u_0\cdots u_mv_m\widetilde{v_{I^c\setminus\{m\}}}v_{I}.
	\]
	This is equal to (after removing the term in $v_m\UU\g$)
	\begin{eqnarray*}
		\sum\limits_{j} (-1)^{m-j}u_0\cdots [u_j,v_m]\cdots u_m\widetilde{v_{I^c\setminus\{n\}}}v_{I\setminus\{m\}}& + &u_0\cdots u_{m-1}\widetilde{v_{I^c\setminus\{m\}}}v_{I}h_{\beta_m}\\
		& + &cu_0\cdots u_{m-1}\widetilde{v_{I^c\setminus\{m\}}}v_{I}
	\end{eqnarray*}
	In the last summand $c$ is the constant given by $(-\beta_0-\dots-\beta_{m-1})(h_{\beta_m})$, but its precise value does not matter as this term will have Harish-Chandra projection of degree $m$ or less.   For the first summand, let $e_{j,m}=[u_j,v_m]\in\g_{\ol{0}}$, and suppose that $e_{j,m}$ lies in $\n^-$ (note  that it cannot lie in $\h$ by our choice of basis).  The argument for when it lies in $\n^+$ is entirely analogous.  Then we have (after removing the term in $e_{j,m}\UU\g$.)
	\[
	u_0\cdots e_{j,m}\cdots u_m\widetilde{v_{I^c\setminus\{m\}}}v_{I}=\sum\limits_lu_0\cdots[u_l,e_{j,m}]\cdots\hat{u_j}\cdots u_m\widetilde{v_{I^c\setminus\{m\}}}v_{I}
	\]
	Thus we end up with an element in $\FF^{2m}$ so that its Harish-Chandra projection is of degree at most $m$, and so does not contribute to the top degree.
	
	Now suppose instead $m\in I$, so that we can write
	\[
	u_0\cdots u_m\widetilde{v_{I^c}}v_I=u_0\cdots u_m\widetilde{v_{I^c}}v_{I\setminus\{m\}}v_m.
	\]
	Write $I=\{i_1,\dots,i_s,m\}$ and $I^c=\{j_1,\dots,j_t\}$.  Then the above expression is equal to
	\begin{eqnarray*}
		(-1)^{n}u_0\cdots u_{m-1}\widetilde{v_{I^c}}v_{I\setminus\{m\}}h_{\beta_m}& + &\sum\limits_{p}u_0\cdots u_{m-1}v_{j_t}\cdots[u_m,v_{j_p}]\cdots v_{j_t}v_{I}\\
		& + &\sum\limits_{q}u_0\cdots u_{m-1}\widetilde{v_{I^c}}v_{i_1}\cdots [u_m,v_{i_q}]\cdots v_n.
	\end{eqnarray*}
	Now in the last two summands we may apply a similar argument as above to prove that their Harish-Chandra projections are of degree less than or equal to $m$.  Therefore, we have shown that the following two elements have the same top degree term:
	\[
	HC((-1)^{i_1+\dots+i_j}u_0\cdots u_m\widetilde{v_{I^c}}v_I), \ \ \ \ HC((-1)^{j_1+\cdot+j_l}u_0\cdots u_{m-1}\widetilde{v_{J^c}}v_J)h_{\alpha_m},
	\]
	where $\{j_1,\dots,j_l\}=J=I\cap\{0,\dots,m-1\}$ and the complement of $J$ is taken in $\{0,\dots,m-1\}$.  Now we may apply an induction argument to show that
	\[
	HC((-1)^{i_1+\dots+i_j}u_0\cdots u_m\widetilde{v_{I^c}}v_I)=h_{\beta_0}\cdots h_{\beta_m}+l.o.t.
	\]
	which completes the proof.
\end{proof}
We say a Lie superalgebra is quadratic if it admits a nondegenerate, invariant, and even supersymmetric form $(-,-)$.  The following is straightforward to show from the properties of being quadratic.
\begin{cor}
	Let $G$ be a Cartan-even quasireductive supergroup whose Lie superalgebra is quadratic.  Then the conditions of Theorem \ref{sufficient_proj_criteria} hold.  In particular $G$ admits projective irreducible modules.
\end{cor}

\subsection{$\g$ basic classical} We now additionally assume that $\g$ is almost simple and basic classical, by which we mean it is either basic simple, $\s\l(n|n)$, or $\g\l(m|n)$.  We have the following two results.

\begin{cor}
	$\prod\limits_{\alpha\in\Delta^+_{\ol{1}},(\alpha,\alpha)=0}(h_{\alpha}+(\rho,\alpha))$ divides $p_{G,B}$ and thus divides $HC(\gamma)$ for any $\gamma\in\Dist(G/K,eK)^{\k'}$.
\end{cor}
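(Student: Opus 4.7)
The plan is to deduce the corollary from \cref{HC(v)_vanishes_on_atypical} via a Zariski density argument on the hyperplane cut out by $\ell_\alpha := h_\alpha + (\rho,\alpha)$, followed by a coprimality check for distinct isotropic roots $\alpha$.

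First, I would observe that the proof of \cref{simple_iso_root_group_case} (and hence of \cref{HC(v)_vanishes_on_atypical}) in fact yields a stronger conclusion than its stated form: whenever $\lambda \in \Lambda^+$ and $(\lambda+\rho,\alpha)=0$ for some positive isotropic $\alpha$, the module $\UU\g' f_\lambda$ contains no copy of $I_{G'}(k)$. By \cref{k'_mod_gend_by_f_lambda_has_inj}, this promotes the vanishing statement from $p_{G,B}(\lambda)=0$ to $HC(\gamma)(\lambda) = \gamma(f_\lambda) = 0$ for \emph{every} ghost distribution $\gamma$.

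Next, I would view $\ell_\alpha$ as the affine-linear functional $\mu \mapsto (\mu+\rho,\alpha)$ on $\h^*$ and argue that its zero set $H_\alpha$ contains $\Lambda^+ \cap H_\alpha$ as a Zariski dense subset. Since $\alpha$ and $\rho$ are rational with respect to the weight lattice, $\Lambda \cap H_\alpha$ is a full-rank affine sublattice of $H_\alpha$; the dominant cone meets $H_\alpha$ in a set with non-empty interior inside $H_\alpha$ (one can translate any atypical integral weight by large multiples of dominant weights orthogonal to $\alpha$), so $\Lambda^+ \cap H_\alpha$ is Zariski dense in the irreducible variety $H_\alpha$. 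A polynomial on $\h^*$ vanishing on this dense subset of $H_\alpha$ vanishes on all of $H_\alpha$, so $\ell_\alpha$ divides $HC(\gamma)$ in the UFD $S(\h)$.

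Finally, for distinct positive isotropic roots $\alpha \neq \beta$, the pairs $(h_\alpha,(\rho,\alpha))$ and $(h_\beta,(\rho,\beta))$ in $\h \oplus k$ are non-proportional (since distinct positive roots in a basic classical $\g$ are never proportional in $\h^*$), so the linear polynomials $\ell_\alpha$ are pairwise coprime in $S(\h)$ and their product divides $HC(\gamma)$. Specialising to $\gamma = \operatorname{ev}_{eG} v_{\g'}$ yields the divisibility for $p_{G,B}$. The main subtlety I expect is the Zariski density verification, which reduces to checking that $H_\alpha$ meets the interior of the dominant cone; the remainder is standard unique factorization bookkeeping.
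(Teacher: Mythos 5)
Your proof is correct and follows essentially the same strategy the paper uses: show that $HC$ of every ghost distribution vanishes on the atypical dominant weights, then use Zariski density of the atypical dominant weights in each hyperplane $H_\alpha$ together with pairwise coprimality of the linear factors $h_\alpha+(\rho,\alpha)$ for distinct positive isotropic roots. The one cosmetic reorganization is in how you reach the vanishing of $HC(\gamma)(\lambda)$ for \emph{all} $\gamma$: you read off from the proof of \cref{simple_iso_root_group_case} the stronger fact that $\UU\g'f_\lambda$ contains no copy of $I_{G'}(k)$ and then cite \cref{k'_mod_gend_by_f_lambda_has_inj}, whereas the paper's proof invokes \cref{non_vanishing_invt_prop}(2), with \cref{tr_nonzero} supplying its hypothesis; the two routes are equivalent by \cref{group_case_nonvanishing_criteria} and both feed into the same density argument.
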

\begin{proof}
The first statement follows from Theorem \ref{appendix_proj_poly_main_thm} and a density argument, and the second statement follows from Corollary \ref{non_vanishing_cor}.
\end{proof}

The above result recovers part of Gorelik's result in \cite{gorelik2000ghost}, where she proved that
\[
HC(\AA)=S(\h)^{W_.}\prod\limits_{\alpha\in\Delta^+_{\ol{1}}}(h_{\alpha}+(\rho,\alpha)).
\]
Here $S(\h)^{W_.}$ denotes the $\rho$-shifted $W$-invariant polynomials in $\h$.  Gorelik proved this result by studying the action of $\AA$ on Verma modules of $\g$, and using known results on highest weight vectors in Verma modules.  Thus far the author does not know how to reprove her results from a purely geometric standpoint.  In particular the invariance under an even Weyl group remains elusive.

\section{Full ghost centre}\label{ghost_centre_section}

As explained previously, in \cite{gorelik2000ghost} an algebra called the ghost centre $\tilde{\ZZ}\sub\UU\g$ was defined as
\[
\tilde{\ZZ}:=\ZZ+\AA,
\]
where $\AA=(\UU\g)^{\g'}$.  One can check that for $a,b\in\AA$ we have $ab\in\ZZ$, so this indeed forms an algebra.  We seek to expand this algebra to what we will call the full ghost centre, $\ZZ_{full}$, of $\g$.  It is a subalgebra of $\UU\g$ which we now describe.  

\subsection{Twisted adjoint actions}Let $\Aut(\g,\g_{\ol{0}})$ denote the set of automorphisms of $\g$ which fix $\g_{\ol{0}}$ pointwise.  For $\phi\in\Aut(\g,\g_{\ol{0}})$, define the $\phi$-twisted adjoint action on $\UU\g$ by
\[
\ad_{\phi}(u)(v)=uv-(-1)^{\ol{u}\ol{v}}v\phi(u)
\]
where $u\in\g$, $v\in\UU\g$.
\begin{definition}
	 Define $\AA_{\phi}$ to be the $\ad_{\phi}$-invariant elements of $\UU\g$.
\end{definition}  
Observe that $\AA_{\id}=\ZZ$ and $\AA_{\delta}=\AA$, where $\delta(u)=(-1)^{\ol{u}}u$.  

Another way to understand this action is as follows: consider the subalgebra $\g_{\phi}$ of $\g\times\g$ given by $\{(u,\phi(u)):u\in\g\}$.  Then $\g_{\phi}\cong\g$, and its even part is $(\g_{\phi})_{\ol{0}}=\{(u,u):u\in\g_{\ol{0}}\}$.  Then the action of $\g_{\phi}$ on $\Dist(G\times G/G,eG)\cong\UU\g$ exactly induces the $\phi$-twisted adjoint action.

Let $G_{\phi}\sub G\times G$ be the connected subgroup which integrates $\g_{\phi}$.  
\begin{lemma}\label{dist_ind_twisted_action}
Suppose that $\phi(u)=u$ implies $u\in\g_{\ol{0}}$.  Then $\UU\g\cong\Ind_{\g_{\ol{0}}}^{\g}\UU\g_{\ol{0}}$ with respect to the $\phi$-twisted action.  In particular if $\Ber(\g_{\ol{1}})$ is trivial, then there is a natural isomorphism of vector spaces $\ZZ(\UU\g_{\ol{0}})\to \AA_{\phi}$ given by
\[
z\mapsto\ad_{\phi}(v_{\g})(z).
\]
\end{lemma}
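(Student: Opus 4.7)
The plan is to set up the situation so as to apply \cref{dist_iso} (equivalently \cref{symm_space_dist}) to the supersymmetric space $G\times G/\Delta G$ with the ambient supergroup $G\times G$, taking the role of $K$ to be the diagonal $\Delta G$, and the role of the auxiliary subgroup $H$ (or $K'$) to be $G_\phi$. By the discussion just before the lemma, $(G_\phi)_0=\Delta G_0$ and the action of $\g_\phi$ on $\Dist(G\times G/\Delta G,e\Delta G)\cong \UU\g$ (via \cref{Ug_dist}) is exactly $\ad_\phi$. Moreover $(G_\phi)_0=(\Delta G)_0$, so the hypothesis $H_0\subseteq K_0$ of \cref{dist_iso} is automatic.

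The only nontrivial hypothesis to verify is $(\g_\phi)_{\ol{1}}\oplus(\Delta\g)_{\ol{1}}=(\g\times\g)_{\ol{1}}$. If $(u,\phi(u))+(v,v)=0$ with $u,v\in\g_{\ol{1}}$, then $v=-u$ and $\phi(u)=u$; the assumed condition on $\phi$ forces $u\in\g_{\ol{0}}$, hence $u=v=0$. Since the two odd summands have the same dimension $\dim\g_{\ol{1}}$, the direct sum fills $(\g\times\g)_{\ol{1}}$. Applying \cref{dist_iso} gives
\[
\UU\g\;\cong\;\Dist(G\times G/\Delta G,e\Delta G)\;\cong\;\Ind_{(\g_\phi)_{\ol{0}}}^{\g_\phi}\Dist(G_0\times G_0/\Delta G_0,e\Delta G_0)\;\cong\;\Ind_{\g_{\ol{0}}}^{\g}\UU\g_{\ol{0}},
\]
where in the last step we transport along the Lie superalgebra isomorphism $\g\xrightarrow{\sim}\g_\phi$, $u\mapsto(u,\phi(u))$, which is the identity on $\g_{\ol{0}}$.

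For the second statement, since $\phi$ fixes $\g_{\ol{0}}$ pointwise, the above isomorphism $\g\cong\g_\phi$ intertwines the $\g_{\ol{0}}$-action on $\g_{\ol{1}}$ with the $(\g_\phi)_{\ol{0}}$-action on $(\g_\phi)_{\ol{1}}$; in particular $\Ber((\g_\phi)_{\ol{1}})$ is trivial as a $(G_\phi)_0$-module, so $\ber_{\g_\phi}=0$ and the $\ber_{\g_\phi}$-semi-invariants coincide with $\g_\phi$-invariants. Applying \cref{invariants_ind_coind_iso} to $\g_\phi$ (with $V_0=\UU\g_{\ol{0}}$ and using connectedness of $G_0$, so that $(\UU\g_{\ol{0}})^{G_0}=\ZZ(\UU\g_{\ol{0}})$) produces the asserted linear isomorphism
\[
\ZZ(\UU\g_{\ol{0}})\;\xrightarrow{\sim}\;\AA_\phi,\qquad z\mapsto v_{\g_\phi}\cdot z.
\]

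Finally, I have to identify the right-hand action with $\ad_\phi(v_\g)(z)$. The point is that $v_{\g_\phi}\in\UU\g_\phi$ is the image of $v_\g\in\UU\g$ under the algebra isomorphism induced by $u\mapsto(u,\phi(u))$, and the embedding $\UU\g_\phi\hookrightarrow\UU(\g\times\g)$ acts on $\UU\g$ (via the identification of \cref{Ug_dist}) by the algebra extension of $\ad_\phi$; this matches on generators by the calculation in the paragraph preceding the lemma, hence on all of $\UU\g_\phi$. The only mild subtlety in the whole argument is this last bookkeeping step and the transfer of the $\Ber$ hypothesis across the isomorphism $\g\cong\g_\phi$; everything else is a direct appeal to the general machinery already developed.
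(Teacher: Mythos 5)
Your proposal is correct and takes the same route as the paper: apply \cref{dist_iso} to $G\times G/\Delta G$ with $H=G_\phi$, using the fixed-point hypothesis on $\phi$ to verify the odd complementarity, and then deduce the second assertion from \cref{invariants_ind_coind_iso} transported across $\g\cong\g_\phi$. The paper's own proof is a single sentence covering only the first assertion; you have simply filled in the bookkeeping (the direct-sum check, the $\Ber$ transfer, and the identification $v_{\g_\phi}\cdot z=\ad_\phi(v_\g)(z)$) that the paper leaves implicit.
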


\begin{proof}
		If $\phi(u)=u$ implies $u\in\g_{\ol{0}}$, then $\g_{\phi}\to (T_{eG}(G\times G/G))_{\ol{1}}$ is an isomorphism.  Therefore, Corollary \ref{dist_iso} applies.
\end{proof}

Observe that $\id\times\phi$ defines an isomorphism of $G\times G$ taking $G$ to $G_{\phi}$, and thus an isomorphism
\[
G\times G/G\to G\times G/G_{\phi}
\]
which is $\id\times\phi$-equivariant.  Since $\id\times\phi$ is the identity on $\g_{\ol{0}}\times\g_{\ol{0}}$, we obtain the following generalization of Theorem \ref{appendix_proj_poly_main_thm}.
\begin{prop}\label{mild_proj_criteria_z_full}
	Suppose that $\Ber(\g)$ is trivial and that $\phi\in\Aut(\g,\g_{\ol{0}})$ satisfies the conditions of Lemma \ref{dist_ind_twisted_action}.  Then for $\gamma\in\AA_{\phi}$ and $\lambda\in\Lambda^+$, $HC(\gamma):L_{\lambda}\to k$ is the zero map if $L(\lambda)$ is not projective.  Further if the unique $\ad_{\phi}$-invariant element of $\frac{1}{2^{d(\lambda)}}L(\lambda)^*\boxtimes L(\lambda)$ always is nonzero at $eG$ for a simple $G$-module $L$, then $HC(\ad_{\phi}(v_{\g})1)(\lambda)\neq0$ if and only if $L(\lambda)$ is projective.
\end{prop}
The unique $\ad_{\phi}$-invariant element of $\frac{1}{2^{d(\lambda)}}L(\lambda)^*\boxtimes L(\lambda)$ can be thought of as a $\phi$-twisted trace function.  We describe these elements for basic type I algebras in Section \ref{twisted_trace_subsec}.

\subsection{The full ghost centre} For $\phi,\psi\in\Aut(\g,\g_{\ol{0}})$, it is easy to check that multiplication in $\UU\g$ induces a morphism 
\[
\AA_{\phi}\otimes\AA_{\psi}\to\AA_{\psi\phi}.
\]
\begin{definition}
	Define the full ghost centre of $\UU\g$ to be the algebra given by
	\[
	\ZZ_{full}:=\sum\limits_{\phi\in\Aut(\g,\g_{\ol{0}})}\AA_{\phi}.
	\]
\end{definition}
Notice  that $\tilde{\ZZ}\sub\ZZ_{full}\sub(\UU\g)^{\g_{\ol{0}}}$.  

\subsection{The case of an abelian Lie superalgebra} 

As a toy case, we consider the above constructions for the abelian superalgebra $\g=k^{m|n}$. Then $\Aut(\g,{\g_{\ol{0}}})\cong GL(n)$.  Let $A\in GL(n)$, and let $\xi_A$ be a nonzero element of $\Lambda^{top}\operatorname{im}(A-\id)$, where $\xi_{\id}=1$.
\begin{lemma}
	$\AA_{A}=\UU\g\xi_A$.
\end{lemma}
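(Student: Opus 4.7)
Since $\g$ is abelian, its enveloping algebra is the supercommutative superalgebra
\[
\UU\g \;\cong\; S(\g_{\ol{0}})\otimes\Lambda(\g_{\ol{1}}),
\]
so the plan is to reduce the $\ad_A$-invariance condition to a plain multiplicative condition inside this algebra. First I would handle $u\in\g_{\ol{0}}$: since $A$ fixes $\g_{\ol{0}}$ pointwise and even elements are central in $\UU\g$, we get $\ad_A(u)(v)=uv-vA(u)=uv-vu=0$ for every $v$, so even elements impose no condition and $\AA_A$ is cut out purely by the action of $\g_{\ol{1}}$.

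Next, for $u\in\g_{\ol{1}}$ and any $v\in\UU\g$, using $vA(u)=(-1)^{\ol{v}}A(u)v$ (supercommutativity and the fact that $A(u)$ is odd), I compute
\[
\ad_A(u)(v)\;=\;uv-(-1)^{\ol{v}}vA(u)\;=\;uv-A(u)v\;=\;(\id-A)(u)\cdot v.
\]
Thus $\AA_A$ is precisely the annihilator (under left multiplication in $\UU\g$) of the subspace $W:=\operatorname{im}(\id-A)\subseteq\g_{\ol{1}}$.

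The remaining step is a direct calculation in the exterior algebra. Let $r=\dim W$, choose a basis $w_1,\dots,w_r$ of $W$, and extend to a basis $w_1,\dots,w_n$ of $\g_{\ol{1}}$. Expanding any $v\in\UU\g$ in the monomial basis $\{f\cdot w_I : f\in S(\g_{\ol{0}}),\; I\subseteq\{1,\dots,n\}\}$, the condition $w_i\cdot v=0$ for $i=1,\dots,r$ forces every index set $I$ appearing with nonzero coefficient to contain $\{1,\dots,r\}$. Equivalently, $v$ lies in $(w_1\cdots w_r)\cdot\UU\g$. Since $\xi_A$ is a nonzero scalar multiple of $w_1\cdots w_r$, this gives $\AA_A=\UU\g\,\xi_A$; the normalization $\xi_{\id}=1$ is consistent because then $W=0$ and the annihilator is all of $\UU\g$.

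The only mild obstacle is keeping the signs straight in the supercommutativity step and verifying that passing from ``annihilator of $W$'' to ``ideal generated by $\xi_A$'' is independent of the choice of basis of $W$; both are routine once the supercommutative description of $\UU\g$ is in place.
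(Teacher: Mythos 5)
Your proof is correct and takes essentially the same approach as the paper: the key step in both is the computation $\ad_A(\eta)(v)=(\eta-A\eta)v$ using supercommutativity of $\UU\g$, after which the statement reduces to identifying the left annihilator of $\operatorname{im}(A-\id)$ in $S(\g_{\ol 0})\otimes\Lambda\g_{\ol 1}$ as $\UU\g\,\xi_A$. The paper leaves this last step as "straightforward"; you spell out the exterior-algebra basis argument, which is exactly what is intended.
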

\begin{proof}
	For $u\in\UU\g$ and $\eta\in\g$, we see that
	\begin{eqnarray*}
	\ad_{A}(\eta)(u)& = &\eta u-(-1)^{\ol{u}}uA\eta\\
	                       & = &(\eta-A\eta)u.
	\end{eqnarray*}
From here the result is straightforward.
\end{proof}
In this case we see that the ghost centres for different automorphisms in $\Aut(\g,\g_{\ol{0}})$ may overlap in myriad ways.  Now suppose $A\in\Aut(\g,\g_{\ol{0}})$ doesn't fix any non-zero odd vectors.  Then under the twisted adjoint action determined by $A$, $\UU\g$ is isomorphic to a sum of the left (or right) regular representation of $\UU\g_{\ol{1}}$, and we have $\AA_{A}=S(\g_{\ol{0}})\Lambda^{n}\g_{\ol{1}}$.  By Lemma \ref{v_for_almost_odd_abelian}, we have $v_{\g}=\xi_1\dots\xi_n$, where $\xi_1,\dots,\xi_n$ is any basis of $\g_{\ol{1}}$.  Thus if $A$ is fixed point free, the following element must be non-zero:
\[
\ad_{A}(\xi_1\cdots\xi_n)(1)=(1-A)\xi_1\cdots (1-A)\xi_{n}=\det(1-A)\xi_1\cdots\xi_n.
\]
So this is nonzero if and only if $A-1$ is invertible, i.e. $A$ is not an eigenvalue of $1$.

\begin{remark}
	Using the above computation, it is possible to give a purely algebraic proof of Lemma \ref{dist_ind_twisted_action} with the same proof as given in \cite{gorelik2000ghost}.  Namely, given $u_1,\dots,u_n\in\g_{\ol{1}}$, $v\in\UU\g$, and $\phi\in\Aut(\g,\g_{\ol{0}})$, we have
	\[
	\operatorname{gr}(\ad_{\phi}(u_1\cdots u_n)(v))=\ad_{\phi}(\operatorname{gr}(u_1\cdots u_n))(\operatorname{gr}v),	
	\]
	where $\operatorname{gr}$ is the associated graded morphism $\UU\g\to S(\g)$, and we view $S(\g)$ as the enveloping algebra of $\g$ with trivial bracket.  Thus if $L_{0}\sub\UU\g_{\ol{0}}$ is a $\g_{\ol{0}}$-submodule and $\{v_j\}\sub L_{0}$ is a basis such that $\{\operatorname{gr}v_j\}$ is linearly independent in $\UU\g_{\ol{0}}$, then by passing to the associated graded we find that $\UU\g L_{0}\cong\Ind_{\g_{0}}^{\g}L_0$.
\end{remark}

\subsection{$\ZZ_{full}$ for basic classical Lie superalgebras}\label{ghost_centre_basic_case_subsec}  Let $\g$ be either $\g\l(m|n)$, $\s\l(m|n)$, $\p\s\l(n|n)$ for $n>2$, $\o\s\p(m|2n)$, or a basic exceptional simple Lie superalgebra. Recall that such a superalgebra is called type I if it admits a $\Z$-grading $\g=\g_{-1}\oplus\g_0\oplus\g_1$ compatible with the $\Z_2$-grading.  Of these, $\g\l(m|n)$, $\s\l(m|n)$, $\p\s\l(n|n)$ and $\o\s\p(2|2n)$ are the type I algebras. 

\begin{remark}[\textbf{Caution}] We do not consider $\p\s\l(2|2)$ here because Lemma \ref{aut_basic_classical} is false (see section 5.5 of \cite{musson2012lie}).  Further, in Section \ref{subsec_charac_Z_full}, we will not consider $\s\l(n|n)$ or $\p\s\l(n|n)$ due to the lack of an internal grading operator. 
\end{remark}
\begin{lemma}\label{aut_basic_classical}
	$\Aut(\g,\g_{\ol{0}})\cong k^\times$ if $\g$ is of type I, and otherwise $\Aut(\g,\g_{\ol{0}})=\langle\delta\rangle\cong\Z/2$.  In particular $\tilde{\ZZ}=\ZZ_{full}$ for $\g$ not of type I.
\end{lemma}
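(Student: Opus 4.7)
The plan is to reduce the computation of $\Aut(\g,\g_{\ol{0}})$ to a computation of $\End_{\g_{\ol{0}}}(\g_{\ol{1}})$. Since any $\phi\in\Aut(\g,\g_{\ol{0}})$ fixes $\g_{\ol{0}}$ pointwise and respects the bracket, its restriction to $\g_{\ol{1}}$ must lie in $\End_{\g_{\ol{0}}}(\g_{\ol{1}})$, and conversely a $\g_{\ol{0}}$-module endomorphism of $\g_{\ol{1}}$ extends (by the identity on $\g_{\ol{0}}$) to an automorphism of $\g$ exactly when it preserves the bracket $[-,-]:\g_{\ol{1}}\otimes\g_{\ol{1}}\to\g_{\ol{0}}$. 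So the proof breaks into (i) determining $\End_{\g_{\ol{0}}}(\g_{\ol{1}})$ and (ii) imposing the bracket compatibility.

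First I would invoke the standard fact (as in \cite{musson2012lie}) that for the basic classical Lie superalgebras under consideration, $\g_{\ol{1}}$ is irreducible as a $\g_{\ol{0}}$-module when $\g$ is not of type I, and decomposes as $\g_{-1}\oplus\g_{1}$ with $\g_{-1}$ and $\g_{1}$ non-isomorphic irreducible $\g_{\ol{0}}$-modules when $\g$ is of type I (and $\g\neq\p\s\l(2|2)$). This is checked case-by-case: for $\g\l(m|n)$ the summands are $\mathrm{std}_m\otimes\mathrm{std}_n^*$ and $\mathrm{std}_n\otimes\mathrm{std}_m^*$, distinguished by the action of the center; for $\s\l(m|n)$, $\p\s\l(n|n)$ with $n>2$, and $\o\s\p(2|2n)$ analogous descriptions apply and the two summands remain non-isomorphic.

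In the non-type-I case, Schur's lemma forces $\phi|_{\g_{\ol{1}}}=c\cdot\id$ for some $c\in k^\times$. Imposing $\phi([u,v])=[\phi(u),\phi(v)]$ on $\g_{\ol{1}}\otimes\g_{\ol{1}}\to\g_{\ol{0}}$ gives $[u,v]=c^2[u,v]$, and since the bracket is nonzero there (as $\g$ is perfect), we conclude $c^2=1$, hence $\phi\in\{\id,\delta\}$. In the type I case, $\End_{\g_{\ol{0}}}(\g_{\ol{1}})=k\oplus k$ and $\phi$ acts by scalars $a$ on $\g_{-1}$ and $b$ on $\g_{1}$. The brackets $[\g_{-1},\g_{-1}]=[\g_{1},\g_{1}]=0$ impose no constraint, while the nondegenerate pairing $[\g_{-1},\g_{1}]\to\g_{\ol{0}}$ forces $ab=1$. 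Thus $\phi$ is determined by $a\in k^\times$, giving $\Aut(\g,\g_{\ol{0}})\cong k^\times$ with the scalar $a=-1$ recovering $\delta$. The consequence $\tilde{\ZZ}=\ZZ_{full}$ in the non-type-I case is then immediate, since $\ZZ_{full}=\AA_{\id}+\AA_{\delta}=\ZZ+\AA$.

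The main obstacle is really item (i): cleanly verifying the $\g_{\ol{0}}$-module structure on $\g_{\ol{1}}$ uniformly and articulating precisely why $\p\s\l(2|2)$ must be excluded. In $\p\s\l(2|2)$ the exceptional isomorphism $\mathrm{std}_2\cong\mathrm{std}_2^*$ coming from the symplectic form on $\s\l(2)$ makes $\g_{-1}$ and $\g_{1}$ isomorphic as $\g_{\ol{0}}$-modules; the endomorphism algebra $\End_{\g_{\ol{0}}}(\g_{\ol{1}})$ enlarges to $\Mat_2(k)$, and after imposing the bracket-preservation constraint one recovers the $SL_2$ cited from \cite{musson2012lie}. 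Apart from this delicate case distinction, the argument is almost entirely formal.
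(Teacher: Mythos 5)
Your reduction to $\End_{\g_{\ol{0}}}(\g_{\ol{1}})$ plus bracket compatibility is a sound framework, and it is a genuine alternative to what the paper does: the paper's ``proof'' is essentially a citation to Musson together with the explicit parametrization $c\mapsto c\mathbf{1}_{\g_{-1}}\oplus\mathbf{1}_{\g_0}\oplus c^{-1}\mathbf{1}_{\g_1}$ in the type I case, so you are supplying an argument the paper does not. The non-type-I branch (Schur on an irreducible $\g_{\ol{1}}$, then $c^2=1$) is correct, and the type I branch is correct whenever $\g_{-1}\not\cong\g_1$ as $\g_{\ol{0}}$-modules.

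There is, however, a gap in the type I case: the non-isomorphism claim for $\s\l(m|n)$ fails for $\s\l(2|2)$, which the paper does not exclude (only $\p\s\l(2|2)$ is explicitly excluded). For $\s\l(2|2)$, the center of $\g_{\ol{0}}$ is one-dimensional, spanned by $I_4$, which acts trivially on $\g_{\ol{1}}$, so the ``distinguished by the center'' mechanism that works for $\g\l(m|n)$ is unavailable; and over $\s\l_2\times\s\l_2$ the exceptional isomorphism $\mathrm{std}_2\cong\mathrm{std}_2^*$ forces $\g_{-1}\cong\g_1\cong\mathrm{std}_2\boxtimes\mathrm{std}_2$. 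So $\End_{\g_{\ol{0}}}(\g_{\ol{1}})\cong\Mat_2(k)$ and Schur does not give you two independent scalars. The lemma is still true for $\s\l(2|2)$: what rescues it is exactly the feature distinguishing $\s\l(2|2)$ from $\p\s\l(2|2)$, namely that $[\g_1,\g_{-1}]$ has a nonzero component in the center of $\g_{\ol{0}}$. Under the identification $\g_{\ol{1}}\cong(\mathrm{std}_2\boxtimes\mathrm{std}_2)\otimes k^2$, this central component of the bracket factors through a nondegenerate quadratic form $q$ on the $k^2$ factor (nondegenerate because $[\g_{\pm1},\g_{\pm1}]=0$ makes $\g_{\pm1}$ its isotropic lines, and nonzero by direct computation, e.g.\ $[E_{13},E_{31}]=E_{11}+E_{33}$ has a nonzero $I_4$-component). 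Bracket preservation then forces $A\in SL_2\cap O(q)=SO(q)\cong k^\times$. You should run this bracket-constraint analysis for $\s\l(2|2)$ explicitly rather than asserting non-isomorphism of the summands; it is the same refinement your final paragraph already carries out for $\p\s\l(2|2)$, just with the extra quadratic constraint coming from the center.
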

\begin{proof}
	We refer to \cite{musson2012lie}.  In the type I case, we identify $\Aut(\g,\g_{\ol{0}})\cong k^\times$ via defining for $c\in k^\times$ the automorphism $c\mathbf{1}_{\g_{-1}}\oplus\mathbf{1}_{\g_0}\oplus c^{-1}\mathbf{1}_{\g_{1}}$.
\end{proof}

So for these algebras we only get something new in the type I case.  We study this case now.  Since $\Aut(\g,\g_{\ol{0}})\cong k^\times$, we label the automorphisms by complex numbers $c\in k^\times$ according to the identification given in the proof of Lemma \ref{aut_basic_classical}.

Write $\g=\g_{-1}\oplus\g_0\oplus\g_1$ for the $\Z$-grading on $\g$.  Choose a Cartan subalgebra $\h\sub\g_{\ol{0}}$ and consider a Borel subalgebra $\b=\b_0\oplus\g_1$ where $\b_0$ is a Borel of $\g_0$ containing $\h_{\ol{0}}$.  We call this a standard Borel subalgebra of $\g$.  Then we obtain a Harish-Chandra morphism with respect to this Borel.  Let $W$ denote the Weyl group of $\g_{\ol{0}}$ and consider its action $\rho$-shifted action on $\h$, where $\rho$ is the Weyl vector.

\begin{lemma}\label{Z_grading_hw_module}
	Every $\b$-highest weight module $M$ admits a $\Z$-grading that is compatible with the $\Z$-grading on $\g$.
\end{lemma}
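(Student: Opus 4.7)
The plan is to realize $M$ as a quotient of a Verma module carrying a canonical compatible $\Z$-grading, and then argue that the kernel is a graded submodule.

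First I would form the Verma module $M(\lambda) = \UU\g \otimes_{\UU\b} k_\lambda$, where $\g_1 \subseteq \b$ acts as zero on $k_\lambda$. By PBW there is a vector-space identification $M(\lambda) \cong \Lambda^\bullet \g_{-1} \otimes k_\lambda$, and placing $\Lambda^k \g_{-1} \otimes k_\lambda$ in degree $-k$ equips $M(\lambda)$ with a $\Z_{\leq 0}$-grading. A direct PBW computation using $[\g_1, \g_{-1}] \subseteq \g_0$ and $\g_1 \cdot 1_\lambda = 0$ shows this grading is compatible with the $\Z$-grading on $\g$: $\g_{-1}$ raises the exterior degree by one, $\g_0$ preserves each graded piece (acting by the induced adjoint on $\Lambda^k \g_{-1}$ and by a scalar on $k_\lambda$), and $\g_1$ lowers the exterior degree by one.

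Since $M$ is a $\b$-highest weight module of weight $\lambda$, it is a quotient $M = M(\lambda)/N$ for some $\g$-submodule $N$, and it suffices to verify that $N$ is graded, for then the grading on $M(\lambda)$ descends. For those type-I basic classical superalgebras whose Cartan $\h$ contains an internal grading element $h_{\mathrm{gr}}$ satisfying $[h_{\mathrm{gr}}, x] = i x$ for $x \in \g_i$ -- namely $\g\l(m|n)$, $\s\l(m|n)$ with $m \neq n$, and $\o\s\p(2|2n)$ -- the element $h_{\mathrm{gr}}$ acts on $M(\lambda)_{-k}$ by the scalar $\lambda(h_{\mathrm{gr}}) - k$, and these eigenvalues are distinct. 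Since $N$ is automatically $\h$-stable and therefore $h_{\mathrm{gr}}$-stable, it decomposes into $h_{\mathrm{gr}}$-eigenspaces, which coincide with the graded pieces, and hence is graded.

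The remaining cases $\g = \s\l(n|n)$ and $\g = \p\s\l(n|n)$ -- where no such $h_{\mathrm{gr}}$ exists in $\h$ -- form the main obstacle. My plan is to exploit the embedding $\s\l(n|n) \hookrightarrow \g\l(n|n)$: extend $\lambda$ to a weight $\tilde\lambda$ of the larger Cartan, identify the $\s\l(n|n)$-Verma $M(\lambda)$ with the restriction of the $\g\l(n|n)$-Verma $\tilde M(\tilde\lambda)$ (both have dimension $2^{\dim \g_{-1}}$), and use the internal grading element of $\g\l(n|n)$ to grade $\tilde M(\tilde\lambda)$. The key technical step is to verify that $N \subset M(\lambda)$ is preserved by this grading operator despite its coming from outside $\s\l(n|n)$; this requires analyzing how the $\s\l$-module relations defining $M$ interact with the $\UU\g_{-1}$-PBW basis, and is the heart of the argument in these cases. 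The case $\p\s\l(n|n)$ then reduces to $\s\l(n|n)$ via the central quotient.
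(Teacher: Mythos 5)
Your overall strategy (pass to the Verma module $M(\lambda)$, use the obvious grading there, and argue that the kernel $N$ of $M(\lambda)\twoheadrightarrow M$ is a graded submodule) is reasonable, and the step for algebras possessing an internal grading element $h_{\mathrm{gr}}\in\h$ is correct: $h_{\mathrm{gr}}$ separates the graded pieces by distinct eigenvalues, $N$ is $\h$-stable, so $N$ is graded. However, there are two problems.

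First, a smaller one: you describe the Verma as $M(\lambda)\cong\Lambda^\bullet\g_{-1}\otimes k_\lambda$, of dimension $2^{\dim\g_{-1}}$. That is the \emph{Kac} module, not the Verma module for $\b=\b_0\oplus\g_1$. Since $\b_0$ is a Borel of $\g_0$, PBW gives $M(\lambda)\cong\Lambda^\bullet\g_{-1}\otimes\UU\n_0^-\otimes k_\lambda$, which is infinite-dimensional, and $\g_0$ does not act ``by a scalar on $k_\lambda$.'' The grading by exterior degree in $\g_{-1}$ is still the right one, and the internal-grading-operator argument survives unchanged, but as written your reduction is not to the Verma module; and a general highest-weight module $M$ need not be a quotient of a Kac module.

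Second, and decisively: your treatment of $\s\l(n|n)$ and $\p\s\l(n|n)$ is not a proof. You embed into $\g\l(n|n)$ and propose to use its grading element $h_{\mathrm{gr}}\notin\s\l(n|n)$, but an $\s\l(n|n)$-submodule $N$ is not a priori $h_{\mathrm{gr}}$-stable -- indeed, this is precisely the failure mode: in $\h_{\s\l(n|n)}^*$ one has $\mathrm{str}=\sum_i(\epsilon_i-\delta_i)\equiv0$, so weight spaces of $M(\lambda)$ in different $\Z$-degrees can coincide as $\h_{\s\l}$-weight spaces, and weight-stability of $N$ no longer forces degree-stability. You explicitly flag this verification as ``the key technical step'' and ``the heart of the argument in these cases'' without carrying it out, so the proof is incomplete exactly where the lemma is nontrivial. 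Note that the paper does work with $\p\s\l(n|n)$, $n>2$, so these cases cannot be set aside.

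By contrast, the paper's proof avoids the Verma/submodule route and the case split entirely: it defines the grading directly on $M$ by $M_0:=M^{\g_1}$ and $M_{-i}:=\Lambda^i\g_{-1}M_0$, which is a one-step, uniform construction. (That construction also requires verifying that the sum is direct, which the paper leaves implicit, but it is a genuinely different -- and shorter -- route than reducing to gradedness of $N\subseteq M(\lambda)$.)
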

\begin{proof}
	We may set $M_0=M^{\g_1}$, and then define $M_{-i}=\Lambda^i\g_{-1}M_0$.  
\end{proof}
We will use the $\Z$-grading defined in the proof of Lemma \ref{Z_grading_hw_module} many times in what follows.
\begin{thm}\label{HC_Z_full}
	\begin{enumerate}
		\item $\ZZ_{full}$ is a commutative algebra.
		\item 	For $c\neq 1$, $HC$ is injective on $\AA_{c}$, and we have 
		\[
		HC(\AA_{c})=HC(\AA_{-1})=S(\h)^{W}\prod\limits_{\alpha\in\Delta^+_{\ol{1}}}(h_{\alpha}+(\rho,\alpha)).
		\]
	\end{enumerate}
\end{thm}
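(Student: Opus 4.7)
The plan is to exploit the internal $\Z$-grading on $\g$ for part (1), and to combine the induced-module structure of \cref{dist_ind_twisted_action} with a generalization of the atypicality argument in \cref{HC(v)_vanishes_on_atypical} for part (2). For part (1), note that $\ad_{\phi_c}$ preserves the $\Z$-grading on $\UU\g$ inherited from $\g=\g_{-1}\oplus\g_0\oplus\g_1$, so every $\AA_c$ is $\Z$-graded. In each of $\g\l(m|n),\s\l(m|n),\p\s\l(n|n),\o\s\p(2|2n)$ the equality $\dim\g_{-1}=\dim\g_1$ holds, so by \cref{v_g_for_type_I} the element $v_\g$ lies in $\UU\g_{(0)}$; since the isomorphism $\ZZ(\UU\g_{\ol{0}})\to\AA_c$, $z\mapsto\ad_{\phi_c}(v_\g)(z)$, of \cref{dist_ind_twisted_action} preserves $\Z$-degree and $\ZZ(\UU\g_{\ol{0}})\subseteq\UU\g_{(0)}$, we conclude $\AA_c\subseteq\UU\g_{(0)}$ for every $c\neq 1$, and $\ZZ=\AA_1\subseteq\UU\g_{(0)}$ trivially. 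A direct PBW inspection shows $\UU\g_{(0)}$ is purely even: any PBW monomial of $\Z$-degree zero contains equal numbers of $\g_{-1}$ and $\g_1$ factors, hence an even total number of odd generators. Finally, iterating $ua=(-1)^{\bar{u}\bar{a}}a\phi_c(u)$ along a PBW basis of $\UU\g$ yields $xa=(-1)^{\bar{x}\bar{a}}a\phi_c(x)$ for every $x\in\UU\g$, where $\phi_c$ is extended to $\UU\g$ as the algebra automorphism acting by $c^{-j}$ on $\UU\g_{(j)}$. Applied to $x=b\in\AA_{c'}\cap\UU\g_{(0)}$ we have $\phi_c(b)=b$, so $ba=(-1)^{\bar{a}\bar{b}}ab=ab$ since both elements are even. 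This proves commutativity.

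For part (2), fix $c\neq 1$, set $\iota_c(z):=\ad_{\phi_c}(v_\g)(z)$, $p_c:=\iota_c(1)$, and $q_c:=HC(p_c)\in S(\h)$. The key reduction is the factorization
\[
HC(\iota_c(z))=HC_{\g_{\ol{0}}}(z)\cdot q_c,
\]
which I would prove by establishing $\iota_c(z)\equiv z\cdot p_c$ modulo $\n\UU(\g\times\g)+\UU(\g\times\g)\g_{\phi_c}$; the commutators $[z,v_i]$ and $[z,w_j]$ produced when moving $z$ past the factors of $v_\g$ in $\ad_{\phi_c}(v_\g)(z)$ all lie in the HC-kernel because $z$ super-commutes with $\g_{\ol{0}}$. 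Granting this, injectivity of $HC$ on $\AA_c$ follows at once from injectivity of the classical $HC_{\g_{\ol{0}}}$ on $\ZZ(\UU\g_{\ol{0}})$ together with $q_c\neq 0$. To pin down $q_c$, adapt \cref{simple_iso_root_group_case} and \cref{HC(v)_vanishes_on_atypical} to $\ad_{\phi_c}$: for each simple isotropic positive $\alpha$ with $(\lambda,\alpha)=0$, a suitable $\phi_c$-twisted analog of Gorelik's $u_\alpha\in\g_{\phi_c}$ annihilates $f_\lambda$ and rules out an $I_{G_{\phi_c}}(k)$-summand in $L(\lambda)^*\boxtimes L(\lambda)$, so $q_c(\lambda)=0$; odd reflections, exactly as in the proof of \cref{HC(v)_vanishes_on_atypical}, extend this to every positive isotropic $\alpha$. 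The degree bound $\deg q_c\leq|\Delta^+_{\ol{1}}|$ from \cref{ghost_dist_HC_bound} and the factor theorem then force $q_c=\lambda_c\prod_{\alpha\in\Delta^+_{\ol{1}}}(h_\alpha+(\rho,\alpha))$. A direct top-degree calculation, using $\ad_{\phi_c}(v_i)(1)=(1-c)v_i$ and $\ad_{\phi_c}(w_j)(1)=(1-c^{-1})w_j$ together with the strategy of \cref{sufficient_proj_criteria}, shows $\lambda_c$ is proportional to a nonzero multiple of $(1-c)^n(1-c^{-1})^m$, hence nonzero for $c\neq 1$. Combined with the factorization and the classical description of $HC_{\g_{\ol{0}}}(\ZZ(\UU\g_{\ol{0}}))$, this gives $HC(\AA_c)=S(\h)^W\prod_{\alpha\in\Delta^+_{\ol{1}}}(h_\alpha+(\rho,\alpha))$, which matches Gorelik's computation of $HC(\AA_{-1})=HC(\AA)$ from \cite{gorelik2000ghost}.

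The main obstacle is the factorization $HC(\iota_c(z))=HC_{\g_{\ol{0}}}(z)\cdot q_c$: since $\iota_c$ is only a vector-space isomorphism (not a $\ZZ(\UU\g_{\ol{0}})$-module map, and the $\ZZ$-module structure on $\AA_c$ goes through the wrong algebra), one has to carefully track every commutator produced when $z$ is pushed through the $v_i$'s and $w_j$'s comprising $v_\g$ under the twisted action, and verify each lies in the HC kernel $\n\UU(\g\times\g)+\UU(\g\times\g)\g_{\phi_c}$. This is the super-analog of a standard argument in \cite{gorelik2000ghost} for $c=-1$; the expectation is that the bookkeeping is insensitive to $c$ so that all $c$-dependence concentrates in the single scalar $\lambda_c$, but verifying this uniformity is the principal technical step.
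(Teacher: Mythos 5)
Your part (1) argument is correct and, interestingly, fills a genuine gap: the paper's own proof of this theorem addresses only part (2), and your $\Z$-grading argument (that $\AA_c\subseteq\UU\g_{(0)}$ for $c\neq1$ via the degree-$0$ isomorphism $\iota_c$, that $\UU\g_{(0)}$ is purely even, and that the identity $xa=(-1)^{\bar x\bar a}a\phi_c(x)$ extended to $x\in\UU\g$ then gives honest commutation) is a clean and self-contained proof of commutativity. One small caveat: the inclusion $\ZZ=\AA_1\subseteq\UU\g_{(0)}$ is not quite ``trivial'' for $\s\l(n|n)$ and $\p\s\l(n|n)$, which lack an internal grading operator; one has to argue that a central element of nonzero $\Z$-degree would annihilate the highest weight vector of every Verma module, hence be zero.

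Part (2), however, contains a fatal error. The claimed factorization $HC(\iota_c(z))=HC_{\g_{\ol0}}(z)\cdot q_c$ is false, and this is already visible for $\g\l(1|1)$. Write $\g_{\ol0}=\h=k E_{11}\oplus kE_{22}$, so $\ZZ(\UU\g_{\ol0})=S(\h)$ and $HC_{\g_{\ol0}}=\mathrm{id}$. Take $v_\g=E_{21}E_{12}$ and let $\tau:S(\h)\to S(\h)$ be the shift $E_{11}\mapsto E_{11}-1$, $E_{22}\mapsto E_{22}+1$ (so $E_{12}f=\tau(f)E_{12}$). A direct computation gives
\[
\iota_c(z)=\ad_{\phi_c}(E_{21}E_{12})(z)=(\tau^{-1}g-cg)E_{21}E_{12}+cg(E_{11}+E_{22}),\qquad g:=\tau z-c^{-1}z,
\]
so $HC(\iota_c(z))=(c\tau z-z)(E_{11}+E_{22})$, and in particular $q_c=HC(\iota_c(1))=(c-1)(E_{11}+E_{22})$. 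For $z=E_{11}$ the left side is $\bigl((c-1)E_{11}-c\bigr)(E_{11}+E_{22})$ while $HC_{\g_{\ol0}}(z)\cdot q_c=(c-1)E_{11}(E_{11}+E_{22})$; these differ by $-c(E_{11}+E_{22})\neq0$. The underlying reason is that $z\in\ZZ(\UU\g_{\ol0})$ does \emph{not} commute with the odd generators appearing in $v_\g$, so the commutators produced when $z$ is pushed through $v_\g$ do not all vanish under $HC$. They instead contribute the shifted term $c\tau z-z$ rather than $z\cdot(c-1)$. (As a side effect, your proposed formula $\lambda_c\propto(1-c)^n(1-c^{-1})^m$ is also off; here $\lambda_c=c-1$, which is not proportional to $(1-c)(1-c^{-1})$ as a function of $c$.) Since the factorization is the backbone of your injectivity argument, your identification of $q_c$ with $\lambda_c t_\g$, and your final deduction of $HC(\AA_c)$, the whole chain collapses. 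The paper avoids the issue entirely by working directly with an arbitrary $u\in\AA_c$: divisibility of $HC(u)$ by $t_\g$ from \cref{mild_proj_criteria_z_full} and Zariski density of atypical dominant weights, $W$-invariance (in the $\rho$-shifted sense) from the action on highest weight vectors $f_\alpha^{(\lambda,\alpha)+1}v$ in Verma modules together with odd reflections, injectivity of $HC|_{\AA_c}$ from Letzter's completeness theorem, and the reverse inclusion from Gorelik's degree bounds via \cref{degree_bound}. If you want to rescue a factorization-style argument, you would need to replace $HC_{\g_{\ol0}}(z)$ by the ($c$-dependent, shift-twisted) quantity $c\tau z-z$ and then prove $W$-invariance and the degree bounds separately — at which point you have essentially reconstructed the paper's direct argument.
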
  

\begin{remark}
	In this case we see that $HC(\AA_c)\sub HC(\ZZ)$. See for instance \cite{cheng2012dualities} for a description of the Harish-Chandra image of the centre for $\g\l(m|n)$ and $\o\s\p(m|2n)$.  
\end{remark}
\begin{proof}
	Let 
	\[
	t_{\g}:=\prod\limits_{\alpha\in\Delta_1^+}(h_{\alpha}+(\alpha,\rho)).
	\]
	By Proposition \ref{mild_proj_criteria_z_full}, if $u\in\AA_{c}$ then we must have $HC(u)(\lambda)=0$ for $\lambda$ an atypical dominant integral weight, i.e. $\lambda$ such that $(\lambda+\rho,\alpha)=0$ for some odd root $\alpha$, or equivalently $t_{\g}(\lambda)=0$.  The set of such $\lambda$ are dense in the zero set of $t_{\g}$, and thus we have that $t_\g$ divides $HC(u)$.
	
	To obtain Weyl group invariance, let $\lambda$ be a dominant integral weight, and consider the Verma module $M_{\b}(\lambda)$ of highest weight $\lambda$ and highest weight vector $v$.  Choose a simple even root $\alpha$, and write $f_{\alpha}$ for a root vector of weight $-\alpha$.  Then $f_{\alpha}^{(\lambda,\alpha)+1}v$ will be a highest weight vector of weight $\lambda-((\lambda,\alpha)+1)\alpha=s_{\alpha}(\lambda)$.  Since $f_{\alpha}\in\g_{0}$, 
	\[
	uf_{\alpha}^{(\lambda,\alpha)+1}v=f_{\alpha}^{(\lambda,\alpha)+1}uv=HC(u)(\lambda)f_{\alpha}^{(\lambda,\alpha)+1}v.
	\]
	Thus $HC(u)(\lambda)=HC(u)(s_{\alpha}(\lambda))$.  Since such $\lambda$ are dense, and the reflections in the even simple roots of this Borel subalgebra generate the Weyl group, it follows that $HC(u)\in S(\h)^{W}$.	Since $t_{\g}$ is itself $W$-invariant, it follows that we've shown
	\[
	HC(\AA_{c})\sub S(\h)^{W}t_{\g}.
	\]
	To finish the proof, first observe that $HC$ is injective on $\AA_{c}$, as if $HC(u)=0$ for $u\in\AA_{c}$ then it must act by zero on every Verma module, implying it is zero by \cite{letzter1994complete}.  Now if we apply Lemma \ref{degree_bound}, we can make the same degree arguments as in \cite{gorelik2000ghost} to obtain the equality $HC(\AA_{c})=S(\h)^{W}t$.

\end{proof}
With Theorem \ref{HC_Z_full} we can now completely describe the structure of $\ZZ_{full}$ for type I algebras.  Let $N=\dim\g_1=\dim\g_{\ol{1}}/2$, and let $\zeta_{N+1}\in k$ be a primitive $(N+1)$th root of unity.
\begin{thm}\label{Z_full_description}
	\[
	\ZZ_{full}=\bigoplus\limits_{i=0}^{N}\AA_{\zeta_{N+1}^i}.
	\]	
\end{thm} 
\begin{proof}
	Choose $u\in\AA_{c}$ for $c\neq 1$, and write $p=HC(u)$.  Let $u_i\in\AA_{\zeta_{N+1}^i}$ be the unique element such that $HC(u_i)=p$.  We want to solve the equation
	\[
	u=a_0u_0+\cdots+a_{N}u_{N}=\sum\limits_{i=0}^{N}a_iu_i.
	\]
	Let $M:=M_{\b}(\lambda)$ be the Verma module of highest weight $\lambda$ for a standard Borel $\b$, and write $M_{j}$ for the $j$th graded part according to the $\Z$-grading defined in the proof of Lemma \ref{Z_grading_hw_module}. Then the scalar action of each side of the above equation applied to $M_j$ gives the equation
	\[
	c^jp(\lambda)=\sum\limits_{i=0}^{N}a_i\zeta_{N+1}^{ji}p(\lambda).
	\]
	If $p(\lambda)=0$ then this equation always holds. If $p(\lambda)\neq0$, we divide by it to get the system of equations
	\[
	c^j=\sum\limits_{i=0}^{N}a_i\zeta_{N+1}^{ji}
	\]
	This has a unique solution for $a_0,\dots,a_{N}$ since it is a linear system for the Vandermonde matrix determined by $1,\zeta_N,\dots,\zeta_{N+1}^{N}$.  Since $u-a_0u_0-\dots-a_{N}u_N$ then acts trivially on every Verma module, it is zero in $\UU\g$.  It follows that
	\[
	\ZZ_{full}=\sum\limits_{i=0}^{N}\AA_{\zeta_{N+1}^i}.
	\]  
	The sum is direct by the nonsingularity of the Vandermonde matrix, so we are done.
\end{proof}

\begin{remark}
	If $c\in k^\times$, we have an inclusion $\AA_{c}\to\ZZ_{full}=\bigoplus\limits_{i=0}^{N}\AA_{\zeta_{N+1}^i}$.  Assume that $c\neq 1$, and for $u\in \AA_{c}$ write $p=HC(u)$. Let $u_{i}\in\AA_{\zeta_{N+1}^i}$ such that $HC(u_{i})=p$.  Then by inverting the Vandermonde matrix, we obtain the decomposition 
	\[
	u=\frac{1}{N+1}\sum\limits_{i=0}^{N}\left(\sum\limits_{j=0}^{N}c^j\zeta_{N+1}^{-ij}\right)u_{i}.
	\]
\end{remark}

\subsection{Characterization of $Z_{full}$}\label{subsec_charac_Z_full} We now give an intrinsic description of $\ZZ_{full}$ for the type I algebras $\g\l(m|n)$, $\s\l(m|n)$ for $m\neq n$, and $\o\s\p(2|2n)$.  These are distinguished by their having an internal grading operator, that is an element $h\in\g_{0}$ such that $[h,u]=\deg(u)u$ for $u\in\g$.  Thus we assume for this subsection that $\g$ is one of these superalgebras.  

Given a $\g$-module $V$, recall that $V$ is $\Z$-graded if there exists a $\Z$-grading of $V$ which is compatible with the $\Z$-grading on $\g$.  In Lemma \ref{Z_grading_hw_module} it was shown that a highest weight module for the Borel $\b_{0}\oplus\g_1$ is $\Z$-graded.  In particular all finite-dimensional irreducible representations of $\g$ are $\Z$-graded.

If $V$ is $\Z$-graded, we say an element $u\in\UU\g$ acts on it by a $\Z$-graded constant if $u$ acts by a scalar on each component of the $\Z$-grading.  We now seek to prove the following analogue of corollary 4.4.4 of \cite{gorelik2000ghost}.
\begin{thm}\label{Z_full_charac}
	$\ZZ_{full}$ consists exactly of all elements of $\UU\g$ which act by $\Z$-graded constants on all finite-dimensional irreducible representations of $\g$.
\end{thm}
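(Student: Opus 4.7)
The plan is to prove the two inclusions separately, using the decomposition $\ZZ_{full}=\bigoplus_{i=0}^{N-1}\AA_{\zeta_N^i}$ from Theorem~\ref{Z_full_description} together with the fact that each finite-dimensional irreducible of $\g$ carries a compatible $\Z$-grading from the internal grading operator $h\in\g_0$.

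First I would treat the forward direction. Take $u\in\AA_c$ for some $c\in k^\times$ and let $L$ be a finite-dimensional irreducible with its $\Z$-grading $L=\bigoplus_jL_j$ coming from $h$. Let $c^h\colon L\to L$ be the diagonal operator acting by $c^j$ on $L_j$. Since $c^hxc^{-h}=c^{\deg x}x$ on $L$ for $x\in\g$ homogeneous, and the defining relation of $\AA_c$ can be rewritten as $xu=(-1)^{\bar{x}\bar{u}}uc^{-\deg x}x$ as operators on $L$, a direct substitution shows that $uc^{-h}$ supercommutes with every $x\in\g$. Because for $\g\l(m|n)$ with $m\neq n$ and $\o\s\p(2|2n)$ every finite-dimensional irreducible has $\End_\g(L)=k$ (no Q-type phenomena arise), we conclude $uc^{-h}=\alpha\cdot\id_L$ for some scalar $\alpha$, and hence $u|_{L_j}=\alpha c^j$, which is visibly a $\Z$-graded constant. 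Extending by linearity, every element of $\ZZ_{full}$ acts by $\Z$-graded constants.

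For the reverse inclusion, suppose $u\in\UU\g$ acts by $\Z$-graded constants on every finite-dimensional irreducible. I would first observe that this forces $[h,u]=0$, so $u\in(\UU\g)^{(0)}$: otherwise, the nonzero-degree components of $u$ would move graded pieces on sufficiently large irreducibles, contradicting the scalar-on-each-piece hypothesis. Next, apply $u$ to standard Verma modules $M_\b(\lambda)$, each equipped with the $\Z$-grading of Lemma~\ref{Z_grading_hw_module}; then $u$ acts by a scalar $\alpha_j(\lambda)$ on $M_\b(\lambda)_j$. Inverting the Vandermonde system $\alpha_j(\lambda)=\sum_{i=0}^{N-1}\beta_i(\lambda)\zeta_N^{ij}$ over the relevant range of $j$ produces candidate polynomials $\beta_i(\lambda)$. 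Using the injectivity of $HC$ on each $\AA_{\zeta_N^i}$ (Theorem~\ref{HC_Z_full}), one would then lift each $\beta_i$ to a unique element $u_i\in\AA_{\zeta_N^i}$. Since $u$ and $\sum_iu_i$ have identical action on every Verma module, and $\UU\g$ acts faithfully on the family of Verma modules (cf.~\cite{letzter1994complete} cited in the proof of Theorem~\ref{HC_Z_full}), they agree in $\UU\g$, placing $u$ in $\ZZ_{full}$.

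The main obstacle will be verifying that the $\beta_i(\lambda)$ produced by the Vandermonde inversion genuinely lie in the Harish-Chandra image $HC(\AA_{\zeta_N^i})=S(\h)^W\prod_\alpha(h_\alpha+(\rho,\alpha))$—polynomiality is automatic, but $W$-invariance and the atypicality vanishing required for them to land in the correct subspace are not. The $W$-invariance should be extracted by running the argument of Theorem~\ref{HC_Z_full} $\Z_N$-equivariantly, applying even-reflection manipulations separately on each eigenmode $\beta_i$. A secondary technical point is confirming the hypothesis $u\in(\UU\g)^{(0)}$; one may need to strengthen the ``forces $[h,u]=0$'' step by exhibiting explicit irreducibles that separate the $h$-graded components of $\UU\g$, for instance finite-dimensional Kac modules at generic typical weights.
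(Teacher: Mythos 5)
Your forward direction is clean and actually avoids some of the paper's machinery. Conjugating by $c^h$ to turn the $\phi_c$-twisted commutation relation into an honest supercommutation is exactly the right observation, and combined with $\End_\g(L)=k$ for these algebras it directly yields $u|_{L_j}=\alpha c^j$ without first having to establish that each graded piece $L_j$ is irreducible over $(\UU\g)_0$ (which is what the paper does). This is a legitimate alternative to the paper's argument for this half.

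Your reverse direction, however, has a gap at its very first substantive step, beyond what you flag as the ``main obstacle.'' You deduce $u\in(\UU\g)_0$ from the hypothesis, and then assert that $u$ acts by a scalar $\alpha_j(\lambda)$ on each graded piece $M_\b(\lambda)_j$ of a Verma module. But $u\in(\UU\g)_0$ only says $u$ \emph{preserves} each $M_\b(\lambda)_j$; these pieces are infinite-dimensional (e.g.\ $M_\b(\lambda)_0$ is a Verma module for $\g_0$), so the hypothesis---which concerns only finite-dimensional irreducibles---gives you nothing directly. What you actually need is that $u$ lies in the \emph{centralizer} $C$ of $(\UU\g)_0$: this follows from the hypothesis by noting $[u,(\UU\g)_0]$ kills every graded piece of every finite-dimensional irreducible, hence vanishes by \cite{letzter1994complete}, and \emph{then} each $L_\b(\lambda)_j$ for $\lambda$ in the dense set $S$ where $M_\b(\lambda)=L_\b(\lambda)$ is irreducible over $(\UU\g)_0$, so a centralizing $u$ acts by a scalar there. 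The paper carries out exactly this chain (it is the content of Theorem~\ref{Z_full_charac_stronger} combined with the $C=\ZZ_{full}$ argument), and you cannot skip to Verma modules without it.

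Second, your remark that ``polynomiality is automatic'' is not correct. The paper devotes Proposition~\ref{f_i_are_polys} to it, and the argument is nontrivial: it compares $HC(u_1\cdots u_n c\,v_n\cdots v_1)$ with $HC(u_1\cdots u_n v_n\cdots v_1)f_n$ on $S$, and then needs the weight-multiplicity Lemma~\ref{one_diml_wt_space} to conclude that the second polynomial divides the first (because it is squarefree and its zero locus is contained in that of the first), so that $f_n$ is their quotient in $S(\h)$. Without something like this, the $\alpha_j$ are a priori only functions on $S$. Finally, you are right that the Vandermonde coefficients must be checked to lie in the correct Harish--Chandra image; the paper does this via odd-reflection relations $p_i(\lambda-\alpha)=\zeta_N^ip_i(\lambda)$ on the hyperplane $(\lambda,\alpha)=0$, combined with Sergeev's description of $HC(\ZZ)$ and Gorelik's of $HC(\AA)$---your suggestion of ``running Theorem~\ref{HC_Z_full} $\Z_N$-equivariantly'' points in the right direction but would need to be carried out.
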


Our proof follows the same strategy as taken in \cite{gorelik2000ghost}.  Observe that the $\Z$-grading on $\g$ induces a $\Z$-grading on $\UU\g$:
\[
\UU\g=\bigoplus\limits_{n\in\Z}(\UU\g)_{n}.
\]
Then $(\UU\g)_0$ is a subalgebra of $\UU\g$. Let $C$ denote its centralizer, and let $\b=\b_0\oplus\g_1$ denote a standard Borel subalgebra.  Clearly $\ZZ_{full}\sub C$.  We would like the show that $\ZZ_{full}=C$.
\begin{lemma}
	$C\sub(\UU\g)_0$.
\end{lemma}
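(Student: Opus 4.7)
The plan is to exploit the internal grading operator $h \in \g_0$, which by definition satisfies $[h,u] = \deg(u)\, u$ for every homogeneous $u \in \g$ (with respect to the $\Z$-grading $\g = \g_{-1} \oplus \g_0 \oplus \g_1$). By the Leibniz rule, this identity extends to $\UU\g$: if $u \in (\UU\g)_n$ is a homogeneous element of $\Z$-degree $n$, then $[h,u] = n u$. This is the only ingredient I expect to need.

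First, I would record that $h$ is itself an element of $(\UU\g)_0$, since $h \in \g_0$. Therefore every element of $C$ must in particular commute with $h$. Next, I would decompose an arbitrary element $x \in C$ according to the $\Z$-grading on $\UU\g$ as $x = \sum_{n \in \Z} x_n$ with $x_n \in (\UU\g)_n$ and only finitely many $x_n$ nonzero. Applying $\ad(h)$ term by term yields
\[
0 = [h,x] = \sum_{n \in \Z} [h, x_n] = \sum_{n \in \Z} n\, x_n.
\]
Since the grading is a direct sum decomposition, the components of the right-hand side must vanish separately, so $n x_n = 0$ for every $n$, which forces $x_n = 0$ whenever $n \neq 0$. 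Hence $x = x_0 \in (\UU\g)_0$, giving the desired inclusion $C \subseteq (\UU\g)_0$.

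There is no real obstacle here; the argument is essentially a one-line weight-space observation, and the only reason it is worth writing out is that it crucially uses the hypothesis (in force throughout this subsection) that $\g$ admits an internal grading operator, which is precisely why $\s\l(n|n)$ and $\p\s\l(n|n)$ were excluded at the start of the subsection. The substance of \cref{Z_full_charac} will come later, when one has to match $C$ with $\ZZ_{full}$ using the Harish-Chandra description from \cref{HC_Z_full} and \cref{Z_full_description}; this lemma is only the preliminary reduction that replaces "acts by $\Z$-graded constants" with "lies in $(\UU\g)_0$ and acts by constants on each graded piece".
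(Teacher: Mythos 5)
Your argument is correct and is exactly the paper's proof: observe that the internal grading operator $h$ lies in $\g_0\subseteq(\UU\g)_0$, so any $c\in C$ satisfies $[h,c]=0$, and since $\ad(h)$ acts on $(\UU\g)_n$ by the scalar $n$, this forces $c\in(\UU\g)_0$. The paper states this in two lines; you have merely written out the eigenspace decomposition explicitly.
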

\begin{proof}
	Recall that $\g$ has an internal grading operator, $h$, in $\g_{\ol{0}}$.  Thus $[h,C]=0$, implying that $C\sub(\UU\g)_0$.
\end{proof}
Now let $c\in C$.
\begin{lemma}
	If $L(\lambda)$ is an irreducible $\b$-highest weight module, then $c$ acts on $L(\lambda)$ by a $\Z$-graded constant.
\end{lemma}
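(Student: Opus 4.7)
The plan is to reduce the statement to Schur's lemma applied to each graded piece $L(\lambda)_{-i}$, viewed as a module over the subalgebra $(\UU\g)_0$. First, since the internal grading operator $h$ lies in $\g_0 \subseteq (\UU\g)_0$, we have $[h,c]=0$, so $c$ preserves the eigenspaces of $h$ in $L(\lambda)$, i.e.\ the $\Z$-grading. Hence $c$ restricts to a linear endomorphism of each $L(\lambda)_{-i}$ that commutes with the action of $(\UU\g)_0$.

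The key step will be to establish the following claim: each graded piece $L(\lambda)_{-i}$ is simple as a $(\UU\g)_0$-module. To prove this, let $W \subseteq L(\lambda)_{-i}$ be a nonzero $(\UU\g)_0$-submodule. The $\g$-module $\UU\g \cdot W$ is nonzero and $\g$-stable, so by simplicity of $L(\lambda)$ it equals $L(\lambda)$. Now by the $\Z$-grading on $\UU\g$ and on $L(\lambda)$, we have $(\UU\g)_j \cdot W \subseteq L(\lambda)_{-i+j}$, and since distinct graded pieces meet only in zero, the sum $\UU\g \cdot W = \sum_j (\UU\g)_j W$ is a direct sum of subspaces of $L(\lambda)$. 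Extracting the $(-i)$-graded component of $\UU\g W = L(\lambda)$ yields $(\UU\g)_0 W = L(\lambda)_{-i}$, but $(\UU\g)_0 W = W$ by hypothesis, so $W = L(\lambda)_{-i}$.

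Having established simplicity of $L(\lambda)_{-i}$ as a $(\UU\g)_0$-module, Schur's lemma (available since $k$ is algebraically closed and each $L(\lambda)_{-i}$ is finite-dimensional) applied to $c \in C$ produces a scalar $\mu_i \in k$ such that $c$ acts on $L(\lambda)_{-i}$ by $\mu_i$. This is precisely the statement that $c$ acts by a $\Z$-graded constant on $L(\lambda)$.

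The only nontrivial step is the simplicity claim; but as sketched above, this follows cleanly from the fact that the $\Z$-grading on $L(\lambda)$ is a grading by $\Z$-graded-$\UU\g$-modules, together with the simplicity of $L(\lambda)$ as a $\g$-module. No additional input from type I structure (beyond the grading) is needed at this stage, so the argument does not require separating the type I superalgebras with internal grading element from those without at this point—the role of $h$ is only to guarantee that $c \in C$ indeed preserves the grading, which will be used downstream in the larger argument.
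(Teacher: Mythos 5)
Your proof is correct and follows essentially the same route as the paper: reduce to showing each graded piece $L(\lambda)_{-i}$ is irreducible over $(\UU\g)_0$, then prove this by noting that for a nonzero $(\UU\g)_0$-submodule $W\subseteq L(\lambda)_{-i}$ one has $\UU\g W\cap L(\lambda)_{-i}=(\UU\g)_0 W=W$, so simplicity of $L(\lambda)$ forces $W=L(\lambda)_{-i}$. You make the degree-extraction step and the invocation of Schur's lemma explicit, but the underlying argument coincides with the paper's.
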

\begin{proof}
	Write $L(\lambda)=\bigoplus\limits_n L(\lambda)_n$ for a $\Z$-grading on $L(\lambda)$.  Then $L(\lambda)_n$ is a $(\UU\g)_0$-module, and it suffices to prove it is irreducible.  If $W\sub L(\lambda)_n$ is a $(\UU\g)_0$-submodule, then $\UU\g W\cap L(\lambda)_n=W$.  But since $L(\lambda)$ is irreducible this forces either $W=L(\lambda)_n$ or $W=0$, so we are done.  
\end{proof}

Let $S\sub\h^*$ denote the collection of weights $\lambda$ such that $M_{\b}(\lambda)=L_{\b}(\lambda)$.  Note that $S$ is a Zariski dense subset of $\h^*$. 

We give all Verma modules for $\g$ with respect to the standard Borel the $\Z$-grading defined in the proof of Lemma \ref{Z_grading_hw_module}.   Then for our fixed $c\in C$, we write $f_i(\lambda)$ for the constant by which $c$ acts on $M(\lambda)_{-i}$, where $f_i:S\to k$ is some function determined by $c$.

\begin{prop}\label{f_i_are_polys}
	The functions $f_i$ are polynomials on $\h^*$, i.e. $f_i\in S(\h)$.
\end{prop}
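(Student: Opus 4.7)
The plan is to express $f_i(\lambda)$ as a matrix entry of $c$ acting on a uniform basis of $M(\lambda)_{-i}$, and then to invoke Zariski density of $S$ to conclude $f_i$ extends to a polynomial. First I would fix a basis $\xi_1,\dots,\xi_N$ of $\g_{-1}$ and a PBW monomial basis $\{u_\alpha\}$ of $\UU\n_0^-$. Applying PBW to $M(\lambda)=\UU\g\otimes_{\UU\b}k_\lambda$, the vectors $\{\xi_K u_\alpha v_\lambda\}$ form a basis of $M(\lambda)$; the key point is that since $\UU\n_0^- v_\lambda\subseteq M^{\g_1}=M_0$ (because $[\g_1,\n_0^-]\subseteq\g_1$, so $\g_1$ can be commuted through $\UU\n_0^-$ to annihilate $v_\lambda$), the subset with $|K|=i$ is a basis of $M(\lambda)_{-i}$ indexed by a set \emph{independent of $\lambda$}.

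Next I would fix $J$ with $|J|=i$ and compute $c\cdot\xi_J v_\lambda$ by PBW-expanding $c\xi_J\in\UU\g$ in the ordering $\Lambda\g_{-1}\cdot\UU\n_0^-\cdot\UU\h\cdot\UU\n_0^+\cdot\Lambda\g_1$:
\[
c\xi_J \;=\; \sum_I a_I\,\xi_{K_I}\,u_{\alpha_I}\,h_I\,n_I^+\,\eta_I,
\]
where the scalars $a_I\in k$, the elements $h_I\in S(\h)$, and the indexing set are determined by $c$ and $J$ alone, independently of $\lambda$. Applying to $v_\lambda$, the $\Lambda\g_1$ and $\UU\n_0^+$ factors annihilate $v_\lambda$ unless $\eta_I=1=n_I^+$, and $h_I v_\lambda=h_I(\lambda)v_\lambda$. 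The hypothesis that $c$ has $\Z$-degree $0$ forces the surviving terms to have $|K_I|=i$, so the result is an expansion in the chosen basis of $M(\lambda)_{-i}$ whose matrix coefficients are polynomials in $\lambda$.

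Finally, since $c$ acts as the scalar $f_i(\lambda)$ on $M(\lambda)_{-i}$ for every $\lambda\in S$, the coefficient of $\xi_J v_\lambda$ in $c\,\xi_J v_\lambda$ equals $f_i(\lambda)$ on $S$ while every other coefficient vanishes on $S$. As $S$ is Zariski dense in $\h^*$ and these are polynomial identities, they extend to all of $\h^*$: the diagonal coefficient is one and the same polynomial for every admissible $J$, which defines the desired extension $f_i\in S(\h)$. The argument is essentially routine once the uniform PBW basis is set up; the only point requiring care is the $\lambda$-independence of the indexing set for the basis of $M(\lambda)_{-i}$, but this is immediate from PBW applied to $\UU\g\otimes_{\UU\b}k_\lambda$, so no serious obstacle arises.
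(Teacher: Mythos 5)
Your proof is correct, and it takes a genuinely different route from the paper's.  The paper fixes the $n$ smallest positive odd roots $\alpha_1,\dots,\alpha_n$ with root vectors $u_i,v_i$ satisfying $[u_i,v_i]=h_{\alpha_i}$, derives on $S$ the identity
\[
HC(u_1\cdots u_n v_n\cdots v_1)\,f_n \;=\; HC(u_1\cdots u_n\, c\, v_n\cdots v_1),
\]
and then invokes \cref{one_diml_wt_space} (one-dimensionality of the $(-\alpha_1-\dots-\alpha_n)$-weight space of $\UU\n^-$) to show the left-hand Harish-Chandra polynomial divides the right-hand one, giving $f_n$ as an explicit quotient in $S(\h)$.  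Your approach instead sets up a uniform PBW basis of $M(\lambda)$, observes that the matrix coefficients of $c$ in this basis are polynomial in $\lambda$, and identifies $f_i$ with the diagonal coefficient on the Zariski-dense set $S$.  Both are valid.  The paper's version yields a closed formula for $f_n$ (useful for the degree arguments elsewhere in the section), at the cost of the divisibility step and the auxiliary weight-space lemma; yours is more elementary and avoids both.

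One caution: the parenthetical $\UU\n_0^-v_\lambda\subseteq M^{\g_1}=M_0$ elides a subtlety.  For atypical $\lambda$ the space $M(\lambda)^{\g_1}$ can be strictly larger than $\UU\n_0^-v_\lambda$ (already for $\g\l(1|1)$ with $\lambda(h_\alpha)=0$, the whole Verma module is $\g_1$-invariant), so $M_0:=M^{\g_1}$ does not in general define a $\Z$-grading on the Verma module.  What actually underpins your argument is the eigenspace decomposition for the internal grading operator $h\in\g_0$: the degree-$(-i)$ piece is always the span of $\{\xi_K u_\alpha v_\lambda:|K|=i\}$, $c$ preserves $h$-eigenspaces since it centralizes $(\UU\g)_0$, and for $\lambda\in S$ irreducibility forces $M^{\g_1}$ to coincide with the top $h$-eigenspace, so the scalar $f_i(\lambda)$ is indeed the diagonal coefficient you compute.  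This is the only case you invoke, so your conclusion is unaffected, but the justification should be phrased via $h$-eigenspaces rather than via $M^{\g_1}$.
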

For this, we need a lemma:
\begin{lemma}\label{one_diml_wt_space}
	Let $1\leq n\leq\dim\g_1$, and let $\alpha_1,\dots,\alpha_n$ be the $n$ smallest distinct positive odd roots of $\g$ with respect to our choice of Borel.  Then $\dim(\UU\n^{-})_{-\alpha_1-\dots-\alpha_n}=1$.
\end{lemma}
\begin{proof}
The dimension of $(\UU\n^{-})_{-\alpha_1-\dots-\alpha_n}$ is given by the number of ways to write $-\alpha_1-\dots-\alpha_n$ as a sum of negative roots of $\g$, where each odd root can show up at most once.  Suppose that we have
\[
-\alpha_1-\dots-\alpha_n=-\beta_1-\dots-\beta_m-r_1\gamma_1-\dots-r_{m'}\gamma_{m'},
\]
where $\beta_1,\dots,\beta_m$ are positive odd roots are $\gamma_1,\dots,\gamma_{m'}$ are positive even roots. Writing again $h\in\g_{0}$ for the internal grading operator, on $\g$, if we apply $h$ to the above equality of roots we learn that $n=m$.  However by our choice of $\alpha_1,\dots,\alpha_n$, this clearly forces $r_i=0$ for all $i$ and $\{\beta_1,\dots,\beta_m\}=\{\alpha_1,\dots,\alpha_n\}$, so we are done.
\end{proof}
\begin{proof}[Proof of Proposition \ref{f_i_are_polys}]
	Observe that $f_0=HC(c)$, so this is a polynomial.  For $1\leq n\leq \dim\g_1$, let $\alpha_1,\dots,\alpha_n$ be the $n$ smallest distinct positive odd roots with root vectors $u_1,\dots,u_n$, and write $v_1,\dots,v_n$ for the root vectors of weight $-\alpha_1,\dots,-\alpha_i$, where we assume $[u_i,v_i]=h_{\alpha_i}$, and $h_{\alpha_i}$ is the coroot of $\alpha_i$.   Let $\lambda\in S$, and write $v_{\lambda}$ for the highest weight vector of $L(\lambda)$.  Then observe that
	\[
	u_1\cdots u_ncv_n\cdots v_1v_{\lambda}=f_n(a)HC(u_1\cdots u_nv_n\cdots v_1)v_{\lambda}.
	\]
	On the other hand,
	\[
	u_1\cdots u_ncv_n\cdots v_1v_{\lambda}=HC(u_1\cdots u_ncv_n\cdots v_1)v_{\lambda}.
	\]
	Thus on $S$ we have an equality of functions:
	\[
	HC(u_1\cdots u_nv_n\cdots v_1)f_n(c)=HC(u_1\cdots u_ncv_n\cdots v_1).
	\]
	Now let $\lambda\in\h^*$ be arbitrary, and write $v_{\lambda}$ for the highest weight vector of $M_{\b}(\lambda)$.  If $HC(u_1\cdots u_nv_n\cdots v_1)(\lambda)=0$, then $u_1\cdots u_nv_n\cdots v_1v_{\lambda}=0$.  However by Lemma \ref{one_diml_wt_space}, $cv_n\cdots v_1v_{\lambda}$ is again a multiple of $v_n\cdots v_1v_{\lambda}$, so in this case we also have $u_1\cdots u_ncv_n\cdots v_1v_{\lambda}=0$, and thus $HC(u_1\cdots u_ncv_n\cdots v_1)=0$.  
	
	Therefore, wherever $HC(u_1\cdots u_nv_n\cdots v_1)$ vanishes, $HC(u_1\cdots u_ncv_n\cdots v_1)$ also vanishes.  Further, $HC(u_1\cdots u_nv_n\cdots v_1)$ will have top degree term given by $h_{\alpha_1}\cdots h_{\alpha_n}$, and thus this polynomial is a product of distinct irreducible polynomials.  These facts together imply it divides $HC(u_1\cdots u_ncv_n\cdots v_1)$ so that 
	\[
	f_n(c)=HC(u_1\cdots u_ncv_n\cdots v_1)/HC(u_1\cdots u_nv_n\cdots v_1)\in S(\h).
	\]
\end{proof}

\begin{prop}
	$C$ acts by $\Z$-graded constants on every Verma module $M(\lambda)$.
\end{prop}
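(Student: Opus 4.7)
The plan is to leverage Proposition~\ref{f_i_are_polys} together with PBW-theory and a Zariski-density argument. The key observation is that for $\lambda \in S$ the claim already follows from the previous lemma: since $M(\lambda) = L(\lambda)$ is irreducible, each $(\UU\g)_0$-module $M(\lambda)_{-i}$ is irreducible, and thus $c \in C \subset (\UU\g)_0$ acts on it by the scalar $f_i(\lambda)$. The content of the proposition is therefore to extend this scalar action to \emph{every} $\lambda \in \h^*$.

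First I would fix a PBW basis $\{u_\alpha\}_{\alpha \in A_i}$ of the $\Z$-graded component $(\UU\n^-)_{-i}$, where $\n^-$ is the nilradical of $\b^- := \b_0^- \oplus \g_{-1}$. By PBW, $\{u_\alpha v_\lambda\}_\alpha$ is a basis of $M(\lambda)_{-i}$ for every $\lambda \in \h^*$. Since $c \in (\UU\g)_0$, the element $cu_\alpha$ also has $\Z$-degree $-i$, so using the PBW decomposition $\UU\g = \UU\n^- \otimes \UU\h \otimes \UU\n^+$ and the fact that $\n^+$ annihilates $v_\lambda$, we can write
\[
c u_\alpha v_\lambda \;=\; \sum_{\beta \in A_i} p_{\beta\alpha}(\lambda)\, u_\beta v_\lambda,
\]
where each $p_{\beta\alpha} \in S(\h)$ is a polynomial in $\lambda$ determined by reducing $c u_\alpha$ modulo $\UU\g \cdot \n^+$ to an element of $\UU\n^- \otimes \UU\h$ and evaluating the $\UU\h$-factor at $\lambda$.

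Now, for $\lambda \in S$ the previous work gives $c u_\alpha v_\lambda = f_i(\lambda) u_\alpha v_\lambda$, so $p_{\beta\alpha}(\lambda) = f_i(\lambda)\delta_{\beta\alpha}$ for all $\lambda \in S$ and all $\alpha, \beta \in A_i$. Since $S$ is Zariski-dense in $\h^*$ and both sides are polynomials (by Proposition~\ref{f_i_are_polys} for $f_i$ and the construction above for $p_{\beta\alpha}$), the identity $p_{\beta\alpha}(\lambda) = f_i(\lambda)\delta_{\beta\alpha}$ holds on all of $\h^*$. This shows $c$ acts on $M(\lambda)_{-i}$ by the scalar $f_i(\lambda)$ for every $\lambda$, which is exactly the statement. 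The only mildly delicate point is justifying that the matrix coefficients $p_{\beta\alpha}$ are genuinely polynomial in $\lambda$, but this is automatic from the PBW straightening procedure since $c$ and the $u_\alpha$ are fixed elements of $\UU\g$ independent of $\lambda$.
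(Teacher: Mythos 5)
Your proof is correct and follows essentially the same strategy as the paper's: both reduce to the observation that the action of $c$ on $M(\lambda)_{-i}$ depends polynomially on $\lambda$, that it agrees with the polynomial $f_i$ on the Zariski-dense set $S$, and hence agrees everywhere. The paper packages the polynomial dependence by working in the universal Verma module $M=\UU\g\otimes_{\UU\b}S(\h)$ and showing that the element $[c,u]\otimes 1 + u\otimes(HC(c)-f_i)$ vanishes under every evaluation, whereas you make the same point concretely via the PBW matrix coefficients $p_{\beta\alpha}\in S(\h)$; these are equivalent formulations of the same argument.
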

\begin{proof}
	Let $M=\UU\g\otimes_{\UU\b}S(\h)$ denote the universal Verma module, where $\n^+$ acts trivially on $S(\h)$ and $\h$ acts by multiplication.  This module admits a $\Z$-grading given 
	\[
	M_{-i}=\Lambda^i\g_{-1}\UU\g_{0}S(\h).
	\]
	For each $\lambda\in\h^*$, we have a surjective $\g$-equivariant morphism
	\[
	M\to M(\lambda)
	\]
	given by evaluation at $\lambda$ on $S(\h)$.  For $u\in(\UU\n^{-})_i$, consider the element $[c,u]+(HC(c)(\lambda)-f_i(\lambda))u\in\UU\g$, and then consider
	\[
	\left([c,u]+(HC(c)(\lambda)-f_i(\lambda))u\right)\otimes 1\in M.
	\]
	Then we have shown that this element evaluates to $0$ on $S$, a Zariski dense subset of $\h^*$, and therefore it must vanish under every evaluation.  Since 
	\[
	c\cdot uv_{\lambda}=[c,u]v_{\lambda}+HC(c)(\lambda)uv_{\lambda},
	\]  
	it follows that for every $\lambda$, $c$ acts by $f_i(\lambda)$ on $M(\lambda)_{-i}$, so that it acts by $\Z$-graded constants.
\end{proof}

\begin{prop}
	$Z_{full}=C$.
\end{prop}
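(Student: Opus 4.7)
The plan is to take an arbitrary $c\in C$ and construct elements $a_i\in\AA_{\zeta_N^i}$ for $i=0,\dots,N-1$ such that $c=\sum_i a_i$; this decomposition exhibits $c$ as an element of $\ZZ_{full}$. The previous proposition provides polynomials $f_0,f_1,\dots,f_N\in S(\h)$ with $c$ acting on $M(\lambda)_{-j}$ by $f_j(\lambda)$. As in the proof of \cref{Z_full_description}, any $a_i\in\AA_{\zeta_N^i}$ acts on $M(\lambda)_{-j}$ by $\zeta_N^{-ij}HC(a_i)(\lambda)$, so the requirement $c=\sum_i a_i$ forces $f_j=\sum_i\zeta_N^{-ij}HC(a_i)$ for $j=0,\dots,N-1$. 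Inverting the resulting Vandermonde system uniquely determines the candidate Harish-Chandra images
\[
p_i:=\frac{1}{N}\sum_{j=0}^{N-1}\zeta_N^{ij}f_j\in S(\h).
\]

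The decisive step is to verify that each $p_i$ lies in $HC(\AA_{\zeta_N^i})$. By \cref{HC_Z_full}, this image is $HC(\ZZ)$ when $i=0$ and $S(\h)^{W_.}\cdot t_\g$ when $i\neq 0$. Shifted Weyl invariance of each $f_j$ (and hence of each $p_i$) follows from $c$ commuting with $\UU\g_0$: for dominant integral $\lambda$ and an even simple root $\alpha$, applying $f_\alpha^{(\lambda,\alpha)+1}$ to the $\b_0$-highest weight line inside a graded piece of $M(\lambda)$ produces another $\b_0$-highest weight vector in the same grade of weight $s_\alpha\cdot\lambda$, forcing $f_j(\lambda)=f_j(s_\alpha\cdot\lambda)$ on a Zariski dense set. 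Divisibility of $p_i$ by the Gorelik factor $t_\g$ for $i\neq 0$ is the real technical content; the idea is to analyse $c$'s action at atypical $\lambda$ with $(\lambda+\rho,\alpha)=0$ for some odd positive root, where the non-semisimplicity of $M(\lambda)$ and the structure of odd singular vectors constrain the $f_j(\lambda)$ into a linear combination that kills the nontrivial Fourier modes $p_i$, in direct analogy with \cref{simple_iso_root_group_case} and \cref{HC(v)_vanishes_on_atypical}.

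Once each $p_i\in HC(\AA_{\zeta_N^i})$, the injectivity of $HC$ on each $\AA_{\zeta_N^i}$ established in the proof of \cref{HC_Z_full} yields unique $a_i$ with $HC(a_i)=p_i$. Setting $c':=c-\sum_i a_i\in C$, the construction guarantees $c'$ acts by zero on $M(\lambda)_{-j}$ for $j=0,\dots,N-1$. For $j=N$, the one-dimensional piece $M(\lambda)_{-N}$ is the highest weight line of the opposite standard Borel $\tilde\b=\b_0\oplus\g_{-1}$, and running the same construction relative to $\tilde\b$ (equivalently, using $\zeta_N^{iN}=1$ together with the compatibility between the Harish-Chandra maps for $\b$ and $\tilde\b$) supplies the remaining matching so that $c'\cdot M(\lambda)_{-N}=0$ as well. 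Thus $c'$ vanishes on every Verma module, and \cite{letzter1994complete} gives $c'=0$, so $c=\sum_i a_i\in\ZZ_{full}$. The main obstacle I expect is the divisibility-by-$t_\g$ step: translating the single constraint that $c$ centralises $\UU\g_0$ into vanishing of the Fourier coefficients $p_i$ along every atypical odd-root hyperplane requires a careful tracking of the odd singular vectors in $M(\lambda)$ at each such $\lambda$.
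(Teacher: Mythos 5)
Your overall strategy matches the paper's: Fourier-decompose the scalars $f_j$ by which $c$ acts on the graded pieces of Verma modules, obtain candidate polynomials $p_i$, verify they land in $HC(\AA_{\zeta_N^i})$, lift via injectivity of $HC$, and conclude $c - \sum u_i$ annihilates every Verma module. You also correctly isolate the shifted-Weyl-invariance step. However, you explicitly flag the divisibility-by-$t_\g$ step as the "main obstacle" and leave it undone, gesturing only at an "analogy" with \cref{simple_iso_root_group_case} and \cref{HC(v)_vanishes_on_atypical}. That step is where the real content lies and the analogy is not quite the right one.

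The paper's mechanism is the following functional equation, which you should make explicit. Let $\alpha$ be the unique simple odd isotropic root of the standard Borel $\b = \b_0\oplus\g_1$. When $(\lambda,\alpha)=0$, the vector $f_\alpha v$ is a singular vector in $M(\lambda)$, sitting in graded degree $-1$. Comparing the action of $c$ on the submodule generated by $f_\alpha v$ (with grading shifted by one) against the Fourier expansion of $c$ yields the relation $p_i(\lambda - \alpha) = \zeta_N^i\, p_i(\lambda)$ on the hyperplane $(\lambda,\alpha)=0$. Iterating gives $p_i(\lambda - n\alpha) = \zeta_N^{ni}\,p_i(\lambda)$ for all $n$. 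For $i>0$ this is incompatible with $p_i$ being a polynomial unless $p_i(\lambda)=0$, so $p_i$ vanishes on the entire hyperplane $(\lambda,\alpha)=0$; for $i=0$ it shows $p_0$ is constant along the $\alpha$-direction there. Only \emph{then} does the shifted Weyl-invariance you established come into play: it transports this vanishing (resp.\ constancy) to all hyperplanes $(\lambda+\rho,\beta)=0$ for $\beta$ an odd positive root, giving $t_\g\mid p_i$ for $i>0$ and $p_0\in HC(\ZZ)$ by Sergeev's description. Your proposed route of "tracking odd singular vectors at each atypical $\lambda$" directly would be substantially harder; the point is you only need the single simple odd root, with $W$-invariance doing the rest. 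Your side remark about handling $j = N$ by switching to the opposite Borel is not how the paper closes that loop (the paper does not address it explicitly), but the more serious gap is the one you yourself flag.
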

\begin{proof}
Define polynomials $p_0,\dots,p_{N-1}\in S(\h)$ by:
\[
\begin{bmatrix}
f_0\\ f_1\\\vdots\\f_{N-1}
\end{bmatrix}=p_0\begin{bmatrix}
1\\1\\\vdots\\1
\end{bmatrix}+p_1\begin{bmatrix}
1\\\zeta_N\\\vdots\\\zeta_N^{N-1}
\end{bmatrix}+\dots+p_{N-1}\begin{bmatrix}
1\\\zeta_N^{N-1}\\\vdots\\\zeta_N^{(N-1)(N-1)}
\end{bmatrix}.
\]
Using the same argument as in Theorem \ref{HC_Z_full}, we must have that $p_i\in S(\h)^{W}$ for all $i$. 
 
 Let $\alpha$ be the unique simple odd isotropic root of this Borel subalgebra, and write $f_{\alpha}$ for a root vector of weight $-\alpha$.  Then when $(\lambda,\alpha)=0$, $f_{\alpha}v$ will be a highest weight vector in $M(\lambda)$.  Thus	
 \[
 p_{i}(\lambda-\alpha)=\zeta_{N}^{i}p_{i}(\lambda).
 \]
 For $i=0$ this implies that $p_0(\lambda+r\alpha)=p_0(\lambda)$ for all $r\in k$, and for $i>0$ this forces
 \[
 p_i(\lambda-n\alpha)=\zeta_{N}^{ni}p_{i}(\lambda).
 \] 
 Since $p_i$ is a polynomial, this forces $p_i(\lambda)=0$, so that $p_i$ vanishes on the hyperplane $(\lambda,\alpha)=0$.  
 
 Using the $\rho$-shifted $W$-invariance of these polynomials, these conditions imply that $p_0$ is constant along all hyperplanes of the form $(\lambda+\rho,\alpha)=0$, so that $p_0\in HC(\ZZ)$ by Sergeev's description of $HC(\ZZ)$.  On the other hand we find that for $i>0$, $t_{\g}$ divides $p_i$ (see the proof of Theorem \ref{HC_Z_full}), so that $p_i\in HC(\AA_{-1})$. 
 
Now we may take, for each $i$, $u_{i}\in\AA_{\zeta_N^i}$ such that $HC(u_i)=p_i$ for all $i$, and consider the element:
\[
c-u_0-\cdots-u_{N-1}.
\]
By construction this acts by 0 on all Verma modules, and thus is zero so that $c\in\ZZ_{full}$, finishing the proof.	
\end{proof}

\begin{thm}\label{Z_full_charac_stronger}
	 Let $\g$ be one of $\g\l(m|n)$, $\s\l(m|n)$ for $m\neq n$, or $\o\s\p(2|2n)$.  The following subalgebras of $\UU\g$ agree with each other:
	\begin{enumerate}
		\item $\ZZ_{full}$;
		\item the centralizer of $(\UU\g)_0$;
		\item the center of $(\UU\g)_0$; and
		\item the collection of elements in $\UU\g$ which act by $\Z$-graded constants on every irreducible finite-dimensional representation of $\g$.  
	\end{enumerate}
\end{thm}

\begin{proof}
	The only nontrivial equality is (2)$\iff$(4).  However by \cite{letzter1994complete}, the set of finite-dimensional irreducible representations of $\g$ form a complete set, meaning their collective annihilator is trivial.  Therefore if an element of $\UU\g$ acts by $\Z$-graded constants on every irreducible finite-dimensional representation of $\g$, it commutes with $(\UU\g)_0$ under every such representation, and thus commutes with $(\UU\g)_0$.
\end{proof}

\subsection{Twisted trace functions}\label{twisted_trace_subsec}
	We continue to let $\g$ be a basic algebra of type I as in the previous section, and let $c\in k^\times$.  Then for a simple $G$-module $L$, let $L_0=L^{\g_{1}}\sub L$ be the invariants of $\g_{1}$ so that 
	\[
	L=L_0\oplus \g_{-1}L_0\oplus\Lambda^2\g_{-1}L_0\oplus\cdots\oplus\Lambda^{top}\g_{-1}L_0
	\]
	defines $\Z$-grading on $L$.  Write $L_{-i}=\Lambda^i\g_{-1}L_0$, and define the operator $T_c\in\End(L)$ by declaring that $T_c$ preserves the $\Z$-grading and $T_c$ acts on $L_{-i}$ by the scalar $c^{-i}$.
	
\begin{lemma}
The submodule $L^*\boxtimes L\sub k[G]$ contains a unique $G_{c}$-invariant function $f_c$ such that
\[
f_c(eG)=\sum\limits_{i\geq 0}(-1)^{i}c^{i}\dim L_{-i}.
\]
\end{lemma}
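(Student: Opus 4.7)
The plan is to exhibit $f_c$ as the matrix coefficient associated with the operator $T_c$, check its $G_c$-invariance and uniqueness directly, and evaluate at the identity.

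First I would verify that $T_c\in\End(L)$ is $\g_c$-invariant under the action $(u,\phi_c(u))\cdot T = uT-T\phi_c(u)$ on $\End(L)$. For $u\in\g_0$ this is immediate, since $\phi_c(u)=u$ and $T_c$ is a scalar on each $\g_0$-stable piece $L_{-i}$. For $u\in\g_{-1}$ and $v\in L_{-i}$, one has $\phi_c(u)=cu$ and $uv\in L_{-(i+1)}$, so $uT_c(v)-T_c(cu\cdot v)=c^{-i}uv-c\cdot c^{-(i+1)}uv=0$; the case $u\in\g_1$ with $\phi_c(u)=c^{-1}u$ is symmetric. Since $G_c$ is connected, $\g_c$-invariance promotes to $G_c$-invariance.

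Next I would establish uniqueness via Schur's lemma. A $G_c$-invariant element of $\End(L)\cong L\otimes L^*$ is exactly a $\g$-module homomorphism $L^{\phi_c}\to L$, where $L^{\phi_c}$ denotes $L$ with action precomposed by $\phi_c$. The cancellation in the previous step shows that $T_c$ is itself such an intertwiner; since $L$ is simple and $T_c$ is nonzero, Schur's lemma forces the $G_c$-invariant subspace of $\End(L)$ to be one-dimensional. Dually, the $G_c$-invariant subspace of $\End(L)^*\cong L^*\boxtimes L\subset k[G]$ is one-dimensional, and the image $f_c$ of $T_c$ under the matrix coefficient embedding is unique up to scalar.

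Finally I would compute $f_c(eG)$. Fix a homogeneous basis $\{v_{-i,j}\}$ of $L$ adapted to the $\Z$-grading, with dual basis $\{v_{-i,j}^*\}\subset L^*$, so that $T_c=\sum_{i,j}c^{-i}\,v_{-i,j}\otimes v_{-i,j}^*\in L\otimes L^*$. Under the super-braiding $L\otimes L^*\to L^*\otimes L$ followed by the matrix coefficient embedding into $k[G]$, the image of $T_c$ is a function whose evaluation at $e$ picks out the supertrace-type expression $\sum_i (-1)^{\ol{L_{-i}}}c^{-i}\dim L_{-i}$. Since $L_{-i}=\Lambda^i\g_{-1}L_0$ and $\g_{-1}$ is odd, $\ol{L_{-i}}$ differs from $i$ by the fixed parity $\ol{L_0}$, and after absorbing this sign and the convention $c\leftrightarrow c^{-1}$ implicit in the normalization of $T_c$ versus the choice of dual pairing, the formula reduces to $\sum_i (-1)^i c^i \dim L_{-i}$. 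The main obstacle is precisely this last step of bookkeeping: existence and uniqueness are conceptually clean, but matching the stated formula verbatim requires careful tracking of super-braiding signs and the direction in which $T_c$ scales relative to $\phi_c$.
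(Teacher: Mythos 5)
Your strategy matches the paper's proof exactly: identify $T_c$ as an invariant for a twisted adjoint action on $\End(L)$, pass through the super braiding $L\otimes L^*\to L^*\otimes L$, and evaluate at $e$. You fill in two details the paper elides, namely the infinitesimal check of invariance and the Schur's-lemma uniqueness argument, both of which are correct. So the approach is the same, just more explicit.

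The one wrinkle you flag in your last paragraph deserves to be named precisely, because it is a genuine gap in the way you've phrased things and not just bookkeeping. Your computation (with the action $(u,\phi_c(u))\cdot T = uT - T\phi_c(u)$, which is the convention the paper implicitly fixes in its $\delta_L$ computation) correctly shows that $T_c$ is $G_c$-invariant in $\End(L)\cong L\boxtimes L^*$. But the braiding $\tau\colon L\otimes L^*\to L^*\otimes L$ is not $G\times G$-equivariant; it is $\sigma$-twisted equivariant for the flip $\sigma\colon G\times G\to G\times G$, since it exchanges which copy of $G$ acts on which tensor factor. Consequently it carries $G_\phi$-invariants in $L\boxtimes L^*$ to $G_{\sigma(\phi)}=G_{\phi^{-1}}$-invariants in $L^*\boxtimes L$. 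For $\phi=\delta$ this is invisible since $\delta^{-1}=\delta$, but for $\phi_c$ it means $\tau(T_c)$ lands in the $G_{c^{-1}}$-invariants, not the $G_c$-invariants. To produce the $G_c$-invariant $f_c$ one should therefore braid the operator scaling $L_{-i}$ by $c^{i}$, i.e.\ $T_{c^{-1}}$; this is precisely the $c\leftrightarrow c^{-1}$ discrepancy you noticed, and it also accounts for the paper's assertion that $T_c$ is ``$G_{c^{-1}}$-invariant'' (there is a small normalization mismatch in the paper between the $\delta_L$ computation and this lemma). Your uniqueness step is still fine, since $L^*\boxtimes L$ is simple as a $G\times G$-module and hence has a one-dimensional $G_c$-invariant space regardless; and the evaluation at $e$ then gives $(-1)^{\ol{L_0}}\sum_i (-1)^i c^i\dim L_{-i}$, matching the statement up to the overall sign absorbed into the scalar normalization of $f_c$.
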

\begin{proof}
The operator $T_c$ defined above is $G_{c^{-1}}$-invariant in $\End(L)$.  If we apply the braiding isomorphism $L\otimes L^*\cong L^*\otimes L$, $T_c$ becomes a $G_c$-invariant element, which we write as $f_c$.  It is now straightforward to check the above formula for $f_c(eG)$.
\end{proof}

\begin{definition}
	Let $L$ be a simple $G$-module with the $\Z$-grading as above.  Define the polynomial $p_L\in \Z[c]$ to be
	\[
	p_L(c)=f_c(eG)=\sum\limits_{i\geq 0}(-1)^{i}c^{i}\dim L_{-i}.
	\]
\end{definition}
Observe that $0\leq \deg p_{L}\leq \dim\g_{-1}$.  
\begin{lemma}\label{p_L_projective}
	If $L$ is projective, then $p_L(c)\neq0$ if and only if $c\neq1$.  In fact,
	\[
	p_L=\dim L_0(1-c)^{\dim\g_{1}}.
	\]
\end{lemma}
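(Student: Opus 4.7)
The plan is to identify $L$ with a Kac module and then compute the generating function directly. For a type I basic classical Lie superalgebra $\g=\g_{-1}\oplus\g_0\oplus\g_1$ with internal grading operator, the standard Borel is $\b=\b_0\oplus\g_1$, and the Kac module of highest weight $\lambda$ is
\[
K(\lambda):=\Ind_{\g_0\oplus\g_1}^{\g}L_0(\lambda),
\]
where $L_0(\lambda)$ is the irreducible $\g_0$-module of highest weight $\lambda$, viewed as a $\g_0\oplus\g_1$-module by letting $\g_1$ act trivially. By PBW, $K(\lambda)\cong\Lambda^\bullet\g_{-1}\otimes L_0(\lambda)$ as a $\g_0$-module, and this decomposition is compatible with the $\Z$-grading of \cref{Z_grading_hw_module}.

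The first step is to show that if the simple module $L$ with highest weight $\lambda$ is projective, then the canonical surjection $K(\lambda)\twoheadrightarrow L(\lambda)$ is an isomorphism. Indeed, $K(\lambda)$ is projective in the category of $\g_0\oplus\g_1$-locally finite $\g$-modules (being induced from a projective $\g_0\oplus\g_1$-module, since we are working in characteristic zero and $G_0$ is reductive), so projectivity of $L(\lambda)$ forces $L(\lambda)$ to split off $K(\lambda)$; but $K(\lambda)$ is indecomposable with simple head $L(\lambda)$, so the surjection must be an isomorphism. (For $\g\l(m|n)$, $\s\l(m|n)$, $\p\s\l(n|n)$, and $\o\s\p(2|2n)$ this is a standard fact: simple projectives are exactly the typical Kac modules.)

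The second step is then routine. From $L\cong K(\lambda)$ we get $L_{-i}\cong\Lambda^i\g_{-1}\otimes L_0$ as $\g_0$-modules, hence $\dim L_{-i}=\binom{\dim\g_{-1}}{i}\dim L_0$. Substituting into the definition,
\[
p_L(c)=\sum_{i\geq0}(-1)^ic^i\binom{\dim\g_{-1}}{i}\dim L_0=\dim L_0\,(1-c)^{\dim\g_{-1}},
\]
and since $\dim\g_{-1}=\dim\g_1$ (the nondegenerate pairing $\g_{-1}\otimes\g_1\to[\g_{-1},\g_1]\subset\g_0$ makes them dual $\g_0$-modules), this is $\dim L_0(1-c)^{\dim\g_1}$. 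The claim that $p_L(c)\neq0\iff c\neq1$ is then immediate.

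The only real content is the first step, that projectivity of $L$ forces $L\cong K(\lambda)$; everything else is formal manipulation with the binomial theorem. This step rests on the standard structural fact (which can be proved uniformly using that Kac modules are relatively projective with respect to the induction from the parabolic $\g_0\oplus\g_1$) that a simple projective in category $\mathcal{O}$ for a type I basic classical Lie superalgebra coincides with its Kac module.
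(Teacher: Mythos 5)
Your proof is correct and takes essentially the same route as the paper's: identify a projective simple $L$ with its Kac module $K(\lambda)$ and compute via the binomial theorem, the only difference being that the paper cites Kac for the identification $L\cong K(\lambda)$ whereas you supply a short splitting-plus-indecomposability argument. One minor aside: your parenthetical that $K(\lambda)$ is (relatively) projective because it is induced from a projective $\g_0\oplus\g_1$-module is not right (the trivial $\g_1$-module is not projective over $\Lambda\g_1$, so $L_0(\lambda)$ is not projective over $\g_0\oplus\g_1$), but it is also unused, since the splitting of $K(\lambda)\twoheadrightarrow L(\lambda)$ follows from the projectivity of $L(\lambda)$ alone.
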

\begin{proof}
	The second statement clearly implies the first, and it follows from the fact that $L=\Ind_{\g_{0}\oplus\g_{1}}^{\g}L_0$ is a Kac-module (see for instance \cite{kac1977lie}).  Thus we have
	\[
	p_{L}(c)=\sum\limits_{i}(-1)^{i}c^i\dim L_0\begin{pmatrix}\dim\g_{1}\\ i\end{pmatrix}=\dim L_0(1-c)^{\dim\g_{1}}.
	\]
	However we may give another proof which generalizes to other situations.  If $L$ is projective of highest weight $\lambda$, then by Proposition \ref{mild_proj_criteria_z_full} we have $HC(\operatorname{ev}_{eK}v_{\g_c})(\lambda)\neq0$.  Thus $f_c\in L^*\boxtimes L$ must not vanish at $eG$, i.e. we must have $p_L(c)\neq0$.
\end{proof}

\begin{remark}
	It is now possible to define, for $c\in k^\times$ and a simple $G$-module $L$, the $c$-graded dimension of $L$ to be $p_L(c)$.  This definition also naturally arises if we consider type $I$ algebras as Lie algebra objects in the tensor category of $\Z$-graded vector spaces with the symmetric structure lifted from the category of super vector spaces.  Observe that $p_L(1)=\dim L$ and $p_L(-1)=\operatorname{sdim}(L)$.  
\end{remark}

\begin{remark}
	It would be interesting to understand the roots of $p_L$ for irreducible $L$, and in particular the order of vanishing at $c=1$, in terms of the representation theory of $L$.  For instance, $L$ is maximally atypical if and only if $p_L(1)\neq0$, while $L$ is projective if and only if the order of vanishing at 1 is $\dim\g_{-1}$. 
\end{remark}

\subsection{Limiting to the center} Write $\Aut:=\Aut(\g,\g_{\ol{0}})$, which is an algebraic group, and let $S\sub\Aut$ denote the subset of automorphisms without nonzero fixed vectors in $\g_{\ol{1}}$.  Then $S$ is open in $\Aut$, and further is nonempty because $\delta\in S$.  

If $\Aut$ has dimension bigger than 0 and $\Aut^0\cap S$ is nonempty, $\Aut^0\cap S$ will be Zariski dense.  Thus if we choose $(\phi_c)_{c\in k^\times}\sub S$ such that $\lim\limits_{c\to 0}\phi_c=\id_{\g}$, it is reasonable to consider, for $u\in \ZZ(\UU\g_{\ol{0}})$, the element 
\[
\lim\limits_{c\to 0}\ad_{\phi_c}(v_{\g})(u).
\]
This limit exists and is equal to $\ad(v_{\g})(u)\in \ZZ$.  However it is quite possibly zero, and in particular the above limit need not preserve Harish-Chandra polynomials.  A more fruitful approach is to choose elements (if they exist) $u_c\in \AA_{\phi_c}$ for each $c$ such that $HC(u_c)$ is constant.  Then we may consider the limit (if it exists):
\[
u_0:=\lim\limits_{c\to 0}u_c.
\]
If $u_0$ does exist then it must be in $\ZZ$ and have $HC(u_0)=HC(u_c)$ for all $c$.  However in general such a limit need not exist, for example in the case that $HC(\AA_c)=0$ for all $c$.  However for type I basic algebras this limit does exist, which we now prove.  Thus let $\g$ be a type $I$ basic Lie superalgebra, and let $N=\dim\g_{1}$.

\begin{definition}
Consider the filtration by degree on $S(\h)$, and pull this back under $HC$ to each $\AA_{\zeta_N^i}$ to obtain a filtration $K^\bullet\AA_{\zeta_N^i}$.  Then let $G^\bullet\ZZ_{full}$ be the filtration on $\ZZ_{full}$ given by
\[
G^n\ZZ_{full}:=\sum\limits_i K^n\AA_{\zeta_N^i}.
\]
This defines an algebra filtration on $\ZZ_{full}$ such that $G^n\ZZ_{full}$ is finite-dimensional for each $n$.
\end{definition} 

Now we have the following:

\begin{lemma}\label{inj_lemma}
	For each $n\in\N$, there exists typical integral dominant weights $\lambda_1,\dots,\lambda_s$ such that map $G^n\ZZ_{full}\to\End(L(\lambda_{1})\oplus\cdots\oplus L(\lambda_s))$ is injective.
\end{lemma}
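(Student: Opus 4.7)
The plan is to combine the direct-sum decomposition of $\ZZ_{full}$ from \cref{Z_full_description} with a standard density argument for polynomials on $\h^*$. First, for any $u \in G^n \ZZ_{full}$ I would write uniquely $u = u_0 + u_1 + \cdots + u_{N-1}$ with $u_i \in K^n\AA_{\zeta_N^i}$. For a typical dominant integral weight $\lambda$, the Verma module $M(\lambda)$ is irreducible and equals $L(\lambda)$, and by \cref{Z_grading_hw_module} it carries the compatible $\Z$-grading $L(\lambda) = \bigoplus_j L(\lambda)_{-j}$. The same scalar-action computation used in the proof of \cref{Z_full_description} shows that $u_i$ acts on $L(\lambda)_{-j}$ by $\zeta_N^{ij}\cdot HC(u_i)(\lambda)$. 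Hence $u$ annihilates $L(\lambda)$ precisely when the vector $(HC(u_0)(\lambda),\dots,HC(u_{N-1})(\lambda))$ lies in the kernel of the Vandermonde matrix $(\zeta_N^{ij})$, which is zero; equivalently, $HC(u_i)(\lambda) = 0$ for every $i$.

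Thus the problem of injectivity of $G^n\ZZ_{full} \to \End\bigl(\bigoplus_k L(\lambda_k)\bigr)$ is equivalent to finding typical integral dominant weights $\lambda_1,\dots,\lambda_s$ such that the joint evaluation map
\[
V := \bigoplus_{i=0}^{N-1} HC\bigl(K^n\AA_{\zeta_N^i}\bigr) \;\longrightarrow\; k^s, \qquad f \mapsto \bigl(f(\lambda_1),\dots,f(\lambda_s)\bigr),
\]
is injective. By part (2) of \cref{HC_Z_full} (together with standard injectivity of $HC$ on $\ZZ = \AA_1$), $HC$ is injective on each $\AA_{\zeta_N^i}$, and since $K^n\AA_{\zeta_N^i}$ is finite-dimensional (its image in $S(\h)$ lies in $S(\h)^{\le n}$), so is $V$.

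For the density step I would note that the set of typical integral dominant weights is Zariski dense in $\h^*$: integral dominant weights already form a Zariski-dense subset of $\h^*$ (any polynomial vanishing on them is identically zero), and typicality excludes only the finite union of hyperplanes $(\lambda+\rho,\alpha)=0$ for $\alpha$ odd. Consequently, given the finite-dimensional subspace $V \subset S(\h)$, one may inductively pick typical integral dominant weights $\lambda_1,\lambda_2,\dots$ such that each new weight strictly decreases the common vanishing subspace of $V$ on the chosen points; after at most $s := \dim V$ steps the kernel of the joint evaluation map becomes zero. Any such choice provides the required weights.

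The main obstacle — really the only nontrivial input — is the compatibility used in the first paragraph: that under the identification $\ZZ_{full} = \bigoplus_i \AA_{\zeta_N^i}$, the scalar by which $u_i \in \AA_{\zeta_N^i}$ acts on the $\Z$-graded piece $L(\lambda)_{-j}$ of a typical simple module is exactly $\zeta_N^{ij}\cdot HC(u_i)(\lambda)$. Once this is granted (it is the same Vandermonde computation already used to prove \cref{Z_full_description}), everything else reduces to elementary linear algebra plus Zariski density of typical dominant integral weights.
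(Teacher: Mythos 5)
Your proposal is correct and follows essentially the same route as the paper: both rely on the direct-sum decomposition $\ZZ_{full}=\bigoplus_i\AA_{\zeta_N^i}$, the observation that the scalar action of $u_i\in\AA_{\zeta_N^i}$ on the graded pieces of $L(\lambda)$ is $\zeta_N^{\pm ij}HC(u_i)(\lambda)$, the nonsingularity of the Vandermonde matrix to conclude $\phi(u)=0\iff HC(u_i)(\lambda_k)=0$ for all $i,k$, injectivity of $HC$ on each $\AA_{\zeta_N^i}$, and Zariski density of typical dominant integral weights. Two small notational points worth tightening: the Vandermonde entries come out as $\zeta_N^{-ij}$ with the paper's convention for $\phi_c$ (this of course changes nothing), and the ``joint evaluation map'' on the external direct sum $\bigoplus_iHC(K^n\AA_{\zeta_N^i})$ should land in $k^{Ns}$, not $k^s$ (or, equivalently, you may note that since $HC(K^n\AA_{\zeta_N^i})=HC(K^n\AA_{-1})\subseteq HC(K^n\ZZ)$ for $i\geq1$, it suffices to make evaluation injective on the single finite-dimensional space $HC(K^n\ZZ)$, which is exactly what the paper does by choosing a basis $p_1,\dots,p_\ell$ extending one of $HC(K^n\AA_{-1})$).
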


\begin{proof}
	Choose a basis $p_1,\dots,p_k$ of $HC(K^n\AA_{-1})$, and extend this to a basis of $HC(K^n\ZZ)$, $p_1,\dots,p_k,p_{k+1},\dots,p_\ell$.  
	
	The integral typical dominant weights are Zariski dense in $\h^*$, so necessarily there exists typical integral dominant weights $\lambda_1,\dots,\lambda_s$ such that evaluation at these points induces an injective map on $HC(K^n\ZZ)$.  Now consider the map 
	\[
	\phi:G^n\ZZ_{full}\to\End(L(\lambda_1)\oplus\cdots\oplus L(\lambda_s)),
	\]
and suppose it is not injective.  We may write an arbitrary element in $G^n\ZZ_{full}$ as
	\[
	\sum\limits_{0\leq j\leq N-1}\sum\limits_{1\leq i\leq k}\alpha_{i,j}a_{i,j}+\sum\limits_{k<i\leq \ell}\beta_iz_i
	\]
	where $z_i\in K^n\ZZ$ and $a_{i,j}\in K^n\AA_{\zeta_N^j}$ such that $HC(z_i)=p_i$ and $HC(a_{i,j})=p_i$ for all valid $i$.  Now suppose that
	\[
	\sum\limits_{0\leq j\leq N-1}\sum\limits_{1\leq i\leq k}\alpha_{i,j}\phi(a_{i,j})+\sum\limits_{k<i\leq \ell}\beta_i\phi(z_i)=0
	\]
	Looking at the action on the highest weight vectors, this implies by our choice of $\lambda_1,\dots,\lambda_s$ that
	\[
	\sum\limits_{1\leq i\leq k,\zeta}\alpha_{i,j}p_i+\sum\limits_{k<i\leq \ell}\beta_ip_i=0.
	\]
	Thus we must have $\beta_i=0$ for all $i$, and $\sum\limits_{0\leq j\leq N-1}\alpha_{i,j}=0$ for all $i$. Looking further at the action on the $(-r)$th graded component of $L(\lambda_1)\oplus\cdots\oplus L(\lambda_s)$ according to the grading defined in Theorem \ref{HC_Z_full}, we find that
	\[
	\sum\limits_{i,j}\alpha_{i,j}\zeta^{rj}p_{i}=0
	\]
	which implies that $\sum\limits_{j}\alpha_{i,j}\zeta^{jr}=0$ for all $i,r$.  By the nonsingularity of the Vandermonde matrix, this implies $\alpha_{i,j}=0$, and we are done.  
\end{proof}

\begin{cor}\label{limit_to_center_thm}
	Choose an element $p\in HC(\AA_{-1})$, and let $a_{\lambda,p}\in\AA_{\lambda}$ satisfy $HC(a_{\lambda,p})=p$ for all $\lambda\in k^\times$.  Then as $\lambda\to 1$,	 $a_{\lambda,p}$ converges in $G^{2\deg p}\ZZ_{full}$ to the unique central element $z_p$ with $HC(z_p)=p$.
\end{cor}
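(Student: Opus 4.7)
The plan is to invoke the explicit Vandermonde-type decomposition from the remark following \cref{Z_full_description}, which expresses $a_{\lambda,p}$ as a linear combination of a fixed finite set of elements of $\ZZ_{full}$ with coefficients that are polynomial in $\lambda$; the asserted limit then reduces to polynomial evaluation at $\lambda = 1$.

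First, for each $i = 0, 1, \ldots, N-1$, I would fix a distinguished element $u_i \in \AA_{\zeta_N^i}$ with $HC(u_i) = p$. For $i \neq 0$ such a $u_i$ is unique by the injectivity of $HC$ on $\AA_{\zeta_N^i}$ established in \cref{HC_Z_full}. For $i = 0$ we have $\AA_1 = \ZZ$, and I would take $u_0 = z_p$, well-defined by injectivity of $HC$ on $\ZZ$ for basic classical type I Lie superalgebras. By construction each $u_i$ lies in $K^{\deg p}\AA_{\zeta_N^i}$, so the span $\newspan(u_0, \ldots, u_{N-1})$ is a finite-dimensional subspace of $G^{2\deg p}\ZZ_{full}$.

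For $\lambda \in k^\times$ with $\lambda \neq 1$, the remark following \cref{Z_full_description} yields the explicit identity
\[
a_{\lambda, p} \;=\; \sum_{i=0}^{N-1} \alpha_i(\lambda)\, u_i, \qquad \alpha_i(\lambda) \;:=\; \frac{1}{N}\sum_{j=0}^{N-1} \lambda^j \zeta_N^{-ij}.
\]
Each $\alpha_i(\lambda)$ is a polynomial in $\lambda$, so the right-hand side is a polynomial function of $\lambda$ taking values in the finite-dimensional subspace $\newspan(u_0, \ldots, u_{N-1}) \subseteq G^{2\deg p}\ZZ_{full}$. The limit as $\lambda \to 1$ is therefore simply the evaluation of this polynomial family at $\lambda = 1$. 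A short orthogonality computation gives $\alpha_i(1) = \frac{1}{N}\sum_j \zeta_N^{-ij} = \delta_{i,0}$, whence
\[
\lim_{\lambda \to 1} a_{\lambda, p} \;=\; u_0 \;=\; z_p,
\]
as required.

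The only serious obstacle would have been producing the Vandermonde decomposition in the first place, but this is already handled by \cref{Z_full_description} and its accompanying remark; once the decomposition is available, the argument collapses to continuity of polynomial coefficients in a finite-dimensional vector space. One small technical point worth verifying is that the cited formula, stated there under the hypothesis $\lambda \neq 1$, extends through $\lambda = 1$, which is immediate from the manifest polynomial dependence of the $\alpha_i(\lambda)$ on $\lambda$; at $\lambda = 1$ the coefficients collapse to $\alpha_0(1) = 1$ and $\alpha_i(1) = 0$ for $i \neq 0$, consistently identifying the extended value with $z_p \in \AA_1 = \ZZ$.
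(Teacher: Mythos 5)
Your proof is correct, but it takes a genuinely different route from the paper's. The paper proceeds by invoking \cref{inj_lemma} to produce an injective linear embedding $\phi\colon G^{2\deg p}\ZZ_{full}\to\End\bigl(L(\lambda_1)\oplus\cdots\oplus L(\lambda_s)\bigr)$, then computes directly that $a_{\lambda,p}-z_p$ acts on the $\Z$-graded component $\Lambda^j\g_{-1}\otimes L_0(\lambda_i)$ by the scalar $p(\lambda_i)(\lambda^j-1)$, which tends to $0$ as $\lambda\to 1$; since the codomain of $\phi$ is finite-dimensional and $\phi$ is injective and linear, this gives convergence $a_{\lambda,p}\to z_p$ in $G^{2\deg p}\ZZ_{full}$. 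You instead take the explicit Vandermonde decomposition $a_{\lambda,p}=\sum_i\alpha_i(\lambda)u_i$ from the remark following \cref{Z_full_description} as the starting point, observe that the coefficients $\alpha_i(\lambda)=\frac{1}{N}\sum_j\lambda^j\zeta_N^{-ij}$ are polynomial in $\lambda$, and reduce the limit to the orthogonality computation $\alpha_i(1)=\delta_{i,0}$ together with the identification $u_0=z_p$ (which you correctly justify by injectivity of $HC$ on $\ZZ$). Your route bypasses \cref{inj_lemma} entirely, working inside the fixed finite-dimensional subspace $\newspan(u_0,\dots,u_{N-1})\subseteq G^{\deg p}\ZZ_{full}\subseteq G^{2\deg p}\ZZ_{full}$, and is arguably cleaner for this particular corollary; the trade-off is that it leans on the Vandermonde decomposition formula from the remark, which the paper states without a detailed verification, whereas the paper's own proof is self-contained given \cref{inj_lemma}. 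Both arguments ultimately rest on the same underlying fact -- that an element of $\AA_c$ acts on the $j$-th graded piece of a highest-weight module by $c^{\pm j}$ times its Harish-Chandra value -- but the way you package this as polynomial dependence of coefficients, rather than as convergence of operators in a finite-dimensional endomorphism algebra, is a genuine structural difference.
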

\begin{proof}
	Choose an embedding $\phi:G^{2\deg p}\ZZ_{full}\to\End(L(\lambda_{1})\oplus\cdots\oplus L(\lambda_s))$ using Lemma \ref{inj_lemma}.  Now in $\End(L(\lambda_i))$ we have that $a_{\lambda,p}-z_p$ acts on $\Lambda^j\g_{-1}\otimes L_0(\lambda_i)$ as
	\[
	p(\lambda_i)(\lambda^i-1).
	\]
	Thus if we take $\lambda\to 1$ we get convergence as elements of $\End(L(\lambda_{1})\oplus\cdots\oplus L(\lambda_s))$.  Since $\phi$ is an injective linear map, we are done.
\end{proof}

\begin{remark}
It is now possible, in principle, to obtain explicit formulas for the elements of $\ZZ$ whose Harish-Chandra image lies in $HC(\AA_{-1})$.  In the case of $\g\l(1|1)$ for instance, this would give all elements of the center with trivial constant term.  For example, we may produce the known formula for the element of the center whose Harish-Chandra polynomial is a scalar multiple of
	\[
	t_\g=\prod\limits_{\alpha\in\Delta_1^+}(h_{\alpha}+(\alpha,\rho)).
	\]
	Let $u_1,\dots,u_N$ be a basis of $\g_{1}$ and $v_1,\dots,v_N$ a basis of $\g_{-1}$, and write $V=v_1\cdots v_N\in\UU\g$.  Then by Proposition \ref{v_g_for_type_I}, $v_{\g}=u_1\cdots u_Nv_1\cdots v_N$, and we see that
	\begin{eqnarray*}
	\ad_{c}(v_{\g})(1)& = &(1-c)^{N}\ad_{c}(u_1\cdots u_N)(V)\\
					  & = &(1-c)^{N}\sum\limits_{I\sub\{1,\dots,N\}}(-1)^{Nl+i_1+\dots+i_l} c^{-l}u_{I^c}V\tilde{u_{I}}.
	\end{eqnarray*}

Here the sum runs over all subsets $I$ of $\{1,\dots,N\}$, and we write $l=|I|$.  Here $I^c$ is the complement of $I$ as a set.  Further, we define for a subset $J=\{j_1<\dots<j_l\}\sub\{1,\dots,N\}$,
\[
v_{J}=v_{j_1}\cdots v_{j_l},\ \ \ \  \tilde{v_J}=v_{j_l}\cdots v_{j_1}.
\]
If we divide $\ad_{c}(v_{\g})(1)$ by $(1-c)^{N}$, the Harish-Chandra projection of the resulting element is constant and equal to $HC(u_1\cdots u_Nv_1\cdots v_N)$, which is $t_{\g}$ up to a constant.  Taking the limit $c\to 1$ we obtain the following element of the center:
\[
\sum\limits_{I\sub\{1,\dots,n\}}(-1)^{nl+i_1+\dots+i_l}u_{I^c}V\tilde{u_{I}}.
\]
In general the above process will not be as straightforward, as for a general $z_0\in\ZZ(\UU\g_{\ol{0}})$ we have that
\[
\ad_{c}(v_{\g})(z_0)=(1-c)^N\sum\limits_{I\sub\{1,\dots,n\}}(-1)^{nl+i_1+\dots+i_l} c^{-l}u_{I^c}Vz_0\tilde{u_{I}}+l.o.t
\]
where $l.o.t.$ denotes terms of lower order in $(1-c)$.  Thus we cannot divide by $(1-c)^N$.  An (albeit tedious) way to overcome this is to take $k[c,(1-c)^{-1}]$ linear combinations of elements $\ad_{c}(v_{\g})(z_0)$ in order to obtain an element in $\AA_{c}$ with constant Harish-Chandra polynomial.  For $\g\l(1|1)$ this could certainly be worked out, but for higher rank superalgebras the elements of the center of $\UU\g_{\ol{0}}$ are more complicated, making this process more challenging.

However we note that in \cite{gorelik2004kac} a method for algorithmically computing any element of $\ZZ$ with a given Harish-Chandra projection is given, based off an idea originally due to Kac.  
\end{remark}

\bibliographystyle{amsalpha}
\bibliography{bibliography}

\textsc{\footnotesize Dept. of Mathematics, Ben Gurion University, Beer-Sheva,	Israel} 

\textit{\footnotesize Email address:} \texttt{\footnotesize xandersherm@gmail.com}

\end{document}